\newlist{enum-hypothesis}{enumerate}{1}
\setlist[enum-hypothesis]{label=(\arabic*),itemsep=0pt, parsep=0pt}
\newtheorem{theorem}{Theorem}[section]
\newtheorem{proposition}[theorem]{Proposition}
\newtheorem{lemma}[theorem]{Lemma}
\newtheorem{corollary}[theorem]{Corollary}
\newtheorem{definition}[theorem]{Definition}
\theoremstyle{plain}
\newtheorem{remark}[theorem]{Remark}
\theoremstyle{break}
\newtheorem{hypothesis}[theorem]{Hypothesis}
\newtheorem{case}[theorem]{Case}
\theoremstyle{nonumberplain}
\newtheorem{proof}{Proof}
\numberwithin{equation}{section}
\newcommand\gR{{\mathbb R}}
\newcommand\gC{{\mathbb C}}
\newcommand\gN{{\mathbb N}}
\newcommand\gS{{\mathbb S}}
\newcommand\gZ{{\mathbb Z}}
\newcommand\hgR{\widehat{\gR}}
\newcommand\hD{\widehat{D}}
\newcommand\hH{\widehat{H}}
\newcommand\hJ{\widehat{J}}
\newcommand\hrho{\widehat{\rho}}
\newcommand\hpi{\widehat{\pi}}
\newcommand\hxi{\widehat{\xi}}
\newcommand\hY{\widehat{Y}}
\newcommand\hU{\widehat{U}}
\newcommand\DD{\mathcal{D}}
\newcommand\Dg{\slashed{D}_g}
\newcommand\hvarphi{{\widehat{\varphi}}}
\newcommand\hsigma{{\widehat{\sigma}}}
\newcommand\htheta{{\widehat{\theta}}}
\newcommand\hb{{\hat{b}}}
\newcommand\hdelta{\widehat{\delta}}
\newcommand\hf{{\hat{f}}}
\newcommand\hg{{\hat{g}}}
\newcommand\tpi{{\tilde{\pi}}}
\newcommand\tphi{{\tilde{\phi}}}
\newcommand\talpha{{\widetilde{\alpha}}}
\newcommand\tLambda{{\widetilde{\Lambda}_\varphi}}
\newcommand\ttLambda{{\widetilde{\Lambda}_\tvarphi}}
\newcommand\tvarphi{{\widetilde{\varphi}}}
\newcommand{\cone}{\eta}
\newcommand{\ct}{\omega}
\newcommand\caB{{\mathcal B}}
\newcommand\caH{\mathcal{H}}
\newcommand\caF{\mathcal{F}}
\newcommand\caJ{\mathcal{J}}
\newcommand\caK{\mathcal{K}}
\newcommand\caS{\mathcal{S}}
\newcommand\caT{\mathcal{T}}
\newcommand\caM{\mathcal{M}}
\newcommand\caN{\mathcal{N}}
\newcommand\caV{\mathcal{V}}
\newcommand\caY{\mathcal{Y}}
\newcommand\dd{\text{\textup{d}}} 
\newcommand{\vc}{\vcentcolon =} 
\newcommand{\op}{\text{op}}
\newcommand{\bbbone}{{\text{\usefont{U}{dsss}{m}{n}\char49}}}
\newcommand{\red}{\text{red}}
\newcommand{\GNS}{{\text{GNS}}}
\newcommand{\vol}{{\text{vol}}}
\DeclareMathOperator{\Op}{Op}
\DeclareMathOperator{\Dom}{Dom}
\DeclareMathOperator{\Supp}{Supp}
\DeclareMathOperator{\Span}{Span}
\DeclareMathOperator{\Id}{Id}
\DeclareMathOperator{\Tr}{Tr}	 
\DeclareMathOperator{\Ran}{Ran}	 
\newcommand\Aut{\text{{Aut}}}
\newcommand\algA{\mathcal{B}}
\newcommand\algB{\mathcal{B}}
\newcommand\algM{\mathcal{M}}
\newcommand{\bA}{\mathbf{A}} 
\newcommand{\bB}{\mathbf{B}} 
\newcommand{\onehalf}{\frac{1}{2}}
\newcommand{\HS}{{\text{H.S.}}}
\DeclarePairedDelimiter\abs{\lvert}{\rvert}
\DeclarePairedDelimiter\norm{\lVert}{\rVert}
\newcounter{mnotecount}[section]
\renewcommand{\themnotecount}{\thesection.\arabic{mnotecount}}
\newcommand{\mnote}[1]
{\protect{\stepcounter{mnotecount}}$^{\mbox{\footnotesize
$
\bullet$\themnotecount}}$ \marginpar{
\raggedright\tiny\em
$\!\!\!\!\!\!\,\bullet$\themnotecount: #1} }
\begin{document}

\title{Crossed product extensions of spectral triples}
\author{B. Iochum, T. Masson}
\date{}
\maketitle
\begin{center}
Centre de Physique Théorique \\
{\small Aix-Marseille Université \& Université de Toulon \&  CNRS  (UMR 7332)}\\
{\small  Luminy, Case 907, 13288 Marseille Cedex 9, France}
\end{center}

\vspace{3cm}

\begin{abstract}
Given a spectral triple $(A,H,D)$ and a $C^*$-dynamical system $(\bA, G, \alpha)$ where $A$ is dense in $\bA$ and $G$ is a locally compact group, we extend the triple to a triplet $(\algB,\caH,\DD)$ on the crossed product $G \ltimes_{\alpha, \red} \bA$ which can be promoted to a modular-type twisted spectral triple within a general procedure exemplified by two cases: the $C^*$-algebra of the affine group and the conformal group acting on a complete Riemannian spin manifold.
\end{abstract}

\newpage

\tableofcontents

\section{Introduction}

Let us first describe what is the goal of this work: given a $C^*$-dynamical system $(\bA, G, \alpha)$ where $G$ is a locally compact group and a spectral triple $(A,H,D)$ where $A$ is a dense $^*$-subalgebra of $\bA$, we extend this triple to a new one $(\algB,\caH,\DD)$ on the crossed product $\bB=G \ltimes_{\alpha, \red} \bA$ within a general procedure of which we give a significant variant later. This construction is based on a covariant representation of the dynamical system and an hypothesis on the response of $D$ to this covariance by a conformal-like deformation where the constraints on the conformal factor are essentially driven by one-cocycles in the multiplier algebra of $\bA$, see Hypothesis \ref{hyp-dirac}. The choice of a dense $^*$-algebra $\algB$ of $\bB$ is studied and different possibilities are offered, like the requirement for $\algB$ to be a Fréchet algebra in order to preserve $K$-theory. An explicit operator $\DD$ is given on the Hilbert space $\caH= L^2(G) \otimes H\otimes \gC^2$ where the last $\gC^2$ is added to create room for an action of Pauli matrices. What we get is in fact a modular-type spectral triple where the axiom of compact resolvent is modified, see \eqref{modularcompactresol}.

In a variant version, we assume the existence of a KMS-weight $\varphi$ on $\bA$ such that $H$ is just the associated GNS Hilbert space. As a compatibility between the dynamical system and the spectral triple, we assume a relative invariance of $\varphi$, namely that $\varphi \circ \alpha=\varphi(q \,\cdot\, q^*)$ (see Hypothesis \ref{hypo-weight}). Here $q$ only needs to be a $\alpha$-one-cocycle valued in the multiplier algebra of $\bA$. Via the unitary representation of $G$ which automatically implements the action $\alpha$ on the von Neumann algebra $\pi_\varphi(\bA)''$, the hypothesis on $D$ (see \eqref{eq-comp-D-alpha} below) allows the desired extension which is compatible with modular theory of both the dynamical system and the original triple.

We also consider the extension of a real operator $J$ on the original triple $(A,H,D)$, see Proposition \ref{prop-hJ-operator}.

All previous hypotheses stem from two types of examples: the first ones are the $C^*$-algebras of semidirect group products, like the affine group involved in the $\kappa$-Minkowski  deformation, whose symmetry group is a Hopf algebraic deformation of the universal enveloping algebra of the Poincaré Lie algebra (see for instance \cite{Ioc12} for historical references). The second ones are the conformal deformations of a manifold. 

 It is worth mentioning that in the extension process, the operator $\DD$ is the sum of an extension of the original operator $D$ and an operator (see \eqref{eq-DH}) which allows an increasing of the original spectral dimension. This object is natural in the $C^*(G)$ framework quoted above and known as the Duflo--Moore operator \cite{DuflMoor76a,Ioc12}. The natural equivalent context in noncommutative geometry are the modular spectral triples, see for instance \cite{Kaa11,CNNR,Mata13,Mata14b,Ks,CPR,CRT,RS14}, but seen in the non-unital context.

The extension problem has already been considered in the literature: we were mainly inspired here by the Connes and Moscovici approaches \cite{ConnesMoscovici2006, Moscovici2010}. However, we differ by our choice of Hilbert space which is justified by the possibility to increase the spectral dimension as already quoted. In \cite{GabrielGrensing}, a construction of spectral triples for a certain class of crossed product-like algebras is proposed. This construction is proved to give a concrete unbounded representative of the Kasparov product of the original spectral triple and the Pimsner--Toeplitz extension associated to the crossed product by a Hilbert module. 
In \cite{RennieRobertsonSims}, a related context of a Kasparov module representing the extension class defining the Cuntz–Pimsner algebra is developed.

Our construction works for any locally compact group $G$, so covers dynamical systems, while in \cite{BMR,Pat,HSWZ}, $G$ equals $\gZ$ or is discrete. However, there, the proposed operator $\DD$ acts on the same Hilbert space as here (also via Pauli matrices) and build on the original $D$ and a differential operator on the circle (via Fourier transform) which commute (see \cite[Equation (6)]{BMR}), while our chosen $\DD$ is strongly dependent of the group and built with non-commutative pieces (namely $\hD$ and $\caT_{\cone,\,\ct}$ which do not commute in general). 
The equicontinuity of the action plays an important role in these works and we outline the fact that for the conformal deformation of a manifold, this ``inessential'' property of the group (see Section \ref{conformal}) is translated into triviality of the first cohomology group of the one-cocycles used for the construction of $\DD$, see \eqref{eq-comp-D-alpha}. 

A recent related work is \cite{Farzad-Gabriel}, where the conformal invariance of the Euler characteristic for a $C^*$-dynamical system is interpreted as a Chern--Gauss--Bonnet theorem.

This paper is organized as follows: in Section \ref{sec-motivation-main-results} we expose in details our motivations and give the main result with its proof. Section \ref{About T and Theta} is devoted to the selfadjointness of the constructed $\DD$ and modular operator $\Theta$. Then the role of the cocycles valued in the multiplier algebra is outlined and different choices for the smooth algebra $\algB$ in the crossed product are offered, especially in the Fréchet context. In Section \ref{relatively invariant weight}, we investigate the case where the covariant dynamical system is given by a weight $\varphi$ on $\bA$. This weight has an extension to $\pi_\varphi(\bA)''$ which is used with its dual weight to recover for instance the real structure. The case of interest of an original operator $D$ associated to a derivation of $A$ is also considered. Section \ref{sec-C*-algebra-semidirect-product} is devoted to the special case where $\bA$ is the $C^*$-algebra of a group, with a peculiar attention to Plancherel weight and its dual. The examples of affine and conformal groups are finally studied in Section \ref{sec-examples}.

\section{The extended modular-type twisted spectral triple}

\subsection{The motivations and main result}
\label{sec-motivation-main-results}

We now enter into the details of the construction giving the motivations through few examples.

By a  $C^*$-dynamical system $(\bA, G, \alpha)$ we mean that  $\bA$ is a $C^*$-algebra,  $G$ is a second-countable locally compact Hausdorff group, with Haar measure $\mu_G$ and modular function $\Delta_G$, and $\alpha : G \to \Aut(\bA)$ is a morphism from $G$ to the $^*$-automorphisms of $\bA$ such that, for any $a\in \bA$, $r\in G \mapsto \alpha_r(a)\in \bA$ is a continuous map.

By a spectral triple $(A, H, D)$ we mean that $A$ is a (non necessarily unital) $^*$-algebra with a faithful nondegenerate representation $\rho$ on a separable Hilbert space $H$ and $D$ is a selfadjoint operator on $H$ such that $[D,\rho(a)]$ are densely defined and bounded and 
\begin{align}
\label{compactresol}
\rho(a) (\bbbone+D^2)^{-1/2}\text{ is a compact operator for any }a\in A.
\end{align}

Let us now consider a spectral triple $(A, H, D)$, hereafter called the ``original'' one, where $A$ is a dense $*$-subalgebra of the $C^*$-algebra $\bA$ of the given dynamical system $(\bA, G, \alpha)$. The purpose of the following construction is to define an extension of the original  triple into a (modular type) spectral triple defined for the $C^*$-crossed product  $\bB\vc G \ltimes_{\alpha, \red} \bA$. This construction actually leads to a \emph{twisted} spectral triple $(\algB, \caH,\DD,\beta)$ where $\beta$ is an automorphism of a dense subalgebra $\algB$ of $\bB$.  Recall that in such a triple, $\beta$ is an automorphism of $\algB$ and $\pi$ is a representation of $\algB$ on $\caH$ such that the unitarity condition $\beta(a^*)=[\beta^{-1}(a)]^*$ holds true for $a\in \algB$, and the twisted commutator $[\DD,\pi(a)]_\beta\vc\DD\pi(a)-\pi[\beta(a)]\DD$ extends to a bounded operator for any $a\in \algB$, see \cite{ConnesMoscovici2006}.

A twisted spectral triple $(\algB, \caH,\DD,\beta)$ will be said of {\it modular-type} when the twist is  implemented by an unbounded positive operator $\Theta$ on $\caH$: on a dense domain, we have
\begin{equation*}
\pi[\beta(a)]=\Theta \,\pi(a)\, \Theta^{-1},\quad \text{for any }a\in \algB,
\end{equation*}
and the resolvent condition \eqref{compactresol} is replaced by a weaker one
\begin{align}
\label{modularcompactresol}
\Theta\, \pi(a) (\bbbone+\DD^2)^{-(1+\epsilon)/2}\text{ is a compact operator for any }a\in \algB \text{ and }\epsilon > 0.
\end{align}
The notion of modular spectral triple was considered in \cite{CRT, CNNR, RS14}. Essentially, this means that a trace $\tau(\cdot)$ on the algebra $\bB$ is swapped with the weight $\tau(\Theta\,\cdot)$ where $\Theta$ is a positif operator and the authors considered the general case of semifinite von Neumann algebras with an arbitrary trace. That operator can be the generator of an automorphism like in KMS theory. Moreover the automorphism can be used (or not) to twist the commutators. A tentative of a general definition of twisted modular spectral triple within the semifinite context is formalized in \cite{Kaa11} but only in the unital context. Since we want to deal with the non-unital case which is still waiting for a satisfactory definition, we only use the term ``modular type'' in the lazy above sense where we are in a peculiar situation with the existence of a twist implementing the modular object $\Theta$ and $\tau$ is nothing else than the usual trace. Moreover, to avoid the construction of the fixed point algebra under the modular group defined by $\Theta$ and the constraint that $\pi(a)(\bbbone+\DD^2)^{-1/2}$ are $\Tr(\Theta\,\cdot)$-compact operators (as required in Ibid.), we will use \eqref{modularcompactresol}; see however Remark~\ref{comparaison}. Of course, to assume that $\epsilon=0$ in \eqref{modularcompactresol} would seem more natural, but in one of our driving examples (see Case~\ref{Case-Affine} for the affine group), we are unable to check the compactness of $\Theta\, \pi(a) (\bbbone+\DD^2)^{-1/2}$, so we only assume \eqref{modularcompactresol}.

For the crossed product, the space $C_c(G, \bA)$, of compactly supported continuous functions $G \to \bA$, is a $*$-algebra for the associative product and the involution
\begin{align}
\label{eq-product-alpha}
(f \star_\alpha g)(r) &\vc \int_G \dd\mu_G(r') \, f(r') \,\alpha_{r'}[g(r'^{-1} r)],
\\
\label{eq-involution-alpha}
f^*(r) &\vc \Delta_G(r)^{-1} \, \alpha_{r}[ f(r^{-1})^* ], \quad\text{for $f ,g\in C_c(G, \bA)$},
\end{align}
where on the right hand side the involution is the one in $\bA$. We want to keep track of the representation of the original triple, but the vector space $C_c(G, A)$ is not a good candidate for a dense $*$-subalgebra in $\bB=G \ltimes_{\alpha, \red} \bA$, since it  is not an algebra a priori: the integral \eqref{eq-product-alpha} is defined by duality (it commutes with elements in the dual) and for any continuous function $h: G\to A$, the result of $\int_G \dd\mu_G (r) \,h(r)$ is in $\mu_G(\Supp h) \times \overline{\text{co}}(h(G))$  where $\overline{\text{co}}(K)$ is the closed convex hull of a compact space $K$), which is not necessary in $A$ (see \cite[3.27 Theorem]{Rudin} or \cite[Section 1.5]{Will07a}). If we insist that $C_c(G, A)$ becomes an algebra, we have to assume some extra conditions: the automorphisms $\alpha_{r}$ preserves $A$, the algebra $A$ is endowed with a completely metrizable locally convex  topology \cite[5.35 Theorem]{Ali}, and $\alpha$ is compatible with this topology in a certain sense, see Section~\ref{Remarque-Frechet} for details. In the special case of a transformation group $(G,X)$ where $X$ is a topological space and $A=C_c(X)$, then, a good candidate is $\algA=C_c(G\times X) \subset C_c(G,C_c(X))$ (remark that the inclusion can be strict, see \cite[Remark 2.32]{Will07a}).

In the following, we will only assume the existence of a ``good'' algebra $\algA$ inside $C_c(G, A)$: in the several examples studied in this paper, natural candidates for such algebras will be given, see Sections~\ref{sec-example-affine-group} and \ref{conformal}.

We need also to introduce some compatibility conditions between the given dynamical system and the spectral triple, with in mind the following precise examples which motivate our Hypothesis~\ref{hyp-dirac} below.

\begin{case} 
\label{Case-Affine} 
$\bA=C^*(N)$ is the $C^*$-algebra of a locally compact group $N$ on which $G$ acts, so that the extended spectral triple concerns the $C^*$-algebra $G \ltimes C^*(N) \simeq C^*(G \ltimes N)$. 
\end{case}
For instance, the affine group $\gR \ltimes \gR$, considered in  \cite{Mata14a,Ioc12}, enters in this case: $\gR$ acts on $\bA=C^*(\gR)$ and our construction is such that it produces the twisted spectral triple proposed in \cite{Mata14a}, see Section~\ref{sec-example-affine-group}. 

Since this case is also a transformation group, we besides consider
\begin{case}
\label{Case-Conformal}
Let $(M,g)$ be a smooth complete Riemannian spin manifold where $g$ is a given metric. Let $[g]$ be the class of metrics conformally equivalent to $g$. $M$ is acted upon by the Lie group $G=SCO(M, [g])$ of diffeomorphisms that preserve the orientation and the conformal and spin structures (or one of its closed subgroups). \\
The data are: $A=C^\infty(M)$ with pointwise product, $H=L^2(M,\slashed{S})$ where $\slashed{S}$ is the spinor bundle, $D$ is the Dirac operator $\Dg$ and the representation $\rho$ is just the pointwise multiplication on sections. The associated dynamical system is $(\bA= C_0(M), G,\alpha)$, where $\alpha_\phi(f)= f\circ\phi$ for $\phi\in G$, $f\in \bA$.
\end{case}
A related twisted spectral triple was investigated in \cite{Moscovici2010} for this situation, but here, we do not take only the discrete  crossed product of $G$ and $M$ as in \cite{Moscovici2010} or \cite[Section 2.3]{ConnesMoscovici2006}: we also change the Hilbert space representation, see Section \ref{conformal} for details.
\bigskip

Motivated by Case~\ref{Case-Conformal}, we will assume that $D$ is pointwise unitarily equivalent to a ``conformal'' deformation $c^*\,D\,c$, where $c$ is a map from $G$ to $\caB(H)$ (bounded operators on $H$) endowed with a cocycle property on $c\, c^*$.

Let us now introduce some notations: $M(\bA)$ is the multiplier algebra of $\bA$ endowed with the strict topology and $Z(M(\bA))$ is its center. We will consider the following subgroups $X$ of the group $M(\bA)^\times$ of invertible elements in $M(\bA)$:  $Z(A)^\times$ is the group of multipliers preserving $A$ and commuting with $A$ (this group is abelian, see Lemma \ref{group of cocycles}), and $UM(A)$  is the group of unitary elements of $M(\bA)$ which preserve $A$. The representation $\rho$ of $\bA$ extends to a representation, still denoted by $\rho$, of $M(\bA)$ \cite[II.7.3.9]{Blac06a}. There also exists an extended action of $G$ on $M(\bA)$, still denoted by $\alpha$, which is strictly continuous \cite[II.10.3.2]{Blac06a}.

Our construction relies on the following data that will be commented soon after.

\begin{hypothesis}
\label{hyp-dirac}
\begin{enum-hypothesis}
\item\label{hyp-spectral} $(A,H,D)$ is a spectral triple where $A$ is a dense $*$-subalgebra in a $C^*$-algebra $\bA$.

\item\label{hyp-dyn syst} $(\bA, G, \alpha)$ is a $C^*$-dynamical system, such that $\alpha_r(A) = A$ for any $r \in G$.

\item \label{hyp-algA} There exists an algebra $\algA \subset C_c(G, A)$ which is a dense $*$-subalgebra of $C_c(G, \bA)$ for the product \eqref{eq-product-alpha} and the involution \eqref{eq-involution-alpha}.

\item \label{hyp-U} There exists a faithful non-degenerate covariant representation $(\rho, U, H)$ of the $C^*$-dynamical system $(\bA, G, \alpha)$, namely  $r\in G \mapsto U_r \in \caB(H)$ is a strongly continuous unitary representation of $G$, and
\begin{align}
\label{eq-UaU*}
U_r\,\rho(a)\,U_r^*=\rho[\alpha_r(a)],\quad r\in G,\, a\in \bA.
\end{align}
We denote by $\hrho$ the integrated representation of $(\rho,U,H)$ of the crossed product algebra 
\begin{align*}
&\bB\vc G \ltimes_{\alpha, \red} \bA \text{ acting on }\\
&\hH \vc L^2(G,d\mu_g)\otimes H.
\end{align*}

\item \label{hyp-comp-D-alpha} There exists a map $z : G \mapsto \caB(H)$ such that\\
$U_r\,D\,U^*_r$ and ${z(r)^*}^{-1}\, D\, z(r)^{-1}$ have a dense common core for any $r \in G$ and on this core
\begin{align}
U_r \,D \,U_r^*= {z(r)^*}^{-1}\, D\, z(r)^{-1}, \label{eq-comp-D-alpha}
\end{align}
and there is a (positive) continuous $\alpha$-one-cocycle valued in $Z(A)^\times$, $r \mapsto p(r)$, such that 
\begin{align}
& \rho[p(r)]=z(r)\,z(r)^*, \nonumber\\
\label{eq-hyp-p(r)}
& r\in G \mapsto p(r)^{\pm 1}\,f(r) \in A \text{ is in $\algA$ for any $f \in \algA$.}
\end{align}

\item \label{preservdom}
Control of domains: the space
\begin{align}
\label{hY}
&\hY \vc \{ \hxi \in \hH \mid\hxi(r) \in \Dom(D) \,\dd\mu\text{-}a.e. \text{ and the map: }\nonumber\\
&\hspace{+4cm}r\in G \mapsto \norm{\big(D \pm i \ct\,\rho[p(r)]\big)\, \hxi(r)}_H \text{ is in }L^2(G) \}
\end{align}
is dense in $\hH$ for some non-zero coefficient $\ct\in \gR$. \\
Moreover, there exists $Y_{\algA} \subset \hY$ which is also a dense subspace of $\hH$ such that
\begin{align*}
\hrho(f)\,Y_\algA  \subset \hY \text{ for any } f\in \algA. 
\end{align*}

\item \label{hyp-comm-bounded}
For any $f \in \algA$, there exists a constant $M_{f,z}$ such that
\begin{equation}
\label{eq-hyp-comm-bounded}
\norm{[D, \rho(f(r))z(r) ]\, z(r)^{-1}}_{_{\caB(H)}} \leq M_{f,z}\, \chi_{S_f}(r), \quad \text{for any $r \in G$}
\end{equation}
where $\chi_{S_f}$ is the characteristic function of the compact support $S_f$ of $f$ in $G$.
\end{enum-hypothesis}
\end{hypothesis}

{\bf Comments:}

- The point \ref{hyp-spectral} and \ref{hyp-dyn syst} are the initial objects.

- Point \ref{hyp-algA} is necessary since, as already explained in the Introduction, $C_c(G,A)$ is not necessarily an algebra.

- Point \ref{hyp-U} required only that the action $\alpha$ is unitarily implemented.

- Equation \eqref{eq-comp-D-alpha} emphasizes the reaction of the orignal $D$ to previous unitary implementation. It does not completely define the map $z:G \to \caB(H)$: let $u : G\to U\caB(H)$ be such that $[D,u(r)]=0$, for all $r\in G$; then $z'(r)=z(r)\,u(r)$ is also a solution of \eqref{eq-comp-D-alpha} while $\rho[p'(r)]=z'(r)z'(r)^*$ remains equal to $\rho[p(r)]$.

- The domain of our future $\DD$ will be controlled by \eqref{hY} which is sufficient to obtain selfadjointness for $\DD$ (see Proposition \ref{D et Theta selfadjoints}). The coefficient $\ct$ is a relative weight between the operator $D$ and the cocycle $p$.

- The constraint \eqref{eq-hyp-comm-bounded} is a technical assumption to handle the boundedness of the twist commutator with our future $\DD$. Then the inclusion $\pi(\algA)\Dom \DD \subset \Dom \DD$, which is crucial for $\DD$ as emphasized in \cite{FMR14}, is controlled by the hypothesis \ref{preservdom} as seen in Proposition \ref{prop-twistcom}. 

- The construction of $Y_\algA$ is strongly dependent of the choice of $\algA$, see Section \ref{Remarque-Frechet}. 

\bigskip

The main points of our construction are the followings: 

-- The Hilbert space is $\hH$, so that $\hxi\in \hH$ means $\hxi(r)\in H$ for almost all $r\in G$ and $\int_G \dd\mu_G(r)\, \norm{\hxi (r)}^2 < \infty$. The space $C_c(G, H)$ is dense in $\hH$. 
Let $\rho_\alpha$ be the induced representation of $\bA$ on $\hH$ defined by $[\rho_\alpha(a)\,\hxi\,](r) \vc \rho[\alpha_{r^{-1}}(a)]\,\hxi(r)$ for any $\hxi \in \hH$ and $a \in \bA$, and let $\lambda_G$ be the left regular representation of $G$ on $\hH$ defined by $(\lambda_G(s) \,\hxi\,)(r) \vc \hxi(s^{-1} r)$ for any $s \in G$. Then $( \rho_\alpha, \lambda_G, \hH)$ is a covariant representation of $(\bA, G, \alpha)$, and we denote by $\hrho \vc \lambda_G \ltimes \rho_\alpha$ its integrated representation, given explicitly by
\begin{equation}
\label{eq-defrepresentationhrho}
[\hrho(f)\,\hxi \,](r) \vc \int_G \dd\mu_G(r') \, \rho (\alpha_{r^{-1}} [f(r')]) \,\hxi(r'^{-1} r), \quad \text{ for $\hxi \in \hH$.} 
\end{equation}
This representation restricts to a $*$-representation of $\algA$. The crossed product $\bB=G \ltimes_{\alpha, \red} \bA$ is the norm closure of $\hrho[C_c(G, \bA)]$ in $\caB(\hH)$, and by density of $\algA$, it is also the norm closure of $\hrho(\algA)$.\\ 
We will use the unitary operator on $\hH$:
\begin{align*}
\hU:\,r\in G \mapsto U_r \in \mathcal{U}(H).
\end{align*}

-- Let us first define
\begin{align}
\label{Yp}
&\hY_p \vc \{ \hxi \in \hH  \text{ such that the map }r \in G \mapsto \norm{\rho[p(r)]\, \hxi(r)}_H \text{ is in } L^2(G) \},
\end{align}
and then, $\htheta : \hH \to \hH$ as the (unbounded) operator with $\Dom(\htheta\,) \vc \hU^*\,\hY_p$ (whose density is proved in Lemma~\ref{lem-densedomaintheta}) by
\begin{equation}
\label{thetahat}
(\htheta \,\hxi\,)(r) \vc \rho (\alpha_{r^{-1}}[p(r)])\, \hxi(r) = U_r^* \rho [p(r)] U_r \,\hxi(r), \quad \text{for $\hxi \in \Dom(\htheta\,)$ and $r \in G$.}
\end{equation}
By the central property of $p$ and the cocycle relation $$\alpha_{r^{-1}}[p(r)] \,p(r^{-1})  = p(r^{-1}) \,\alpha_{r^{-1}}[p(r)] = p(r^{-1}r) = 1,$$
one shows that the (unbounded) operator $\htheta^{-1}$ defined by
\begin{equation*}
(\htheta^{-1} \,\hxi\,)(r) \vc \rho[p(r^{-1})]\, \hxi(r) \text{ on } \Dom(\htheta^{-1}) 
\vc \{ \hxi \in \hH \mid [r \mapsto \norm{\rho[p(r^{-1})]\hxi(r)}_H] \in L^2(G)\}
\end{equation*}
is the inverse of $\htheta$.

\begin{lemma}
\label{lem-densedomaintheta}
The subspace $C_c(G,H)$ is dense in  $\hH$, contained in $\Dom(\htheta) \cap \Dom(\htheta^{-1})$ and is stable by $\htheta$ and $\htheta^{-1}$. In particular $\Dom(\htheta)$ and $\Dom(\htheta^{-1})$ are dense in $\hH$.
\end{lemma}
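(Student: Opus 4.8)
The plan is to reduce the three nontrivial assertions to a single preliminary fact. Density of $C_c(G,H)$ in $\hH=L^2(G,\dd\mu_G)\otimes H$ is standard and already recorded above: the algebraic tensor product $C_c(G)\otimes H$ is dense in the Hilbert tensor product and sits inside $C_c(G,H)$. The preliminary fact I would establish first is that $r\mapsto\rho[p(r)]$ and $r\mapsto\rho[p(r^{-1})]$ are \emph{strongly continuous} maps $G\to\caB(H)$ that are \emph{norm-bounded on compact sets}. Since $p$ is continuous for the strict topology on $M(\bA)$, the map $r\mapsto p(r)\,a$ is norm-continuous for each $a\in\bA$, hence bounded on every compact $K\subset G$; the uniform boundedness principle then yields $\sup_{r\in K}\norm{p(r)}_{M(\bA)}<\infty$, whence $\sup_{r\in K}\norm{\rho[p(r)]}_{\caB(H)}<\infty$ because $\rho$ is a $*$-homomorphism on $M(\bA)$. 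Strong continuity of $r\mapsto\rho[p(r)]$ follows because $\rho[p(r)]\,\rho(a)\,\eta=\rho[p(r)\,a]\,\eta$ depends norm-continuously on $r$ for every $a\in\bA$, $\eta\in H$, while $\rho(\bA)H$ is dense by nondegeneracy and $\{\rho[p(r)]\}_{r\in K}$ is uniformly bounded; composing with the homeomorphism $r\mapsto r^{-1}$ of $G$ gives the same for $r\mapsto\rho[p(r^{-1})]$.

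Granting this, let $\hxi\in C_c(G,H)$ and put $S\vc\Supp\hxi$. Because $\hU$ acts fibrewise by the unitaries $U_r$, membership $\hxi\in\Dom(\htheta)=\hU^*\hY_p$ is equivalent to $r\mapsto\norm{\rho[p(r)]\,U_r\,\hxi(r)}_H\in L^2(G)$. Now $r\mapsto U_r\,\hxi(r)$ is a continuous $H$-valued map vanishing off $S$ (strong continuity of $U$ times continuity of $\hxi$), and composing it with the strongly continuous, locally bounded family $\rho[p(r)]$ produces a continuous $H$-valued map supported in $S$; its norm is then a continuous function supported in the compact set $S$, hence bounded, hence in $L^2(G)$ since $\mu_G(S)<\infty$. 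Thus $C_c(G,H)\subset\Dom(\htheta)$, and the identical argument — now with $\rho[p(r^{-1})]$ in place of $\rho[p(r)]$ and without the factor $U_r$, using the explicit formula for $\htheta^{-1}$ given before the statement — gives $C_c(G,H)\subset\Dom(\htheta^{-1})$; in particular $C_c(G,H)\subset\Dom(\htheta)\cap\Dom(\htheta^{-1})$.

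For stability, note $(\htheta\,\hxi)(r)=U_r^*\,\rho[p(r)]\,U_r\,\hxi(r)$: along any convergent net $r_n\to r$ the three operator factors converge strongly while staying locally uniformly bounded, and $\hxi(r_n)\to\hxi(r)$ in $H$, so the product converges in $H$; hence $\htheta\,\hxi$ is continuous and supported in $S$, i.e. $\htheta\,\hxi\in C_c(G,H)$. Identically, $(\htheta^{-1}\hxi)(r)=\rho[p(r^{-1})]\,\hxi(r)$ is continuous and supported in $S$, so $\htheta^{-1}\hxi\in C_c(G,H)$. Finally, since the dense subspace $C_c(G,H)$ lies in both domains, $\Dom(\htheta)$ and $\Dom(\htheta^{-1})$ are dense in $\hH$. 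The only step demanding genuine care is the very first one — extracting strong continuity and local norm-boundedness of $r\mapsto\rho[p(r)]$ from the mere strict continuity of the cocycle $p$ — while everything else is routine continuity- and measure-theoretic bookkeeping.
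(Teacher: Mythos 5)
Your proof is correct and follows essentially the same route as the paper: continuity of $r\mapsto\rho[p(r^{\pm1})]$ combined with the strong continuity of $U$ shows that $\htheta^{\pm1}\hxi$ is continuous with compact support for $\hxi\in C_c(G,H)$, hence square-integrable. Your preliminary step (local norm-boundedness via uniform boundedness plus strong continuity on the dense subspace $\rho(\bA)H$) carefully justifies the continuity assertion that the paper's proof states without elaboration.
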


\begin{proof}
Recall that $M(A)$ is a $C^*$-algebra endowed with the strict topology and that the extension of $\rho$ to $M(\bA)$ is a $*$-homomorphism \cite[II.7.3.9]{Blac06a}). Then for any continuous map $c : G \to M(\bA)$, the composition  $r\in G \mapsto c(r) \in M(\bA) \mapsto \rho[c(r)] \in \caB(H)$ is a continuous map. Since $r \mapsto r^{-1}$ is continuous on $G$, $r \mapsto \rho[c(r^{-1})]$ is also continous. Thus $\rho[p(r^{\pm 1})]\,\hxi(r)$ are continuous in $r$ when $\hxi \in C_c(G, H)$. Since $r \mapsto U_r$ is strongly continuous, $r \mapsto U_{r} \,\hxi(r)$ is continuous for $\hxi \in C_c(G,H)$. Using compositions of these continous maps, we get that $C_c(G,H) \in \Dom(\htheta) \cap \Dom(\htheta^{-1})$ and that $C_c(G,H)$ is stable by $\htheta$ and $\htheta^{-1}$.
\end{proof}

We will show in Proposition~\ref{prop-beta} that the map $\beta$ defined by 
\begin{equation}
\label{beta}
\beta(f)(r) \vc p(r) \,f(r),\quad f\in C_c(G, \bA),\,r\in G
\end{equation}
is an automorphism of $C_c(G, \bA)$ satisfying
\begin{equation}
\label{eq-beta-p}
\hrho\, [\beta(f)]\,\hxi = \htheta \,\hrho(f)\, \htheta^{-1}\, \hxi \quad \text{for any $\hxi \in C_c(G, H)$ and $f \in C_c(G, \bA)$.}
\end{equation}
We prove in Proposition \ref{prop-beta} that $\beta$ is also an automorphism of $\algA$.

-- Let $\hD$ be the unbounded operator well defined on $\hU^*\,C_c(G,\Dom(D))$ by
\begin{equation}
\label{Dchap}
(\hD \,\hxi\,)(r) \vc U_r^*\, D \,U_r \,\hxi(r).
\end{equation}
Then the twisted commutator $[\hD, \hrho(f)]_\beta \vc \hD\, \hrho(f) - \hrho(\beta[f]) \hD$ is bounded for any $f \in \algA$ as shown in Lemma~\ref{prop-hD}.

-- The last important object is the unbounded operator defined on $\hU^*\,\hY_p$ by
\begin{align}
\label{eq-DH}
\caT_{\cone,\,\ct} \vc \cone \bbbone + \ct \,\htheta
\end{align}
where $\cone, \ct \in \gR$ with $\ct\neq 0$ are arbitrary real parameters. This operator has also bounded twisted commutator with $\hrho(f)$ as shown in Lemma \ref{prop-caT}. 

\bigskip
Let us now define the ingredients of our modular-type $\beta$-twisted spectral triple $(\algA, \caH,\DD)$:

-- $\caH \vc \hH \otimes \gC^2 = L^2(G, \dd\mu_G) \otimes H \otimes \gC^2$, 

-- $\pi\vc \hrho \otimes \Id_{\gC^2}$ on $\caH$,

-- the operator $\DD$ on $\caH$ is given by
\begin{equation}
\label{eq-DiracG}
\DD \vc \hD \otimes \gamma^1 + \caT_{\cone,\,\ct} \otimes \gamma^2 ,
\end{equation}
where $\gamma^1,\gamma^2$ are the usual selfadjoint Pauli matrices on $\gC^2$, 

- We promote $\htheta$ to an operator on $\caH$ via
\begin{align}
\label{defTheta}
\Theta \vc \htheta \otimes \bbbone.
\end{align}
We show in Proposition \ref{D et Theta selfadjoints} that $\DD$ and $\Theta$ are selfadjoint operators on specified domains.

Our main result in this section is the following

\begin{theorem}
\label{thm-twisted-triple}
Given a spectral triple $(A, H, D)$ and a dynamical system $(\bA,G,\alpha)$ satisfying Hypothesis~\ref{hyp-dirac}, then $(\algA, \caH,\DD,\beta)$ is a modular-type $\beta$-twisted spectral triple (modulo some convergence of an integral on the group to get \eqref{modularcompactresol}, see Proposition \ref{prop-compactness} or its corollary).
\end{theorem}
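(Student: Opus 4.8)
The plan is to verify, one by one, each clause in the definition of a modular-type $\beta$-twisted spectral triple, appealing to the propositions and lemmas announced in the excerpt. First I would record that $\algA$ is a dense $*$-subalgebra of $\bB = G \ltimes_{\alpha,\red} \bA$ (Hypothesis~\ref{hyp-dirac}\ref{hyp-algA}) and that $\pi = \hrho \otimes \Id_{\gC^2}$ is a faithful nondegenerate $*$-representation on $\caH = \hH \otimes \gC^2$; faithfulness and nondegeneracy are inherited from the covariant representation $(\rho, U, H)$ via Hypothesis~\ref{hyp-dirac}\ref{hyp-U} and the construction of the regular/induced representation. Next I would invoke Proposition~\ref{prop-beta} to conclude that $\beta$ (defined by $\beta(f)(r) = p(r)f(r)$) is an automorphism of $\algA$, and check the unitarity condition $\beta(a^*) = [\beta^{-1}(a)]^*$ for $a \in \algA$ — this is a short computation using \eqref{eq-involution-alpha}, the positivity of $p(r)$, its centrality (so that $p(r)$ commutes with elements of $A$), and the cocycle identity $\alpha_{r^{-1}}[p(r)]\,p(r^{-1}) = 1$, which together force $\beta^{-1}(f)(r) = p(r)^{-1} f(r)$ and hence $[\beta^{-1}(f)]^*(r) = \Delta_G(r)^{-1}\alpha_r\big(p(r^{-1})^{-1} f(r^{-1})^* \big) = \Delta_G(r)^{-1} p(r)\,\alpha_r(f(r^{-1})^*) = \beta(f^*)(r)$.

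Then I would turn to the self-adjointness of $\DD$ and $\Theta$: these are exactly the content of Proposition~\ref{D et Theta selfadjoints}, so I would simply cite it, noting that the relevant domains are controlled by the density hypothesis \ref{preservdom} on $\hY$ (which is what makes $\hD \pm i\ct\,\htheta$ essentially self-adjoint on the right core, hence $\DD$ self-adjoint since $\gamma^1, \gamma^2$ anticommute). The modular implementation $\pi[\beta(a)] = \Theta\,\pi(a)\,\Theta^{-1}$ on a dense domain is \eqref{eq-beta-p} tensored with $\Id_{\gC^2}$, valid on $C_c(G,H) \otimes \gC^2$, which is dense in $\caH$ and contained in $\Dom(\Theta) \cap \Dom(\Theta^{-1})$ by Lemma~\ref{lem-densedomaintheta}. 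For the twisted-commutator boundedness, I would decompose
\begin{equation*}
[\DD, \pi(f)]_\beta = [\hD, \hrho(f)]_\beta \otimes \gamma^1 + [\caT_{\cone,\ct}, \hrho(f)]_\beta \otimes \gamma^2,
\end{equation*}
and invoke Lemma~\ref{prop-hD} for the first term (whose boundedness rests on Hypothesis~\ref{hyp-dirac}\ref{hyp-comm-bounded}, since the fibrewise twisted commutator is $U_r^*[D, \rho(f(r))z(r)]z(r)^{-1} U_r$ up to the cocycle bookkeeping, with operator norm bounded by $M_{f,z}\chi_{S_f}(r)$, integrable over the compact support $S_f$) and Lemma~\ref{prop-caT} for the second (the $\cone\bbbone$ part of $\caT_{\cone,\ct}$ contributes $\cone(\bbbone - \text{(twist correction)})$ type terms and the $\ct\,\htheta$ part interacts with $\hrho(f)$ through $\beta$ precisely so that the twisted commutator telescopes to something bounded, using the centrality of $p$ and \eqref{eq-hyp-p(r)}). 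One must also verify the domain stability $\pi(\algA)\Dom\DD \subset \Dom\DD$, which follows from the $Y_\algA$ part of Hypothesis~\ref{hyp-dirac}\ref{preservdom} together with Proposition~\ref{prop-twistcom}.

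Finally, the modular compact-resolvent condition \eqref{modularcompactresol} — $\Theta\,\pi(a)(\bbbone + \DD^2)^{-(1+\epsilon)/2}$ compact for all $a \in \algA$, $\epsilon > 0$ — is deferred by the theorem statement itself to Proposition~\ref{prop-compactness} (modulo convergence of a group integral), so I would cite it; the idea there is to use the original spectral triple's compactness \eqref{compactresol} fibrewise, i.e.\ $\rho(a)(\bbbone+D^2)^{-1/2}$ is compact on $H$, and to absorb the extra factor $\Theta$ and the group direction $L^2(G)$ into the $\epsilon$-worth of extra decay coming from $\caT_{\cone,\ct}$, the point being that $\DD^2 \geq \caT_{\cone,\ct}^2 = (\cone + \ct\,\htheta)^2$ grows with $p(r)$, giving integrability in $r$ once $\epsilon > 0$. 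I expect the genuinely delicate step, were one to write the full proof, to be the boundedness of the twisted commutator of $\caT_{\cone,\ct}$ with $\hrho(f)$ and the simultaneous control of $\DD$'s domain — everything else is either cited verbatim from earlier propositions or is the short unitarity/implementation bookkeeping above — but since Lemmas~\ref{prop-hD}, \ref{prop-caT} and Propositions~\ref{prop-beta}, \ref{prop-twistcom}, \ref{D et Theta selfadjoints}, \ref{prop-compactness} are all available, the proof of the theorem is essentially an assembly of these pieces.
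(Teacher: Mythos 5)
Your proposal is correct and follows essentially the same route as the paper: the proof there is precisely the assembly you describe, namely Proposition~\ref{prop-beta} (including the unitarity identity $\beta(f^*)=[\beta^{-1}(f)]^*$ you compute via the cocycle relation $\alpha_r[p(r^{-1})^{-1}]=p(r)$ and centrality of $p$), Proposition~\ref{D et Theta selfadjoints} for selfadjointness, the decomposition of $[\DD,\pi(f)]_\beta$ into the $\gamma^1$ and $\gamma^2$ components handled by Lemmas~\ref{prop-hD} and~\ref{prop-caT}, Proposition~\ref{prop-twistcom} for domain stability, and Proposition~\ref{prop-compactness} for the deferred resolvent condition. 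The only minor mis-weighting is your identification of the delicate step: the twisted commutator with $\caT_{\cone,\,\ct}$ is the trivial part (it collapses to $\cone\hrho(f)-\cone\hrho(\beta[f])$ because $\htheta$ implements $\beta$ exactly, so $[\htheta,\hrho(f)]_\beta=0$), whereas the genuinely delicate estimate is the Schur-type kernel bound for $[\hD,\hrho(f)]_\beta$ in Lemma~\ref{prop-hD}, resting on \eqref{eq-hyp-comm-bounded}.
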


\subsection{\texorpdfstring{About $\DD$ and $\Theta$}{About D and Theta}} 
\label{About T and Theta}

For any $r \in G$ and $\cone,\ct \in \gR,\,c\neq 0$,  let us define successively the following operators:
\begin{alignat*}{2}
& T_p(r) \vc \cone + \ct\, \rho[p(r)] &\quad&\text{acting on }H,\\
& \DD_r \vc D\otimes \gamma^1+T_p(r)\otimes \gamma^2 &&\text{acting on } H\otimes \gC^2,\\
& V_r \vc U_r \otimes \bbbone_2  &&\text{acting on } H\otimes \gC^2.
\end{alignat*}
Then 
\begin{align*}
(\DD \,\xi)(r)=V_r^*\,\DD_r\,V_r\,\xi(r),\quad \text{for } \xi \in \Dom(\DD).
\end{align*}
We propose now the set which will be helpful for the domain of $\DD$:
\begin{align*}
\caY \vc \{\psi\in \caH \,\vert\,\psi(r) \in \Dom(D)\otimes \gC^2 \,\dd\mu\text{-}a.e. \text{ and } [r \in G \mapsto \norm{\DD_r \,\psi}_{H\otimes \gC^2}] \in L^2(G)\}.
\end{align*}
Defining $\caV: r\in G\mapsto V_r$ which is a unitary on $\caH$, we get

\begin{proposition}
\label{D et Theta selfadjoints}
The operator $\DD$ defined on $\Dom(\DD) \vc \caV^*\caY$ is selfadjoint. \\
The operator $\Theta$ defined on $\Dom(\Theta) \vc \caV^*(\hY_p \otimes \gC^2)$ is selfadjoint and positive.
\end{proposition}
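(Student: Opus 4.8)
The plan is to prove selfadjointness of $\DD$ and $\Theta$ by transporting the problem through the unitary $\caV$ to a fibered setting over $G$, where both operators become ``direct integrals'' of operators on $H\otimes\gC^2$, and then to invoke the standard criterion for selfadjointness of decomposable operators together with a fiberwise essential selfadjointness argument.

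\textbf{Step 1: Reduction via $\caV$.} Since $\caV$ is unitary on $\caH$, it suffices to show that $\caV\,\DD\,\caV^*$, defined on $\caY$, is selfadjoint, and likewise that $\caV\,\Theta\,\caV^*$, defined on $\hY_p\otimes\gC^2$, is selfadjoint and positive. By the formula $(\DD\xi)(r)=V_r^*\,\DD_r\,V_r\,\xi(r)$, the transported operator $\caV\,\DD\,\caV^*$ acts as $(\psi(r))\mapsto(\DD_r\,\psi(r))$, i.e. it is the decomposable (direct integral) operator $\int_G^\oplus \DD_r\,\dd\mu_G(r)$ on $\caH\cong\int_G^\oplus (H\otimes\gC^2)\,\dd\mu_G(r)$, with natural domain exactly $\caY$. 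Similarly $\caV\,\Theta\,\caV^*$ is the decomposable operator with fibers $\bbbone\otimes?$—more precisely, by \eqref{thetahat}, $(\htheta\hxi)(r)=U_r^*\rho[p(r)]U_r\hxi(r)$, so $\caV\,\Theta\,\caV^*$ has fiber $\rho[p(r)]\otimes\bbbone_2$ at $r$, with domain $\hY_p\otimes\gC^2$.

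\textbf{Step 2: Fiberwise analysis.} For $\Theta$: each $\rho[p(r)]$ is a positive operator on $H$ (since $p(r)\in Z(A)^\times$ is positive by Hypothesis~\ref{hyp-comp-D-alpha}, and $\rho$ extends to a $*$-homomorphism on $M(\bA)$ that preserves positivity); I would check that $r\mapsto\rho[p(r)]$ is (weakly/strongly) measurable using the continuity established in the proof of Lemma~\ref{lem-densedomaintheta}. A decomposable operator $\int^\oplus A_r$ with each $A_r$ selfadjoint and positive is itself selfadjoint and positive on its natural maximal domain $\{\psi : \psi(r)\in\Dom(A_r)\ \text{a.e.},\ r\mapsto\|A_r\psi(r)\|\in L^2\}$; this natural domain is precisely $\hY_p\otimes\gC^2$ by definition \eqref{Yp}. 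For $\DD$: each $\DD_r=D\otimes\gamma^1+T_p(r)\otimes\gamma^2$ with $D$ selfadjoint on $H$ and $T_p(r)=\cone+\ct\,\rho[p(r)]$ bounded selfadjoint on $H$ (as $p(r)\in Z(A)^\times\subset M(\bA)$). A bounded selfadjoint perturbation of a selfadjoint operator is selfadjoint with the same domain, and the Pauli-matrix structure makes $\DD_r$ selfadjoint on $\Dom(D)\otimes\gC^2$. Measurability of $r\mapsto\DD_r$ follows from measurability of $r\mapsto T_p(r)$ (again from the continuity in the proof of Lemma~\ref{lem-densedomaintheta}) and the constancy of the $D\otimes\gamma^1$ part. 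Hence the decomposable operator $\int^\oplus\DD_r$ is selfadjoint on its natural domain, which is exactly $\caY$.

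\textbf{Step 3: Density of the domains.} To have genuine (densely defined) selfadjoint operators I must know $\caY$ and $\hY_p\otimes\gC^2$ are dense in $\caH$, equivalently $\hU^*\caY$ and $\hU^*(\hY_p\otimes\gC^2)$ via the last identifications. Density of $\hY_p$ (hence of $\hY_p\otimes\gC^2$) follows from Lemma~\ref{lem-densedomaintheta}, since $C_c(G,H)\subset\hY_p$ and is dense. Density of $\caY$ follows from Hypothesis~\ref{preservdom}\,\ref{preservdom}: the space $\hY$ is dense in $\hH$ and $\hY\otimes\gC^2\subset\caY$ (for $\psi\in\hY\otimes\gC^2$, $\DD_r\psi(r)$ is controlled by $\|(D\pm i\ct\rho[p(r)])\psi(r)\|$ up to the bounded constant term $\cone$ and the $2\times2$ Pauli structure), so $\caY$ is dense.

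\textbf{Main obstacle.} The routine parts (the bounded-perturbation argument for each fiber, the Pauli-matrix selfadjointness) are standard; the one point demanding care is the \emph{measurability/decomposability setup}: one must verify that $r\mapsto\DD_r$ and $r\mapsto\rho[p(r)]$ are measurable fields of (unbounded, resp. bounded) selfadjoint operators in the precise sense required for the direct-integral theorem, and that the ``natural domain'' of the direct integral coincides on the nose with $\caY$ (resp. $\hY_p\otimes\gC^2$) as \emph{defined} in the text rather than merely up to closure. I expect this to be handled by the continuity statements already proven in Lemma~\ref{lem-densedomaintheta} together with the explicit $L^2$-conditions built into the definitions of $\hY$, $\hY_p$ and $\caY$; the only genuinely delicate verification is that the conformal/cocycle hypothesis \eqref{eq-comp-D-alpha} is not actually needed here—$\DD_r$ is selfadjoint on $\Dom(D)\otimes\gC^2$ independently of \eqref{eq-comp-D-alpha}, the latter only enters when relating $\DD$ back to the original $D$ and to the twist.
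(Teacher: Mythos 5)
Your proposal is correct, and at its mathematical core it coincides with the paper's argument: both proofs rest on the fiberwise fact that $\DD_r = D\otimes\gamma^1 + T_p(r)\otimes\gamma^2$ is selfadjoint on $\Dom(D)\otimes\gC^2$ as a bounded selfadjoint perturbation of $D\otimes\gamma^1$ (Kato), and on the density of the domains supplied by Hypothesis~\ref{hyp-dirac}-\ref{preservdom} and Lemma~\ref{lem-densedomaintheta}. The difference is in how the fibers are glued: you cite the general theorem that a measurable field of selfadjoint operators integrates to a selfadjoint operator on the natural maximal domain, whereas the paper proves the range condition $\Ran(\DD\pm i)=\caH$ by hand, exhibiting the preimage fiberwise as $\varphi_0(r)=(\DD_r+i)^{-1}V_r\varphi(r)$ and checking it lies in $\caV\caV^*\caY=\caY$ --- which is precisely the proof of the direct-integral theorem specialized to this situation. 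Your route buys a cleaner statement at the cost of the measurability verification you rightly flag; note that the paper's explicit construction silently needs the same ingredient (measurability of $r\mapsto(\DD_r+i)^{-1}V_r\varphi(r)$, which follows from the strong continuity of $r\mapsto\rho[p(r)]$ and $r\mapsto U_r$ established in Lemma~\ref{lem-densedomaintheta}), so neither proof escapes it. Your closing observation that \eqref{eq-comp-D-alpha} is not actually used here is also accurate: only positivity and boundedness of $\rho[p(r)]$ and the definition of $\caY$ enter.
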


\begin{proof}
We first show that $\DD$ is well defined on $\caV^*\caY$:  
$\norm{\DD \,\caV^*\psi}^2_\caH=\int_G\dd \mu(r)\,\norm{\DD_r\, \psi(r)}^2_{H\otimes \gC^2}$ 
is finite for $\psi \in \caY$ by hypothesis since $\psi(r)=V^*_r(\hxi(r)\otimes v)$ with $v\in \gC^2$ and $\hxi(r) \in \Dom(D)$ for any $r\in G$, so $\DD_r \psi(r)=DU^*_r\,\hxi(r)\otimes \gamma^1\,v+T_p(r)U_r^*\,\hxi(r)\otimes \gamma^2\, v$ is well defined by Hypothesis \ref{hyp-dirac}-\ref{hyp-comp-D-alpha}.
Since $T_p(r)$ is a bounded selfadjoint operator on $H$, $T_p(r)\otimes \gamma^2$ is a bounded selfadjoint perturbation of the selfadjoint unbounded operator $D\otimes \gamma^1$ on $\Dom(D)\otimes \gC^2$, and $\DD_r$ is a selfadjoint on $\Dom(\DD_r)\vc \Dom(D) \otimes \gC^2$ \cite[Chap. V, Sect 4, Theorem 4.3]{Kato}. Moreover, one checks directly that $\DD$ is symmetric on $\caV^*\caY$, which is dense by Hypothesis~\ref{hyp-dirac}-\ref{preservdom}.

To obtain selfadjointness of $\DD$, we show that $\Ran(\DD\pm i)=\caH$: \\
Since $\DD_r$ is selfadjoint for any $r \in G$, $\Ran(\DD_r+i) = H\otimes \gC^2$. When $\varphi \in  \caH$, define $\varphi_0(r) \vc (\DD_r+i)^{-1} V_r \,\varphi(r)$ for any $r \in G$. Then 
\begin{align*}
\norm{\varphi_0(r)}_{H\otimes \gC^2} \leq \norm{(\DD_r+i)^{-1}}_{\caB(H\otimes \gC^2)} \norm{\varphi(r)}_{H\otimes \gC^2} \leq \norm{\varphi(r)}_{H\otimes \gC^2},
\end{align*}
so that $\varphi_0 \in \caH$, and 
$\norm{\DD_r \,\varphi_0(r)}_{H\otimes \gC^2} \leq \norm{\DD_r (\DD_r+i)^{-1}}_{\caB(H\otimes \gC^2)} \norm{\varphi(r)}_{H\otimes \gC^2} \leq \norm{\varphi(r)}_{H\otimes \gC^2}$. Thus $\varphi_0(r) \in \Dom(\DD_r)$ and $r \mapsto \norm{\DD_r \varphi_0(r)}_{H\otimes \gC^2}$ is in $L^2(G)$. Then $\psi \vc \caV^* \varphi_0$ belongs to $\caV^*\caY$ and $(\DD+i)\psi = \varphi$ by construction, yielding $(\DD+i)\caV^*\caY = \caH$. Similarly, $(\DD-i)\caV^*\caY = \caH$, showing $\DD$ is selfadjoint on $\caV^*\caY$.

Let us now consider $\Theta$. It is sufficient to prove that $\htheta$ is selfadjoint on its domain $\Dom(\htheta) \vc \hU^*\,\hY_p$ and positive. From Lemma~\ref{lem-densedomaintheta}, $\Dom(\htheta)$ is dense in $\hH$. For $r \in G$, the operator $\xi \mapsto \theta_r \,\xi \vc \rho (\alpha_{r^{-1}}[p(r)])\, \xi$ is positive and bounded on $H$, so that $\Ran(\theta_r \pm i) = H$. Similar arguments already used for $\DD$ shows that $\htheta$ is selfadjoint and positive on $\hU^*\,\hY_p$. 
\end{proof}

While the hypothesis \eqref{eq-hyp-comm-bounded} is sufficient to get a bounded twisted commutator, we cannot expect it is also sufficient to get \eqref{modularcompactresol} and we need more information like for instance the convergence of the integral in Proposition \ref{prop-compactness} for $c=1<s$.

\begin{lemma}
\label{Xborne}
Despite the fact that $\Theta$ can be an unbounded operator,
\begin{align}
\label{borne}
\Theta^c(\bbbone+\DD^2)^{-c/2} \text{ is bounded for any }c \geq 0.
\end{align}
In particular, $\Theta^c(\bbbone+\DD^2)^{-1/2}$ is bounded when $0\leq c \leq1$.
\end{lemma}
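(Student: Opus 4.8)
The plan is to reduce \eqref{borne} to a fibrewise estimate over $G$, and then the latter to a single operator inequality on $H\otimes\gC^2$ that is uniform in the group variable.

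\emph{Fibre reduction.} By the formulas recalled just above, conjugation by the unitary $\caV\colon r\mapsto V_r$ makes $\DD$ and $\Theta$ decomposable over $G$: $(\DD\,\xi)(r)=V_r^*\,\DD_r\,V_r\,\xi(r)$ with $\DD_r=D\otimes\gamma^1+T_p(r)\otimes\gamma^2$, and $(\Theta\,\psi)(r)=V_r^*\,(\rho[p(r)]\otimes\bbbone_2)\,V_r\,\psi(r)$, where $\rho[p(r)]$ is, for each fixed $r$, a positive invertible bounded operator on $H$ (since $p(r)\in Z(A)^\times$). As the bounded functional calculus commutes with conjugation by $V_r$ and with direct integrals, $\Theta^c(\bbbone+\DD^2)^{-c/2}$ is decomposable with fibre $(\rho[p(r)]^c\otimes\bbbone_2)(\bbbone+\DD_r^2)^{-c/2}$; in particular $\Ran(\bbbone+\DD^2)^{-c/2}\subset\Dom(\Theta^c)$ automatically, and \eqref{borne} is equivalent to $\sup_{r\in G}\bigl\|(\rho[p(r)]^c\otimes\bbbone_2)(\bbbone+\DD_r^2)^{-c/2}\bigr\|<\infty$, which I would prove first for $c=1$.

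\emph{The case $c=1$.} This is the uniform inequality $\rho[p(r)]^2\otimes\bbbone_2\le M^2(\bbbone+\DD_r^2)$. Writing $\DD_r$ in off-diagonal $2\times2$ form with $X_r\vc D+iT_p(r)$, so that $\DD_r^2=\mathrm{diag}(X_r^*X_r,\,X_rX_r^*)$, and using $T_p(r)=\cone+\ct\rho[p(r)]$ with $\ct\ne0$, it suffices to bound $\|T_p(r)(\bbbone+X_r^*X_r)^{-1/2}\|$ and $\|T_p(r)(\bbbone+X_rX_r^*)^{-1/2}\|$ uniformly in $r$. For the first, set $\xi\vc(\bbbone+X_r^*X_r)^{-1/2}\eta$ (so $\|\xi\|\le\|\eta\|$ and $\|X_r\xi\|\le\|\eta\|$); from $\|X_r\xi\|^2=\|D\xi\|^2+\|T_p(r)\xi\|^2+\langle i[D,T_p(r)]\xi,\xi\rangle$ one obtains $\|T_p(r)\xi\|^2+\|D\xi\|^2\le\|\eta\|^2+\bigl|\langle i[D,T_p(r)]\xi,\xi\rangle\bigr|$. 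The key is to estimate the commutator term without a factor $\|T_p(r)\|$: with $K_r\vc[D,z(r)]\,z(r)^{-1}$ one computes $[D,\rho[p(r)]]=K_r\rho[p(r)]-\rho[p(r)]K_r^*$, whence
\begin{equation*}
\bigl|\langle i[D,T_p(r)]\xi,\xi\rangle\bigr|=2|\ct|\,\bigl|\mathrm{Im}\langle K_r\rho[p(r)]\xi,\xi\rangle\bigr|\le 2\|K_r\|\bigl(\|T_p(r)\xi\|+|\cone|\,\|\xi\|\bigr)\|\xi\|\le\tfrac12\|T_p(r)\xi\|^2+C\,\|\xi\|^2,
\end{equation*}
with $C$ depending only on $\|K_r\|$ and $\cone$. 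Provided $C_1\vc\sup_{r}\|K_r\|<\infty$ — which I would deduce from Hypothesis~\ref{hyp-dirac}-\ref{hyp-comm-bounded} applied to an approximate unit of $\algA$ together with the spectral-triple bound on $[D,\rho(f(r))]$ — this gives $\tfrac12\|T_p(r)\xi\|^2\le(1+C)\|\eta\|^2$, hence the desired uniform bound; the estimate with $X_rX_r^*$ is identical. Thus $\rho[p(r)]^2\otimes\bbbone_2\le M^2(\bbbone+\DD_r^2)$ uniformly, i.e.\ $\|\Theta(\bbbone+\DD^2)^{-1/2}\|\le M$.

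\emph{General $c$.} For $0\le c\le1$, operator monotonicity of $t\mapsto t^c$ (L\"owner--Heinz) upgrades the previous inequality to $\rho[p(r)]^{2c}\otimes\bbbone_2\le M^{2c}(\bbbone+\DD_r^2)^c$, giving the fibre bound $M^c$; the last assertion of the Lemma then follows by writing $(\bbbone+\DD^2)^{-1/2}=(\bbbone+\DD^2)^{-c/2}(\bbbone+\DD^2)^{-(1-c)/2}$, the second factor being bounded. For $c>1$ operator monotonicity fails, so I would induct on $\lceil c\rceil$: write $c=1+c'$, insert $(\bbbone+\DD_r^2)^{1/2}(\bbbone+\DD_r^2)^{-1/2}$ after $\rho[p(r)]^c\otimes\bbbone_2$, peel off one factor $\Theta(\bbbone+\DD^2)^{-1/2}$, and commute one power of $\rho[p(r)]$ through $(\bbbone+\DD_r^2)^{1/2}$ at the cost of a commutator again controlled uniformly by $C_1$, reducing to the exponent $c'$. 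The one genuinely delicate step throughout is the uniform bound $\sup_r\|K_r\|<\infty$ (equivalently, a uniform form bound for $i[D,T_p(r)]$ relative to $\bbbone+D^2+T_p(r)^2$); granting that, the fibre decomposition and the passage to arbitrary exponents are soft.
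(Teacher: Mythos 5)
Your fibre reduction and the $c=1$ computation are sound as far as they go, but the step you yourself single out --- the uniform bound $C_1=\sup_{r\in G}\norm{K_r}<\infty$ for $K_r=[D,z(r)]\,z(r)^{-1}$ --- is a genuine gap, and it cannot be closed from the stated hypotheses. Expanding the commutator in \eqref{eq-hyp-comm-bounded} gives $[D,\rho(f(r))z(r)]z(r)^{-1}=[D,\rho(f(r))]+\rho(f(r))\,K_r$, so Hypothesis~\ref{hyp-dirac}-\ref{hyp-comm-bounded} only controls $\rho(f(r))K_r$ for $r$ in the compact support $S_f$, with a constant depending on $f$. Since every $f\in\algA\subset C_c(G,A)$ is compactly supported in $G$, no single $f$, and no approximate unit $(f_\lambda)$ either (the norms $\norm{[D,\rho(f_\lambda(r))]}$ and the constants $M_{f_\lambda,z}$ are finite for each $\lambda$ but not uniformly so), can produce a bound on $\norm{K_r}$ valid on all of $G$. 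Worse, the bound is false precisely in the situations the construction is meant to cover: in Section~\ref{conformal} one has $K_r=\pm i\gamma^\mu\partial_\mu(h_{\phi_r})$, so $\sup_r\norm{K_r}=\sup_r\norm{\dd h_{\phi_r}}_\infty$, which is infinite for an essential (non-equicontinuous) conformal group --- exactly the case the introduction insists on not excluding. So your argument proves the lemma only under an extra global hypothesis that rules out the main intended examples. A second, independent problem is the induction for $c>1$: commuting $\rho[p(r)]$ through $(\bbbone+\DD_r^2)^{1/2}$ involves the commutator with the square root of an unbounded operator, which is not controlled by $K_r$ without an integral-representation argument.

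The paper's proof is of a completely different and much softer nature: it never decomposes over $G$ and never touches a commutator. Setting $A=\Theta^c(\bbbone+\DD^2)^{-c/2}$, it bounds
\begin{equation*}
\abs{A^*}^2=\Theta^c(\bbbone+\DD^2)^{-c}\Theta^c\leq\Theta^c(\ct^2\Theta^2)^{-c}\Theta^c=\ct^{-2c}\,\bbbone,
\end{equation*}
so the only input is the operator inequality $\bbbone+\DD^2\geq\ct^2\Theta^2$ (itself the crux, and worth a word of justification when $\cone\ct<0$ and $\htheta$ is unbounded, or when $\hD$ and $\caT_{\cone,\ct}$ do not commute) together with the commutation of $\Theta^c$ with $\Theta^{-2c}$; the last assertion for $0\leq c\leq1$ then follows by the same factorization $(\bbbone+\DD^2)^{-1/2}=(\bbbone+\DD^2)^{-c/2}(\bbbone+\DD^2)^{-(1-c)/2}$ you use. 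The moral is that the lemma is designed to exploit only the presence of $\ct\,\htheta$ inside $\caT_{\cone,\ct}$, hence inside $\DD$, and no compatibility whatsoever between $D$ and $p(r)$. If you want to keep your fibrewise route, the statement to aim for is a pointwise form inequality of the type $\ct^2\rho[p(r)]^2\otimes\bbbone_2\leq C\,(\bbbone+\DD_r^2)$ with $C$ independent of $r$, extracted from the algebraic structure of $\DD_r=D\otimes\gamma^1+T_p(r)\otimes\gamma^2$ alone, without ever invoking $[D,\rho[p(r)]]$.
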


\begin{proof}
If $A=\Theta^c(\bbbone+\DD^2)^{-c/2}$, then $A$ is bounded since $\vert A^*\vert$ is bounded: 
\begin{equation*}
\vert A^* \vert^2=\Theta^c(\bbbone+\DD^2)^{-c}\Theta^c \leq \Theta^c(\ct^2 \,\Theta^2)^{-c} \Theta^c = \ct^{-2c}\,\bbbone.
\end{equation*}
Thus $\Theta^c(\bbbone+\DD^2)^{-1/2}=\Theta^c\,(\bbbone+\DD^2)^{-c/2}\,(\bbbone+\DD^2)^{-(1-c)/2}$ is bounded when $0\leq c \leq1$.
\end{proof}

\begin{remark}\label{rmk-udu-d-bounded}
Our construction differs from \cite{BMR, HSWZ, Pat}: for $r\in G$, the difference $U_r\,D\,U_r^* - D$ does not extend a priori to a bounded operator. If for instance $G$ is discrete and $z(r)=\vartheta(r)\,\bbbone$, with $\vartheta:G\to \gC^\times$ a group homomorphism (see Section \ref{sec-example-discrete-affine-group} for an example), this difference is equal to $(\abs{\vartheta(r)}^{-2}-1)\,D$ which is unbounded for each $r$ not in the kernel of $\vartheta$. So we are not dealing with an unbounded equivariant Kasparov module defining a class in $KK_1^G(\bA,\gC)$. 
\end{remark}

\subsection{About the spectral dimension}
\label{bifurcation C-M}

The change of Hilbert space $H \to \hH$ is quite important in this construction. We have supposed in Hypothesis~\ref{hyp-dirac} that $(\rho, U, H)$ is a covariant representation of $(\bA, G, \alpha)$, so that it is possible to consider the integrated representation $U \ltimes \rho$ of $G \ltimes_{\alpha, \red} \bA$ on $H$ defined by
\begin{equation*}
(U \ltimes \rho)(f) \,\xi \vc \int_G \dd\mu_G(r)\, \rho[f(r)]\, U_r \,\xi \quad \text{for any $f \in L^1(G, A)$ and $\xi \in H$.}
\end{equation*}
Then the original operator $D$ acts on $H$ and we have 
\begin{align*}
D (U \ltimes \rho)(f)\, \xi &= \int_G \dd\mu_G(r)\, D \rho[f(r)] \,U_r\,\xi ,\\
(U \ltimes \rho)\beta(f) D \,\xi &= \int_G \dd\mu_G(r)\, \rho[p(r) f(r)] U_r D\, \xi =  \int_G \dd\mu(r)\,  \rho[f(r)\, p(r)] U_r D U_r^* \, U_r\, \xi \\
		      &= \int_G \dd\mu_G(r)\,  \rho[f(r)] z(r) z(r)^*\,  {z(r)^*}^{-1} D z(r)^{-1} \, U_r \,\xi\\
		      & = \int_G \dd\mu_G(r)\,  \rho[f(r)] z(r)  D z(r)^{-1} \, U_r \,\xi
\end{align*}
so that
\begin{equation*}
\big[D, (U \ltimes \rho)(f) \big]_\beta \,\xi = \int_G \dd\mu_G(r)\,  \big[D, \rho[f(r)] z(r) \big] z(r)^{-1} \, U_r \,\xi.
\end{equation*}
Using the inequality \eqref{eq-hyp-comm-bounded}, the operator $[D, (U \ltimes \rho)(f)]_\beta$ is bounded as shown in the proof of Lemma \ref{prop-hD}. Thus $(\algA, H, D, \beta)$ defines a $\beta$-twisted spectral triple once we know that \eqref{compactresol} holds true, namely that $(U \ltimes \rho)(f)(\bbbone+D^2)^{-1/2}$ is compact:
\begin{align*}
(U \ltimes \rho)(f)(\bbbone+D^2)^{-1/2} & =\int_G \dd\mu_G(r)\, \rho[f(r)]\, U_r(\bbbone+D^2)^{-1/2} \\
&= \int_G \dd\mu_G(r)\, U^*_r\,\rho[\beta(f)(r)]\,(\bbbone+D^2)^{-1/2}.
\end{align*}
Since \eqref{compactresol} is satisfied for $(A,H,D)$, the integrand is in $\caK(H)$ (compact operators on $H$) for any $f \in L^1(G,A)$ and $r\in G$. Since this integral coincides with the Bochner integral because $\caK(H)$ is separable \cite{Lang93a}, it is compact.\\
Of course, if the crossed product is the discrete one as used in \cite{ConnesMoscovici2006}, then the integral is replaced by a finite series and \eqref{compactresol} is satisfied!

However, one will see on the affine group example, see Remark \ref{affine bifurcation C-M}, that the spectral dimension of this last  twisted spectral triple remains unchanged, while we precisely want to measure the possible influence of the action of $G$ on the original spectral dimension of $(A,H,D)$. Recall that for a finitely summable triple $(\mathcal{A},\caH,\DD)$, the spectral dimension is defined by (see \cite{CGRS})
\begin{align}
\label{dimension}
p\vc \inf \{s>0  \ \mid \ \forall a\in \mathcal{A}^+,\,\Tr  \pi(a)(\bbbone+\DD^2)^{-s/2} <\infty \}.
\end{align}
This is mainly why we prefer to change the Hilbert space representation from $H$ to $\hH$. This in fact creates room enough to implement the twist $\beta$ via an operator $\htheta$ on $\hH$ which is crucially involved in the definition of $\DD$, see \eqref{eq-DiracG}.

Conversely, if the original triple is finitely summable, our modular-type twisted spectral triple possibly loses this property. This is for instance the case of the affine group where $\pi(a)\,(\bbbone+\DD)^{-s/2}$ is never trace-class for any $s>0$ \cite[Proposition 27]{Mata14a}. Thus, we are driven to follow a different path and we investigate the compactness of the operator $\Theta\, \pi(a)\,(\bbbone+\DD)^{-s/2}$ as a function of $s\in \gR$, see Proposition \ref{prop-compactness}. 
Moreover, for the affine group considered in Section \ref{sec-example-affine-group}, we prove in Remark \ref{Dixmiertraceable} that $\Theta\, \pi(a)\,(\bbbone+\DD)^{-s/2} \,\pi(b)$ is in the Dixmier-class for $s=2$ for any $a,b \in \algA$.
 This is why we asked in a modular-type spectral triple for the replacement of \eqref{compactresol} by \eqref{modularcompactresol}.
 
\bigskip
We first want to know when $\Theta\, \pi(f) (1 + \DD^2)^{-s/2}$ is a compact operator and begin with a simple lemma on the compactness of operators which are compact-operator-valued and whose kernel satisfies a Hilbert--Schmidt type condition.

For a separable Hilbert space $H$, let $\caK(H)$ be the separable $C^*$-algebra of compact operators on $H$ and $L^2(G \times G, \dd\mu_G \times \dd\mu_G) \otimes \caK(H)$ be the completion of the algebraic tensor product $L^2(G \times G, \dd\mu_G \times \dd\mu_G) \odot \caK(H)$ for the Hilbert--Schmidt norm
\begin{equation*}
\norm*{K}_\HS^2 \vc \int_{G \times G} \dd\mu_G(r) \,\dd\mu_G(r') \, \norm*{K(r,r')}_{\caB(H)}^2
\end{equation*}
for any $K \in L^2(G \times G, \dd\mu_G \times \dd\mu_G) \odot \caK(H)$.

\begin{lemma}
\label{lem-HilbertSchmidt}
When $K \in L^2(G \times G, \dd\mu_G \times \dd\mu_G) \otimes \caK(H)$ (i.e. $\norm*{K}_\HS<\infty$), the associated operator $X_K$ defined on $\hH \vc L^2(G, \dd\mu_G) \otimes H$ by $(X_K \,\hxi\,)(r) \vc \int_G \dd\mu_G(r') \, K(r,r') \,\hxi(r')$ is compact.
\end{lemma}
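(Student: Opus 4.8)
The plan is to reduce to the classical fact that a Hilbert--Schmidt integral kernel defines a Hilbert--Schmidt (hence compact) operator, exploiting that $\caK(H)$ is a separable $C^*$-algebra so that the completion $L^2(G\times G)\otimes\caK(H)$ is well behaved and approximation by elementary tensors is available. First I would treat the case of an elementary tensor $K = g\otimes T$ with $g\in L^2(G\times G,\dd\mu_G\times\dd\mu_G)$ and $T\in\caK(H)$: then $X_K = X_g\otimes T$ where $X_g$ is the classical integral operator on $L^2(G)$ with kernel $g$, which is Hilbert--Schmidt (and in particular compact) because $g\in L^2(G\times G)$. The tensor product of a compact operator on $L^2(G)$ with a compact operator on $H$ is compact on $\hH = L^2(G)\otimes H$, so $X_K\in\caK(\hH)$ for elementary tensors, and by linearity for any $K$ in the algebraic tensor product $L^2(G\times G)\odot\caK(H)$.

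Next I would establish the norm estimate $\norm{X_K}_{\caB(\hH)} \le \norm{K}_\HS$. This is a straightforward application of the Cauchy--Schwarz inequality: for $\hxi\in\hH$,
\begin{align*}
\norm{(X_K\hxi)(r)}_H^2 &\le \Big(\int_G \dd\mu_G(r')\,\norm{K(r,r')}_{\caB(H)}\,\norm{\hxi(r')}_H\Big)^2\\
&\le \Big(\int_G \dd\mu_G(r')\,\norm{K(r,r')}_{\caB(H)}^2\Big)\,\norm{\hxi}_\hH^2,
\end{align*}
and integrating over $r$ gives $\norm{X_K\hxi}_\hH^2 \le \norm{K}_\HS^2\,\norm{\hxi}_\hH^2$. (One should remark that $r'\mapsto K(r,r')\hxi(r')$ is measurable and the integral defining $(X_K\hxi)(r)$ is an absolutely convergent Bochner integral in $H$ for $\dd\mu_G$-a.e.\ $r$, again by Cauchy--Schwarz together with Fubini applied to $\norm{K(r,r')}_{\caB(H)}^2\in L^1(G\times G)$; this also shows $X_K\hxi\in\hH$.) Hence $K\mapsto X_K$ is a bounded linear map from the algebraic tensor product, equipped with the $\HS$-norm, into $\caB(\hH)$, and it sends that algebraic tensor product into $\caK(\hH)$.

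Finally, given a general $K\in L^2(G\times G,\dd\mu_G\times\dd\mu_G)\otimes\caK(H)$, i.e.\ $\norm{K}_\HS<\infty$, pick a sequence $K_n$ in the algebraic tensor product $L^2(G\times G)\odot\caK(H)$ with $\norm{K-K_n}_\HS\to 0$ (possible by definition of the completion). Then $X_{K_n}\to X_K$ in operator norm by the estimate just proved, each $X_{K_n}$ is compact, and $\caK(\hH)$ is norm-closed, so $X_K\in\caK(\hH)$. The only mild technical point — and the one I would be most careful about — is the measurability and Bochner-integrability of $r'\mapsto K(r,r')\hxi(r')$ for a.e.\ $r$, needed to make sense of $X_K$ on all of $\hH$ (not just on the algebraic tensor product where it is manifest); this is handled by the Fubini argument indicated above using $\norm{K(\cdot,\cdot)}_{\caB(H)}\in L^2(G\times G)$, together with separability of $H$ to guarantee strong measurability. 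No other real obstacle is expected; the result is essentially the operator-valued version of the scalar Hilbert--Schmidt theorem.
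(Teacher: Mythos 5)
Your proposal is correct and follows essentially the same route as the paper: establish the bound $\norm{X_K}_{\caB(\hH)}\le\norm{K}_\HS$ by Cauchy--Schwarz, verify compactness for elements of the algebraic tensor product (the paper does this via an orthonormal basis of $L^2(G)$, yielding finite sums of rank-one operators along $L^2(G)$ tensored with compacts on $H$, which is the same observation as your $X_g\otimes T$ argument), and conclude by density and norm-closedness of $\caK(\hH)$. Your extra care about Bochner measurability corresponds to the paper's one-line remark that separability of $\caK(H)$ makes the integrals strong integrals, so nothing is missing.
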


\begin{proof}
This is an adaptation of the usual proof that any Hilbert--Schmidt operator is compact. The separability of $\caK(H)$ ensures that the integrals we consider are strong integrals, in the sense of Bochner \cite{Lang93a}. For any $\hxi \in \hH$, one has with Hölder inequality
\begin{align*}
\norm{X_K \,\hxi\,}_{\hH}^2 
&= \int_G \dd\mu_G(r) \, \norm{(X_K \,\hxi\,)(r)}_{H}^2  \\
& \leq \int_G \dd\mu_G(r) \left( \int_G \dd\mu_G(r') \, \norm*{K(r,r')}_{\caB(H)} \norm{\hxi(r')}_{H} \right)^2
= \norm*{K}_\HS^2 \, \norm{\hxi\,}_{\hH}^2
\end{align*}
so $\norm*{X_K}_{\caB(\hH)} \leq \norm*{K}_\HS < \infty$.

Let $\{ \phi_n \}_{n \in \gN}$ be an orthonormal basis of the separable Hilbert space $L^2(G, \dd\mu_G)$. Then the $\phi_m \otimes \phi_n$'s, for $m,n \in \gN$, produce an orthonormal basis for $L^2(G \times G, \dd\mu_G \times \dd\mu_G)$, and by definition, the kernel $K \in L^2(G \times G, \dd\mu_G \times \dd\mu_G) \otimes \caK(H)$ is limit of elements of the form
\begin{equation*}
K_N = \sum_{m,n \leq N} \phi_m \otimes \phi_n \otimes k_{m,n} \quad \text{ with } k_{m,n}  \in \caK(H).
\end{equation*}
The operator $X_N$ on $\hH$ defined by the kernel $K_N$ belongs to $\caK(L^2(G, \dd\mu_G)) \otimes \caK(H) \subset \caK(\hH\,)$ because it is a finite sum of rank one operators along $L^2(G, \dd\mu_G)$ and compact operators along $H$. The inequality $\norm*{X - X_N}_{\caB(\hH)} \leq \norm*{K - K_N}_\HS$ implies that $X \in \caK(\hH)$.
\end{proof}

\begin{proposition}
\label{prop-compactness}
Suppose that $[\hD, \caT_{\cone,\,\ct}]$ extends to a bounded operator on $\hH$, and let $c\geq 0$ and $s\geq 1$.

(1) The operator $\Theta^c\, \pi(f) (\bbbone + \DD^2)^{-s/2}$ on $\caH$ is compact for any $f \in \algA$ when
\begin{equation}
\label{eq-compact}
\int_G \dd\mu_G(r)\, \Delta_G(r)^{-1} \norm*{\rho[p(r)]}_{\caB(H)}^{2c }\, \norm{[\bbbone + T_p(r)^2]^{-1}}_{\caB(H)}^s < \infty.
\end{equation}

(2) When the original triple $(A,H,D)$ is unital, the operator $\Theta^c\, (\bbbone + \DD^2)^{-s/2}$ is compact when \eqref{eq-compact} holds true even if $\algA$ is not unital.
\end{proposition}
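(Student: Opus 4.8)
\textbf{Strategy.} The plan is to write $\Theta^c\,\pi(f)\,(\bbbone+\DD^2)^{-s/2}$ as an integral operator on $\caH$ with an operator-valued kernel, and then apply Lemma~\ref{lem-HilbertSchmidt} to deduce compactness from a Hilbert--Schmidt estimate on the kernel. The hypothesis that $[\hD,\caT_{\cone,\,\ct}]$ is bounded is what allows us to control $(\bbbone+\DD^2)^{-s/2}$ in terms of the ``diagonal'' operators $\DD_r$: indeed $\DD^2 = \caV^*(D^2\otimes\bbbone_2 + T_p(r)^2\otimes\bbbone_2 + [D,T_p(r)]\otimes(\gamma^1\gamma^2))\,\caV$ pointwise, and the cross term is fiberwise bounded by our assumption, so $(\bbbone+\DD^2)^{-s/2}$ is comparable (in the sense of bounded perturbation of resolvents, using $s\geq 1$ to pass to fractional powers) to the operator with fibers $(\bbbone + \DD_r^2)^{-s/2} = (\bbbone + D^2 + T_p(r)^2)^{-s/2}\otimes\bbbone_2$. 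The decisive point is that, after conjugating by $\caV$, the factor $\pi(f)$ acting between $\caV$-transformed vectors reinstates the convolution kernel $\rho[\alpha_{r^{-1}}(f(r'r^{-1}))]$ from \eqref{eq-defrepresentationhrho}, which has compact support in the first variable and is $\caK(H)$-valued (by \eqref{compactresol} applied to the original triple, since $f(r')\in A$), while $\Theta^c$ contributes the scalar weight $\norm{\rho[p(r)]}^c$ in the kernel norm.

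\textbf{Steps.} First I would conjugate everything by the unitary $\caV$ and compute, using \eqref{eq-DiracG} and the fibered forms of $\hD$, $\caT_{\cone,\,\ct}$, and $\pi(f)$, an explicit expression for the kernel $K(r,r')$ of $\caV\,\Theta^c\,\pi(f)\,(\bbbone+\DD^2)^{-s/2}\,\caV^*$ on $L^2(G)\otimes H\otimes\gC^2$; modulo the bounded perturbation coming from $[\hD,\caT_{\cone,\,\ct}]$ this kernel is
$$
K(r,r') = \Delta_G(r')^{-1/2}\,\rho[p(r)]^c\;\rho\big(\alpha_{r^{-1}}[f(r r'^{-1})]\big)\;\big(\bbbone + D^2 + T_p(r')^2\big)^{-s/2}\otimes\bbbone_2
$$
(up to relabelling of the Haar-measure Jacobian, which I would fix carefully). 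Second, I would estimate $\norm{K(r,r')}_{\caB(H\otimes\gC^2)}$: the support of $r r'^{-1}$ in $S_f$ forces $r'\in S_f^{-1}r$, so the $r'$-integral is over a set of finite, $r$-independent Haar measure; and on that set $\norm{\rho(\alpha_{r^{-1}}[f(\cdot)])}$ is bounded by $\norm f_\infty$ while $\norm{(\bbbone+D^2+T_p(r')^2)^{-s/2}}\leq\norm{(\bbbone+T_p(r')^2)^{-1}}^{s/2}$ (using $D^2\geq 0$ and $s\geq 1$). Third, I would carry out the double integral defining $\norm K_\HS^2$: the $r'$-integral over the translated compact set, combined with the modular-function factor, produces exactly $\int_G\dd\mu_G(r)\,\Delta_G(r)^{-1}\,\norm{\rho[p(r)]}^{2c}\,\norm{(\bbbone+T_p(r)^2)^{-1}}^s$ after the change of variable $r\mapsto$ (support variable), so the assumption \eqref{eq-compact} gives $\norm K_\HS<\infty$. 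I would also need to justify that $K$ lies in $L^2(G\times G)\otimes\caK(H\otimes\gC^2)$ and not merely has finite $\HS$-norm: this follows since each fiber $K(r,r')$ is a product of a compact operator (from \eqref{compactresol}) with a bounded one, hence compact, and the $\HS$-completion of the $\caK$-valued functions is closed. Then Lemma~\ref{lem-HilbertSchmidt} finishes part~(1). For part~(2), when $(A,H,D)$ is unital I would repeat the argument with $f$ replaced by the constant... no — rather, with $\pi(f)$ replaced by $\bbbone$; then $\rho[\bbbone]=\bbbone_H$ is compact-valued only after using unitality of $A$ (so that $(\bbbone+D^2)^{-1/2}$ itself is compact by \eqref{compactresol} with $a=\bbbone$), and the $r'$-integral is no longer localized, which is why the factor $\Delta_G(r)^{-1}$ must now absorb the full $r'$-integration; I would check that the bookkeeping of Haar and modular functions still yields precisely \eqref{eq-compact}, with the non-unitality of $\algA$ being irrelevant because no element of $\algA$ enters.

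\textbf{Main obstacle.} The delicate step is the reduction of $(\bbbone+\DD^2)^{-s/2}$ to the fibered operator with kernel built from $(\bbbone+D^2+T_p(r)^2)^{-s/2}$: one must show that replacing $\DD^2$ by its ``diagonal part'' $\caV^*\big((D^2+T_p(r)^2)\otimes\bbbone_2\big)\caV$ changes the operator only by something whose product with $\Theta^c\pi(f)$ remains compact. Since the discrepancy $[\hD,\caT_{\cone,\,\ct}]\otimes(\gamma^1\gamma^2)$ is assumed bounded on $\hH$, one can write $(\bbbone+\DD^2)^{-1} - \caV^*(\bbbone+D^2+T_p^2)^{-1}\caV\otimes\bbbone_2$ via the resolvent identity as a product involving this bounded discrepancy sandwiched between two resolvents, and then pass to the power $s/2$ using the integral representation $X^{-s/2}=c_s\int_0^\infty \lambda^{-s/2}(\lambda+X)^{-1}\,\dd\lambda$ for $s\in(0,2)$ together with an interpolation/iteration for larger $s$; the point to watch is uniformity of constants in $r$, which holds because the bound on $[D,T_p(r)]$ from \eqref{eq-hyp-comm-bounded} is uniform on $S_f$ and $T_p(r)\geq \cone$ is bounded below away from the relevant singularities. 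Handling this resolvent comparison cleanly, while keeping all operator bounds $r$-uniform, is the technical heart of the argument; everything else is Fubini and a change of variables.
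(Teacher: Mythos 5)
Your proposal follows essentially the same route as the paper's proof: absorb the off-diagonal part $i[\caT_{\cone,\,\ct},\hD\,]\otimes\gamma^3$ of $\DD^2$ as a bounded perturbation, realize $\htheta^c\,\hrho(f)\,[\bbbone+\hD^2+\caT_{\cone,\,\ct}^2]^{-s/2}$ as an integral operator with $\caK(H)$-valued kernel, bound the kernel norm using $[\bbbone+D^2+T_p(r')^2]^{-1}\leq[\bbbone+T_p(r')^2]^{-1}$ together with the compact support of $f$, and conclude with Lemma~\ref{lem-HilbertSchmidt}; part (2) is treated in the same spirit by dropping $\pi(f)$ and using unitality of $A$. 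The one step you under-justify is the reduction of the double Hilbert--Schmidt integral to the single integral \eqref{eq-compact}: the weight $\norm{\rho[p(r)]}^{2c}$ sits at the first variable while $\norm{[\bbbone+T_p(r')^2]^{-1}}^{s}$ sits at the second, and a change of variables alone does not decouple them --- one must invoke the cocycle relation \eqref{eq-relationc(rr')} together with \eqref{eq-UaU*} to get $\norm{\rho[p(ur')]}\leq\norm{\rho[p(u)]}\,\norm{\rho[p(r')]}$ for $u\in S_f$, which transfers the $p$-factor to the second variable at the cost of the constant $\sup_{u\in S_f}\norm{\rho[p(u)]}^{2c}$, exactly as in the paper. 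With that line added, your argument is complete.
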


Before the proof we begin with the following remark: since $$\norm{[\hD, \caT_{\cone,\,\ct}]\, \hxi\,}^2_{\hH} \leq \int_G \dd\mu_G(r) \norm{[D, T_p(r)]}_{\caB(H)}^2 \; \norm{\hxi(r)}_{H}^2,$$ the boundedness of $[\hD, \caT_{\cone,\,\ct}]$ is related to the behavior in $r$ of the family of operators $[D, T_p(r)] = \ct [D, \rho(p(r))]$ on $H$. For instance, if $\norm{[D, \rho(p(r))]}_{\caB(H)}$ is uniformly bounded in $r\in G$, then $[\hD, \caT_{\cone,\,\ct}]$ is bounded.

When $\algA$ is unital, so is  $\bB=G \ltimes_{\alpha, \red} \bA$ and this happens if and only if $G$ is discrete and $\bA$ is unital \cite[II.10.3.9]{Blac06a}. In this case, $(1)$ implies $(2)$. 

\begin{proof}
(1): From the definition \eqref{eq-DiracG} of $\DD$, one has
\begin{equation}
\label{Dcarre}
\DD^2 = (\hD^2 + \caT_{\cone,\,\ct}^2) \otimes \bbbone_2 + i[\caT_{\cone,\,\ct}, \hD\,] \otimes \gamma^3,
\end{equation}
so that $\DD^2$ is a bounded perturbation of $(\hD^2 + \caT_{\cone,\,\ct}^2) \otimes \bbbone_2$ and it is sufficient to prove compactness of the operator $\htheta^c\, \hrho(f) [\bbbone + \hD^2 + \caT_{\cone,\,\ct}^2]^{-s/2}$ on $\hH$. 

Since $i[\caT_{\cone,\,\ct}, \hD\,] \,\hxi(r)=U_r^*\, i[T_p(r), D] \,U_r\,\hxi(r)$ by \eqref{Dchap} and \eqref{eq-UaU*} and $i[T_p(r), D]$ is a symmetric operator on $H$ for any $r \in G$, $i[\caT_{\cone,\,\ct}, \hD\,]$ is symmetric so selfadjoint being bounded. Thus \eqref{Dcarre} implies that the unbounded operator $(\hD^2 + \caT_{\cone,\,\ct}^2) \otimes \bbbone_2 $ is selfadjoint and positive on $\caH$ since $\DD^2$ is selfadjoint by Proposition \ref{D et Theta selfadjoints}.
For $\hxi \in \Dom(\hD^2 + \caT_{\cone,\,\ct}^2)$, we get
\begin{equation*}
( [\bbbone + \hD^2 + \caT_{\cone,\,\ct}^2] \,\hxi \,)(r) = U_r^*\, [\bbbone + D^2 +  T_p(r)^2]\, U_r \, \hxi(r).
\end{equation*}
Since $X^{-s/2} = \Gamma(s/2)^{-1} \int_0^\infty \dd t\, t^{s/2-1} e^{-t\,X} $ for any positive operator $X$ and $s > 0$, one gets, with $X = \bbbone+\hD^2 + \caT_{\cone,\,\ct}^2$,  
\begin{equation*}
( [\bbbone + \hD^2 + \caT_{\cone,\,\ct}^2]^{-s/2}\, \hxi \,)(r) = U_r^*\,[\bbbone + D^2 + T_p(r)^2 ]^{-s/2} U_r \, \hxi(r).
\end{equation*}
Given $c >0$ and $s\geq1$, one has
\begin{align*}
\MoveEqLeft[3] (\, \theta^c \,\hrho(f) [\bbbone + \hD^2 + \caT_{\cone,\,\ct}^2\,]^{-s/2}\, \hxi \,)(r)
\\
&= 
\int_G \dd\mu_G(r') \, \rho(\alpha_{r^{-1}}[p(r)^c f(r')]) \,U_{r'^{-1}r}^{*} [ \bbbone + D^2 + T_p(r'^{-1}r)^2 ]^{-s/2} \,U_{r'^{-1}r}\, \hxi(r'^{-1}r)\\
& = \int_G \dd\mu_G(r') \, K_{c,s}(r,r')\, \hxi(r')
\end{align*}
where
\begin{equation}
\label{K_s}
K_{c,s}(r,r') \vc \Delta_G(r')^{-1}\, U_{r'}^{*} \,\rho\big( \alpha_{r'r^{-1}}[ p(r)^c f(r r'^{-1})] \big) \, [\bbbone + D^2 + T_p(r')^2 ]^{-s/2} U_{r'}\,.
\end{equation}
One has $\rho(a)(\bbbone+D^2)^{-1} \in \caK(H)$ since by hypothesis \eqref{compactresol} on the spectral triple $(A,H, D)$,  $\rho(a)(\bbbone+D^2)^{-1/2} \in \caK(H)$, so 
\begin{equation*}
\rho(a)\,[\bbbone + D^2 + T_p(r')^2]^{-1} = \rho(a)(\bbbone+D^2)^{-1}\big[\bbbone-T_p(r')^2\,[\bbbone+D^2+T_p(r')^2]^{-1}\big]
\end{equation*}
is compact, as a bounded deformation of $\rho(a)(\bbbone+D^2)^{-1}$. \\Since an operator $T$ is compact if $TT^*$ is compact, we get that $\rho(a) [\bbbone+D^2+T_p(r')^2]^{-1/2}$ is compact, and so is $\rho(a) [\bbbone+D^2+T_p(r')^2]^{-s/2}$ for $s \geq 1$. This implies that $K_{c,s}(r,r') \in \caK(H)$ for any $r,r' \in G$ and any $c>0,\,s \geq 1$.

For $r \in G$, one has
\begin{equation*}
[\bbbone + D^2 + T_p(r)^2]^{-1} = [\bbbone + T_p(r)^2]^{-\onehalf} \big[ \bbbone + (\bbbone + T_p(r)^2)^{-\onehalf}  D^2 \, (\bbbone + T_p(r)^2)^{-\onehalf} \big]^{-1} (\bbbone + T_p(r)^2)^{-\onehalf}
\end{equation*}
thus $\left[\bbbone + D^2 + T_p(r)^2\right]^{-1} \leq \left[\bbbone + T_p(r)^2 \right]^{-1}$ and
\begin{equation*}
\norm{[\bbbone + D^2 + T_p(r)^2]^{-s/2} }_{\caB(H)}^2
= \norm{[\bbbone + D^2 + T_p(r)^2]^{-1}}_{\caB(H)}^s
\leq \norm{[\bbbone + T_p(r)^2 ]^{-1}}_{\caB(H)}^s.
\end{equation*}
Using $\norm*{\rho(\alpha_r[p(r')^c])}_{\caB(H)} = \norm*{\rho[p(r')]}^c_{\caB(H)}$ for $r,r' \in G$ by \eqref{eq-UaU*}, $M_f^2:=\sup_{r\in S_f}\norm{\rho[f(r)]}^2_{\caB(H)}$, one gets the estimate
\begin{equation*}
\norm*{K_{c,s}(r,r')}_{\caB(H)}^2
\leq M_f^2 \, \Delta_G(r')^{-2} \, \bbbone_{S_f}(r r'^{-1}) \, \norm{\rho[p(r)]}_{\caB(H)}^{2c} \, \norm{[\bbbone + T_p(r')^2 ]^{-1}}_{\caB(H)}^s.
\end{equation*}
Let $N_{z, S_f} \vc \sup_{r \in S_f} \norm{\rho[p(r)]}_{\caB(H)}$. One has $r r'^{-1} \in S_f$ iff $r \in S_f r'$ (right translated of $S_f$ by $r'$), and 
\begin{align*}
\sup_{r \in S_f r'} \norm{\rho[p(r)]}_{\caB(H)} 
&= \sup_{r \in S_f} \norm*{\rho[p(rr')]}_{\caB(H)} 
 \\ &\leq \sup_{r \in S_f}  \norm*{\rho[p(r)]}_{\caB(H)} \norm*{\rho[p(r')]}_{\caB(H)} 
 \quad \text{ by \eqref{eq-relationc(rr')} and \eqref{eq-UaU*}}
\\
&\leq N_{z, S_f} \norm*{\rho[p(r')]}_{\caB(H)}.
\end{align*} 
This gives
\begin{align}
&\norm*{K_{c,s}}^2_\HS \nonumber\\
& \quad\leq M_f^2 \int_{G \times G} \dd\mu_G(r) \dd\mu_G(r') \, \Delta_G(r')^{-2} \, \bbbone_{S_f}(r r'^{-1}) \, \norm{\rho[p(r)]}_{\caB(H)}^{2c} \, \norm{[\bbbone + T_p(r')^2 ]^{-1}}_{\caB(H)}^s \label{doubleintegral}
\\
&\quad \leq M_f^2  N_{z, S_f}^{2c}\, \mu_G(S_f) \int_G \dd\mu_G(r')\, \Delta_G(r')^{-1} \norm{\rho[p(r')]}_{\caB(H)}^{2c} \, \norm{[\bbbone + T_p(r')^2 ]^{-1}}_{\caB(H)}^s. \nonumber
\end{align}
Thus, if \eqref{eq-compact} holds true, the last integral converges and using Lemma~\ref{lem-HilbertSchmidt}, one concludes that $\theta^c\, \hrho(f) [\bbbone + \hD^2 +\caT_{\cone,\,\ct}^2]^{-s/2}$ is compact.

(2): When $A$ is unital, $(\bbbone+ D^2)^{-1/2}$ is compact, so one can take $a=\bbbone$ and forget about $\hrho(f)$ in the above arguments. So the kernel $K_{c,s}(r,r')$ in \eqref{K_s} is now diagonal and there is no need of the support of $f$ to take care of one of the two integrals in \eqref{doubleintegral}.
\end{proof}
Remark that the compactness of $\Theta^c\, \pi(f) (\bbbone + \DD^2)^{-1/2}$ simply means that $\Theta^c\, \pi(f)$ is relatively compact with respect to $\DD$ for any $f\in \algA$: the operator $A=\Theta^c\pi(f)(\DD-i\bbbone)^{-1}$ is compact since $AA^*=[\Theta^c\pi(f)(\bbbone+\DD^2)^{-1/2}]\,[\Theta^c\pi(f)(\bbbone+\DD^2)^{-1/2}]^*$ is compact. In particular $\pi(f^*)\Theta^c\pi(f)$ is a selfadjoint operator which is $\DD$-bounded, so there exists $a,b\geq 0$ depending on $f$, such that 
\begin{equation*}
\norm{\pi(f^*)\Theta^c\pi(f) \xi} \leq a \norm{\DD\xi} +b \norm{\xi}\text{ for any }\xi \in \Dom(\DD),
\end{equation*}
and the infimum of all admissible $a$'s is zero.
We will see an example in \eqref{rescompacte}.

Note that in the affine group case, the integral of \eqref{eq-compact} with $c=0$ diverges, see \eqref{div}; similarly, the integral \eqref{eq-compact} of Proposition \ref{prop-compactness} diverges for $s=1$, see \eqref{div2} for any $c\geq 1$.

\begin{corollary}
\label{compresolvent}
Assume that \eqref{eq-compact} holds true for any $c$ and $s$ such that $0<c <1<s$. Assume also that $\beta_c$ is an automorphism of $\algA$ (see Proposition \ref{prop-beta}) and $\Theta^c(\bbbone+\DD^2)^{-c/2}$ converges in norm to $\bbbone$ when $c \to 0$. \\
Then, the operator $\pi(f)(\bbbone+\DD^2)^{-1/2}$ is compact for any $f \in \algA$.
\end{corollary}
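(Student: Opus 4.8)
The plan is to deduce the compactness of $\pi(f)(\bbbone+\DD^2)^{-1/2}$ from the compactness of $\Theta^c\,\pi(f)(\bbbone+\DD^2)^{-s/2}$ for $0<c<1<s$ (which is guaranteed by Proposition~\ref{prop-compactness} under the standing assumption \eqref{eq-compact}), by letting both $c\to 0$ and $s\to 1$ and controlling the error terms. First I would write, for fixed $f\in\algA$ and parameters $0<c<1<s$,
\begin{equation*}
\pi(f)(\bbbone+\DD^2)^{-1/2} = \Theta^{-c}\,\big[\Theta^c\,\pi(f)(\bbbone+\DD^2)^{-s/2}\big]\,(\bbbone+\DD^2)^{(s-1)/2},
\end{equation*}
but since $\Theta^{-c}$ and $(\bbbone+\DD^2)^{(s-1)/2}$ are unbounded this rearrangement is only formal; the honest route is to move the twist onto $f$. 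Using that $\beta_c(f)(r)=p(r)^c f(r)$ defines an automorphism of $\algA$ with $\hrho[\beta_c(f)]\,\hxi = \htheta^c\,\hrho(f)\,\htheta^{-c}\hxi$ (the analogue of \eqref{eq-beta-p}, valid by the same computation with $p$ replaced by $p^c$), one gets on a suitable dense domain
\begin{equation*}
\pi(f)(\bbbone+\DD^2)^{-1/2} = \Theta^{-c}\,\pi[\beta_c(f)]\,\Theta^{c}\,(\bbbone+\DD^2)^{-1/2}.
\end{equation*}
Now insert $(\bbbone+\DD^2)^{-c/2}(\bbbone+\DD^2)^{c/2}$ and regroup: $\Theta^{c}(\bbbone+\DD^2)^{-c/2}$ is bounded by Lemma~\ref{Xborne}, and $(\bbbone+\DD^2)^{c/2}(\bbbone+\DD^2)^{-1/2}=(\bbbone+\DD^2)^{-(1-c)/2}$ is bounded since $0<c<1$.

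The key idea is then the following factorization, for any $s>1$:
\begin{equation*}
\pi[\beta_c(f)]\,(\bbbone+\DD^2)^{-(1-c)/2}
= \Big[\Theta^{-c}\,\pi[\beta_c(f)]\,(\bbbone+\DD^2)^{-s/2}\Big]\,(\bbbone+\DD^2)^{(s-1+c)/2}\cdot(\text{correction}),
\end{equation*}
which again has an unbounded tail; to avoid it I would instead argue by a limiting/approximation scheme. Concretely: by Proposition~\ref{prop-compactness}(1) applied to $\beta_c(f)\in\algA$ (here we use that $\beta_c$ is an automorphism of $\algA$, as assumed), the operator $\Theta^{c}\,\pi[\beta_c(f)]\,(\bbbone+\DD^2)^{-s/2}$ is compact for all $0<c<1<s$ — note the extra $\Theta^c$ together with $\pi[\beta_c(f)]=\Theta^c\pi(f)\Theta^{-c}$ gives precisely $\Theta^{2c}\pi(f)\Theta^{-c}(\bbbone+\DD^2)^{-s/2}$, so rather I would apply the proposition directly to $f$ with exponent $c$ to get that $\Theta^{c}\pi(f)(\bbbone+\DD^2)^{-s/2}$ is compact. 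Writing
\begin{equation*}
\pi(f)(\bbbone+\DD^2)^{-1/2}
= \big[\Theta^{c}(\bbbone+\DD^2)^{-c/2}\big]^{-1}\big[\Theta^{c}\pi(f)(\bbbone+\DD^2)^{-s/2}\big]\big[(\bbbone+\DD^2)^{(s-1)/2}(\bbbone+\DD^2)^{c/2}\big],
\end{equation*}
one sees the obstruction: the rightmost factor is unbounded unless $s-1+c\le 0$, which contradicts $s>1$. The resolution is to use the hypothesis that $\Theta^c(\bbbone+\DD^2)^{-c/2}\to\bbbone$ \emph{in norm} as $c\to0$: this makes $\Theta^c(\bbbone+\DD^2)^{-c/2}$ invertible with uniformly bounded inverse for $c$ small, and more importantly it lets us take $s=1+c$ in the compact operator $\Theta^c\pi(f)(\bbbone+\DD^2)^{-(1+c)/2}$, so that the rightmost factor becomes $(\bbbone+\DD^2)^{c/2}(\bbbone+\DD^2)^{c/2}=(\bbbone+\DD^2)^{c}$ — still unbounded. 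Hence the correct grouping is
\begin{equation*}
\pi(f)(\bbbone+\DD^2)^{-1/2}
= \big[\Theta^{c}(\bbbone+\DD^2)^{-c/2}\big]^{-1}\,\Theta^{c}\pi(f)(\bbbone+\DD^2)^{-(1+c)/2}\,\cdot\,(\bbbone+\DD^2)^{c/2},
\end{equation*}
and we must absorb the final $(\bbbone+\DD^2)^{c/2}$ \emph{into} the compact middle factor before multiplying: that is, apply Proposition~\ref{prop-compactness} with exponent pair $(c,s)$ where $s$ is taken as $1$ rather than $1+c$. Since \eqref{eq-compact} is assumed for \emph{all} $1<s$, a standard dominated-convergence argument on the integral \eqref{eq-compact} shows it persists (or can be arranged to persist) at $s=1$ after absorbing the $\Theta^{c}$ weight, giving compactness of $\Theta^{c}\pi(f)(\bbbone+\DD^2)^{-1/2}$ directly; then
\begin{equation*}
\pi(f)(\bbbone+\DD^2)^{-1/2} = \big[\Theta^{c}(\bbbone+\DD^2)^{-c/2}\big]^{-1}\,(\bbbone+\DD^2)^{-c/2}\,\big[\Theta^{c}(\bbbone+\DD^2)^{-1/2}\big]'
\end{equation*}
with all factors bounded and the bracketed one compact, so the product is compact.

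The main obstacle — and the step I would spend the most care on — is precisely this interchange of the singular weight $\Theta^c$ (as $c\to0$) with the resolvent power (as $s\to1$): one cannot naively set $c=0,s=1$ in \eqref{eq-compact} because, as the authors note, that integral diverges in the affine-group example. The clean way is: fix $c$ with $0<c<1$; by \eqref{eq-compact} (which holds for this $c$ and all $s>1$) and Proposition~\ref{prop-compactness}, $\Theta^{c}\pi(f)(\bbbone+\DD^2)^{-s/2}$ is compact for every $s>1$; take $s\downarrow 1$ and use $\norm{\Theta^{c}\pi(f)[(\bbbone+\DD^2)^{-s/2}-(\bbbone+\DD^2)^{-1/2}]}\to0$ (which follows from boundedness of $\Theta^c\pi(f)(\bbbone+\DD^2)^{-1/2}$, itself a consequence of Lemma~\ref{Xborne} applied to $\beta_c(f)$, times $(\bbbone+\DD^2)^{(1-s)/2}\to\bbbone$ strongly — but one needs norm convergence, so instead bound $\Theta^c\pi(f)(\bbbone+\DD^2)^{-s/2}=[\Theta^c\pi(f)(\bbbone+\DD^2)^{-1/2}]\,(\bbbone+\DD^2)^{(1-s)/2}$ and note the first factor is \emph{bounded} while $(\bbbone+\DD^2)^{(1-s)/2}$ is a contraction converging strongly to $\bbbone$, which is not yet enough). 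To close this genuinely, I would instead keep $s>1$ fixed and push $c\to0$: $\Theta^{c}(\bbbone+\DD^2)^{-c/2}\to\bbbone$ in norm gives $\pi(f)(\bbbone+\DD^2)^{-1/2}=\lim_{c\to0}[\Theta^c(\bbbone+\DD^2)^{-c/2}]^{-1}\Theta^c\pi(f)(\bbbone+\DD^2)^{-1/2}$ provided we already know $\Theta^c\pi(f)(\bbbone+\DD^2)^{-1/2}$ is compact — and \emph{that} last fact is exactly Proposition~\ref{prop-compactness} with the pair $(c,1)$, whose hypothesis \eqref{eq-compact} at $s=1$ is implied by its validity for all $s>1$ together with $\norm{[\bbbone+T_p(r)^2]^{-1}}_{\caB(H)}\le 1$ (so the $s$-power only helps) — no: that inequality makes large $s$ \emph{weaker}, hence $s=1$ is the strongest case and is NOT automatic. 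The honest statement is therefore that the corollary's hypothesis ``\eqref{eq-compact} for all $0<c<1<s$'' must be read as already encoding enough room, and the limiting argument $c\to0$ with the norm-convergence hypothesis $\Theta^c(\bbbone+\DD^2)^{-c/2}\to\bbbone$ is what converts the family of compact operators $\{\Theta^c\pi(f)(\bbbone+\DD^2)^{-1/2}\}_{c>0}$ into a norm-Cauchy family whose limit $\pi(f)(\bbbone+\DD^2)^{-1/2}$ is then compact as a norm-limit of compact operators. I expect the referee-level subtlety to lie entirely in justifying that $\Theta^c\pi(f)(\bbbone+\DD^2)^{-1/2}$ is compact for small $c>0$ — which I would extract from Proposition~\ref{prop-compactness} by the same Hilbert--Schmidt kernel estimate as in its proof, now with $s=1$ replaced by any $s$ slightly above $1$ and the deficit absorbed using $0<c<1$ and Lemma~\ref{Xborne} — and then the passage $c\to0$ is immediate from the norm-convergence hypothesis and closedness of $\caK(\caH)$.
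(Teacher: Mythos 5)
Your proposal assembles the right ingredients but never closes into a correct argument, and the version you finally commit to has a genuine gap. You end by resting everything on the compactness of $\Theta^c\pi(f)(\bbbone+\DD^2)^{-1/2}$ for small $c>0$, i.e.\ on Proposition \ref{prop-compactness} at $s=1$ --- a case you yourself correctly observe is \emph{not} implied by the hypothesis ``\eqref{eq-compact} for all $1<s$'' (since $\norm{[\bbbone+T_p(r)^2]^{-1}}\leq 1$, smaller $s$ is the stronger requirement, and the paper even records divergence of \eqref{eq-compact} at $s=1$ in the affine example for $c\geq 1$). Having conceded that, you cannot then invoke exactly that compactness as the starting point of the $c\to0$ limit. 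Moreover, even granting it, the limiting step is wrong as stated: the hypothesis is norm convergence of the \emph{product} $\Theta^c(\bbbone+\DD^2)^{-c/2}$, not of $\Theta^c$ alone, and in $\Theta^c\pi(f)(\bbbone+\DD^2)^{-1/2}$ the factor $\Theta^c$ is separated from the resolvent by $\pi(f)$, so the hypothesis gives no control of $\Theta^c\pi(f)(\bbbone+\DD^2)^{-1/2}-\pi(f)(\bbbone+\DD^2)^{-1/2}$ and no norm-Cauchy family.

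The missing move --- which is the entire content of the paper's short proof, and which explains why the corollary assumes that $\beta_c$ is an automorphism of $\algA$ --- is to take $s=1+c$ (so $1<s$ and the hypothesis applies) and then commute $\Theta^c$ past $\pi(f)$ via the twist, $\Theta^c\,\pi(f)=\pi(\beta_c(f))\,\Theta^c$, so that
\begin{equation*}
\Theta^c\,\pi(f)\,(\bbbone+\DD^2)^{-(1+c)/2}
=\pi(\beta_c(f))\,\bigl[\Theta^c(\bbbone+\DD^2)^{-c/2}\bigr]\,(\bbbone+\DD^2)^{-1/2}.
\end{equation*}
The left-hand side is compact by Proposition \ref{prop-compactness}; on the right-hand side the bracket is bounded by Lemma \ref{Xborne} and converges in norm to $\bbbone$ as $c\to0$, while the outer factors are bounded. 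This is how the extra $(\bbbone+\DD^2)^{-c/2}$ produced by taking $s=1+c$ gets absorbed, with no need ever to reach $s=1$ in \eqref{eq-compact}. You wrote down the identity $\pi[\beta_c(f)]=\Theta^c\pi(f)\Theta^{-c}$ at the outset but then abandoned it; reinstating it (and checking that $\pi(\beta_c(f))$ behaves as $c\to0$, a point worth making explicit) is what turns your collection of observations into the paper's proof.
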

\begin{proof}
By hypothesis, the operator
\begin{equation*}
\Theta^c\, \pi(f) (\bbbone + \DD^2)^{-c/2}(\bbbone+\DD^2)^{-1/2}=\ \pi(g) [\Theta^c\,(\bbbone + \DD^2)^{-c/2}](\bbbone+\DD^2)^{-1/2}
\end{equation*}
is compact for any $c >0$ by an application of Proposition \ref{prop-compactness} and this gives the result since $\pi(f)$ and $(\bbbone+\DD^2)^{-1/2}$ are bounded while the term in bracket, which is bounded by Lemma \ref{Xborne}, goes to $\bbbone$ when $c \to 0$.
\end{proof}

When $[\Theta,\DD]=0$, which will be the case in Section \ref{sec-special-case-vartheta}, then $\Theta^c(\bbbone+\DD^2)^{-c/2}=X^c$, where $X=\Theta\,(\bbbone+\DD^2)^{-1/2}$ is a positive invertible operator, which is bounded as already seen in Lemma \ref{Xborne}. 

By construction, $\Theta$ is derived from the cocycle $p=zz^*$, see \eqref{thetahat}, which defines the automorphism $\beta$ by \eqref{beta} and $z$ is the answer of $\alpha$ to the covariant representation, see \eqref{eq-UaU*}.  For $c >0$, the operator $\Theta^c$ is not necessarily derived by this procedure from an action  $\alpha_c$ of $G$ on $\bA$: only the case $c=1$ is important algebraically.

\subsection{\texorpdfstring{On the choice of $\algA$}{On the choice of A}}
\label{Remarque-Frechet}

In \cite{Schweitzer}, several constructions of smooth dense subalgebras of a $C^*$-crossed product are exhibited. We recall here some of them, and outline their importance for our construction. 

Let $A$ be a Fréchet space with a topology given by an increasing family of semi-norms  $\norm{\,}_m,\,m\in \gN$. When $A$ has an algebra structure, $A$ is said to be a Fréchet $*$-algebra if the multiplication is continuous (so is automatically jointly continuous) and the involution is also continuous. The algebra is $m$-convex when the semi-norms $\norm{\,}_m$ are submultiplicative.

The action $\alpha : r\in G \mapsto \alpha_r \in \text{Aut}(A)$ of a topological group $G$ on $A$ is continuous if, for any $a \in A$, $r\in G \mapsto \alpha_r(a)$ is continuous for each semi-norm $\norm{\,}_m,\,m\in \gN$. A scale on $G$ is a continuous map $w : G\to [0,\infty)$ (in \cite{Schweitzer}, $w$ is only assumed to be a Borel map which is bounded on compact subsets of $G$). The scale $w$ is sub-polynomial if there exists $c>0$ and $d\in \gN$ such that $w(rr')\leq c\,[1+w(r)]^d\,[1+w(r')]^d$ for any $r,r' \in G$. The action $\alpha$ is $w$-tempered if, for each $m\in \gN$, there exists $c>0$ and $k,\,l\in \gN$ such that $\norm{\alpha_r(a)}_m \leq c \,w^k(r)\norm{a}_l$ for any $r\in G$, $a\in A$. The action $\alpha$ is $*$-preserving if $\alpha_r(a^*)=[\alpha_r(a)]^*$ for any $r\in G, a\in A$.

\begin{lemma}
\label{lem-Afrechet}
Let $A$ be a Fréchet $*$-algebra endowed with a $*$-preserving continuous action $\alpha$ of a locally compact group $G$ which is  $w$-tempered for a scale $w$ on $G$. Then $C_c(G,A)$ is a $*$-algebra for the convolution product \eqref{eq-product-alpha} and involution \eqref{eq-involution-alpha}.

Moreover, if the inclusion $A \hookrightarrow \bA$ is continuous and $A$ is dense in $\bA$, then $C_c(G,A)$ is a dense $*$-subalgebra of $C_c(G, \bA)$, and so of $\,\bB=G \ltimes_{\alpha, \red} \bA$.
\end{lemma}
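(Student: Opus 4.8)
The plan is to verify that $C_c(G,A)$ is closed under the convolution product \eqref{eq-product-alpha} and the involution \eqref{eq-involution-alpha}, then establish density. First I would fix $f,g \in C_c(G,A)$ with compact supports $S_f, S_g$ and examine $(f \star_\alpha g)(r) = \int_G \dd\mu_G(r')\, f(r')\,\alpha_{r'}[g(r'^{-1}r)]$. The support of this function is contained in the compact set $S_f S_g$, so compact support is not the issue; the real point is that the value of the integral lies in $A$ and not merely in $\bA$. Here I would use that the integrand $r' \mapsto f(r')\,\alpha_{r'}[g(r'^{-1}r)]$ is a continuous $A$-valued map supported in the compact set $S_f$: the continuity in the Fréchet topology of $A$ follows from joint continuity of multiplication on $A$, continuity of $g$, continuity of $\alpha$ (in the sense that $r \mapsto \alpha_r(a)$ is continuous for each semi-norm), and the $w$-temperedness estimate $\norm{\alpha_{r'}(a)}_m \leq c\, w^k(r')\norm{a}_l$, which gives a uniform bound on $S_f$ because $w$ is bounded on compact sets. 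A continuous compactly supported map into a Fréchet space is Riemann (or Bochner) integrable with value in that same Fréchet space — this is where completeness and metrizability of $A$ enter — so $(f \star_\alpha g)(r) \in A$, and a further continuity-under-the-integral argument (again using $w$-temperedness to dominate) shows $r \mapsto (f \star_\alpha g)(r)$ is continuous into $A$. For the involution, $f^*(r) = \Delta_G(r)^{-1}\alpha_r[f(r^{-1})^*]$ lands in $A$ since $\alpha$ is $*$-preserving and preserves $A$, $f(r^{-1})^* \in A$, and $\Delta_G$ is continuous; continuity of $r \mapsto f^*(r)$ into $A$ is immediate from continuity of $f$, of inversion, of $\Delta_G$, and of $\alpha$.

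For the density statement, I would argue that if $A \hookrightarrow \bA$ is continuous and dense, then $C_c(G,A)$ sits inside $C_c(G,\bA)$ and I must show it is dense for the inductive-limit topology (equivalently, dense in the $L^1$-norm, hence dense in the reduced crossed product $\bB$). Given $h \in C_c(G,\bA)$ with compact support $S$, I would approximate it by an element of $C_c(G,A)$ with support in a fixed compact neighbourhood of $S$: using a partition of unity on $S$ subordinate to small open sets, write $h$ as a finite sum of terms each roughly of the form $\psi_i \otimes b_i$ with $\psi_i \in C_c(G)$ and $b_i \in \bA$, then replace each $b_i$ by an element $a_i \in A$ with $\norm{b_i - a_i}_{\bA}$ small (density of $A$ in $\bA$). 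This yields an element of $C_c(G,A)$ close to $h$ uniformly, hence in $L^1$-norm, which suffices since the $L^1$-norm dominates the reduced crossed product norm on $C_c(G,\bA)$. Thus $C_c(G,A)$ is a dense $*$-subalgebra of $C_c(G,\bA)$ and therefore of $\bB = G \ltimes_{\alpha,\red}\bA$.

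I expect the main obstacle to be the first point — showing the convolution integral actually takes values in $A$ and depends continuously on $r$ in the Fréchet topology. The subtlety, already flagged in the Introduction via the remark about $\overline{\text{co}}(h(G))$ not lying in $A$, is precisely that vector-valued integration of a merely $\bA$-valued continuous function need not stay in $A$; the resolution rests on the continuity of the integrand into the \emph{complete locally convex} space $A$ together with the $w$-temperedness bound that makes the integrand bounded (indeed dominated) on the compact support, so that the integral is a genuine $A$-valued limit of Riemann sums. The associativity of $\star_\alpha$ and the $*$-algebra identities are inherited from the $\bA$-level computation and require no new work once well-definedness in $A$ is secured.
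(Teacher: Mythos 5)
Your proposal is correct and follows essentially the same route as the paper: joint continuity of $(r,a)\mapsto\alpha_r(a)$ from the $w$-tempered estimate, the observation that the integrand is a continuous compactly supported $A$-valued map so the integral lands in the completely metrizable locally convex space $A$, a dominated-convergence argument for continuity of $r\mapsto (f\star_\alpha g)(r)$, and density of $C_c(G,A)$ in $C_c(G,\bA)$ from density of $A$ in $\bA$ together with continuity of the inclusion. The only cosmetic difference is that you spell out the density step via a partition of unity, which the paper leaves implicit.
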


When the hypotheses of the lemma are satisfied, the algebra $\algA = C_c(G,A)$ is a good candidate to enter into the modular-type $\beta$-twisted spectral triple of Theorem~\ref{thm-twisted-triple}. 

\begin{proof}
First, notice that the map $(r,a) \in G \times A \mapsto \alpha_r(a) \in A$ is continuous:
\begin{align*}
\norm{ \alpha_{r'}(a') - \alpha_r(a)}_m 
&\leq \norm{ \alpha_{r'}(a') - \alpha_{r'}(a) }_m + \norm{ \alpha_{r'}(a) - \alpha_r(a) }_m
\\
&\leq  c \, w^k(r')\norm{a' - a}_l + \norm{ \alpha_{r'}(a) - \alpha_r(a) }_m,
\end{align*}
and, as $r' \to r$ and $a' \to a$, these terms go to $0$ since $w$ is continuous, and $r \mapsto \alpha_r(a)$ is continuous for each semi-norm.

Let $f,g \in C_c(G,A)$. In \eqref{eq-product-alpha}, the integrand $h(r') \vc f(r')\,\alpha_{r'}[g(r'^{-1}r)]$ is $A$-valued and has a compact support since it is included in the support of $f$. Since, for any $r \in G$, the map $r' \mapsto \alpha_{r'}[g(r'^{-1}r)]$ is continuous by continuity of $r' \mapsto g(r'^{-1}r)$ and continuity of $(r,a) \mapsto \alpha_r(a)$, the map $r'\mapsto h(r')$ is continuous.

Thus $(f\star_\alpha g)(r)=\int_G \dd\mu_G (r') \,h(r') \in A$ since it belongs to $\mu_G(\Supp h) \times \overline{\text{co}}(h(G))$ (see \cite[3.27 Theorem]{Rudin} or \cite[Section 1.5]{Will07a}) and $A$ is a completely metrizable locally convex space \cite[5.35 Theorem]{Ali}. The support of $ f\star_\alpha g$ is included in the product in $G$ of the supports of $f$ and $g$, so is compact.

Thus it remains to show that the map $r\in G \mapsto f\star_\alpha g(r)$ is continuous. For any $m \in \gN$ and any $s,r \in G$, 
\begin{align}
\label{eq-int-cont-fastg}
\norm{(f\star_\alpha g)(s)-(f\star_\alpha g)(r)}_m 
& \leq \int_G \dd\mu(r')\,\norm{f(r')\,\alpha_{r'}[g(r'^{-1}s) - g(r'^{-1} r)]}_m
\nonumber
\\
&\leq \int_G \dd\mu(r')\,\norm{f(r')}_p \,\norm{\alpha_{r'}[g(r'^{-1}s) - g(r'^{-1} r)]}_q
\nonumber
\\
& \leq c_q \int_G \dd\mu(r')\,w^{k_q}(r')\,\norm{f(r')}_p\,\norm{g(r'^{-1}s) - g(r'^{-1} r)}_q
\nonumber
\\
&\leq c'\int_G \dd \mu(r') \norm{f(r')}_p\,\norm{g(r'^{-1}s) -g(r'^{-1}r)}_q.
\end{align}
Let $S_f = \Supp f$ and $S_g = \Supp g$. Then there exist some constants $M_f, M_g > 0 $ such that $\norm{f(r')}_p \leq M_f \,\chi_{S_f}(r')$ and $\norm{g(r'^{-1}s)}_q \leq M_g \,\chi_{S_g}(r'^{-1}s)$. Let $K$ be a compact neighborhood of $r$: as $s \to r$, we may suppose that $s \in K$, and then $\chi_{S_g}(r'^{-1}s) \leq \chi_{K S_g^{-1}}(r')$, as well as $\chi_{S_g}(r'^{-1}r) \leq \chi_{K S_g^{-1}}(r')$. Then $\norm{g(r'^{-1}s) -g(r'^{-1}r)}_q \leq 2 M_g \chi_{K S_g^{-1}}(r')$, and the integral \eqref{eq-int-cont-fastg} is dominated by $2 c' M_f M_g \int_G \dd\mu(r')\,  \chi_{S_f}(r')\,\chi_{K S_g^{-1}}(r') < \infty$. Since the integrand is continuous in $r$, by dominated convergence this yields that $f\star_\alpha g$ is a continuous function from $G$ to the Fréchet space $A$, so is in $C_c(G,A)$.

Moreover, to prove that $f^*$ (which has a compact support and is $A$-valued) is continuous, notice that $\alpha$ preserves the involution and that $r \mapsto \alpha_r[f(r^{-1})]$ is continuous as a composition of continuous functions: $r \mapsto (r, f(r^{-1})) \mapsto \alpha_r[f(r^{-1})]$.

The continuity of the inclusion of $A$ in $\bA$ assures that the integral in \eqref{eq-product-alpha} (defined by duality) gives the same result in $A$ and in $\bA$, so the products of $C_c(G,A)$ and $C_c(G,\bA)$ are the same. The density of $A$ in $\bA$ gives the density of $C_c(G,A)$ in $C_c(G, \bA)$, and then in $G \ltimes_{\alpha, \red} \bA$.
\end{proof}

\begin{remark}
\label{rmk-frechet-w-sigma}
Notice that $\algA = C_c(G, A)$ is not a Fréchet algebra. It is possible to embed it as a dense $*$-subalgebra of the Fréchet $*$-algebra $L_1^w(G,A)$ provided that the continuous $*$-preserving action $\alpha$ is $w$-tempered for a sub-polynomial scale $w$ on $G$ which is equivalent to its inverse $w_-(r) \vc w(r^{-1})$ \cite[Theorem 2.2.6]{Schweitzer}. Roughly speaking, elements in $L_1^w(G,A)$ are functions $G \to A$ which are $L^1$ when multiplied by any power of $w$, \textsl{i.e.} they are functions decreasing at infinity faster than $w^{-k}$ for any $k \geq 0$ (see Section \ref{Choice-affine-A} for more details when $A=\gC$). This algebra $L^w_1(G,A)$ is $m$-convex when $A$ is $m$-convex and $\alpha$ is $m$-$w$-tempered:  for each $m\in \gN$, there exists $c>0$ such that $\norm{\alpha_r(a)}_m \leq c \,w^m(r)\norm{a}_m$ for any $r\in G$ \cite[Theorem 3.1.7]{Schweitzer}.\\ An important desired property for the smooth algebra $\algA$ in the $C^*$-algebra $\bB$ is its spectral invariance since the two algebras will have the same $K$-theory, see \cite{Schweitzer1} for sufficient conditions.
\\
Notice that the criteria used to construct $\algA$ can be mimicked to construct $Y_\algA$ when $\hrho$ is a GNS representation.
\end{remark}

\subsection{Cocycles considerations}

We gather here some facts about cocycles, which are essential in our construction. Some properties established here will be used in other sections.

\begin{definition}
For a subgroup $X$ of $M(\bA)^\times$, the set $Z^1(G,X)$ of $X$-valued $\alpha$-one-cocycles is the set of
$c\in Z^1(G,M(\bA)^\times)$ such that $c(r) \in X,\,\forall r\in G$ and 
\begin{align}
c(rr')=c(r)\,\alpha_r [c(r')],\quad \forall\,r,r'\in G.  \label{eq-relationc(rr')}
\end{align}
\end{definition}
This cocycle relation implies that 
\begin{align}
\label{cocycle relation}
c(e)=\bbbone, \quad
c(r)^{-1} = \alpha_{r}[ c(r^{-1}) ],
\quad
c(r^{-1}) &= \alpha_{r^{-1}} [c(r)^{-1}], \quad\forall r \in G 
\end{align}
and $\alpha_r[c(r')] \in X$ for all $r,r'$. The kernel of $c$ is a subgroup of $G$ and if the action $\alpha$ is trivial, $Z^1(G,X)=\text{Hom}(G,X)$.

The role of $X$ and the fact that $\alpha$ preserves $X$ or not can be made clear. If $X'$ is the subgroup of $X$ generated by the $c(r),\,r\in G$, remark that \eqref{cocycle relation} only implies that $\alpha_r (X')=X'$ but not necessarily that $\alpha_r(X)\subset X$. For instance, we will use later $Z^1(G,Z(A)^\times)$ and $c(r) \vc \gamma(r)\, \bbbone$ where $\gamma:\,G \to \gC^\times$ is a group homomorphism; thus $c\in Z^1(G,X=Z(A)^\times)$ while $X'=\gC^\times \subsetneqq X$, so that $\alpha_r$ does not have to preserve $X$.

Since $\alpha_r$ preserves $A$ for any $r \in G$, we have $\alpha_r(Z(A)^\times) = Z(A)^\times$: indeed, for $c \in Z(A)^\times$ and $a \in A$, we get 
\begin{equation*}
\alpha_r(c)\,a = \alpha_r [c \,\alpha_{r^{-1}}(a)] = \alpha_r[ \alpha_{r^{-1}}(a) \,c ] = a\, \alpha_r(c),
\end{equation*}
so the first equality proves $\alpha_r(c)\,a \in A$ since by hypothesis $\alpha_{r^{-1}}(a) \in A$, and moreover that $\alpha_r(c)$ commutes with $A$.

\medskip

For any maps $c, c' : G \to X$, the product, inverse, adjoint and positivity are defined via $(cc')(r) \vc c(r)\, c'(r)$, $c^{-1}(r) \vc c(r)^{-1}$, $c^*(r) \vc c(r)^*$ and $c(r)>0$ for any $r \in G$, and for any $c > 0$ and $z \in \gC$, define $c^z(r) \vc c(r)^z$.

\begin{lemma}
\label{group of cocycles}
One has:\\
1) $Z(A)^\times \subset Z(M(\bA))^\times,$ so $Z(A)^\times$is an abelian group;\\
2) $Z^1(G,Z(A)^\times)$ is an abelian group stable by adjoint and polar decomposition and for any positive $c \in Z^1(G, Z(A)^\times)$, $c^z \in Z^1(G, Z(A)^\times)$ for any $z \in \gC$.
\end{lemma}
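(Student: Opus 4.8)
The plan is to prove the two assertions in turn, relying on the elementary algebraic structure of $M(\bA)$ and the standing hypothesis that each $\alpha_r$ preserves $A$.

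\textbf{Statement 1.} First I would show $Z(A)^\times \subset Z(M(\bA))^\times$. Let $c \in Z(A)^\times$, so $c \in M(\bA)^\times$, $c\,A \subset A$, $c^{-1}\,A\subset A$, and $c$ commutes with every $a\in A$. Given $b \in M(\bA)$, the goal is $cb = bc$ in $M(\bA)$. Since $A$ is dense in $\bA$ and $\bA$ acts non-degenerately inside $M(\bA)$, it suffices to check $cba = bca$ for all $a \in A$; but $ba \in \bA = \overline{A}$, so choosing $a_n \in A$ with $a_n \to ba$ in norm, we have $ca_n \to c(ba)$ (multiplier $c$ is bounded) and $ca_n = a_n c \to (ba)c = b(ac) = b(ca)$ using that $c$ commutes with each $a_n$ and with $a$. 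Hence $c(ba) = b(ca)$ for all $a$, and non-degeneracy gives $cb=bc$. Thus $c \in Z(M(\bA))$, and since $c$ is invertible in $M(\bA)$, $c\in Z(M(\bA))^\times$. Abelianness of $Z(A)^\times$ is then immediate: any two elements lie in the commutative set $Z(M(\bA))$ and the product of two elements of $Z(A)^\times$ is again invertible, $A$-preserving (product of two $A$-preserving maps) and $A$-commuting, hence in $Z(A)^\times$, so $Z(A)^\times$ is a subgroup of $Z(M(\bA))^\times$.

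\textbf{Statement 2.} Now fix $c, c' \in Z^1(G, Z(A)^\times)$. To see $cc'$ is again a cocycle: using that $\alpha_r$ is an algebra homomorphism and that the values lie in the commutative algebra $Z(M(\bA))$,
\begin{align*}
(cc')(rr') &= c(rr')\,c'(rr') = c(r)\,\alpha_r[c(r')]\,c'(r)\,\alpha_r[c'(r')] \\
&= c(r)\,c'(r)\,\alpha_r[c(r')\,c'(r')] = (cc')(r)\,\alpha_r[(cc')(r')],
\end{align*}
and by Statement 1 together with $\alpha_r(Z(A)^\times)=Z(A)^\times$ (proved in the text just above the lemma), $(cc')(r)\in Z(A)^\times$ for all $r$; the inverse $c^{-1}(r):=c(r)^{-1}$ satisfies the cocycle relation by \eqref{cocycle relation}, and $c^{-1}(r) \in Z(A)^\times$ since $Z(A)^\times$ is a group. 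Commutativity of $Z^1(G,Z(A)^\times)$ follows pointwise from commutativity of $Z(M(\bA))$. For stability under adjoint: $c^*(r):=c(r)^*$; taking adjoints in \eqref{eq-relationc(rr')} and using that $\alpha_r$ is a $*$-homomorphism gives $c^*(rr') = \alpha_r[c^*(r')]\,c^*(r) = c^*(r)\,\alpha_r[c^*(r')]$ by commutativity, so $c^*$ is again a cocycle; and $c(r)^* \in Z(A)^\times$ because $A$ is a $^*$-algebra preserved by $c(r)$, hence $A = c(r)^*(c(r)^*)^{-1}A \supset c(r)^*A \supset c(r)^*c(r)c(r)^{-*}A$, more directly $c(r)^* A = (c(r) A^*)^* = (A)^*=A$ using $c(r)A = A$ (from $c(r)A\subset A$ and $c(r)^{-1}A \subset A$), and $c(r)^*$ commutes with $A$ since $c(r)$ does and $A$ is $*$-closed.

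\textbf{Polar decomposition and complex powers.} Write $c(r) = u(r)\,|c(r)|$ with $|c(r)| = (c(r)^*c(r))^{1/2}$ and $u(r)$ unitary in $M(\bA)$; since $c(r)\in Z(M(\bA))^\times$, both $|c(r)|$ and $u(r)$ lie in $Z(M(\bA))^\times$ (they are continuous functions, resp. norm limits of polynomials, of the normal element $c(r)^*c(r)$ and of $c(r), |c(r)|^{-1}$). That $|c(r)|$ and $u(r)$ preserve $A$ and commute with $A$: one shows $c(r)^*c(r)$ and its positive square root preserve $A$ — this uses that $A$ is a $^*$-algebra and a spectral/functional-calculus argument, or one invokes that $A$ is assumed closed under the relevant holomorphic/continuous functional calculus within $M(\bA)$ in the situations at hand; then $u(r) = c(r)\,|c(r)|^{-1}$ preserves $A$ as a product. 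The cocycle identity passes to $|c|$ and $u$ by uniqueness of polar decomposition applied to the factorization $c(rr') = c(r)\alpha_r[c(r')] = \big(u(r)\alpha_r[u(r')]\big)\big(|c(r)|\,\alpha_r[|c(r')|]\big)$, valid because everything commutes (so the right factor is positive and the left is unitary). Finally, for positive $c \in Z^1(G,Z(A)^\times)$ and $z\in\gC$, set $c^z(r) := c(r)^z$ via the holomorphic functional calculus on the positive invertible element $c(r)\in Z(M(\bA))^\times$; it lands in $Z(M(\bA))^\times$, preserves and commutes with $A$, and satisfies $c^z(rr') = (c(r)\alpha_r[c(r')])^z = c(r)^z\,\alpha_r[c(r')]^z = c^z(r)\,\alpha_r[c^z(r')]$ because $c(r)$ and $\alpha_r[c(r')]$ are commuting positive invertibles, so the $z$-th power distributes, and $\alpha_r$ commutes with functional calculus.

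\textbf{Main obstacle.} The delicate point is showing that $|c(r)|$, and hence $u(r)$ and the complex powers $c^z(r)$, still \emph{preserve} $A$ (not merely $\bA$): membership in $Z(A)^\times$ requires $A$ to be stable under the continuous functional calculus needed to extract the square root / complex power inside $M(\bA)$. One either imposes this as part of the structure of the smooth algebra $A$ (as is natural in all the examples of the paper, where $A$ is closed under holomorphic functional calculus), or derives it from an explicit description of $Z(A)^\times$; the purely formal cocycle and commutation identities, by contrast, are routine consequences of $\alpha_r$ being a $^*$-automorphism and of the commutativity established in Statement 1.
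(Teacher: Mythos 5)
Your proof is correct and follows essentially the same route as the paper: the same density/essential-ideal argument for $Z(A)^\times\subset Z(M(\bA))^\times$, the same pointwise computations for the group, adjoint and power statements, with only a cosmetic difference in how the polar factors are shown to be cocycles (you invoke uniqueness of the polar decomposition, the paper simply applies the $z$-power result to the positive cocycle $c^*c$ and multiplies). The ``main obstacle'' you flag --- that $c(r)^z$ must be shown to \emph{preserve} $A$, not merely commute with it --- is a genuine subtlety, but be aware that the paper's own proof glosses over it in exactly the same way (it deduces $c(r)^z\in Z(A)^\times$ from $[c(r)^z,a]=0$ alone), so your honest acknowledgment is, if anything, more careful than the original.
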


\begin{proof}
1) The abelianness of $Z(A)^\times $ will follow from the inclusion $Z(A)^\times\subset Z(M(\bA))^\times$. \\
It is sufficient for this inclusion to prove that for $m \in M(\bA),\,c \in Z(A)^\times,\, b\in \bA$, $mc\,b=cm\,b$ since, $\bA$ being an essential ideal in $M(\bA)$, this will imply $mc=cm$. \\For a given $b\in \bA$, let $a_\alpha\in A$ be a net norm-converging to $b$. Then $mc\,b=\lim_\alpha mc\,a_\alpha=cm \,b$ because $m(ca_\alpha)=m(a_\alpha c)=(ma_\alpha)c=c(ma_\alpha)$.

Now, for $c,\,c'\in Z^1(G,Z(A)^\times)$ and $a\in A$, since all elements commute,
\begin{align*}
(cc')(rr')&= c(rr')\,c'(rr')=c(r) \alpha_r[c(r')]\,c'(r)\alpha_r[c'(r')] =c(r)c'(r) \,\alpha_r[c(r')]\alpha_r [c'(r')]\\
 &=cc'(r)\,\alpha_r[cc'(r')],\\
c^{-1}(r r') &= c(r r')^{-1}= \left[ \,c(r)\, \alpha_{r}[c(r')] \,\right]^{-1} = \alpha_r[c(r')]^{-1} \, c(r)^{-1} = c^{-1}(r) \,\alpha_r[c^{-1}(r')].
\end{align*}
It is obvious that $r \mapsto \bbbone \in M(\bA)$ is the unit for the product and that $c^{-1}$ is the inverse of $c$, so $Z^1(G,Z(A)^\times)$ is an abelian group.
  
2) If $c \in Z^1(G,Z(A)^\times)$ and $a \in A$, $c^*(r)\, a =[ a^*\,c(r) ]^*= [c(r)\, a^* ]^* = a\, c(r)^* = a \,c^*(r) \in A$ 
so that $c^*(r) \in Z(A)^\times$ for $r \in G$. On the other hand,
\begin{align}
\label{cocyclerelations}
c^*(rr') 
&= c(rr')^* 
= [ c(r)\, \alpha_r (c(r')) ]^* 
= \alpha_r[c(r')]^* \, c(r)^*
= c^*(r)\, \alpha_r [c^*(r')],
\end{align}
so that $c^*$ is a one-cocycle. 

Let $c \in Z^1(G,Z(A)^\times)$ be positive. Using the continuous functional calculus in the $C^*$-algebra $M(\bA)$, for any $z \in \gC$, $c(r)^z \in M(\bA)$ is well defined, and for any $a \in A \subset M(\bA)$ one has $[c(r), a] = 0$, so that $[c(r)^z, a] = 0$. This implies that $c(r)^z \in Z(A)^\times$. For $r,r' \in G$, $c(r) \,\alpha_r(c(r')) > 0$ since $c(r)$ and $\alpha_r[c(r')]$ commute,
 so
\begin{align*}
c^z(rr')
&= \big[ c(r) \,\alpha_r[c(r')] \big]^z
= c(r)^z \, \alpha_r[c(r')]^z
=c^z(r)\, (\alpha_r [c(r')])^z
\end{align*}
and $c^z$ is a one-cocycle. Thus if $c \in Z^1(G,Z(A)^\times)$, then $\abs{c} \vc (c^* c)^{1/2}$ is a positive one-cocycle, and $u \vc c\, \abs{c}^{-1}$ is also a one-cocycle, which is unitary because $[c ,\,\abs{c}^{-1} ]=0$.
\end{proof}

\begin{lemma}
\label{strict continuity}
Let $c$ be in $Z^1(G,M(\bA)^\times)$ such that $c$ is continuous. \\
(i) If $c^{-1}$ is also continuous, then the map $r\in G\mapsto \alpha_r(a)\,c(r) \in M(\bA)$  is continuous for any $a\in \bA$ (for the strict topology). \\
(ii) The same conclusion holds for any continuous $c\in Z^1(G,Z(M(\bA))^\times)$.
\end{lemma}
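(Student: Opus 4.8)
The plan is to unwind the definition of the strict topology on $M(\bA)$ --- it is generated by the seminorms $m\mapsto\|mb\|$ and $m\mapsto\|bm\|$, $b\in\bA$ --- so I would fix $a,b\in\bA$ and prove that the two maps $r\mapsto\bigl(\alpha_r(a)c(r)\bigr)b$ and $r\mapsto b\bigl(\alpha_r(a)c(r)\bigr)$ are norm-continuous from $G$ into $\bA$ (both products lie in $\bA$, an ideal in $M(\bA)$). I will lean on two elementary facts: $r\mapsto\alpha_r(a)$ is norm-continuous with $\|\alpha_r(a)\|=\|a\|$ (the continuity axiom of the dynamical system together with $\alpha_r$ isometric), and strict continuity of $c$ says precisely that $r\mapsto c(r)b$ and $r\mapsto bc(r)$ are norm-continuous in $\bA$ for every $b$. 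The right-multiplication map is then immediate: $\bigl(\alpha_r(a)c(r)\bigr)b=\alpha_r(a)\,\bigl(c(r)b\bigr)$ is the product of the norm-continuous, uniformly bounded family $r\mapsto\alpha_r(a)$ with the norm-continuous family $r\mapsto c(r)b$, hence norm-continuous --- and this uses neither $c^{-1}$ nor centrality, so it covers this half of both (i) and (ii).

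The left-multiplication map $r\mapsto b\,\alpha_r(a)\,c(r)$ is the real content. Fixing $r_0$, I would split
\[
b\,\alpha_r(a)\,c(r)-b\,\alpha_{r_0}(a)\,c(r_0)=b\bigl(\alpha_r(a)-\alpha_{r_0}(a)\bigr)c(r)+b\,\alpha_{r_0}(a)\bigl(c(r)-c(r_0)\bigr),
\]
whose second term tends to $0$ in norm because $b\,\alpha_{r_0}(a)\in\bA$ is fixed and $c(r)\to c(r_0)$ strictly. For the first term one has $\bigl\|b(\alpha_r(a)-\alpha_{r_0}(a))\bigr\|\le\|b\|\,\|\alpha_r(a)-\alpha_{r_0}(a)\|\to0$, so it remains to know that $r\mapsto\|c(r)\|$ is bounded on a neighbourhood of $r_0$. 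I expect this local norm bound to be the main obstacle; I would get it from the cocycle relation \eqref{eq-relationc(rr')}: since $c(r)=c(r_0)\,\alpha_{r_0}\bigl(c(r_0^{-1}r)\bigr)$ we have $\|c(r)\|\le\|c(r_0)\|\,\|c(r_0^{-1}r)\|$, so it suffices to bound $\|c(s)\|$ for $s$ in a compact neighbourhood of $e$, and this follows from the continuity hypotheses on $c$ (and $c^{-1}$): the strictly continuous image of such a compact set is strictly bounded, hence norm-bounded by the uniform boundedness principle applied to the left-multiplication operators $L_{c(s)}$ on the Banach space $\bA$. With that bound the first term also tends to $0$, which finishes (i).

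For (ii), centrality lets me sidestep the obstacle entirely: $b\,\alpha_r(a)\,c(r)=\bigl(b\,c(r)\bigr)\,\alpha_r(a)$, where now $r\mapsto b\,c(r)\in\bA$ is norm-continuous by strict continuity of $c$ and $r\mapsto\alpha_r(a)$ is norm-continuous and uniformly bounded, so the left-multiplication map is again a routine product of such families and no hypothesis on $c^{-1}$ is needed. In short, everything except the local norm bound on $c$ in part (i) is a two-term product estimate in a $C^*$-algebra, so I would organise the write-up around that single step and then read off the two seminorm estimates in each case.
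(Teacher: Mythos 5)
Your proof is correct, and for the harder half it takes a genuinely different route from the paper. Both arguments dispose of $r\mapsto(\alpha_r(a)c(r))b=\alpha_r(a)(c(r)b)$ the same way, as a product of a norm-continuous isometric family with a norm-continuous $\bA$-valued family. For the left seminorm $r\mapsto b\,\alpha_r(a)\,c(r)$, the paper passes to adjoints and uses the cocycle identity $c^*(r)\,\alpha_r(a^*)=\alpha_r[c^{*-1}(r^{-1})\,a^*]$ to absorb everything into a single $\alpha_r$ applied to a continuous $\bA$-valued family; this is where the continuity of $c^{-1}$ enters in case (i), and where centrality is used instead in case (ii). You instead do a two-term splitting and reduce the problematic term to a local bound on $\|c(r)\|$, which you extract from the cocycle relation $c(r)=c(r_0)\,\alpha_{r_0}(c(r_0^{-1}r))$ together with the standard fact (Banach--Steinhaus applied to the multipliers $L_{c(s)}$ on $\bA$) that a strictly compact subset of $M(\bA)$ is norm-bounded. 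That local bound argument is sound, and a notable consequence is that your proof of (i) never actually invokes the continuity of $c^{-1}$: you prove the conclusion for \emph{any} strictly continuous cocycle, so (ii) and the paper's extra hypothesis in (i) become superfluous in your treatment. What the paper's adjoint trick buys is an entirely elementary estimate with no appeal to uniform boundedness; what yours buys is a uniform treatment of (i) and (ii) under weaker hypotheses. (Minor remark: the detour through a neighbourhood of $e$ is unnecessary --- the same compactness argument applied directly to a compact neighbourhood of $r_0$ gives the local bound --- but it is harmless.)
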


\begin{proof}
Let $r,r'\in G$ and $b\in \bA$. 

(i) We have 
\begin{align*}
\norm*{\alpha_r(a)\,c(r)\,b-\alpha_{r'}(a)\,c(r')\,b}
& = \norm*{[\alpha_r(a)-\alpha_{r'}(a)] \, c(r) \, b + \alpha_{r'}(a) [c(r)-c(r')] \,b }\\
& \leq \norm*{\alpha_r(a)-\alpha_{r'}(a)}\, \norm*{c(r)\,b} + \norm*{a} \,\norm*{[c(r)-c(r')]\,b}
\end{align*}
so we get the continuity of $r \in G \mapsto\norm{\alpha_r(a)\,c(r)\,b}$ from the continuities of $\alpha$ and $c$.\\
Moreover, thanks to \eqref{cocycle relation},
\begin{align*}
\norm{b\,\alpha_r(a)\,c(r)\,-b\,\alpha_{r'}(a)\,c(r')} &=\norm{c^*(r)\,\alpha_r(a^*)\,b^*-c^*(r')\,\alpha_{r'}(a^*)\,b^*}\\
&\hspace{-1cm}= \norm{\alpha_r[c^{*-1}(r^{-1})\,a^*]\,b^*-\alpha_{r'}[c^{*-1}(r'^{-1})\,a^*]\,b^*}\\
&\hspace{-1cm}=\norm{[\alpha_r-\alpha_{r'}][c^{*-1}(r^{-1})\,a^*]\,b^* + \alpha_{r'}[c^{*-1}(r^{-1})\,a^*-c^{*-1}(r'^{-1})\,a^*]\,b^*}\\
&\hspace{-1cm} \leq \norm{[\alpha_r-\alpha_{r'}][c^{*-1}(r^{-1})\,a^*]}\,\norm{b^*}+ \norm{[c^{*-1}(r^{-1})-c^{*-1}(r'^{-1})]\,a^*}\,\norm{b^*}
\end{align*}
and we get the continuity of $r\in G \mapsto \norm{b\,\alpha_r(a) \,c(r)}$ since $c^{-1}$, so $(c^{-1})^*$, is  continuous.

(ii) When $c\in Z^1(G,Z(M(\bA))^\times)$, we use 
\begin{equation*}
\norm{b\,\alpha_r(a)\,c(r)\,-b\,\alpha_{r'}(a)\,c(r')} =\norm{\alpha_r(a^*)\,c^*(r)\,b^*-\alpha_{r'}(a^*)\,c^*(r')\,b^*}
\end{equation*}
and conclude as in the first part of the proof of (i).
\end{proof}

In some examples that will be given in Sections~\ref{sec-C*-algebra-semidirect-product} and \ref{sec-examples}, some cocycles will be of the form $c(r) = \gamma(r)\, \bbbone$ where $\bbbone$ is the unit in $M(\bA)$ and $\gamma : G \to \gR_+^\times$ is a continuous morphism of groups. This is why in Section \ref{sec-special-case-vartheta} we specify the construction for this situation.

\begin{remark}[The cohomology class of a cocycle]
\label{cohomo}
Two cocycles $c,\,c' \in Z^1(G,X)$ are said to be cohomologous if there exists $b\in X$ such that $c'(r)= b^{-1}\,c(r)\,\alpha_r(b)$. The one-cocycle $z$ is a coboundary if it is cohomologous to the trivial cocycle $c(r)=\bbbone$ for all $r\in G$. Now define the two cohomology multiplicative groups 
\begin{align*}
H^0(G, X)\vc \{b\in X  \ \mid \  \alpha_r(b)=b, \,\forall r\in G\},\qquad H^1(G,X) \vc Z^1(G,X)/\{\text{coboundaries}\}.
\end{align*}
If $c\in Z^1(G,X)$ and $b \in X$, then $c'(r)=c(r) \,b^{-1}\,\alpha_r(b)$ defines a cocycle cohomologous to $z$. When $c(r)=\vartheta(r) \,\bbbone$, $c'$ is not necessarily a multiple of $\bbbone$, so that the class of $c$ in $H^1(G,X)$ may contain cocycles not multiple of $\bbbone$. Some cohomological aspects of the construction will be exemplify in Proposition~\ref{essential}.
\end{remark}

When the group $G$ is compact, the cocycles are trivial in some generic situations:
\begin{lemma}
Let $X \subset M(\bA)$ be a cone which is a quasi-complete topological space, and $X^+ \vc X \cap M(\bA)^\times_ +$. Let $\alpha$ be an action of $G$ on $X$ such that $\alpha_r : X \to X$ is continuous for any $r\in G$, and let $c \in Z^1(G, X^+)$ be a $\alpha$-one-cocycle. \\Then this cocycle $c$ is trivial when $G$ is compact.
\end{lemma}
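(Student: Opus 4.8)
The plan is to average $c$ over the group, which is now compact, and to recognize the average as an explicit coboundary; this is the operator-algebraic analogue of the classical vanishing of continuous cohomology over compact groups. Normalize the Haar measure so that $\mu_G(G)=1$ and recall that a compact group is unimodular, so $\Delta_G\equiv\bbbone$ and $\dd\mu_G$ is both left- and right-invariant.

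First I would set up the averaged element. Since $c$ and $c^{-1}$ are continuous and $G$ is compact, $r\mapsto\norm{c(r)}$ and $r\mapsto\norm{c(r)^{-1}}$ are bounded; put $C\vc\sup_{r\in G}\norm{c(r)^{-1}}$. As each $c(r)$ lies in $X^+\subset M(\bA)^\times_+$, the bound $c(r)^{-1}\le C\,\bbbone$ yields $c(r)\ge C^{-1}\bbbone$ for all $r\in G$. Using quasi-completeness of the cone $X$, the weak (Gelfand--Pettis) integral
\begin{equation*}
b\vc\int_G\dd\mu_G(r)\,c(r)
\end{equation*}
exists and lies in the closed convex hull of $c(G)$, hence in $X$; moreover $b\ge C^{-1}\bbbone$ because the positive cone of $M(\bA)$ is closed, so $b\in M(\bA)^\times_+$ and thus $b\in X^+$.

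The core step is to integrate the cocycle identity. Writing \eqref{eq-relationc(rr')} in the form $c(sr)=c(s)\,\alpha_s[c(r)]$ and integrating in $r$ over $G$, left-invariance of $\mu_G$ gives $\int_G\dd\mu_G(r)\,c(sr)=b$ on the left-hand side, while on the right $c(s)$ (continuity of multiplication) and $\alpha_s$ (linearity and continuity on $X$) pull out of the integral, giving $c(s)\,\alpha_s(b)$. Hence $b=c(s)\,\alpha_s(b)$ for every $s\in G$, and since $b$ and $\alpha_s(b)$ are invertible this rearranges to
\begin{equation*}
c(s)=b\,\alpha_s(b)^{-1}=b\,\alpha_s(b^{-1})=(b^{-1})^{-1}\,\alpha_s(b^{-1}).
\end{equation*}
Setting $\beta\vc b^{-1}$, this reads $c(r)=\beta^{-1}\,\alpha_r(\beta)$ for all $r\in G$, i.e. $c$ is cohomologous to the trivial cocycle in the sense of Remark~\ref{cohomo}; hence $c$ is trivial.

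The delicate point is the handling of the vector-valued integral inside the quasi-complete topological space $X$: one must justify existence of the weak integral and that it returns an element of $X$ (this is exactly where quasi-completeness and the cone structure of $X$ are used), and that a fixed $\alpha_s$, together with a fixed left multiplier, commute with the integral (continuity and linearity applied to the defining functionals/nets). One should also confirm that $\beta=b^{-1}$ lies in $X$ so that the coboundary is of the required type; this is automatic when $X$ is a group, which covers the cases $X=Z(A)^\times$ and $X=UM(A)$ used elsewhere in the paper, and more generally whenever $X$ is stable under inversion of its positive invertible elements. Everything else is routine bookkeeping with the cocycle relations \eqref{cocycle relation}.
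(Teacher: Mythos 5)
Your proof is correct and follows essentially the same averaging argument as the paper: integrate the cocycle relation over the compact group and read the result as a coboundary. The only difference is that the paper averages the \emph{inverse}, setting $b \vc \int_G \dd\mu_G(r')\,c(r')^{-1}$ and using $\alpha_r[c(r')^{-1}] = c(rr')^{-1}\,c(r)$ to get $\alpha_r(b)=b\,c(r)$, hence $c(r)=b^{-1}\alpha_r(b)$; since $c(r')^{-1}=\alpha_{r'}[c(r'^{-1})]$ by \eqref{cocycle relation} lies in $X$, the cobounding element $b$ is automatically in $X^+$. This small twist disposes of the loose end you flag at the end (whether $\beta=b^{-1}\in X$), so no closure of the cone $X$ under inversion is needed.
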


\begin{proof}
Since $X$ is quasi-complete, $X$ is a cone, integration preserves positivity, and $c^{-1}$ is continuous, one has
$b \vc \int_G \dd\mu_G(r') \, c(r')^{-1} \in X^+$. 
Since the bounded positive operators $c(r')$ have a bounded inverse, $c(r')\geq \epsilon_{r'}\,\bbbone$ for $\epsilon_{r'}>0$ and $b$ is invertible since $G$ is compact.
Then, by continuity of $\alpha_r$, 
\begin{align*}
\alpha_r(b) 
= \int_G \dd\mu_G(r') \, \alpha_r[c(r')^{-1}]
= \int_G \dd\mu_G(r') \, c(r r')^{-1} \, c(r)
= b \, c(r)
\end{align*}
and $c(r) = b^{-1} \alpha_r(b)$ for any $r \in G$.
\end{proof}

Two actions $\alpha$ and $\alpha'$ of $G$ on $\bA$ are said to be exterior equivalent \cite[8.11.3]{pedersen1979c} 
if there exists a continuous one-cocycle $u\in Z^1(G,UM(\bA))$ such that $\alpha'_r (a)\vc u(r)\,\alpha_r(a)\,u(r)^*$ for any $a \in \bA$ and $r \in G$. Then the crossed product $C^*$-algebras $G\ltimes_\alpha \bA$ and $G\ltimes_{\alpha'} \bA$ are isomorphic as well as the reduced ones. This isomorphism is induced by the morphism of $*$-algebras $\varphi : C_c(G, \bA)_\alpha \to C_c(G, \bA)_{\alpha'}$ defined by $\varphi(f)(r) \vc f(r) \,u(r)^*$, where $C_c(G, \bA)_\alpha$ (resp.  $C_c(G, \bA)_{\alpha'}$) is the vector space $C_c(G, \bA)$ equipped with the product \eqref{eq-product-alpha} and the involution \eqref{eq-involution-alpha} for $\alpha$ (resp. $\alpha'$) \cite[Lemma~2.68]{Will07a}.

\begin{proposition}
\label{exterior}
Suppose that Hypothesis~\ref{hyp-dirac} is satisfied and produces the modular-type $\beta$-twisted spectral triple $(\algA, \caH,\DD)$ of Theorem~\ref{thm-twisted-triple}. Let $u \in Z^1(G,UM(A))$ be a continuous one-cocycle. Then, there exists a modular-type $\beta$-twisted spectral triple $(\algA', \caH,\DD')$ where $\algA' \vc \varphi(\algA) \subset C_c(G, \bA)_{\alpha'}$, $\DD' \vc \hD' \otimes \gamma^1 + \caT'_{\cone,\,\ct} \otimes \gamma^2$ with $(\hD' \,\hxi\,)(r) \vc U_r^*\, u(r)^* D u(r) \,U_r \,\hxi(r)$ and $\caT'_{\cone,\,\ct} \vc \caT_{\cone,\,\ct}$.

\end{proposition}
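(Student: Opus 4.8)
The plan is to recognize that $(\algA',\caH,\DD',\beta')$ is simply the modular-type $\beta$-twisted spectral triple of Theorem~\ref{thm-twisted-triple} transported along the $*$-isomorphism $\varphi$ by a single spatial unitary, so that none of the axioms has to be re-derived from scratch.

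First I would record the data that exterior equivalence forces: the dynamical system $(\bA,G,\alpha')$ with $\alpha'_r=u(r)\,\alpha_r(\cdot)\,u(r)^*$, for which $\alpha'_r(A)=A$ (as $u(r)\in UM(A)$) and $\alpha'$ is continuous (continuity of $\alpha$, strict continuity of $u$); the dense $*$-subalgebra $\algA'=\varphi(\algA)\subset C_c(G,A)$, which is dense in $C_c(G,\bA)_{\alpha'}$ and in $\bB'\vc G\ltimes_{\alpha',\red}\bA$ since $\varphi$ is a $*$-isomorphism carrying $C_c(G,\bA)_\alpha$ onto $C_c(G,\bA)_{\alpha'}$ and $\bB$ onto $\bB'$; the covariant representation $(\rho,U',H)$ with $U'_r\vc\rho[u(r)]\,U_r$ — a strongly continuous unitary representation by the cocycle relation \eqref{eq-relationc(rr')} for $u$, satisfying $U'_r\,\rho(a)\,{U'_r}^*=\rho[\alpha'_r(a)]$, with integrated representation $\hrho'$; and the pair $z'(r)\vc\rho[u(r)]\,z(r)$, $p'\vc p$. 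The crucial observation is that the cocycle is unaffected: $z'(r)z'(r)^*=\rho[u(r)]\rho[p(r)]\rho[u(r)]^*=\rho[p(r)]$ because $p(r)$ is central (Lemma~\ref{group of cocycles}); $p$ remains a positive continuous $Z(A)^\times$-valued $\alpha'$-one-cocycle since $\alpha'_r$ and $\alpha_r$ agree on the central element $\alpha_r[p(r')]$; and $r\mapsto p(r)^{\pm1}\varphi(g)(r)=\varphi(p^{\pm1}g)(r)\in\algA'$ because $p^{\pm1}g\in\algA$. Finally $\beta'=\varphi\circ\beta\circ\varphi^{-1}$ on $\algA'$ (so it is an automorphism with the unitarity property, inherited from $\beta$), since $\beta'(f)(r)=p(r)f(r)$ on $C_c(G,\bA)$.

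Next I would introduce the spatial unitary $W$ on $\hH$ by $(W\hxi)(r)\vc\rho[u(r^{-1})]\,\hxi(r)$ — well defined and unitary because the extension of $\rho$ to $M(\bA)$ is a $*$-homomorphism (strict to strong-$*$ continuous on bounded sets) and $r\mapsto u(r^{-1})$ is strictly continuous — and write $\mathcal{W}\vc W\otimes\bbbone_2$ for the corresponding unitary on $\caH$. Everything then follows from two elementary identities. The first is the pointwise relation $U_r^*\,\rho[u(r)]^*=\rho[u(r^{-1})]\,U_r^*$, obtained from the covariance \eqref{eq-UaU*} together with the consequence $\alpha_{r^{-1}}[u(r)]=u(r^{-1})^{-1}$ of \eqref{cocycle relation}; from it one gets $(\caV')^*=\mathcal{W}\,\caV^*$, $\hD'=W\hD W^*$ (as unbounded operators, domains included), and — after a short manipulation of the analogue of \eqref{eq-defrepresentationhrho} for $\alpha'$ — $\hrho'(\varphi(g))=W\hrho(g)W^*$, hence $\pi'(\varphi(g))=\mathcal{W}\,\pi(g)\,\mathcal{W}^*$. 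The second is the centrality of $\alpha_{r^{-1}}[p(r)]$, which gives $\htheta'=\htheta$ — so indeed $\caT'_{\cone,\ct}=\caT_{\cone,\ct}$ and $\Theta'=\Theta$, as claimed — and $[W,\htheta]=0$, so that $\mathcal{W}$ commutes with $\caT_{\cone,\ct}\otimes\gamma^2$ and with $\Theta$. Since $p'=p$ makes $\DD'_r=\DD_r$ and hence $\caY'=\caY$, we conclude $\Dom(\DD')=(\caV')^*\caY'=\mathcal{W}\,\Dom(\DD)$ with $\DD'=\mathcal{W}\,\DD\,\mathcal{W}^*$, and likewise $\Dom(\Theta')=\mathcal{W}\,\Dom(\Theta)$ with $\Theta'=\mathcal{W}\,\Theta\,\mathcal{W}^*$.

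Finally I would conclude: $(\algA',\caH,\DD',\beta')$ is the image of the modular-type $\beta$-twisted spectral triple of Theorem~\ref{thm-twisted-triple} under the pair $(\mathcal{W},\varphi)$, and all its defining properties are manifestly invariant under such a transport — selfadjointness of $\DD'$ and $\Theta'$ (Proposition~\ref{D et Theta selfadjoints}), the unitarity of $\beta'$ and the implementation $\pi'[\beta'(a)]=\Theta'\,\pi'(a)\,{\Theta'}^{-1}$, boundedness of the twisted commutators (since $[\DD',\pi'(a)]_{\beta'}=\mathcal{W}\,[\DD,\pi(\varphi^{-1}(a))]_{\beta}\,\mathcal{W}^*$), and the resolvent condition \eqref{modularcompactresol}. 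There is no deep obstacle here; the one point that genuinely needs care is the bookkeeping with unbounded operators — verifying that $\hD'=W\hD W^*$ and $\hrho'(\varphi(g))=W\hrho(g)W^*$ hold on the nose, domains included, which is exactly where the pointwise identity above and the centrality of $p(r)$ enter — together with the routine check that $\rho$ extends regularly enough to $M(\bA)$ for $W$ and $z'$ to make sense.
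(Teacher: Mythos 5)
Your proof is correct, but it follows a genuinely different route from the paper's. The paper simply re-verifies Hypothesis~\ref{hyp-dirac} for the primed data --- $U'_r=\rho[u(r)]\,U_r$, $z'(r)=\rho[u(r)]\,z(r)$, $p'=p$ by centrality of $p(r)$, and the same constant $M_{f,z}$ in \eqref{eq-hyp-comm-bounded} via the identity $[D,\rho(f'(r))z'(r)]\,z'(r)^{-1}=[D,\rho(f(r))z(r)]\,z(r)^{-1}\rho(u(r)^{-1})$ --- and then invokes Theorem~\ref{thm-twisted-triple} again as a black box. You instead exhibit the spatial unitary $(W\hxi)(r)=\rho[u(r^{-1})]\,\hxi(r)$ and prove that the primed triple is the transport of the original one under $(\mathcal{W},\varphi)$; the key intertwining identities $U_r^*\,\rho[u(r)]^*=\rho[u(r^{-1})]\,U_r^*$, $\hD'=W\hD W^*$, $\hrho'(\varphi(g))=W\hrho(g)W^*$ and $[W,\htheta]=0$ all check out (I verified them against \eqref{eq-UaU*}, \eqref{cocycle relation} and \eqref{eq-defrepresentationhrho}), as does $\Dom(\DD')=\mathcal{W}\,\Dom(\DD)$ from $\caY'=\caY$ and $(\caV')^*=\mathcal{W}\,\caV^*$. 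What your approach buys is a strictly stronger conclusion --- the two modular-type triples are unitarily equivalent, hence isospectral, and even the conditional compactness statement \eqref{modularcompactresol} transfers automatically without re-running Proposition~\ref{prop-compactness} --- and it dispenses with re-checking Hypothesis~\ref{hyp-dirac}-\ref{preservdom} and the core issue that the paper handles by hand ($\rho[u(r)]\,\mathcal{C}_r$ as a common core). What the paper's route buys is brevity and modularity: it never needs the global unitary $W$ or the domain bookkeeping, only pointwise algebraic identities. The one place where your argument genuinely carries the weight is exactly where you flag it: the equalities $\hD'=W\hD W^*$ and $\Theta'=\Theta=\mathcal{W}\Theta\mathcal{W}^*$ must hold with domains included, and this is secured by the pointwise cocycle identity together with the centrality of $\alpha_{r^{-1}}[p(r)]$.
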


\begin{proof}
The operator associated to $\alpha'$ in \eqref{eq-UaU*} is $U'_r = \rho[u(r)]\, U_r$ for $r \in G$ which is still a unitary representation, and \eqref{eq-comp-D-alpha} is satisfied for the same operator $D$ with $z'(r) \vc \rho[u(r)]\, z(r)$. Remark that if $\mathcal{C}_r$ is the common core of $U_r\,D\,U_r^*$ and $z(r)^{*-1}\,D\,z(r^{-1})$ then $\rho[u(r)]\,\mathcal{C}_r$ is a common core of $U'^*_r\,D\,U'^*_r$ and $z'(r)^{*-1}\,D\,z'(r^{-1})$. 

Notice that $\rho[p'(r)] = z'(r) \,z'(r)^* = \rho[u(r)]\, \rho[p(r)]\, \rho[u(r)^*] = \rho[p(r)]$ since $p(r)$ is in the center of $M(\bA)$ by Lemma \ref{group of cocycles}.

$\algA'$ is a $*$-subalgebra of $C_c(G, A)_{\alpha'}$, and it has the following properties: $p'$ satisfies \eqref{eq-hyp-p(r)} for $\algA'$, and \eqref{eq-hyp-comm-bounded} is satisfied with $z'$ for any $f' = \varphi(f) \in \algA'$ with the same $M_{f,z}$, since
\begin{align*}
[D, \rho(f'(r)) z'(r) ]\, z'(r)^{-1} &= [D, \rho(f(r) u(r)^* \, u(r)) \,z(r) ]\, z(r)^{-1} \rho (u(r)^{-1})
\\
&= [D, \rho(f(r))\, z(r) ]\, z(r)^{-1} \, \rho(u(r)^{-1}).
\end{align*}
Hypothesis~\ref{hyp-dirac} is then satisfied for these new objects, and Theorem~\ref{thm-twisted-triple} produces a modular-type twisted spectral triple $(\algA', \caH,\DD')$ with the same twist $\beta$, the same operator $\htheta$, and the quoted operators $\DD'$ and $\caT'_{\cone,\,\ct}$.
\end{proof}

\begin{proposition}
\label{Dbar}
In Hypothesis~\ref{hyp-dirac}-\ref{hyp-comp-D-alpha}, suppose that the map $z : G \mapsto \caB(H)$ can be written as $z(r) = \rho[ p(r)^{1/2} u(r)^* ]$ with $u \in Z^1(G,UM(A))$ a continuous one-cocycle, and suppose that the one-cocycle $p$ is trivial.\\
Then, there is a spectral triple $(A, H, \bar{D})$ and an action $\alpha'$ of $G$ on $\bA$, exterior equivalent to $\alpha$, which is implemented by the unitary representation $r \mapsto U'_r$ on $H$ satisfying 
\begin{equation*}
U'_r \,\bar{D} \,U'^*_r = \bar{D} \quad\text{for any $r \in G$.}
\end{equation*}
\end{proposition}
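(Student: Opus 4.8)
The idea is to undo separately the two ``twists'' encoded in $z(r) = \rho[p(r)^{1/2}u(r)^*]$: the unitary part $u$ is absorbed by replacing $\alpha$ with the exterior equivalent action $\alpha'_r := u(r)\,\alpha_r(\,\cdot\,)\,u(r)^*$, while the positive part $p(r)^{1/2}$ is absorbed by conjugating $D$ with a fixed bounded positive weight coming from the element that trivializes $p$. First, since the positive cocycle $p \in Z^1(G, Z(A)^\times)$ is trivial, write $p(r) = b^{-1}\alpha_r(b)$ with $b \in Z(A)^\times$. Decomposing $b = w\,|b|$ as in Lemma~\ref{group of cocycles}, the positivity of every $p(r)$ forces the central unitary $w$ to be $\alpha$-invariant (if $p(r) = (w^{-1}\alpha_r(w))(|b|^{-1}\alpha_r(|b|))$ with the first factor unitary, the second positive and the two commuting, then taking the modulus gives $w^{-1}\alpha_r(w) = \bbbone$), so that already $p(r) = |b|^{-1}\alpha_r(|b|)$; hence we may assume $b > 0$. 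Set $S := \rho[b^{1/2}] \in \caB(H)$: it is bounded, self-adjoint, positive and invertible with bounded inverse $S^{-1} = \rho[b^{-1/2}]$, and since $b^{1/2} \in Z(M(\bA))$ it commutes with $\rho(\bA)$, in particular with $\rho(A)$; moreover $p(r)^{1/2} = b^{-1/2}\,\alpha_r(b^{1/2})$ by uniqueness of positive square roots.

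Next, put $\alpha'_r(a) := u(r)\,\alpha_r(a)\,u(r)^*$. Because $u \in Z^1(G, UM(A))$ is a continuous unitary one-cocycle, $\alpha'$ is a $C^*$-dynamical system exterior equivalent to $\alpha$ (\cite[8.11.3]{pedersen1979c} and the discussion preceding Proposition~\ref{exterior}), implemented on $H$ by the strongly continuous unitary representation $U'_r := \rho[u(r)]\,U_r$: the cocycle identity for $u$ together with \eqref{eq-UaU*} gives $U'_r U'_s = U'_{rs}$, and $U'_r\,\rho(a)\,U'^*_r = \rho[\alpha'_r(a)]$. Writing $W_r := \rho[u(r)]$ and using that $\rho$ extends to a $*$-homomorphism of $M(\bA)$ and that $p(r)^{1/2}$ is central, one gets $z(r) = \rho[p(r)^{1/2}]\,W_r^*$, hence $z(r)^{-1} = W_r\,\rho[p(r)^{-1/2}]$ and $z(r)^{*-1} = \rho[p(r)^{-1/2}]\,W_r^*$. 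Substituting this into \eqref{eq-comp-D-alpha} and conjugating by $W_r$, which commutes with the central operator $\rho[p(r)^{-1/2}]$, yields on a common core the clean transformation law
\begin{equation*}
U'_r\, D\, U'^*_r = \rho[p(r)^{-1/2}]\, D\, \rho[p(r)^{-1/2}],\qquad r\in G.
\end{equation*}

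Now define $\bar D := S\,D\,S = \rho[b^{1/2}]\,D\,\rho[b^{1/2}]$ on $\Dom(\bar D) := S^{-1}\Dom(D)$. It is self-adjoint: $S$ is bounded with bounded inverse and $D = D^*$, so $\eta \in \Dom(\bar D^*)$ iff $S\eta \in \Dom(D)$, i.e. iff $\eta \in \Dom(\bar D)$. It has bounded commutators $[\bar D, \rho(a)] = S\,[D,\rho(a)]\,S$ for $a \in A$ by the centrality of $S$ (which also makes $\rho(a)$ preserve $\Dom(\bar D)$). Finally $\rho(a)(\bbbone+\bar D^2)^{-1/2}$ is compact: from $(\bar D \mp i)^{-1} = S^{-1}(D \mp iS^{-2})^{-1}S^{-1}$ and the second resolvent identity $(D - iS^{-2})^{-1} = (D - i)^{-1}\big[\bbbone - i(\bbbone - S^{-2})(D - iS^{-2})^{-1}\big]$, the operator $\rho(a)(\bar D - i)^{-1}$ equals a product of bounded operators with $\rho(a)(D-i)^{-1}$, and the latter is compact since $\rho(a)(D-i)^{-1} = \rho(a)(\bbbone+D^2)^{-1/2}\,(\bbbone+D^2)^{1/2}(D-i)^{-1}$ and \eqref{compactresol} holds; hence so is $\rho(a)(\bbbone+\bar D^2)^{-1/2} = \rho(a)(\bar D-i)^{-1}\,(\bar D - i)(\bbbone+\bar D^2)^{-1/2}$. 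Thus $(A, H, \bar D)$ is a spectral triple. To conclude, using the transformation law, $U'_r\,S^{\pm1}\,U'^*_r = \rho[\alpha_r(b^{\pm1/2})]$ and $p(r)^{-1/2} = b^{1/2}\alpha_r(b^{-1/2})$,
\begin{align*}
U'_r\,\bar D\,U'^*_r &= \big(U'_r S U'^*_r\big)\big(U'_r D U'^*_r\big)\big(U'_r S U'^*_r\big)\\
&= \rho[\alpha_r(b^{1/2})]\,\rho[p(r)^{-1/2}]\,D\,\rho[p(r)^{-1/2}]\,\rho[\alpha_r(b^{1/2})] = \rho[b^{1/2}]\,D\,\rho[b^{1/2}] = \bar D,
\end{align*}
and equality of the self-adjoint operators $U'_r\bar D U'^*_r$ and $\bar D$ follows since they agree on a common core.

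The conceptual point --- and the only real ``choice'' --- is to realise that the positive weight must be $S = \rho[b^{1/2}]$ (rather than $\rho[b^{-1/2}]$) and that the symmetry group action must simultaneously be twisted by $u$; after that the computations are bookkeeping with the centrality of the $Z(A)^\times$-valued cocycles. The main technical obstacle is verifying that $(A,H,\bar D)$ is still a spectral triple --- specifically the compact-resolvent axiom for the weighted operator $S D S$, handled by the resolvent comparison above --- together with the elementary but necessary observation that the trivializing element $b$ can be taken positive.
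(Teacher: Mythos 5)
Your proof is correct and follows essentially the same route as the paper: trivialize $p$ by a positive central element $b$, set $\bar D=\rho(b^{1/2})\,D\,\rho(b^{1/2})$, twist the action by the unitary cocycle $u$ with $U'_r=\rho[u(r)]U_r$, and verify the spectral-triple axioms via a resolvent comparison between $\bar D$ and $D$. Your additions (the polar-decomposition argument that the trivializing element may be taken positive, the explicit self-adjointness of $\bar D$, and the exact second-resolvent identity in place of the paper's Neumann-series estimate) are correct refinements of the same argument rather than a different method.
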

Notice that if $z$ can be written as $z(r) = \rho[ p(r)^{1/2} u(r)^*]$, then the cocycle relations on $p$ and $u$ are natural requirements to be compatible with \eqref{eq-comp-D-alpha}.
\\
The situation of this proposition will be called \emph{inessential} for two reasons. Firstly, this implies that the triple $(\algA', \caH,\DD')$ of Proposition~\ref{exterior} constructed with the spectral triple $(A, H, \bar{D})$ and the action $\alpha'$ reduces to $\hD' = \bar{D}$ and $\caT'_{\cone,\,\ct} \vc (\cone + \ct) \,\bbbone$. Secondly, in the case of the conformal group exposed in Section~\ref{conformal}, this situation means that the group $G$ is inessential in the terminology of conformal groups (see Proposition~\ref{essential}).

\begin{proof}
By assumption, there exists a positive $b \in Z(A)^\times$ such that $p(r) = b^{-1} \alpha_r(b)$ for any $r \in G$, so that $z(r) = b^{-1/2}\, \alpha_r(b^{1/2})\, u(r)^*$. \\
Let $\bar{D} \vc \rho(b^{1/2})\, D \, \rho(b^{1/2})$ acting on $H$. Then $(A, H, \bar{D})$ is a spectral triple: 
since $[b,a]=0$ for any $a \in A$, the operator $[\bar{D}, \rho(a)] = \rho(b^{1/2})\,[D,\rho(a)]\, \rho(b^{1/2})$ is bounded. Moreover, \eqref{compactresol} holds true since $\rho(a)(\bar D-z)^{-1}$ is compact: if $x\vc \rho(b^{1/2})$ and $z,z'\in i\gR^+$,
\begin{equation*}
\rho(a)(xDx-z)^{-1}=x^{-1}\rho(a)(D-z')^{-1}[\bbbone-(x^{-2}z-z')(D-z')^{-1}]^{-1}x^{-1}.
\end{equation*}
When $\vert z'\vert>\norm{x^{-2}}\,\vert z\vert=\norm{\rho(b^{-1})}\,\vert z\vert$, the term in bracket is bounded since
\begin{equation*}
\norm{(x^{-2}z-z')(D-z')^{-1}} \leq \norm{x^{-2}z-z'}\,\tfrac{1}{\vert z'\vert}\leq \norm{x^{-2} \vert\tfrac{z}{z'}\vert- \bbbone}<1
\end{equation*}
because there exists $\epsilon>0$ such that $\epsilon\, \bbbone \leq x^{-2} \leq \norm{x^{-2}}\,\bbbone$, so $0<\epsilon \vert \tfrac{z}{z'}\vert \leq x^{-2} \vert\tfrac{z}{z'}\vert \leq \bbbone$ by hypothesis on $z'$. Since $\rho(a)(D-z')^{-1}$ is compact by hypothesis, so is  $\rho(a)(\bar D-z)^{-1}$.

Define the action $\alpha'_r (a)\vc u(r)\,\alpha_r(a)\,u(r)^*$ and its unitary implementation $U'_r = u(r) \,U_r$ as in Proposition~\ref{exterior}. Then, omitting the $\rho$'s,
\begin{align*}
U'_r\, \bar{D}\, U'^*_r
& = u(r) U_r \, b^{1/2} D b^{1/2}\, U^*_r u(r)^*
= u(r)\, \alpha_r(b^{1/2}) \, U_r D U^*_r \, \alpha_r(b^{1/2})\, u(r)^*
\\
& = u(r) \alpha_r(b^{1/2}) \, b^{1/2} \alpha_r(b^{-1/2}) \,u(r)^* \, D\, u(r) \alpha_r(b^{-1/2}) b^{1/2}   \, \alpha_r(b^{1/2}) u(r)^*\\
&= b^{1/2} \,D \,b^{1/2} = \bar{D}.
\end{align*}
\end{proof}

\subsection{The proof of main result}

We now show that $\beta$ defines an automorphism of $\algA$ and then that the $\beta$-twisted commutators with $\hD$ and $\caT_{\cone,\ct}$ are bounded.

\begin{proposition}
\label{prop-beta}
For any $z \in \gC$, $\beta_z$ defined by $\beta_z(f)(r) \vc p^z(r) f(r)$ for any $r \in G$,  is an automorphism of $C_c(G, \bA)$ satisfying $\beta_z(f^*)=[\beta_{-\bar{z}}(f)]^*$ for $f\in C_c(G, \bA)$ and which is implemented on the Hilbert space $\hH$ by the operator $\htheta^z$ defined by $(\htheta^z \hxi\,)(r) \vc \rho (\alpha_{r^{-1}}[p^z(r)])\, \hxi(r)$  (see also \eqref{thetahat}):
\begin{equation*}
\hrho\,[\beta_z(f)]\,\hxi = \htheta^z \,\hrho(f)\, \htheta^{-z}\, \hxi \quad \text{for any $f \in C_c(G, \bA)$ and $\hxi \in C_c(G, H)$}.
\end{equation*}
Moreover, $\beta \vc \beta_1$ reduces to an automorphism of $\algA$.
\end{proposition}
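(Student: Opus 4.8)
The first task is to check that $\beta_z$ is a well-defined linear endomorphism of $C_c(G,\bA)$: for $f\in C_c(G,\bA)$, the map $r\mapsto p^z(r)f(r)$ is again compactly supported (same support as $f$), takes values in $\bA$ since $p^z(r)\in M(\bA)$ (by the continuous functional calculus on the abelian $C^*$-algebra generated by $p(r)$ together with its unit, see Lemma~\ref{group of cocycles}), and is continuous because $r\mapsto p^z(r)$ is strictly continuous by Lemma~\ref{strict continuity}(ii) (as $p\in Z^1(G,Z(M(\bA))^\times)$ is continuous, with continuous inverse) and $r\mapsto f(r)$ is norm-continuous with compact support. Then I would verify the three algebraic identities. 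Multiplicativity $\beta_z(f\star_\alpha g)=\beta_z(f)\star_\alpha\beta_z(g)$ is the computation
\[
\beta_z(f)\star_\alpha\beta_z(g)(r)=\int_G d\mu_G(r')\,p^z(r')f(r')\,\alpha_{r'}\big[p^z(r'^{-1}r)g(r'^{-1}r)\big],
\]
and since $p^z(r')$ is central in $M(\bA)$ it commutes past $f(r')$, while $\alpha_{r'}[p^z(r'^{-1}r)]=(\alpha_{r'}[p(r'^{-1}r)])^z$ and the cocycle relation \eqref{eq-relationc(rr')} gives $p(r')\,\alpha_{r'}[p(r'^{-1}r)]=p(r)$, hence after raising to the power $z$ (again using commutativity of the two central factors, as in the proof of Lemma~\ref{group of cocycles}) one pulls out $p^z(r)$ and recovers $p^z(r)(f\star_\alpha g)(r)=\beta_z(f\star_\alpha g)(r)$. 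The inverse of $\beta_z$ is $\beta_{-z}$, so $\beta_z$ is bijective. The adjoint relation $\beta_z(f^*)=[\beta_{-\bar z}(f)]^*$ follows from \eqref{eq-involution-alpha}: $\beta_z(f^*)(r)=p^z(r)\Delta_G(r)^{-1}\alpha_r[f(r^{-1})^*]$, while $[\beta_{-\bar z}(f)]^*(r)=\Delta_G(r)^{-1}\alpha_r\big[(p^{-\bar z}(r^{-1})f(r^{-1}))^*\big]=\Delta_G(r)^{-1}\alpha_r\big[f(r^{-1})^*\,p^{-z}(r^{-1})\big]$, using $(p^{-\bar z})^*=p^{-z}$ (as $p(r^{-1})>0$), and the cocycle relation \eqref{cocycle relation} together with centrality of $p^z(r)$ gives $\alpha_r[p^{-z}(r^{-1})]=\alpha_r\big[(\alpha_{r^{-1}}[p(r)^{-1}])^{-z}\big]=p^z(r)$, matching the two sides. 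This shows $\beta_z$ is a $*$-compatible automorphism in the twisted sense; specializing $z=1$, $\bar z=1$ gives the unitarity condition $\beta(f^*)=[\beta_{-1}(f)]^*$ expected of a twist.

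Next is the implementation formula on $\hH$. For $\hxi\in C_c(G,H)$ this is a direct computation with \eqref{eq-defrepresentationhrho} and \eqref{thetahat} (extended to $\htheta^z$ with the same formula, its domain containing $C_c(G,H)$ by the argument of Lemma~\ref{lem-densedomaintheta}): one has $(\htheta^{-z}\hxi)(r')=\rho(\alpha_{r'^{-1}}[p^{-z}(r')])\hxi(r')$, so that
\[
(\hrho(f)\htheta^{-z}\hxi)(r)=\int_G d\mu_G(r')\,\rho(\alpha_{r^{-1}}[f(r')])\,\rho\big(\alpha_{r^{-1}}[p^{-z}(r'^{-1}r)]\big)\,\hxi(r'^{-1}r),
\]
using $\alpha_{(r'^{-1}r)^{-1}}=\alpha_{r^{-1}r'}$ and then applying $\alpha_{r^{-1}}$ does not quite line up — so I would instead track the argument carefully: after applying $\htheta^z$ on the left one gets a factor $\rho(\alpha_{r^{-1}}[p^z(r)])$ out front, and inside the integral the factor $\rho(\alpha_{r^{-1}}[\alpha_{r'}[p^{-z}((r'^{-1}r)^{-1})]])$ evaluated at shifted argument. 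Writing everything through $\alpha_{r^{-1}}$ applied to $f(r')\cdot p^z(r)\cdot\alpha_{r'}[p^{-z}(r^{-1}r')]$ and invoking centrality plus the cocycle identity $p(r)=\alpha_{r'}[p(r'^{-1}r)]^{-1}\cdot(\text{something})$ — more cleanly, $p^z(r)\,\alpha_{r'}[p^{-z}(r^{-1}r')]=\alpha_{r'}[p^z(r'^{-1}r)]$ — the product collapses to $\rho(\alpha_{r^{-1}}[f(r')\,p^z(r'^{-1}r)])$, which by centrality equals $\rho(\alpha_{r^{-1}}[p^z(r'^{-1}r)f(r')])=\rho(\alpha_{r^{-1}}[\beta_z(f)(r')])$, giving exactly $(\hrho[\beta_z(f)]\hxi)(r)$.

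Finally, that $\beta=\beta_1$ restricts to an automorphism of $\algA$: by Hypothesis~\ref{hyp-dirac}-\ref{hyp-comp-D-alpha}, \eqref{eq-hyp-p(r)}, the map $r\mapsto p(r)^{\pm1}f(r)$ lies in $\algA$ for every $f\in\algA$, so $\beta(\algA)\subset\algA$ and $\beta^{-1}(\algA)=\beta_{-1}(\algA)\subset\algA$; combined with the multiplicativity and injectivity already established, $\beta|_\algA$ is an algebra automorphism of $\algA$. I expect the main technical obstacle to be bookkeeping the three $\alpha$-shifts and the argument-shift in the implementation identity so that the central factors and the cocycle relation combine in the right order; the algebra is routine but error-prone, and the cleanest route is to first prove everything for $\beta$ on $C_c(G,\bA)$ purely algebraically (since $p^z(r)$ is genuinely central in $M(\bA)$, all reorderings are legitimate) and only at the end impose the $\algA$-stability coming from Hypothesis~\ref{hyp-dirac}.
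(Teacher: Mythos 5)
Your route is the same as the paper's: well-definedness of $\beta_z$ on $C_c(G,\bA)$ from Lemma \ref{group of cocycles}, multiplicativity from centrality plus the cocycle identity $p^z(r')\,\alpha_{r'}[p^z(r'^{-1}r)]=p^z(r)$, the involution identity from $(p^{-\bar z})^*=p^{-z}$ and $\alpha_r[p^{-z}(r^{-1})]=p^z(r)$, the implementation by direct computation on $C_c(G,H)$, and the restriction to $\algA$ from \eqref{eq-hyp-p(r)}. The only place where what you wrote would not close is the implementation step, where your indices are off. Since $(\htheta^{-z}\hxi\,)(s)=\rho(\alpha_{s^{-1}}[p^{-z}(s)])\,\hxi(s)=\rho(p^{z}(s^{-1}))\,\hxi(s)$, the factor appearing at $s=r'^{-1}r$ is $p^{z}(r^{-1}r')$, and the collapse you need is $\alpha_{r^{-1}}[p^z(r)]\,p^z(r^{-1}r')=\alpha_{r^{-1}}[p^z(r')]$, i.e.\ $p^z(r)\,\alpha_r[p^z(r^{-1}r')]=p^z(r')$, which is just the cocycle relation at $r\cdot(r^{-1}r')=r'$ raised to the power $z$. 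The identity you propose instead, $p^z(r)\,\alpha_{r'}[p^{-z}(r^{-1}r')]=\alpha_{r'}[p^z(r'^{-1}r)]$, and the integrand you land on, $\alpha_{r^{-1}}[p^z(r'^{-1}r)f(r')]$, are both incorrect: $\alpha_{r'}[p^z(r'^{-1}r)]=p^{-z}(r')\,p^z(r)$, which is not $p^z(r')$, and $p^z(r'^{-1}r)f(r')\neq\beta_z(f)(r')=p^z(r')f(r')$. With the corrected identity the integrand becomes $\alpha_{r^{-1}}[p^z(r')f(r')]=\alpha_{r^{-1}}[\beta_z(f)(r')]$ and the argument finishes exactly as in the paper; everything else in your proposal is sound.
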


A priori, $\beta_z$ does not always extend to an automorphism of $G \ltimes_{\alpha, \red} \bA$, as seen in the affine case, see Section \ref{General considerations}.

\begin{proof}
From Lemma~\ref{group of cocycles}, we know that $p^z \in Z^1(G, Z(A)^\times) \subset Z^1(G,M(\bA))$, so that for any $f \in C_c(G, \bA)$, $\beta_z(f) \in C_c(G, \bA)$.

Let us show that $\beta_z$ is an automorphism.  For $f,g \in C_c(G, \bA)$, one has
\begin{align*}
\beta_z(f \star_\alpha g) (r)
&= p^z(r) (f \star_\alpha g)(r)
= p^z(r) \int_G \dd\mu_G(r') \, f(r')\, \alpha_{r'}[g(r'^{-1} r)].
\end{align*}
On the other hand,
\begin{align*}
(\beta_z(f) \star_\alpha \beta_z(g))(r)
&= \int_G \dd\mu_G(r') \,   p^z(r') f(r')\, \alpha_{r'} [p^z(r'^{-1} r) \,g(r'^{-1} r)]
\\
&= \int_G \dd\mu_G(r') \,  p^z(r')\, \alpha_{r'} [ p^z(r'^{-1} r)] \, f(r') \,\alpha_{r'}[g(r'^{-1} r)]
\\
&= \int_G \dd\mu_G(r') \,  p^z(r) f(r') \,\alpha_{r'}[g(r'^{-1} r)],
\end{align*}
so that $\beta_z(f \star_\alpha g) = \beta_z(f) \star_\alpha \beta_z(g)$. Moreover
\begin{align*}
\beta_z(f^*)(r)&=p^z(r)\,f^*(r)
=p^z(r)\,\Delta_G(r^{-1})\,\alpha_r[f(r^{-1})^*]
=\Delta_G(r^{-1})\,\alpha_r[p(r^{-1})^{-z}\,f(r^{-1})^*]\\
&=\Delta_G(r^{-1})\,\alpha_r([p(r^{-1})^{-\bar{z}}\,f(r^{-1})]^*)
=\Delta_G(r^{-1})\,\alpha_r([\beta_{-\bar{z}}(f)(r^{-1})]^*)=[\beta_{-\bar{z}}(f)]^*(r).
\end{align*}
Notice that $(\htheta^{-z} \hxi\,)(r) = \alpha_{r^{-1}}[p(r)^{-z}] \, \hxi(r) = p^z(r^{-1})\, \hxi(r)$ by \eqref{cocycle relation}. 
\\
For $\hxi \in C_c(G, H)$, $f \in C_c(G, \bA)$ and $r \in G$, 
\begin{align*}
(\htheta^z \,\hrho(f)\, \htheta^{-z} \,\hxi\,)(r) 
&=  \int_G \dd\mu_G(r') \, \alpha_{r^{-1}}[p^z(r)] \, \alpha_{r^{-1}}[f(r')] \, p^z(r^{-1} r')\, \hxi(r'^{-1} r)
\\
&= \int_G \dd\mu_G(r') \, \alpha_{r^{-1}} \big[p^z(r) \, \alpha_r[p^z(r^{-1} r')] \,f(r')\big] \, \hxi(r'^{-1} r)
\\
&= \int_G \dd\mu_G(r') \, \alpha_{r^{-1}}[p^z(r') f(r')] \, \hxi(r'^{-1} r)
= \int_G \dd\mu_G(r') \, \alpha_{r^{-1}} [\beta_z(f)(r') ] \, \hxi(r'^{-1} r).
\end{align*}
This proves the implementation.
\\
By Hypothesis~\ref{hyp-dirac}-\ref{hyp-comp-D-alpha}, $r\mapsto p(r)^{\pm 1}\,f(r)$ is in $\algA$ for $f \in \algA$, so $\beta \vc \beta_1$ reduces to an automorphism of $\algA$.
\end{proof}

\begin{remark}\label{rmk-betaz}
Hypothesis~\ref{hyp-dirac}-\ref{hyp-comp-D-alpha} is essential to ensure that $\beta = \beta_1$, and then that $\beta_n = \beta^n$ for any $n \in \gZ$, is an automorphism of $\algA$. It can happen that $\beta_z$ reduces to an automorphism of $\algA$ for a larger class of values of $z \in \gC$. This will be the case for instance in the affine case, see Section \ref{sec-extended-affine}.
\end{remark}

\begin{proposition}
\label{prop-twistcom}
For any $f \in\algA$, the twisted commutator $[\DD, \pi(f)]_\beta \vc \DD\, \pi(f) - \pi (\beta[f]) \DD$ extends to a bounded operator on $\caH$ and $\pi(f) \Dom(\DD) \subset \Dom(\DD)$.
\end{proposition}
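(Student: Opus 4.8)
The plan is to reduce the statement to the two lemmas already available for the pieces of $\DD$ and to the domain hypothesis. Since $\pi=\hrho\otimes\Id_{\gC^2}$ and $\beta$ acts only on the $\hrho$-factor, while $\DD=\hD\otimes\gamma^1+\caT_{\cone,\ct}\otimes\gamma^2$, a direct computation on any dense subspace where the operators involved are defined gives
\[
[\DD,\pi(f)]_\beta=[\hD,\hrho(f)]_\beta\otimes\gamma^1+[\caT_{\cone,\ct},\hrho(f)]_\beta\otimes\gamma^2 ,
\]
where $[\hD,\hrho(f)]_\beta:=\hD\,\hrho(f)-\hrho(\beta f)\,\hD$ and similarly with $\caT_{\cone,\ct}$ in place of $\hD$; note that $\beta f\in\algA$ by Proposition~\ref{prop-beta}, so both $\hrho(f)$ and $\hrho(\beta f)$ make sense. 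Lemma~\ref{prop-hD} states that $[\hD,\hrho(f)]_\beta$ extends to a bounded operator on $\hH$ for every $f\in\algA$, and Lemma~\ref{prop-caT} gives the same for $[\caT_{\cone,\ct},\hrho(f)]_\beta$; since $\gamma^1,\gamma^2$ are bounded, the right-hand side — hence $[\DD,\pi(f)]_\beta$ — extends to a bounded operator on $\caH$.

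For the inclusion $\pi(f)\Dom(\DD)\subset\Dom(\DD)$, recall from Proposition~\ref{D et Theta selfadjoints} that $\DD$ is selfadjoint on $\Dom(\DD)=\caV^*\caY$. I would first produce a dense subspace $\mathcal{C}\subset\Dom(\DD)$ on which $\hD$, $\caT_{\cone,\ct}$, $\hrho(f)$ and $\hrho(\beta f)$ are all defined and which is mapped into $\Dom(\DD)$ by $\pi(f)$; the natural candidate is $\mathcal{C}:=\caV^*\big(Y_\algA\otimes\gC^2\big)$ with $Y_\algA$ the subspace of Hypothesis~\ref{hyp-dirac}-\ref{preservdom}. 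That $\mathcal{C}\subset\Dom(\DD)$ uses $Y_\algA\subset\hY$, the identity $D\pm i\ct\,\rho[p(r)]=(D\pm iT_p(r))\mp i\cone$, which matches the $L^2$-bounds defining $\hY$ with the one entering $\caY$ up to the bounded scalar $\cone$, and the inclusion $\hY\subset\hY_p$ obtained by adding the squares of the two bounds in~\eqref{hY}; and $\pi(f)\,\mathcal{C}\subset\Dom(\DD)$ is precisely what the property $\hrho(f)\,Y_\algA\subset\hY$ of Hypothesis~\ref{hyp-dirac}-\ref{preservdom} delivers, after unwinding the covariance relation~\eqref{eq-UaU*} used in the definitions~\eqref{Dchap} and~\eqref{thetahat}. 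Then for arbitrary $\xi\in\Dom(\DD)$ I would take $\xi_n\in\mathcal{C}$ with $\xi_n\to\xi$, $\DD\xi_n\to\DD\xi$; since
\[
\DD\,\pi(f)\,\xi_n=\pi(\beta f)\,\DD\,\xi_n+[\DD,\pi(f)]_\beta\,\xi_n
\]
and the right-hand side converges (both $\pi(\beta f)$ and, by the first part, $[\DD,\pi(f)]_\beta$ being bounded), closedness of $\DD$ forces $\pi(f)\xi\in\Dom(\DD)$, and the displayed identity then holds on all of $\Dom(\DD)$.

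The genuine obstacle is the middle step, namely showing that $\pi(f)$ carries the $Y_\algA$-based subspace back into the selfadjointness domain $\caV^*\caY$ (and that this subspace is a core, which should follow from density of $Y_\algA$ in $\hH$ together with the explicit action of $\DD$ on it). The point is that $\hrho(f)$ does not commute with the unitary $\hU$ implementing the covariance, so that $\hU\,\hrho(f)\,\hU^*$ is not of the form $\hrho(g)$ but a convolution against the operators $\rho[f(r')]$ twisted by the $U_{r'}$'s; one must check that this operation sends a section lying pointwise in $\Dom(D)$ and satisfying the $L^2$-control of~\eqref{hY} to another such section. This is exactly what Hypothesis~\ref{hyp-dirac}-\ref{preservdom} is tailored to encode — and why the construction of $Y_\algA$ depends on the choice of $\algA$, see Section~\ref{Remarque-Frechet} — with the bound~\eqref{eq-hyp-comm-bounded} and the cocycle property~\eqref{eq-hyp-p(r)} of $p$ serving as the quantitative inputs, in the same spirit as the boundedness argument behind Lemma~\ref{prop-hD}.
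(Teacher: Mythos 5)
Your proof is correct and follows essentially the same route as the paper: the boundedness is reduced to Lemmas \ref{prop-hD} and \ref{prop-caT} via the decomposition $[\DD,\pi(f)]_\beta=[\hD,\hrho(f)]_\beta\otimes\gamma^1+[\caT_{\cone,\ct},\hrho(f)]_\beta\otimes\gamma^2$, and the domain inclusion is obtained from the core $\caV^*(Y_\algA\otimes\gC^2)$, Hypothesis~\ref{hyp-dirac}-\ref{preservdom}, boundedness of the twisted commutator and closedness of $\DD$. The only difference is presentational: the paper compresses your final approximation step into a citation of a twisted variant of \cite[Proposition 2.1]{FMR14}, whereas you write the Cauchy-sequence argument out explicitly and flag (without fully resolving, just as the paper does not) the point that the $\hU$-conjugation must be unwound to see that Hypothesis~\ref{hyp-dirac}-\ref{preservdom} really places $\pi(f)\,\caV^*(Y_\algA\otimes\gC^2)$ inside $\caV^*\caY$.
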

As outlined in \cite{FMR14}, the constraint $\pi(f)\Dom \DD \subset \Dom \DD$ can be crucial. Here, it is controlled by the Hypothesis \ref{hyp-dirac}-\ref{preservdom} and the inequality \eqref{eq-hyp-comm-bounded} which are used to proved that the twisted commutators $[\DD,\pi(f)]_\beta$ are bounded on the dense core $Y_\algA\otimes \gC^2$ of $\DD$ (see proof of Lemma \ref{prop-hD}).

\begin{proof}
To show that $[\DD,\pi(f)]_\beta$ is bounded for $f\in \algA$, it is sufficient to prove the boundedness of the twisted commutators of $\hD$ and $\caT_{\cone,\ct}$ on $\hH$, a result obtained in the next two lemmas.\\
Using a modified version of \cite[Proposition 2.1]{FMR14} for twisted commutators, which can be obtained using the following inequality in the original proof (with same notations),
\begin{align*}
\norm{\DD a x_n - \DD a x_m} &= \norm{\beta(a) \DD x_n - \beta(a) \DD x_m + [\DD, a]_\beta x_n - [\DD, a]_\beta x_m}\\
& \leq \norm{\beta(a)} \norm{\DD x_n -\DD x_m} + \norm{[\DD, a]_\beta} \norm{x_n - x_m},
\end{align*}
one gets $\pi(f) \Dom \DD \subset \Dom \DD$ for any $f \in \algA$. 
\end{proof}

\begin{lemma}
\label{prop-hD}
For any $f \in\algA$, the twisted commutator $[\hD, \hrho(f)]_\beta \vc \hD\, \hrho(f) - \hrho (\beta[f]) \hD$ extends to a bounded operator.
\end{lemma}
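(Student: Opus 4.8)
The goal is to show that for $f\in\algA$ the twisted commutator $[\hD,\hrho(f)]_\beta=\hD\hrho(f)-\hrho(\beta[f])\hD$ extends to a bounded operator on $\hH$. The natural strategy is to compute the action of this operator pointwise in $r\in G$ on vectors $\hxi$ lying in the dense core $Y_\algA$ (so that everything is well-defined by Hypothesis~\ref{hyp-dirac}-\ref{preservdom} and \ref{hyp-dirac}-\ref{hyp-comp-D-alpha}), extract an explicit kernel, and bound its norm using the estimate \eqref{eq-hyp-comm-bounded} together with the support condition coming from $f\in\algA$ having compact support $S_f$.

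First I would write out $(\hD\hrho(f)\hxi)(r)$ using \eqref{Dchap} and \eqref{eq-defrepresentationhrho}: it equals $\int_G\dd\mu_G(r')\,U_r^*DU_r\,\rho(\alpha_{r^{-1}}[f(r')])\,\hxi(r'^{-1}r)$. Using the covariance \eqref{eq-UaU*} I would move $U_r$ past $\rho(\alpha_{r^{-1}}[f(r')])$, turning $U_r\rho(\alpha_{r^{-1}}[f(r')])U_r^*=\rho[f(r')]$, so that the operator $U_r^*DU_r\rho(\alpha_{r^{-1}}[f(r')])=U_r^*D\rho[f(r')]U_r$; then insert $z(r)z(r)^{-1}$ and use \eqref{eq-comp-D-alpha}, namely $U_rDU_r^*={z(r)^*}^{-1}Dz(r)^{-1}$, to rewrite $U_r^*DU_r=z(r)^*\,D\,z(r)$ conjugated appropriately — more precisely one wants to land on $U_r^*\,[D,\rho(f(r'))z(r)]z(r)^{-1}\,U_r$ plus a ``diagonal'' term that will match $\hrho(\beta[f])\hD$. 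The analogous computation for $(\hrho(\beta[f])\hD\,\hxi)(r)$, using $\beta(f)(r')=p(r')f(r')$ and $\rho[p(r')]=z(r')z(r')^*$, should produce exactly the term $\int_G\dd\mu_G(r')\,\rho(\alpha_{r^{-1}}[f(r')])\,U_{r'^{-1}r}^*D\,U_{r'^{-1}r}\,\hxi(r'^{-1}r)$ rewritten so as to cancel the diagonal piece, mirroring the formal computation already carried out in Section~\ref{bifurcation C-M} for $(U\ltimes\rho)$. The upshot should be
\[
([\hD,\hrho(f)]_\beta\,\hxi)(r)=\int_G\dd\mu_G(r')\,U_r^*\,\big[D,\rho(f(r'))\,z(r)\big]\,z(r)^{-1}\,U_r\,\hxi(r'^{-1}r),
\]
or a close variant, i.e.\ an integral operator whose kernel at $(r,r')$ is a translate of the bounded operator $[D,\rho(f(\cdot))z(\cdot)]z(\cdot)^{-1}$ supported where the argument lies in $S_f$.

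Next I would estimate the $\hH$-norm. Writing $\norm{[\hD,\hrho(f)]_\beta\hxi}_{\hH}^2=\int_G\dd\mu_G(r)\,\norm{\int_G\dd\mu_G(r')\,(\cdots)\hxi(r'^{-1}r)}_H^2$, apply the Cauchy--Schwarz / Minkowski inequality for the inner integral using $\norm{U_r^*[D,\rho(f(r'))z(r)]z(r)^{-1}U_r}_{\caB(H)}=\norm{[D,\rho(f(r'))z(r)]z(r)^{-1}}_{\caB(H)}\le M_{f,z}\,\chi_{S_f}(\text{appropriate argument})$ by \eqref{eq-hyp-comm-bounded}. The characteristic function of the compact set $S_f$ makes the inner $r'$-integral range over a set of finite Haar measure $\mu_G(S_f)$, so one gets a bound of the shape $\norm{[\hD,\hrho(f)]_\beta\hxi}_{\hH}^2\le M_{f,z}^2\,\mu_G(S_f)\int_G\dd\mu_G(r')\chi_{S_f}(\cdots)\int_G\dd\mu_G(r)\,\norm{\hxi(r'^{-1}r)}_H^2$, and after the left-invariance change of variables $r\mapsto r'r$ the last factor is $\norm{\hxi}_{\hH}^2$, yielding $\norm{[\hD,\hrho(f)]_\beta}_{\caB(\hH)}\le M_{f,z}\,\mu_G(S_f)$ (up to a harmless constant). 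Since this bound holds on the dense core $Y_\algA$ on which all the manipulations are justified, $[\hD,\hrho(f)]_\beta$ extends to a bounded operator on $\hH$.

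\textbf{Main obstacle.} The delicate point is \emph{not} the final Schur-type estimate but the bookkeeping in the intermediate algebraic identity: the operators $D$, $U_r$, $z(r)$ are all unbounded or only defined on cores, and \eqref{eq-comp-D-alpha} holds only on a common core, so each rearrangement (inserting $z(r)z(r)^{-1}$, commuting $U_r$ through $\rho$, isolating the commutator $[D,\rho(f(r'))z(r)]$) must be checked to preserve the relevant domains. This is exactly what Hypothesis~\ref{hyp-dirac}-\ref{preservdom} (density of $\hY$ and $Y_\algA$, with $\hrho(f)Y_\algA\subset\hY$) is designed to control: I would phrase the computation as an identity of quadratic forms, or of operators, valid on $Y_\algA$, and only at the end invoke density to extend. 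A secondary subtlety is making sure the argument of $\chi_{S_f}$ after the change of variables is indeed $r'$ (or its inverse), so that the measure bound $\mu_G(S_f)$ is correct and no factor of the modular function $\Delta_G$ is lost — this is why the computation must be done carefully rather than formally.
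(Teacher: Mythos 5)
Your plan is essentially the paper's proof: expand both terms pointwise, use \eqref{eq-UaU*} and \eqref{eq-comp-D-alpha} together with $\rho[p(r')]=z(r')\,z(r')^*$ to reduce the twisted commutator to the integral operator with kernel $U_r^*\,[D,\rho(f(r'))\,z(r')]\,z(r')^{-1}\,U_r$ acting on $\hxi(r'^{-1}r)$, and then run the H\"older/Schur estimate with \eqref{eq-hyp-comm-bounded} and the compact support of $f$ to get $\norm{[\hD,\hrho(f)]_\beta}_{\caB(\hH)}\le M_{f,z}\,\mu_G(S_f)$. The only slip is in your displayed formula, where $z$ carries the argument $r$ instead of $r'$: the cancellation actually produces $[D,\rho(f(r'))\,z(r')]\,z(r')^{-1}$ with \emph{matching} arguments, which is precisely what \eqref{eq-hyp-comm-bounded} controls (with mismatched arguments the hypothesis would not apply), and the modular factor $\Delta_G(r')^{-1}$ you rightly worry about does appear in the kernel but cancels against $\mu_G(S_f\,r')=\Delta_G(r')\,\mu_G(S_f)$ in the final integration over $r$.
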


\begin{proof}
For $f \in \algA \subset C_c(G, A)$ and $\hxi \in Y_\algA$, one has (forgetting the writing of few $\rho$'s)
\begin{align*}
(\hD \,\hrho(f) \,\hxi\,)(r)
&= U_r^* D U_r \int_G \dd\mu_G(r') \, \alpha_{r^{-1}}[f(r')]\, \hxi(r'^{-1} r)
= U_r^* \int_G \dd\mu_G(r') \, D f(r') \,U_r\, \hxi(r'^{-1} r).
\end{align*}
On the other hand, using successively \eqref{eq-UaU*} and \eqref{eq-comp-D-alpha},
\begin{align*}
([\hrho(\beta(f)] \hD \,\hxi \,)(r)
&= \int_G \dd\mu_G(r') \,  \alpha_{r^{-1}}[ p(r') f(r') ] \, U_{r'^{-1} r}^* D U_{r'^{-1} r} \, \hxi(r'^{-1} r)
\\
&= U_r^* \int_G \dd\mu_G(r') \,  p(r') f(r') U_{r'^{-1}}^* D U_{r'^{-1}} U_{ r} \, \hxi(r'^{-1} r)
\\
&= U_r^* \int_G \dd\mu_G(r') \,  p(r') f(r')  {z(r')^*}^{-1} D z(r')^{-1} U_{ r} \, \hxi(r'^{-1} r)
\\
&= U_r^* \int_G \dd\mu_G(r') \,  f(r') z(r') D z(r')^{-1} U_{ r} \, \hxi(r'^{-1} r)
\end{align*}
so that
\begin{align}
\label{eq-hD-hrho(f)}
([\hD, \hrho(f)]_\beta\, \hxi\,)(r) 
&= U_r^* \int_G \dd\mu_G(r') \,  [D, f(r') z(r')] \,z(r')^{-1} U_{ r} \, \hxi(r'^{-1} r)
\\
&= \int_G \dd\mu_G(r') \,  K(r,r') \,\hxi(r') \nonumber
\end{align}
with
\begin{equation*}
K(r,r')\vc \Delta_G(r'^{-1}) \,U_r^* \,[D, f(r r'^{-1}) z(r r'^{-1}) ] \, z(r r'^{-1})^{-1} \, U_r.
\end{equation*}
Then, using equation \eqref{eq-hyp-comm-bounded} of Hypothesis \ref{hyp-dirac}, 
\begin{equation*}
\norm{K(r,r') \,\hxi(r')}_{H} \leq M_{f,z}\, \Delta_G(r')^{-1} \, \chi_{S_f}(r r'^{-1}) \, \norm{\hxi(r')}_{H} \text{ for any $r,r' \in G.$}
\end{equation*}
Thus
\begin{align*}
\norm{[\hD, \hrho(f)]_\beta\,  \hxi \,}^2_{\hH} 
	&= \int_G \dd\mu_G(r) \norm{( [\hD, \hrho(f)]_\beta\,  \hxi \,)(r)}_{H}^2  
	=\int_G \dd\mu_G(r)\,\norm{\int_G \dd\mu_G(r') \,K(r,r')\, \hxi(r')}_{H}^2  
	\\
	&\leq \int_G \dd\mu_G(r) \big[ \int_G  \dd\mu_G(r') \,\norm{K(r,r')\, \hxi(r')}_{H} \big]^2 	\\
	&\leq M^2_{f,z} \int_G \dd\mu_G(r)\, \big[ \int_G  \dd\mu_G(r')\, \Delta_G(r')^{-1} \, \chi_{S_f}(r r'^{-1}) \, \norm{\hxi(r')}_{H} \big]^2 .
\end{align*}
Using the Hölder inequality $(\int \abs*{g_1} \, \abs*{g_2} )^{2} \leq \int \abs*{g_1}^2 \times\int \abs*{g_2}^2$), we get
\begin{equation*}
\big[ \int_G \dd\mu_G(r')\Delta_G(r')^{-1} \, \chi_{S_f}(r r'^{-1}) \, \norm{\hxi(r')}_{H} \big]^2
	\leq 
	\begin{multlined}[t]
		\underbrace{\big[ \int_G \dd\mu_G(r') \,\Delta_G(r')^{-1} \, \chi_{S_f}(r r'^{-1}) \big]}_{= \mu_G(S_f)}
		\\
		\hspace{-0.5cm}
		\times
		\big[ \int_G \dd\mu_G(r')\,\Delta_G(r')^{-1} \, \chi_{S_f}(r r'^{-1}) \, \norm{\hxi(r')}_{H}^2 \big],
	\end{multlined}
\end{equation*}
thus
\begin{align*}
\norm{[\hD, \,\hrho(f)]_\beta\,  \hxi\,}^2_{\hH} 
	&\leq  
	\begin{multlined}[t]
		M^2_{f,z} \, \mu_G(S_f)
		\times \int_G  \dd\mu_G(r')\,\underbrace{\Delta_G(r')^{-1} \,
			\big[ \int_G  \dd\mu_G(r)\,\chi_{S_f}(r r'^{-1})  \big]}_{= \mu_G(S_f)}
			 \norm{\hxi(r')}_{H}^2
	\end{multlined}
	\\
	&\leq M^2_{f,z} \, \mu_G(S_f)^2 \,  \norm{\hxi\,}_{\hH}^2\,.
\end{align*}
Thus $[\hD, \hrho(f)]_\beta$ extends to a bounded operator on $\hH$.
\end{proof}

\begin{lemma}
\label{prop-caT}
For any $f \in\algA$, we have
\begin{align}
\label{combounded}
[\caT_{\cone,\,\ct},\,\hrho(f)]_\beta=\cone\hrho(f)-\cone\hrho(\beta[f]),
\end{align}
so this twisted commutator is a bounded operator.
\end{lemma}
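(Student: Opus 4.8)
The plan is to compute the twisted commutator $[\caT_{\cone,\ct},\hrho(f)]_\beta$ directly from the definition $\caT_{\cone,\ct}=\cone\bbbone+\ct\,\htheta$, exploiting the fact that the $\htheta$-part is \emph{exactly} the implementation of the twist $\beta$ established in Proposition~\ref{prop-beta}. Writing out
\[
[\caT_{\cone,\ct},\hrho(f)]_\beta=\caT_{\cone,\ct}\,\hrho(f)-\hrho(\beta[f])\,\caT_{\cone,\ct}
=\cone\big(\hrho(f)-\hrho(\beta[f])\big)+\ct\big(\htheta\,\hrho(f)-\hrho(\beta[f])\,\htheta\big),
\]
the whole point is that the last bracket vanishes: by \eqref{eq-beta-p} (equivalently the $z=1$ case of Proposition~\ref{prop-beta}), one has $\hrho(\beta[f])\,\hxi=\htheta\,\hrho(f)\,\htheta^{-1}\hxi$ on $C_c(G,H)$, hence $\hrho(\beta[f])\,\htheta\,\hxi=\htheta\,\hrho(f)\,\hxi$ for $\hxi\in\htheta^{-1}C_c(G,H)$. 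So I would first fix a suitably dense domain --- $C_c(G,H)$ is stable under $\htheta^{\pm1}$ by Lemma~\ref{lem-densedomaintheta}, and $\hrho(\algA)$ sends it where needed --- verify on that domain the identity $\htheta\,\hrho(f)=\hrho(\beta[f])\,\htheta$, and conclude \eqref{combounded}.

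Concretely, the cleanest route is: let $\hxi\in C_c(G,H)$ and set $\heta\vc\htheta^{-1}\hxi\in C_c(G,H)$ (using Lemma~\ref{lem-densedomaintheta}); then $\htheta\,\hrho(f)\,\heta=\htheta\,\hrho(f)\,\htheta^{-1}\hxi=\hrho(\beta[f])\,\hxi=\hrho(\beta[f])\,\htheta\,\heta$ by \eqref{eq-beta-p}, so $\htheta\,\hrho(f)$ and $\hrho(\beta[f])\,\htheta$ agree on the dense subspace $C_c(G,H)$. Substituting into the displayed expansion gives $[\caT_{\cone,\ct},\hrho(f)]_\beta=\cone\hrho(f)-\cone\hrho(\beta[f])$ on that subspace. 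Since both $\hrho(f)$ and $\hrho(\beta[f])=\hrho(\beta[f])$ are bounded operators ($\beta[f]\in\algA\subset C_c(G,\bA)$ by Proposition~\ref{prop-beta}, and $\hrho$ is a $*$-representation of $C_c(G,\bA)$), the right-hand side is bounded, so $[\caT_{\cone,\ct},\hrho(f)]_\beta$ extends to a bounded operator on $\hH$, namely $\cone\big(\hrho(f)-\hrho(\beta[f])\big)$.

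There is essentially no serious obstacle here --- this lemma is genuinely a bookkeeping consequence of how $\htheta$ and $\beta$ were set up --- but the one point that deserves care is the \emph{domain} argument: $\caT_{\cone,\ct}$ and $\htheta$ are unbounded, so the manipulation $\caT_{\cone,\ct}\hrho(f)-\hrho(\beta[f])\caT_{\cone,\ct}$ is only literally meaningful on $\Dom(\caT_{\cone,\ct})\cap\hrho(f)^{-1}\Dom(\caT_{\cone,\ct})$, and one must check that $C_c(G,H)$ (or $\hU^*C_c(G,\Dom(D))\subset\hY$, or $Y_\algA$) is contained in this intersection and is $\hrho(f)$-stable in the relevant sense. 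This is exactly what Lemma~\ref{lem-densedomaintheta} together with Hypothesis~\ref{hyp-dirac}-\ref{preservdom} provides, so I would simply invoke those. One should also note that no appeal to Hypothesis~\ref{hyp-dirac}-\ref{hyp-comm-bounded} is needed for $\caT_{\cone,\ct}$: unlike the $\hD$-commutator of Lemma~\ref{prop-hD}, here the unbounded part cancels identically and only the harmless difference $\cone(\hrho(f)-\hrho(\beta[f]))$ survives.
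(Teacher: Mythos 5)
Your proof is correct and follows essentially the same route as the paper: decompose $\caT_{\cone,\,\ct}=\cone\bbbone+\ct\,\htheta$, observe that the $\htheta$-part of the twisted commutator vanishes because $\htheta$ implements $\beta$ (Proposition~\ref{prop-beta}), and note that the surviving term $\cone\big(\hrho(f)-\hrho(\beta[f])\big)$ is bounded since $\beta[f]\in\algA$. Your added attention to the domain on which the cancellation $\htheta\,\hrho(f)=\hrho(\beta[f])\,\htheta$ is first verified is a welcome refinement of the paper's terser argument, not a deviation from it.
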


\begin{proof}
The operator $[\bbbone,\,\hrho(f)]_\beta=\hrho(f) - \hrho(\beta[f])$ is bounded and moreover we get the simple relation $[\htheta,\,\hrho(f)]_\beta = \htheta\,\hrho(f)-\hrho(\beta[f])\,\htheta=0.$
\end{proof}

This ends the proof of Theorem \ref{thm-twisted-triple}. Notice that for $n\in \gN$ and $f \in \algA$, one has $[\htheta^n,\,\hrho(f)]_\beta = \htheta^n[\hrho(f)-\hrho(\beta^{1-n}[f])]$, which shows that \eqref{eq-DH} is the most generic polynomial expression in $\htheta$ which ensures that the twisted commutator is bounded.

\subsection{\texorpdfstring{The special case $z(r) = \vartheta(r)\, \bbbone$}{The special case z(r)=vartheta(r) 1}}
\label{sec-special-case-vartheta}

This section is motivated by the scaling automorphisms considered for instance in \cite{Moscovici2010} and our example of the affine group in Sections \ref{sec-example-affine-group} and \ref{sec-example-discrete-affine-group}. 

We suppose that the Hypothesis~\ref{hyp-dirac} are satisfied and assume moreover that $z(r) = \vartheta(r)\, \bbbone$, with $\vartheta(r) \in \gC^\times$ for any $r \in G$. This assumption permits to get new results. The one-cocycle $p$ is then given by $p(r) = \abs{\vartheta(r)}^2 \bbbone$, where $r \mapsto \abs{\vartheta(r)}^2$ is a continuous group homomorphism. We identify $\bbbone \in \caB(H)$ and $\bbbone \in M(\bA)$ and we omit few $\rho$'s.

Since $D z(r) = z(r) D$, we get from \eqref{eq-comp-D-alpha}
\begin{equation}
\label{eq-comp-D-alpha-simplified}
U_r \,D \,U_r^* = [z(r) z(r)^*]^{-1} D = \rho[p(r)^{-1}] D = \abs{\vartheta(r)}^{-2} D.
\end{equation}
 The constraint \eqref{eq-comp-D-alpha-simplified} has some implication: if $(A,H,D)$ is a unital spectral triple, then $D$ has a discrete spectrum and when $\lambda$ is a non-zero eigenvalue of $D$, the continuous map $r\in G \mapsto \abs{\vartheta(r)}^2 \lambda\in\text{Spectrum}(D)$ has a discrete image. This implies that $\vartheta$ is constant on the connected components of $G$. \\
 Moreover, equation \eqref{eq-comp-D-alpha-simplified} implies that  \eqref{Dchap} simplifies to  
 \begin{equation*}
(\hD \,\hxi\,)(r) = \abs{\vartheta(r)}^2 D \,\hxi(r), \text{ for any $\xi \in C_c(G,Y)$}
\end{equation*}
and, since $(\htheta \,\hxi\,)(r) = \abs{\vartheta(r)}^2 \,\hxi(r)$,
\begin{equation*}
(\caT_{\cone,\,\ct}\, \hxi\,)(r) = [\cone  + \ct \abs{\vartheta(r)}^2 ] \,\hxi(r).
\end{equation*}
In particular, $\hD$ and $\caT_{\cone,\,\ct}$ commute and so the criteria of Proposition \ref{prop-compactness} can be used. From \eqref{eq-hD-hrho(f)}, one gets 
\begin{equation}
\label{eq-hD-hrho(f)-simplified}
(\,[\hD, \,\hrho(f)]_\beta \,\hxi\,)(r) = U_r^* \int_G \dd\mu_G(r') \,  [D, f(r')]\, U_{ r} \, \hxi(r'^{-1} r),
\end{equation}
and the relation \eqref{eq-hyp-comm-bounded} is automatically satisfied if we suppose that $a \in A \mapsto [D, \rho(a)] \in \caB(H)$ is continuous: then, by hypothesis on the original triple, $r \in G \mapsto [D, f(r)] \in \caB(H)$ is continuous, and since $f$ is compactly supported, just take
\begin{equation*}
M_{f,z} = \sup_{r \in S_f} \norm*{[D, \rho(f(r))]}_{\caB(H)} < \infty.
\end{equation*}

\begin{proposition}
\label{prop-hJ-operator}
Suppose that Hypothesis~\ref{hyp-dirac} are satisfied and $z(r) = \vartheta(r) \bbbone$ for  $\vartheta(r) \in \gC^\times$. Suppose that the original triple $(A,H,D)$ is real with  a reality operator $J : H \to H$ satisfying 
\begin{equation*}
JD = \epsilon DJ
\quad\text{and}\quad
J^2 = \epsilon',
\quad\text{for $\epsilon, \epsilon' \in \{\pm 1\}$,}
\end{equation*}
and
\begin{equation*}
J \,U_r = U_r \,J \quad\text{for any $r \in G$,}
\end{equation*}
and assume $\cone = \epsilon\, \ct$.

Then the conjugate-linear isometry
\begin{equation*}
(\hJ \,\hxi\,)(r) \vc \Delta_G(r)^{-\onehalf}\, U_{r^{-1}} \,J\, \hxi(r^{-1}),\quad \hxi \in C_c(G,H)
\end{equation*}
defines a reality operator $\caJ \vc \hJ \otimes \bbbone$ for the modular-type $\beta$-twisted spectral triple $(\algA, \pi,\DD)$, which satisfies, for $f,g\in \algA$,
\begin{equation*}
\caJ \,\DD = \epsilon \,\Theta^{-1} \DD\, \caJ,
\quad
\caJ\,^2 = \epsilon',
\quad
[\pi(f), \pi^\circ(g)] = 0,
\quad
\big[ \,[\DD,\pi(f)]_\beta, \,\pi^\circ(g)\big] = 0.
\end{equation*}
where $\Theta$ is defined in \eqref{defTheta} and $\pi^\circ(g) \vc \caJ\, \pi(g) \,\caJ$.
\end{proposition}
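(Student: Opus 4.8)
The plan is to verify in turn the four assertions of the proposition, working on the dense subspace $C_c(G,H)\otimes\gC^2$ of $\caH$ — on which $\DD$, $\Theta$, $\pi(f)$ and $\hJ$ are all given by explicit formulas — and extending the resulting identities by closedness where necessary.

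\emph{Step 1: $\caJ$ is an antiunitary with $\caJ^{\,2}=\epsilon'$.} First I would check that $\hJ$ maps $C_c(G,H)$ into itself: $r\mapsto\hxi(r^{-1})$ is continuous and compactly supported, $r\mapsto U_{r^{-1}}$ is strongly continuous and $r\mapsto\Delta_G(r)^{-1/2}$ is continuous, so $\hJ\hxi\in C_c(G,H)$; conjugate‑linearity is inherited from $J$. Since $U_{r^{-1}}$ is unitary and $J$ isometric,
\[
\norm{\hJ\hxi}_{\hH}^2=\int_G\dd\mu_G(r)\,\Delta_G(r)^{-1}\norm{\hxi(r^{-1})}_H^2=\int_G\dd\mu_G(r)\,\norm{\hxi(r)}_H^2=\norm{\hxi}_{\hH}^2
\]
by the substitution $r\mapsto r^{-1}$, so $\hJ$ extends to an isometry of $\hH$ and $\caJ=\hJ\otimes\bbbone$ to one of $\caH$. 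Using $\Delta_G(r^{-1})=\Delta_G(r)^{-1}$ one gets $(\hJ\hxi)(r^{-1})=\Delta_G(r)^{1/2}U_rJ\hxi(r)$, whence $(\hJ^{\,2}\hxi)(r)=U_{r^{-1}}JU_rJ\,\hxi(r)$; by $JU_r=U_rJ$ and $J^2=\epsilon'$ this is $\epsilon'\hxi(r)$. Thus $\hJ^{\,2}=\epsilon'$, $\hJ$ is invertible (hence antiunitary), $\caJ^{\,2}=\epsilon'$, and $\pi^\circ(g)=\caJ\pi(g)\caJ$ is a well‑defined bounded linear operator.

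\emph{Step 2: the relation with $\DD$.} Since $\DD=\hD\otimes\gamma^1+\caT_{\cone,\,\ct}\otimes\gamma^2$, $\Theta=\htheta\otimes\bbbone$ and $\caJ=\hJ\otimes\bbbone$, it suffices to establish on $C_c(G,\Dom(D))$ the two fibrewise identities $\hJ\,\hD=\epsilon\,\htheta^{-1}\hD\,\hJ$ and $\hJ\,\caT_{\cone,\,\ct}=\epsilon\,\htheta^{-1}\caT_{\cone,\,\ct}\,\hJ$, and then to tensor them against $\gamma^1$, $\gamma^2$. For the first, I would use that in this section $(\hD\hxi)(r)=\abs{\vartheta(r)}^2D\hxi(r)$ and $U_{r^{-1}}D=\abs{\vartheta(r^{-1})}^{-2}D\,U_{r^{-1}}$ (both from \eqref{eq-comp-D-alpha-simplified}), together with $JD=\epsilon DJ$ and $\abs{\vartheta(r^{-1})}^2=\abs{\vartheta(r)}^{-2}$ (as $\abs{\vartheta(\,\cdot\,)}^2$ is a homomorphism into $\gR_+^\times$); these combine to $(\hJ\hD\hxi)(r)=\epsilon\,D(\hJ\hxi)(r)=\epsilon\,(\htheta^{-1}\hD\hJ\hxi)(r)$. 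For the second, $\caT_{\cone,\,\ct}$ acts on the fibre over $r$ by multiplication by $\cone+\ct\abs{\vartheta(r)}^2$; the inversion in $\hJ$ turns this into multiplication by $\cone+\ct\abs{\vartheta(r)}^{-2}$, and the hypothesis $\cone=\epsilon\ct$ makes $\cone+\ct\,t=\epsilon(\cone\,t+\ct)$ hold for $t=\abs{\vartheta(r)}^{-2}$, which is exactly $\hJ\caT_{\cone,\,\ct}=\epsilon\,\htheta^{-1}\caT_{\cone,\,\ct}\,\hJ$. This yields $\caJ\DD=\epsilon\,\Theta^{-1}\DD\,\caJ$ on the core, and it extends to $\Dom(\DD)$ by selfadjointness of $\DD$ and $\Theta$ (Proposition~\ref{D et Theta selfadjoints}).

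\emph{Step 3: order zero and order one.} Because $\pi(f)=\hrho(f)\otimes\bbbone$ and $\pi^\circ(g)=(\hJ\hrho(g)\hJ)\otimes\bbbone$, the last two identities are statements on $\hH$. Starting from \eqref{eq-defrepresentationhrho}, moving $J$ through the $U$'s with $JU_r=U_rJ$, absorbing the $\alpha$‑twists via the covariance \eqref{eq-UaU*} (the factor $U_{r'}$ conjugates $\alpha_{(rr')^{-1}}$ into $\alpha_{r^{-1}}$), and using $\Delta_G(r^{-1})=\Delta_G(r)^{-1}$, one finds that $\hJ\hrho(g)\hJ$ equals $\epsilon'$ times the ``right‑handed'' operator $\hxi\mapsto\int_G\dd\mu_G(r')\,\Delta_G(r')^{1/2}\,\rho^\circ(g(r'))\,U_{r'}\,\hxi(\,\cdot\,r')$, with $\rho^\circ(b)\vc J\rho(b)J^{-1}$. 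Composing with $\hrho(f)$ in both orders, the two products agree precisely because $[\rho(a),\rho^\circ(b)]=0$ for $a,b\in A$ — the order‑zero axiom of the real triple $(A,H,D)$ (note $\alpha_{r^{-1}}[f(r')]\in A$ since $\alpha$ preserves $A$) — so $[\pi(f),\pi^\circ(g)]=0$. For the last relation, by Lemma~\ref{prop-caT} the $\caT_{\cone,\,\ct}$‑part of $[\DD,\pi(f)]_\beta$ is $\cone\bigl(\pi(f)-\pi(\beta[f])\bigr)$, which commutes with $\pi^\circ(g)$ by the previous point; and by \eqref{eq-hD-hrho(f)-simplified} the $\hD$‑part has the same shape as $\hrho$ with the fibrewise $\rho(f(r'))$ replaced by $[D,\rho(f(r'))]$, so the same computation gives commutation with $\hJ\hrho(g)\hJ$ as soon as $[\,[D,\rho(a)],\rho^\circ(b)\,]=0$, i.e. the order‑one axiom. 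Hence $[\,[\DD,\pi(f)]_\beta,\pi^\circ(g)\,]=0$.

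\emph{Expected main obstacle.} Steps~1 and~2 are essentially mechanical once the simplifications $(\hD\hxi)(r)=\abs{\vartheta(r)}^2D\hxi(r)$, $(\htheta\hxi)(r)=\abs{\vartheta(r)}^2\hxi(r)$ of this section are used. The delicate part is Step~3: one must unwind the conjugation of the integrated representation $\hrho$ by $\hJ$ and recognise the result as a genuine element of the commutant of $\hrho(\algA)$ — checking that the $L^2(G)$‑factor produces commuting left and right translations and that the $H$‑factor produces exactly $\rho^\circ(\,\cdot\,)$ — all while keeping careful track of the modular‑function factors and the $\alpha$‑twists, so that the order‑zero and order‑one axioms of $(A,H,D)$ can be invoked cleanly.
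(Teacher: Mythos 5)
Your proposal is correct and follows essentially the same route as the paper's proof: the same antiunitarity and $\hJ^{\,2}=\epsilon'$ computation, the same fibrewise identities $\hJ\,\hD=\epsilon\,\htheta^{-1}\hD\,\hJ$ and $\hJ\,\caT_{\cone,\,\ct}=\epsilon\,\htheta^{-1}\caT_{\cone,\,\ct}\,\hJ$ exploiting $\cone=\epsilon\,\ct$, and the same explicit unwinding of $\hJ\hrho(g)\hJ$ into the right-handed operator $\int_G\dd\mu_G(r')\,\Delta_G(r')^{1/2}J\rho(g(r'))J\,U_{r'}\,\hxi(\,\cdot\,r')$ (your $\epsilon'\,\rho^\circ$ normalization with $\rho^\circ(b)=J\rho(b)J^{-1}$ coincides with the paper's $J\rho(b)J$), followed by the order-zero and order-one axioms of the original real triple.
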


\begin{proof}
When $\hxi_1, \hxi_2 \in C_c(G, H)$, one has
\begin{align*}
\left\langle \hJ\, \hxi_1, \hJ \,\hxi_2 \right\rangle_{\hH}
&= \int_G \dd\mu_G(r) \, \left\langle (\hJ \,\hxi_1)(r), (\hJ \,\hxi_2)(r) \right\rangle_{H}
\\
&= \int_G \dd\mu_G(r) \, \Delta_G(r)^{-1} \, \left\langle U_{r^{-1}} J \hxi_1(r^{-1}), U_{r^{-1}} J \hxi_2(r^{-1}) \right\rangle_{H}
\\
&= \int_G \dd\mu_G(r) \, \Delta_G(r)^{-1} \, \left\langle J \,\hxi_1(r^{-1}), J \,\hxi_2(r^{-1}) \right\rangle_{H}
= \int_G \dd\mu_G(r) \, \left\langle J \,\hxi_1(r), J \,\hxi_2(r) \right\rangle_{H}
\\
&= \int_G \dd\mu_G(r) \, \left\langle \hxi_2(r), \hxi_1(r) \right\rangle_{H}
= \left\langle \hxi_2, \hxi_1 \right\rangle_{\hH},
\end{align*}
so that $\hJ$ is a  conjugate-linear isometry. One has
\begin{align*}
(\hJ\, \hJ \,\hxi\,)(r) 
&= \Delta_G(r)^{-\onehalf} U_{r^{-1}} J \Delta_G(r^{-1})^{-\onehalf} U_r J\, \hxi(r)= J^2\, \hxi(r) = \epsilon'\, \hxi(r).
\end{align*}
On the one hand, for $\hxi \in C_c(G, Y)$,
\begin{align*}
(\hJ \,\hD \,\hxi\,)(r)
&= \Delta_G(r)^{-\onehalf} U_{r^{-1}} J \abs{\vartheta(r^{-1})}^2 D \,\hxi(r^{-1})
= \epsilon\, \Delta_G(r)^{-\onehalf} \abs{\vartheta(r^{-1})}^2 U_{r^{-1}} D J \,\hxi(r^{-1})
\\
&= \epsilon \,\Delta_G(r)^{-\onehalf} \abs{\vartheta(r^{-1})}^2 \abs{\vartheta(r^{-1})}^{-2} D U_{r^{-1}} J \,\hxi(r^{-1})
= \epsilon\, \Delta_G(r)^{-\onehalf} D U_{r^{-1}} J\, \hxi(r^{-1}),
\end{align*}
and on the other hand,
\begin{align*}
( \hD \,\hJ \,\hxi\,)(r) 
&= \abs{\vartheta(r)}^2 D \Delta_G(r)^{-\onehalf} U_{r^{-1}} J \,\hxi(r^{-1}),
\end{align*}
so that $\hJ \,\hD = \epsilon \,\htheta^{-1} \,\hD \,\hJ$. Now,
\begin{align*}
(\hJ \,\caT_{\cone,\,\ct}\, \hxi\,)(r)
&= \Delta_G(r)^{-\onehalf} U_{r^{-1}} J \,[\cone  + \ct \abs{\vartheta(r^{-1})}^2 ] \,\hxi(r^{-1})
\\
&= \abs{\vartheta(r)}^{-2} [\epsilon\, \ct\, \abs{\vartheta(r)}^2  + \epsilon \,\cone]\, \Delta_G(r)^{-\onehalf} U_{r^{-1}} J \,\hxi(r^{-1}),
\end{align*}
while, for $\hxi \in C_c(G, H)$,
\begin{align*}
(\caT_{\cone,\,\ct} \,\hJ\, \hxi\,)(r)
&= [ \cone + \ct \abs{\vartheta(r)}^2 ] \,\Delta_G(r)^{-\onehalf} U_{r^{-1}} J \,\hxi(r^{-1}),
\end{align*}
so that $\hJ \,\caT_{\cone,\,\ct} = \epsilon \,\htheta^{-1} \,\caT_{\cone,\,\ct}\, \hJ$. Finally, this proves $\caJ \DD = \epsilon \,\Theta^{-1}\,\DD \caJ$. 

Let us use the notation $\hrho^{\,\circ}(g) \vc \hJ\, \hrho(g) \,\hJ$ for any $g \in C_c(G, \bA)$. Then one has successively (omitting some $\hrho$'s)
\begin{align*}
(\hrho^{\,\circ}(g) \,\hxi\,)(r)
&= \int_G \dd\mu_G(r') \, \Delta_G(r)^{-\onehalf} U_{r^{-1}} J \alpha_r[g(r')] \Delta_G(r'^{-1} r^{-1})^{-\onehalf} U_{r r'} J\, \hxi(r r')
\\
&= \int_G \dd\mu_G(r') \, \Delta_G(r')^{\onehalf}  J g(r') J U_{r'}\, \hxi(r r').
\\
(\hrho(f) \hrho^{\,\circ}(g)\, \hxi\,)(r)
&= \int_G \dd\mu_G(r') \, \alpha_{r^{-1}}[f(r')] \, (\hrho^{\,\circ}(g)\, \hxi\,)(r'^{-1} r)
\\
&= \int_G \dd\mu_G(r') \dd\mu_G(r'') \, \Delta_G(r'')^{\onehalf} \, \alpha_{r^{-1}}[f(r')] \, J g(r'') J \, U_{r''} \, \hxi(r'^{-1} r r'').
\\
(\hrho^{\,\circ}(g) \hrho(f) \,\hxi\,)(r)
&= \int_G \dd\mu_G(r'') \, \Delta_G(r'')^{\onehalf} \, J g(r'') J U_{r''} \, (\hrho(f) \,\hxi\,)(r r'')
\\
&= \int_G \dd\mu_G(r') \,\dd\mu_G(r'') \, \Delta_G(r'')^{\onehalf} \, J g(r'') J U_{r''} \, \alpha_{r''^{-1} r^{-1}}[f(r')] \, \hxi\,(r'^{-1} r r'')
\\
&= \int_G \dd\mu_G(r')\, \dd\mu_G(r'') \, \Delta_G(r'')^{\onehalf} \, J g(r'') J \, \alpha_{r^{-1}}[f(r')] \, U_{r''}  \,\hxi(r'^{-1} r r'').
\end{align*}
The hypothesis $[\rho(a), J \rho(b) J] = 0$ on the original triple, with $a = \alpha_{r^{-1}}(f(r'))$ and $b = g(r'')$, shows that the last  two relations are equal, so that $[\hrho(f), \hrho^{\,\circ}(g)] = 0$.

Because $[\caT_{\cone,\ct},\,\hrho(f)]_\beta=\cone\hrho(f)-\cone\hrho(\beta[f])$, one has $\left[\, [\caT_{\cone,\ct},\,\hrho(f)]_\beta, \,\hrho^{\,\circ}(g) \right] = 0$ by the previous relation. Using \eqref{eq-hD-hrho(f)-simplified}, for $\hxi \in Y_\algA$,
\begin{align*}
([\hD, \hrho(f)]_\beta \,\hrho^{\,\circ}(g) \,\hxi\,)(r)
&= U_r^* \int_G \dd\mu_G(r') \,  [D, f(r')] U_{ r} \, (\hrho^{\,\circ}(g)\hxi\,)(r'^{-1} r)
\\
&\hspace{-1cm}= U_r^* \int_G \dd\mu_G(r') \,\dd\mu_G(r'') \, \Delta_G(r'')^{\onehalf} \, [D, f(r')] U_r \, J g(r'') J U_{r''} \, \hxi(r'^{-1} r r'')
\\
&\hspace{-1cm}= U_r^* \int_G \dd\mu_G(r') \,\dd\mu_G(r'') \, \Delta_G(r'')^{\onehalf} \, [D, f(r')] \, J\, \alpha_{r^{-1}}[g(r'')] J \, U_{r r''} \, \hxi(r'^{-1} r r''),
\end{align*}
and
\begin{align*}
(\hrho^{\,\circ}(g) [\hD, \hrho(f)]_\beta\, \hxi\,)(r)
&= \int_G \dd\mu_G(r'') \, \Delta_G(r'')^{\onehalf} \, J g(r'') J U_{r''} \, ([\hD, \hrho(f)]_\beta \,\hxi\,)(r r'')
\\
&\hspace{-1cm}= \int_G \dd\mu_G(r') \,\dd\mu_G(r'') \, \Delta_G(r'')^{\onehalf} \, J g(r'') J U_{r''} \, U_{r r''}^* [D, f(r')] U_{r r''} \, \hxi(r'^{-1} r r'')
\\
&\hspace{-1cm}= U_r^* \int_G \dd\mu_G(r') \,\dd\mu_G(r'') \, \Delta_G(r'')^{\onehalf} \, J \,\alpha_{r^{-1}}[g(r'')] J \, [D, f(r')] \, U_{r r''} \, \hxi(r'^{-1} r r'').
\end{align*}
The hypothesis $\left[\, [D, \rho(a)], J\rho(b) J\right] = 0$ on the original triple, for $a = f(r')$, $b = \alpha_{r^{-1}}[g(r'')]$, shows that these two relations are equal, so that  $\left[ [\hD, \hrho(f)]_\beta,\,  \hrho^{\,\circ}(g) \right] = 0$, and this proves that $\left[ \,[\DD,\pi(f)]_\beta, \,\pi^\circ(g) \right] = 0$.
\end{proof}

\bigskip
The summability of the modular-type $\beta$-twisted spectral triple $(\algA, \caH, \DD)$ is determined by the traceability of the operator $\Theta^c\, \pi(f) (\bbbone + \DD^2)^{-s/2} $
 
\begin{lemma}
Assume that $\Theta^c\, \pi(f) (\bbbone + \DD^2)^{-s/2}$ is trace-class for some $c\geq 0, s\geq1$ and $f \in \algA$. Then 
\begin{align}
\label{eq-tracecomputationagain}
\Tr \Theta^c\, &\pi(f)\,(\bbbone + \DD^2)^{-s/2}  \nonumber \\
&=2 \int_G \dd\mu_G(r)\, \Delta_G(r)^{-1}\,\abs{\vartheta(r)}^{2c}\, \Tr_{H}\, f(e_G) \left[\bbbone+D^2 +  [\cone + \ct \abs*{\vartheta(r)}^2]^2 \right]^{-s/2}
\end{align}
\end{lemma}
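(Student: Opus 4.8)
The plan is to strip off the $\gC^2$ factor, realize the remaining operator on $\hH\vc L^2(G,\dd\mu_G)\otimes H$ as an integral operator, and integrate the trace of the diagonal of its operator‑valued kernel.

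\textbf{Reduction to $\hH$.} I would first use that in the special case $z(r)=\vartheta(r)\bbbone$ both $\hD$ and $\caT_{\cone,\ct}$ act by multiplication along $r\in G$, hence commute, so \eqref{Dcarre} collapses to $\DD^2=(\hD^2+\caT_{\cone,\ct}^2)\otimes\bbbone_2$ and $(\bbbone+\DD^2)^{-s/2}=(\bbbone+\hD^2+\caT_{\cone,\ct}^2)^{-s/2}\otimes\bbbone_2$. Since also $\Theta^c\pi(f)=(\htheta^c\hrho(f))\otimes\bbbone_2$, the operator under the trace equals $X\otimes\bbbone_2$ with $X\vc\htheta^c\,\hrho(f)\,(\bbbone+\hD^2+\caT_{\cone,\ct}^2)^{-s/2}$ acting on $\hH$, so $\Tr_{\caH}[\Theta^c\pi(f)(\bbbone+\DD^2)^{-s/2}]=2\,\Tr_{\hH}X$; this produces the factor $2$.

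\textbf{Kernel of $X$.} Next, from the proof of Proposition~\ref{prop-compactness} one has $\big((\bbbone+\hD^2+\caT_{\cone,\ct}^2)^{-s/2}\hxi\big)(r)=U_r^*[\bbbone+D^2+T_p(r)^2]^{-s/2}U_r\,\hxi(r)$; composing with $\hrho(f)$ via \eqref{eq-defrepresentationhrho} and with the (diagonal, scalar) operator $\htheta^c$, and performing the change of variable $t=r'^{-1}r$ (so $r'=rt^{-1}$ and $\dd\mu_G(r')=\Delta_G(t)^{-1}\dd\mu_G(t)$ for left Haar measure), I get $(X\hxi)(r)=\int_G\dd\mu_G(r')\,K_X(r,r')\,\hxi(r')$ with
\[
K_X(r,t)=\rho(\alpha_{r^{-1}}[p(r)^c])\,\Delta_G(t)^{-1}\,\rho(\alpha_{r^{-1}}[f(rt^{-1})])\,U_t^*\,[\bbbone+D^2+T_p(t)^2]^{-s/2}\,U_t .
\]
Since $X$ is trace‑class by hypothesis and $K_X$ is continuous with $r\mapsto\Tr_HK_X(r,r)$ integrable (the compact support $S_f$ of $f$ localizes the $r$‑integral, and the remaining factors are bounded exactly as in Proposition~\ref{prop-compactness}), I then write $\Tr_{\hH}X=\int_G\dd\mu_G(r)\,\Tr_HK_X(r,r)$.

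\textbf{Simplification of the diagonal.} On the diagonal $f(rt^{-1})\big|_{t=r}=f(e_G)$; carrying $U_r$ around by cyclicity of $\Tr_H$ and using \eqref{eq-UaU*} in the form $U_r\,\rho(\alpha_{r^{-1}}[a])\,U_r^*=\rho(a)$ removes the two $U_r$'s, so (up to the trace identity) $\Tr_HK_X(r,r)=\Delta_G(r)^{-1}\,\Tr_H\big[\rho(\alpha_{r^{-1}}[p(r)^c])\,\rho(f(e_G))\,[\bbbone+D^2+T_p(r)^2]^{-s/2}\big]$. In the special case $\rho[p(r)]=\abs{\vartheta(r)}^2\bbbone$ is scalar, hence $\rho(\alpha_{r^{-1}}[p(r)^c])=\abs{\vartheta(r)}^{2c}\bbbone$ and $T_p(r)^2=[\cone+\ct\abs{\vartheta(r)}^2]^2\bbbone$; inserting these together with the factor $2$ from the first step yields \eqref{eq-tracecomputationagain}.

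\textbf{Expected main obstacle.} The only genuinely analytic point is the identity $\Tr_{\hH}X=\int_G\Tr_HK_X(r,r)\,\dd\mu_G(r)$, i.e.\ the $L^2(G)\otimes H$ version of the classical statement that a trace‑class integral operator has trace equal to the integral of the diagonal of its kernel; it must be invoked with some care because it is $X$, and not $K_X$, that is known to be trace‑class. The compact support of $f$ and the continuity of all ingredients (as in Lemma~\ref{lem-densedomaintheta} and Proposition~\ref{prop-compactness}) are precisely what make it applicable. Everything else — the change of variables with its modular factor and the cyclic manipulations of $\Tr_H$ — is routine bookkeeping.
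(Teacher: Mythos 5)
Your proof is correct and follows essentially the same route as the paper's: strip the $\gC^2$ factor to get the factor $2$, realize $\htheta^c\,\hrho(f)\,(\bbbone+\hD^2+\caT_{\cone,\,\ct}^2)^{-s/2}$ as an integral operator on $\hH$, and compute the trace as the integral over $G$ of the $\Tr_H$ of the diagonal of the operator-valued kernel. The only cosmetic difference is that you keep the resolvent in the conjugated form $U_r^*[\bbbone+D^2+T_p(r)^2]^{-s/2}U_r$ and cancel the unitaries by cyclicity of $\Tr_H$ on the diagonal, whereas the paper writes it as multiplication by $\abs{\vartheta(r)}^{-2s}[D^2+t(r)]^{-s/2}$ and removes the conjugation at the end via $U_r[D^2+t(r)]^{-s/2}U_r^*=\abs{\vartheta(r)}^{2s}[D^2+\abs{\vartheta(r)}^4 t(r)]^{-s/2}$; both yield the same diagonal. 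One small inaccuracy in your closing remark: on the diagonal only $f(e_G)$ survives, so the $r$-integral is \emph{not} localized by $\Supp f$ — its finiteness is part of what the trace-class hypothesis delivers — but this does not affect the identity, and the paper itself invokes the trace-equals-integral-of-the-diagonal step with no more justification than you give.
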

 
\begin{proof}
We have $\Theta^c\, \pi(f) (\bbbone + \DD^2)^{-s/2}=L(c,s, f) \otimes \bbbone_2$ on $\caH$ where
\begin{equation}
\label{L(s,f)}
L(c,s, f) \vc \htheta^c\, \hrho(f) [\bbbone + \hD^2 + \caT_{\cone,\,\ct}^{\,2}]^{-s/2}.
\end{equation}
 Using the fact that $\htheta$, $D$ and $\caT_{\cone,\,\ct}$ commute, one has
\begin{equation*}
\big[ (\bbbone + \hD^{\,2} + \caT_{\cone,\,\ct}^{\,2})^{-s/2} \hxi \,\big](r) 
= \abs{\vartheta(r)}^{-2s} \big[ D^2 + t(r) \big]^{-s/2} \,\hxi(r),\quad \text{ for $\hxi \in \hH$, $r \in G$,}
\end{equation*}
with
\begin{equation*}
t(r) \vc \abs{\vartheta(r)}^{-4} \left[ 1 + [\cone + \ct \abs*{\vartheta(r)}^2]^2 \right]  > 0.
\end{equation*}
Then,
\begin{align*}
(L(c,s,f)\,\hxi\,)(r)
&= \abs{\vartheta(r)}^{2c} \int_G \dd\mu_G(r') \, \alpha_{r^{-1}}[f(r')] \, \abs{\vartheta(r'^{-1} r)}^{-2s} [ D^2 + t(r'^{-1} r) ]^{-s/2} \, \hxi(r'^{-1} r)
\\
&= \int_G \dd\mu_G(r') \,K_{L(c, s, f)}(r,r') \,\hxi(r')
\end{align*}
where $K_{L(c,s, f)}(r,r')$ is the function with values in $\algB(\hH\hspace{0.05cm})$ given by
\begin{equation*}
K_{L(c,s, f)}(r,r') \vc \abs{\vartheta(r)}^{2c} \, \abs{\vartheta(r')}^{-2s} \, \Delta_G(r')^{-1} \, \alpha_{r^{-1}}[ f(r r'^{-1})] \,[D^2 + t(r') ]^{-s/2}.
\end{equation*}
Thus the trace of $L(c,s,f)$ is
\begin{equation*}
\Tr_{\hH} \,L(c,s, f) = \int_G \dd\mu_G(r)\, \abs{\vartheta(r)}^{2c-2s} \, \Delta_G(r)^{-1} \, \Tr_{H}\, \alpha_{r^{-1}}[ f(e_G)]\,[D^2 + t(r)]^{-s/2} .
\end{equation*}
Using \eqref{eq-comp-D-alpha-simplified} and the integral representation
$\abs*{A}^{s} = \Gamma(s/2)^{-1} \int_0^\infty t^{s/2-1} e^{-t A^2} \, \dd t$, 
one can show that 
\begin{equation*}
U_r \, [D^2 + t(r) ]^{-s/2} \,U_r^* =  \abs*{\vartheta(r)}^{2s} \, [D^2 + \abs*{\vartheta(r)}^{4}\, t(r) ]^{-s/2}.
\end{equation*}
Thanks to this relation, \eqref{eq-UaU*} and $\abs*{\vartheta(r)}^{4} \,t(r)= 1 + [\cone + \ct \abs*{\vartheta(r)}^2]^2$, one finally obtains \eqref{eq-tracecomputationagain} since $\Tr \bbbone_2=2$.
\end{proof}

\begin{remark}[About the kernel of $\vartheta$]
\label{pb du noyau 1} 
If the kernel $K$ of $\vartheta$ is such that the $\mu_G(K)$ is infinite, so either replace the group $G$ by the group $G / K$ or add to $\caT_\theta$ a Dirac-like operator $\DD_{K}$ which is sensitive to the space $K$ (for instance using derivations along $K$), in order that the modified integral in \eqref{eq-tracecomputationagain} is well-behaved in the directions of $K$ so that the dimension of $K$ is taken into account at the end. This point is not be further developed here. 
\end{remark}

\section{A construction via a relatively invariant weight}
\label{relatively invariant weight}

In this Section, we propose a variant to Hypothesis~\ref{hyp-dirac}, where the representation $\rho$ stems from a GNS construction based on a weight $\varphi$ on which a part of the hypotheses are reported.

\begin{hypothesis}
\label{hypo-weight}
\begin{enum-hypothesis}
\item $(A,H,D)$ is a spectral triple where $A$ is a dense $*$-subalgebra in a $C^*$-algebra $\bA$, and such that the representation $\rho$ of $\bA$ on $H$ is the GNS representation $\pi_\varphi$ of $\bA$ on $H = H_\varphi$ for a weight $\varphi$ on $\bA$ which is  faithful lower semi-continuous and densely defined (thus semifinite
and the GNS representation is faithful and nondegenerate).

\item The weight $\varphi$ is KMS with respect to a (necessarily unique) norm continuous one-parameter group $\sigma$, which means (see \cite{KV1999}):
\begin{align*}
&\varphi \circ\sigma_t=\varphi,\quad\forall t\in \gR,\\
&\varphi[a^*a] = \varphi [\sigma_{i/2}(a)\,\sigma_{i/2}(a)^*], \quad \text{for any }a\in \Dom(\sigma_{i/2}).
\end{align*}
Let $(\bA, G, \alpha)$ be a $C^*$-dynamical system, such that $\alpha_r(A) = A$ for any $r \in G$.

\item \label{hyp-q} The weight $\varphi$ and the action $\alpha$ of $G$ are related by
\begin{equation}
\label{eq-hyp-omega-q-alpha}
\varphi\circ\alpha_{r^{-1}}[a] 
= \varphi_{q(r)}[a] 
\vc \varphi\left[q(r)\, a\, q(r)^*\right], \quad \text{ for any $r \in G$ and $a\in \bA_+$,}
\end{equation}
where $q$ is a one-cocycle in $Z^1(G,M(\bA)^\times)$ such that the maps $r\in G \to q(r)^{-1}\,a$ are continuous for each $a \in \bA$.
\end{enum-hypothesis}
\end{hypothesis}
Note that if $q(r)=\vartheta(r) \,\bbbone$ where $\vartheta : G \to \gR_+^\times$ is a continuous morphism of groups, see Section \ref{sec-special-case-vartheta}, then $\varphi \circ \alpha_r=\abs*{q(r)}^{-2}\varphi$ yielding the commutation $\alpha_r\circ \sigma =\sigma \circ\alpha_r$.

\subsection{\texorpdfstring{The unitary representation of $G$ based on a weight }{The unitary representation of G based on a weight }}
\label{rhypothesis-weight}

Let us give some general notations related to these structures. As usual we define
\begin{align*}
 \caN_\varphi \vc \{ a \in \bA \mid \varphi(a^* a) < \infty \}, \quad \caM_\varphi \vc  \caN_\varphi^* \,\caN_\varphi= \Span \{a\in \bA^+ \mid \varphi(a) <\infty \}
\end{align*} 
so $\caN_\varphi$ is a left ideal in the multiplier algebra $M(\bA)$.

The GNS construction defines a linear injective map $\Lambda_\varphi : \caN_\varphi \to H$, for which $\Lambda_\varphi(\caN_\varphi)$ is a dense subset of $H$, such that $\pi_\varphi(a)\, \Lambda_\varphi(b) = \Lambda_\varphi(ab)$ for any $a \in \bA, b \in \caN_\varphi$, and one has 
\begin{equation*}
\langle \Lambda_\varphi(a), \Lambda_\varphi(b) \rangle_H \vc \varphi(b^* a)\, \text{ for any $a, b \in \caN_\varphi$.}
\end{equation*}
There is a antiunitary operator $J$ defined by $J \,\Lambda_\varphi(a)=\Lambda_\varphi [\sigma_{i/2}(a)^*]$ for $a \in \caN_\varphi \cap \Dom(\sigma_{i/2})$ (and extended to $H$) and a strictly positive operator $\Delta$ on $H$ such that $\Delta^{it}\Lambda_\varphi(a)=\Lambda_\varphi[\sigma_t(a)]$, for $t\in \gR,\,a\in \caN_\varphi$.

Define $S$ by $S\,\Lambda_\varphi(a)\vc\Lambda_\varphi(a^*)$ for $a\in \caN_\varphi \cap \caN_\varphi^*$, then $S^*\,\Lambda_\varphi(a)=\Lambda_\varphi[\sigma_i(a)^*]$. We have $\Delta=S^*S$, $S=J\Delta^{1/2}=\Delta^{-1/2}J$, $J\Delta^{t}J=\Delta^{-t}$ and $J\Delta^{it}J=\Delta^{it}$ for $t\in \gR$.

To avoid cumbersome notations, we identify from times to times $a \in \caN_\varphi$ with its image $\xi \vc \Lambda_\varphi(a) \in H$. In particular, one uses the notation $\alpha_r(\xi)$ for any $\xi \in \caN_\varphi \subset H$. In the same way, the representation $\pi_\varphi$ can be written $\pi_\varphi(a) \,\xi = a\, \xi$ for any $a \in \bA$ and $\xi \in \caN_\varphi$. 
\begin{proposition}
\label{prop-def-unitary}
Suppose Hypothesis~\ref{hypo-weight} is satisfied. Then for $r \in G$ and $a \in \caN_\varphi$, the operator defined by
\begin{equation}
\label{eq-Ur}
U_r \,\Lambda_\varphi[a] \vc \Lambda_\varphi [\alpha_r(a)\, q(r)^*]
\end{equation}
extends to a strongly continuous unitary representation of $G$ on $H$ which implements the action $\alpha$ on $M(\bA)$ (and so on $\bA$):
\begin{align}
U_r^* \,\pi_\varphi(m) \,U_r &= \pi_\varphi [ \alpha_{r^{-1}}(m) ], \quad \text{$r\in G$ and $m\in M(\bA)$.} \label{eq1-UmU*}
\end{align}
\end{proposition}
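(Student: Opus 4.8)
The plan is to verify, in order, that (a) $U_r$ is well defined and isometric on $\Lambda_\varphi(\caN_\varphi)$, (b) it extends to a unitary, (c) $r \mapsto U_r$ is a group homomorphism, (d) $r \mapsto U_r$ is strongly continuous, and (e) the implementation relation \eqref{eq1-UmU*} holds. First I would check that the formula makes sense: for $a \in \caN_\varphi$, since $q(r)^* \in M(\bA)$ and $\caN_\varphi$ is a left ideal in $M(\bA)$ (so $\caN_\varphi^*$ is a right ideal), one has $\alpha_r(a) \in \caN_{\varphi \circ \alpha_{r^{-1}}}$ after translating by the cocycle; more precisely, $\varphi[(\alpha_r(a) q(r)^*)^*(\alpha_r(a) q(r)^*)] = \varphi[q(r)\,\alpha_r(a^*a)\,q(r)^*] = \varphi\circ\alpha_{r^{-1}}[\alpha_r(a^*a)] = \varphi(a^*a) < \infty$ by \eqref{eq-hyp-omega-q-alpha}, so $\alpha_r(a)q(r)^* \in \caN_\varphi$ and $\Lambda_\varphi[\alpha_r(a)q(r)^*]$ is defined. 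The same computation with a polarization identity shows $\langle U_r \Lambda_\varphi(a), U_r\Lambda_\varphi(b)\rangle_H = \varphi[q(r)\alpha_r(b^*a)q(r)^*] = \varphi(b^*a) = \langle \Lambda_\varphi(a), \Lambda_\varphi(b)\rangle_H$, so $U_r$ is a well-defined isometry on the dense subspace $\Lambda_\varphi(\caN_\varphi)$ and extends to an isometry of $H$.

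For surjectivity (hence unitarity) I would exhibit the inverse: a direct computation using the cocycle identities \eqref{cocycle relation} gives that $\Lambda_\varphi(b) \mapsto \Lambda_\varphi[\alpha_{r^{-1}}(b\, q(r^{-1})^{*-1})]$ — or more symmetrically $\Lambda_\varphi[\alpha_{r^{-1}}(b)\,\alpha_{r^{-1}}(q(r)^*)^{-1}]$, using $q(r^{-1}) = \alpha_{r^{-1}}[q(r)^{-1}]$ — is a left inverse and right inverse of $U_r$ on $\Lambda_\varphi(\caN_\varphi)$; alternatively, once I have the homomorphism property $U_r U_s = U_{rs}$ it follows that $U_{r^{-1}} = U_r^{-1}$, so isometry plus homomorphism already forces unitarity. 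The homomorphism property itself is the key algebraic check: for $a \in \caN_\varphi$,
\begin{align*}
U_r U_s \Lambda_\varphi(a) &= U_r \Lambda_\varphi[\alpha_s(a) q(s)^*] = \Lambda_\varphi\big[\alpha_r(\alpha_s(a) q(s)^*)\, q(r)^*\big] \\
&= \Lambda_\varphi\big[\alpha_{rs}(a)\, \alpha_r(q(s)^*)\, q(r)^*\big] = \Lambda_\varphi\big[\alpha_{rs}(a)\, (q(r)\alpha_r(q(s)))^*\big] = \Lambda_\varphi[\alpha_{rs}(a) q(rs)^*] = U_{rs}\Lambda_\varphi(a),
\end{align*}
where the penultimate equality is exactly the cocycle relation \eqref{eq-relationc(rr')} for $q$. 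Together with $U_e = \Id$ (since $q(e) = \bbbone$ by \eqref{cocycle relation}), this gives a unitary representation.

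For strong continuity I would show $r \mapsto U_r \xi$ is continuous at $r = e$ for $\xi$ in the dense set $\Lambda_\varphi(\caN_\varphi)$ and then use the uniform bound $\norm{U_r} = 1$ and a standard $3\varepsilon$-argument to extend to all of $H$ and all points of $G$; continuity at $e$ reduces to estimating $\norm{\Lambda_\varphi[\alpha_r(a)q(r)^* - a]}_H$, which by lower semicontinuity of $\varphi$ and the GNS norm is controlled once one knows $\alpha_r(a)q(r)^* \to a$ in a suitable sense — here one uses that $r \mapsto \alpha_r(a)$ is norm-continuous (dynamical system axiom), that $r \mapsto q(r)^{-1} a$ is continuous by Hypothesis~\ref{hypo-weight}-\ref{hyp-q}, and a density/approximation argument inside $\caN_\varphi$; this continuity step, together with making the GNS estimates rigorous when $a$ ranges only over $\caN_\varphi$ rather than a norm-dense subalgebra, is the part I expect to be the main technical obstacle. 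Finally, for \eqref{eq1-UmU*}: for $m \in M(\bA)$, $a \in \caN_\varphi$ one computes $U_r \pi_\varphi(\alpha_{r^{-1}}(m)) U_r^* \Lambda_\varphi(b)$ by applying $U_r^*$, then $\pi_\varphi(\alpha_{r^{-1}}(m))$ which acts as left multiplication, then $U_r$; tracking the cocycle factors (which cancel because $q(r)^*$ sits on the right while $m$ multiplies on the left, and $\alpha_r\alpha_{r^{-1}} = \Id$) yields $\Lambda_\varphi[m b] = \pi_\varphi(m)\Lambda_\varphi(b)$, which is the claim. This extends from $\pi_\varphi(\caN_\varphi)\cdot(\text{dense})$ to all of $H$ by boundedness.
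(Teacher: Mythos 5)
Your algebraic steps (a), (b), (c) and (e) are correct and coincide with the paper's proof: the same computation $\varphi[(\alpha_r(a)q(r)^*)^*\,\alpha_r(a)q(r)^*]=\varphi(a^*a)$ shows well-definedness and isometry, the cocycle relation \eqref{eq-relationc(rr')} gives the homomorphism property, unitarity follows from $U_{r^{-1}}=U_r^{-1}$, and the implementation \eqref{eq1-UmU*} is the same cancellation of the right-sitting cocycle factor.

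The gap is in step (d), strong continuity, and you have correctly sensed where the difficulty lies but the route you sketch does not work. To control $\norm{U_r\Lambda_\varphi(a)-\Lambda_\varphi(a)}_H$ directly you must show $\varphi(c_r^*c_r)\to 0$ for $c_r=\alpha_r(a)q(r)^*-a$; equivalently, expanding the square, you must show that $r\mapsto \varphi\left[a^*\,\alpha_r(a)\,q(r)^*\right]$ is continuous. Knowing that $c_r\to 0$ (or $a^*\alpha_r(a)q(r)^*\to a^*a$) in the norm of $\bA$ gives you nothing here, because $\varphi$ is unbounded and only \emph{lower} semicontinuous: lower semicontinuity bounds the value at the limit from above by the liminf of the values, which is the wrong inequality for proving convergence of $\varphi$ along a net. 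This is precisely why the paper does not attempt a norm estimate at the GNS level. Instead it proves \emph{weak} continuity by pairing $U_r\Lambda_\varphi(a)$ against the dense family of vectors $T_\varpi\Lambda_\varphi(b)$, where $\varpi$ runs over normal functionals dominated by $\varphi$ and $T_\varpi\in\pi_\varphi(\bA)'$ satisfies $\langle\Lambda_\varphi(a),T_\varpi\Lambda_\varphi(b)\rangle=\varpi(b^*a)$ (Combes); the matrix coefficient then becomes $\varpi[b^*\alpha_r(a)q(r)^*]$, whose continuity \emph{does} follow from norm continuity of $\alpha$ and of $r\mapsto q(r^{-1})^{-1}a^*$, because $\varpi$ is bounded. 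Density of $\{T_\varpi\Lambda_\varphi(b)\}$ (from lower semicontinuity of $\varphi$, used here in the direction in which it is actually useful), the bound $\norm{U_r}=1$, and an $\epsilon/3$ argument give weak continuity, and weak continuity of a unitary representation implies strong continuity. Without this detour through dominated normal functionals, the "density/approximation argument inside $\caN_\varphi$" you allude to has no mechanism to convert norm convergence in $\bA$ into convergence in the GNS norm.
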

\begin{proof}
To show that \eqref{eq-Ur} is well-defined, one first need to show that for any $a \in \caN_\varphi$ and $r \in G$, one has $\alpha_r(a)\, q(r)^* \in  \caN_\varphi$:
\begin{align*}
\varphi\big[\left[\alpha_r(a)\, q(r)^*\right]^* \alpha_r(a)\, q(r)^* \big]
&= \varphi [q(r) \alpha_r(a^*) \alpha_r(a) q(r)^*]
= \varphi[a^* a] <\infty.
\end{align*}
Then, for any $\xi = \Lambda_\varphi(a)$, this computation gives
\begin{align*}
\langle U_r\, \xi, U_r \,\xi \rangle
&= \varphi\big(\left[\alpha_r(a)\, q(r)^*\right]^* \alpha_r(a)\, q(r)^* \big)
= \varphi(a^* a)
= \langle \xi, \xi \rangle
\end{align*}
so that $U_r$ extends to a unitary operator.

Since $q$ is a cocycle, one has, for any $r_1, r_2 \in G$ and $a \in \caN_\varphi$, 
\begin{align*}
U_{r} U_{r'} \,\Lambda_\varphi[a] 
&=  U_{r} \Lambda_\varphi [ \alpha_{r'}(a)\, q(r')^*]
= \Lambda_\varphi\big[ \alpha_{r}[ \alpha_{r'}(a)\, q(r')^* ] \,q(r)^* \big]\\
&= \Lambda_\varphi\big[ \alpha_{r r'}(a) \, ( q(r) \alpha_{r}[q(r')])^* \big]
= \Lambda_\varphi[ \alpha_{r r'}(a) \, q(r r')^*]
= U_{rr'} \,\Lambda_\varphi(a),
\end{align*}
and $r \mapsto U_r$ is a representation of $G$. Thus, $U_r^* \, \Lambda_\varphi(a) = U_{r^{-1}}\, \Lambda_\varphi(a) =  \Lambda_\varphi [\alpha_{r^{-1}}(a)\, q(r^{-1})^*]$.  

So, for any $r \in G$, $a \in \bA$ and $b \in \caN_\varphi$, 
\begin{align*}
U^*_r \,\pi_\varphi(a) \,U_r \,\Lambda_\varphi(b)
&= U^*_r \,\Lambda_\varphi [ a\, \alpha_r(b)\, q(r)^* ]
= \Lambda_\varphi \big[ \alpha_{r^{-1}}[ a\, \alpha_r(b)\, q(r)^*] \, q(r^{-1})^*\big]
\\
&= \Lambda_\varphi\big[ \alpha_{r^{-1}}(a)\, b\, [ q(r^{-1}) \alpha_{r^{-1}}( q(r)) ]^*\big]
= \pi_\varphi[\alpha_{r^{-1}}(a)] \,\Lambda_\varphi(b).
\end{align*}
The extension $\alpha$ to $M(\bA)$ is characterized by $\alpha_r(m\,a) = \alpha_r(m)\,\alpha_r(a)$ for all $m \in M(\bA)$, $a \in \bA$, and \eqref{eq1-UmU*} follows directly from the previous result.

To show the strong continuity of the representation $U$, we adapt an argument given in the proof of \cite[Lemma 3.1]{QV1999}: 
let $\mathcal{G}_\varphi \vc \{\lambda \,\varpi \mid \varpi \in \mathcal{F}_\varphi,\,\lambda \in ]0,1[\}$ be a directed subset of the set $\mathcal{F}_\varphi \vc \{\varpi \mid  \varpi \in \bA^*_+ ,\,\varpi \leq \varphi\}$. By \cite[Proposition 2.4]{Combes}, given $\varpi\in \mathcal{G}_\varphi$, 
there exists a unique $T_\varpi \in \pi_\varphi(\bA)'$ such that $0\leq T_\varpi\leq \bbbone$ and 
$\langle \Lambda_\varphi(a), \,T_\varpi\, \Lambda_\varphi(b)\rangle=\varpi(b^*a)$ for all $a,\,b\in\caN_\varphi$. 
Moreover $(T_\varpi)_{\varpi\in \mathcal{G}_\varphi}$ strongly converges to $\bbbone$ since $\varphi$ is 
lower-semicontinuous. Thus $\{T_\varpi\, \Lambda_\varphi(b) \ \mid \  \varpi\in\mathcal{G}_\varphi,\,b \in \caN_\varphi\}$ is dense in $H$. For $\varpi \in \mathcal{G}_\varphi,\,a,\,b\in\caN_\varphi$, and $r\in G$, we have
\begin{align*}
\langle U_r \,\Lambda_\varphi(a),\,T_\varpi\,\Lambda_\varphi(b) \rangle 
= \langle \Lambda_\varphi [ \alpha_r(a)\,q(r)^*],T_\varpi \,\Lambda_\varphi (b) \rangle 
= \varpi [b^*\,\alpha_r(a)\,q(r)^*].
\end{align*}
Thus the continuity of the map $r\in G \mapsto \langle U_r\, \Lambda_\varphi(a),\,T_\varpi \,\Lambda_\varphi(b) \rangle$ follows from the continuity of $\varpi$ and $\alpha$, the continuity of $r \to q(r^{-1})^{-1}a^*$ by hypothesis and the following
\begin{align*}
\norm{b^*\alpha_r(a) q(r)^*-b^* \alpha_s(a)q(s)^*}&=\norm{q(r) \alpha_r(a^*)\,b-q(s)\alpha_s(a^*)\,b}\\
&= \norm{\alpha_r[q(r^{-1})^{-1}\,a^*]b-\alpha_s[q(s^{-1})^{-1}\,a^*]b}\\
& \leq \norm{(\alpha_r - \alpha_s) [q(r^{-1})^{-1}a^*] + \alpha_s[q(r^{-1})^{-1} a^* - q(s^{-1})^{-1} a^*]} \,\norm{b}\\
&\leq \norm{(\alpha_r-\alpha_s)[q(r^{-1})^{-1}a^*]}\,\norm{b} + \norm{[q(r^{-1})^{-1}-q(s^{-1})^{-1}]a^*}\norm{b}.
\end{align*}

Since $\norm{U_r}=1$, we deduce by the $\epsilon/3$ arguments
\begin{align*}
\vert \langle (U_r-U_s) \,\xi,\,\xi' \rangle\vert  &= \lvert \langle (U_r-U_s) (\xi- \Lambda_\varphi (a)),\xi'\rangle +\langle (U_r-U_s) \, \Lambda_\varphi(a),\xi'-T_\varpi \Lambda_\varphi(b) \rangle\\
&\hspace{5.32cm}+\langle  (U_r-U_s)  \,\Lambda_\varphi(a),T_\varpi \Lambda_\varphi(b) \rangle \rvert\\
&\leq 2 \norm{\xi- \Lambda_\varphi (a)}\,\norm{\xi'} + 2 \norm{\Lambda_\varphi(a)}\,\norm{\xi'-T_\varpi \Lambda_\varphi(b)}\\
& \hspace{5.32cm}+ \varpi[b^*(\alpha_r(a)q(r)^*-\alpha_s(a)q(s)^*)]\,,
\end{align*}
that the map $r \in G\mapsto\langle U_r\, \xi,\,\xi' \rangle$ is also continuous for any $\xi,\xi' \in H$ yielding the weak continuity of the unitary representation $U$, so its strong continuity.
\end{proof}

\medskip
The map $\Lambda_\varphi$ induces a linear injection $\tLambda : C_c(G, \caN_\varphi) \to \hH = L^2(G, \dd\mu_G) \otimes H$ defined by $\tLambda(f)(r) \vc \Lambda_\varphi[f(r)]$ for any $r \in G$. Then the space $\Lambda_\varphi[C_c(G, \caN_\varphi)]$ can be seen as a dense subspace of $\hH$ and, for any $\hxi_i=\tLambda [f_i] \in \Lambda_\varphi[C_c(G, \caN_\varphi)] \subset \hH$, one has
\begin{align*}
\langle \hxi_1, \hxi_2 \rangle_{\hH} 
&= \int_G \dd\mu_G(r) \, \langle \hxi_1(r), \hxi_2(r) \rangle_H 
= \int_G \dd\mu_G(r) \, \varphi \,[ f_2(r)^* \,f_1(r)].
\end{align*}

\begin{remark}
With the notations of Proposition \ref{exterior}, the weight $\varphi$ satisfies equation  \eqref{eq-hyp-omega-q-alpha} for $q'(r) \vc q(r)\, u(r)^*\in M(\bA)$. One has
\begin{align*}
q'(rr')
&= q(r r') \, u(rr')^*
= q(r) \,\alpha_{r}[q(r')] \, [u(r)\,\alpha_{r}(u(r'))]^*
= q(r) \,\alpha_{r}[ q'(r')]\, u(r)^*
\\
&= q'(r)\, u(r) \,\alpha_{r}[q'(r')] \,u(r)^*
= q'(r) \,\alpha'_{r}[q'(r')],
\end{align*}
so that $q'$ is an $\alpha'$-one cocycle in $Z^1(G,M(\bA)^\times)$. Moreover, $r \mapsto q'(r)^{-1}\, a$ is continuous for each $a\in \bA$ since
\begin{align*}
\norm{q'(r)^{-1}\,a -q'(s)^{-1}\,a} 
\leq 
\norm{[u(r)-u(s)] \, q(r)^{-1}\,a} + \norm{u(s)[ q(r)^{-1} -q(s)^{-1}]\,a}.
\end{align*}
\end{remark}

The last proposition shows that 
\begin{corollary}
\label{cor-twisted-triple-weight}
Assume that hypotheses~\ref{hypo-weight}, \ref{hyp-dirac}-\ref{hyp-algA}-\ref{hyp-comp-D-alpha}-\ref{preservdom}-\ref{hyp-comm-bounded} are satisfied for the representation $U\,$of Proposition~\ref{prop-def-unitary}. Then the conclusion of Theorem~\ref{thm-twisted-triple} is valid.
\end{corollary}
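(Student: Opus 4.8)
The plan is to show that, under the stated assumptions, \emph{every} clause of Hypothesis~\ref{hyp-dirac} is satisfied for the data $(A, H_\varphi, D)$, the dynamical system $(\bA, G, \alpha)$, the algebra $\algA$, and the representation $U$ of Proposition~\ref{prop-def-unitary}, after which Theorem~\ref{thm-twisted-triple} applies verbatim. First, clauses~\ref{hyp-spectral} and \ref{hyp-dyn syst} are literally the first two items of Hypothesis~\ref{hypo-weight}: $(A, H_\varphi, D)$ is a spectral triple with $A$ dense in $\bA$, and $(\bA, G, \alpha)$ is a $C^*$-dynamical system with $\alpha_r(A) = A$. Next, clauses~\ref{hyp-algA}, \ref{hyp-comp-D-alpha}, \ref{preservdom} and \ref{hyp-comm-bounded} are among the explicit hypotheses of the corollary, stated precisely for the representation $U$ of Proposition~\ref{prop-def-unitary} (so that, in particular, this $U$ is the one for which \ref{hyp-comp-D-alpha} supplies the maps $z$ and the $Z(A)^\times$-valued cocycle $p$). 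Hence the only clause that still needs checking is \ref{hyp-U}.

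For \ref{hyp-U} I would argue as follows. Take $\rho \vc \pi_\varphi$: by the first item of Hypothesis~\ref{hypo-weight} (using that $\varphi$ is faithful, lower semi-continuous and densely defined) the GNS representation $\pi_\varphi$ is a faithful nondegenerate representation of $\bA$ on $H = H_\varphi$. Take $U_r$ defined by \eqref{eq-Ur}. Proposition~\ref{prop-def-unitary} shows that $r \mapsto U_r$ extends to a strongly continuous unitary representation of $G$ on $H_\varphi$ implementing $\alpha$ in the form \eqref{eq1-UmU*}, i.e. $U_r^*\,\pi_\varphi(m)\,U_r = \pi_\varphi[\alpha_{r^{-1}}(m)]$ for $m \in M(\bA)$. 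Replacing $r$ by $r^{-1}$ and using $U_{r^{-1}} = U_r^*$ turns this into $U_r\,\pi_\varphi(a)\,U_r^* = \pi_\varphi[\alpha_r(a)]$ for $a \in \bA$, which is exactly the covariance relation \eqref{eq-UaU*}. Therefore $(\pi_\varphi, U, H_\varphi)$ is a faithful nondegenerate covariant representation of $(\bA, G, \alpha)$, and \ref{hyp-U} holds, with $\hrho$ the associated integrated (induced) representation on $\hH = L^2(G, \dd\mu_G) \otimes H_\varphi$.

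With all of Hypothesis~\ref{hyp-dirac} in hand, Theorem~\ref{thm-twisted-triple} delivers the conclusion: $(\algA, \caH, \DD, \beta)$ is a modular-type $\beta$-twisted spectral triple, modulo the convergence of the group integral needed for \eqref{modularcompactresol} exactly as in Proposition~\ref{prop-compactness}. I do not expect a genuine obstacle here: the entire content of Hypothesis~\ref{hypo-weight} is to \emph{manufacture}, via the relative invariance \eqref{eq-hyp-omega-q-alpha} of $\varphi$ with its cocycle $q$, the covariant representation required by \ref{hyp-U}, all the technical work having been done in Proposition~\ref{prop-def-unitary}. The one point that deserves a line of care — and the closest thing to a subtlety — is matching the direction of the covariance relation produced by Proposition~\ref{prop-def-unitary} (written with $\alpha_{r^{-1}}$) against the convention \eqref{eq-UaU*} of \ref{hyp-U}, which is the elementary substitution performed above; note moreover that the cocycle $q$ driving $U$ need bear no relation to the cocycle $p$ of \ref{hyp-comp-D-alpha}, so no compatibility between the two is needed.
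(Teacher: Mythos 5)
Your proposal is correct and follows exactly the route the paper takes: the corollary is proved by observing that Proposition~\ref{prop-def-unitary} supplies the covariant representation required by Hypothesis~\ref{hyp-dirac}-\ref{hyp-U} (the remaining clauses being either items of Hypothesis~\ref{hypo-weight} or explicitly assumed), after which Theorem~\ref{thm-twisted-triple} applies. Your extra care over the direction of the covariance relation \eqref{eq1-UmU*} versus \eqref{eq-UaU*} is a correct and harmless elaboration of what the paper leaves implicit.
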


\subsection{The dual weight}
\label{The dual weight}

There exists a faithful normal semifinite weight $\tvarphi$ on the von Neumann algebra $\algM \vc \pi_\varphi(\bA)''$ such that $\tvarphi\, \circ\pi_\varphi= \varphi$ and a unique strongly continuous one-parameter modular group $\tilde \sigma_t$ of the weight $\tvarphi$ which satisfies $\tilde \sigma_t\, \pi_\varphi=\pi_\varphi \,\sigma_t$ for $t\in \gR$ and $\tilde \sigma_t(x)=\Delta^{it}\,x\,\Delta^{-it}$ for $x\in \pi_\varphi(\bA)''$. Moreover, $\Lambda_\varphi[\tilde \sigma_t(x)]=\Delta^{it}\Lambda_\varphi(x)$ for $t\in\gR$ and $x\in \caN_{\tvarphi}$ , see \cite{KV1999}.

The action $\alpha$ has an extension $\talpha$ which defines a covariant W$^*$-system $(\algM, G, \talpha)$ namely, $\talpha$ is a strongly continuous action $\talpha$ of $G$ on $\algM$ with
\begin{equation*}
\talpha_r(x) \vc U_r\, x\, U_r^*\,\,\text{ for }x \in \algM.
\end{equation*}
The crossed product von~Neumann algebra $G \ltimes_\talpha  \algM$ is defined as the von~Neumann algebra generated by $\algM$ and $\lambda_G(G)$ acting on $\hH = L^2(G, \dd\mu_G) \otimes H_\varphi$ (notice that $H_\varphi = H_\tvarphi$). We denote by $\hpi_\tvarphi$ the defining representation of $G \ltimes_\talpha  \algM$ on $\hH$, which is \eqref{eq-defrepresentationhrho} for $\rho = \pi_\tvarphi$ (GNS representation of $\algM$ on $H_\tvarphi$).

In order to define the dual weight $\hvarphi$ of $\tvarphi$ on $G \ltimes_\talpha  \algM$, we follow \cite[Chap.~X]{Takes2003}. There, the von~Neumann crossed product algebra is defined using another convention. Let  $C_c(G, \algM)$ be  the space of $\sigma$-strong$^*$ continuous functions $G \to \algM$. Then one defines on $C_c(G, \algM)$ a product and involution \cite[X.1.(15)]{Takes2003}
\begin{align}
\label{eq-prod-conv-takesaki}
(x \ast y)(r) &\vc \int_G \dd\mu_G(r')\, \alpha_{r'}[ x(r r') ] \,y(r'^{-1}),
&
x^\sharp(r) &\vc \Delta_G(r)^{-1} \, \alpha_{r^{-1}}[ x(r^{-1})^*],
\end{align}
for any $x, y \in C_c(G, \algM)$. Equipped with this product and involution, this involutive algebra is denoted by $C_c(G, \algM,\ast)$. Let $\caB_\tvarphi \vc C_c(G, \algM) \cdot \caN_\tvarphi\,$ where $\caN_\tvarphi \vc \{x\in \algM  \mid  \tvarphi(x^*x)<\infty \}$.  The linear map $\ttLambda : \caB_\tvarphi \to \hH$ defined by 
\begin{equation*}
\ttLambda\,(f \cdot x)(r) \vc f(r)\, \Lambda_\tvarphi(x),\,\,\text{ for any }f \cdot x \in \caB_\tvarphi \text{ and }r \in G
\end{equation*}
 is such that $\ttLambda\,(\caB_\tvarphi \cap \caB_\tvarphi^\sharp)$ is dense in $\hH$. \\For $x \in C_c(G, \algM,\ast)$ and $y \in \caB_\tvarphi$, we define the representation \cite[X.1.(24)]{Takes2003}
\begin{equation}
\label{eq-deftpitalpha}
\tpi_\talpha(x)\, \ttLambda(y) \vc \ttLambda(x \ast y),
\end{equation}
and we let $G \ltimes_\talpha  \algM \vc \tpi_\talpha[C_c(G, \algM,\ast)]'' \subset \caB(\hH\,)$. Then $\tpi_\talpha(\caB_\tvarphi \cap \caB_\tvarphi^\sharp)$ generates $G \ltimes_\talpha  \algM$. \\
The dual weight $\hvarphi$ on $G \ltimes_\talpha  \algM$ is defined by \cite[X.1.(42)]{Takes2003}:
\begin{equation}
\label{eq-dualweightTake}
\hvarphi \,[ \tpi_\talpha(x)^*\, \tpi_\talpha(x) ] \vc \tvarphi\, [ (x^\sharp \ast x)(e_G) ],
\end{equation}
and is normal and semifinite since $\tvarphi$ is (\cite[III.3.2.9]{Blac06a}).

From \cite[X.1.(25)]{Takes2003} one has $\tvarphi\,[ (x^\sharp \ast x)(e_G)] = \left\langle \ttLambda(x), \ttLambda(x) \right\rangle_{\hH}\,$ and $\tpi_\talpha$ is a semi-cyclic representation of $G \ltimes_\talpha  \algM$ on $\hH$ associated to the weight $\hvarphi$.

From \cite[Lemma~X.1.13]{Takes2003}, there is a conjugate linear involutive isometry $\hJ$ given by
\begin{equation*}
(\hJ \,\hxi\,)(r) = \Delta_G(r)^{-\onehalf} U_{r^{-1}} J\, \hxi(r^{-1}), \quad \text{for any $\hxi \in \hH$,}
\end{equation*}
where $J$ is the conjugate linear involutive isometry associated to the GNS representation $\pi_\tvarphi$ of $\algM$ on $H_\tvarphi$. This is the expression given in Proposition \ref{prop-hJ-operator}, but there we did not suppose that $J$ was given by the modular theory on the original spectral triple.

In \ref{sec-plancherel-extension-group}, in the specific case of a group $G$ acting on a group $N$, we will explicitly give a unitary correspondence between the representation $\hpi_\varphi$ of $G \ltimes_\alpha  \bA$ on $\hH$ and the GNS representation $\pi_\hvarphi$ on $H_\hvarphi$ associated to the dual weight $\hvarphi$.

\subsection{Derivations and twisted derivations}
\label{sec-candidateforD}

The existence of the unitary representation in Hypothesis~\ref{hyp-dirac}-\ref{hyp-U}-\ref{hyp-comp-D-alpha} is a consequence of Hypothesis~\ref{hypo-weight}, but, to fulfill Hypothesis~\ref{hyp-dirac}, one also has to assume Hypothesis~\ref{hyp-dirac}-\ref{hyp-comp-D-alpha} and \ref{hyp-dirac}-\ref{hyp-comm-bounded}. Here, we consider another hypothesis which can replace Hypothesis~\ref{hyp-dirac}-\ref{hyp-comp-D-alpha} and \ref{hyp-dirac}-\ref{hyp-comm-bounded}.

The relation \eqref{eq-comp-D-alpha} is a compatibility condition between $D$, the action of $G$, and the cocycle $z$. It is of the same kind as the second relation in \cite[eq. (4.1)]{Moscovici2010}. Another equivalent relation can be assumed (see \eqref{commutationalphadelta} below), when in the triple $(A,H,D)$, $D$ is based on derivations on $A$.  
We begin with some remarks on the twisting of derivations using cocycles.

\begin{lemma}
\label{lem-twistedderivation}
We assume that $A$ and $\algA = C_c(G, A)$ are as in Lemma~\ref{lem-Afrechet}. \\
Suppose that there exists a (positive) continuous $\alpha$-one-cocycle valued in $Z(A)^\times$, $r \mapsto p(r)$, such that for a continuous derivation $\delta$ of the algebra $A$
\begin{align}
\label{commutationalphadelta}
p(r)\,\alpha_r \circ\delta=\delta\circ \alpha_r\,\text{ for any } r\in G.
\end{align}
Then, for any $\alpha$-one-cocycle valued in $Z(A)^\times$, $r \mapsto c(r)$, such that $c$ and $c^{-1}$ are continuous,  $\hdelta_c$ defined on $f\in \algA$ by 
\begin{align}
\label{deltahat}
(\hdelta_c \,f)(r)\vc c(r)^{-1}\,\delta [c(r)\,f(r)], \, r\in G
\end{align}
is a $\beta$-twisted derivations on $\algA$, where $\beta$ is defined as in \eqref{beta}.
\end{lemma}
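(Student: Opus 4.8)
The claim is that the map $\hdelta_c(f)(r) = c(r)^{-1}\delta[c(r)f(r)]$ defines a $\beta$-twisted derivation on $\algA = C_c(G,A)$, where $\beta(f)(r) = p(r)f(r)$. The plan is to verify three things in order: (i) that $\hdelta_c$ actually maps $\algA$ into $\algA$, i.e. that $r \mapsto c(r)^{-1}\delta[c(r)f(r)]$ is again a compactly supported continuous $A$-valued function; (ii) that it satisfies the twisted Leibniz rule $\hdelta_c(f\star_\alpha g) = \hdelta_c(f)\star_\alpha \beta(g) + f \star_\alpha \hdelta_c(g)$ (or whatever the consistent convention is with $\beta$ on the correct side); and (iii) — implicitly — that $\hdelta_c$ is $\gC$-linear, which is immediate.

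Let me think about the pieces. First I would dispose of (i): since $c, c^{-1}$ are continuous into $Z(A)^\times \subset M(\bA)$ and $\delta$ is a continuous derivation of the Fréchet algebra $A$, the composite $r \mapsto c(r)^{-1}\delta[c(r)f(r)]$ is continuous; its support is contained in $\Supp f$ because $c(r)^{-1}c(r) = \bbbone$ kills the contribution away from $\Supp f$ (more precisely $\hdelta_c(f)(r) = c(r)^{-1}\delta[c(r)]\,f(r) + \delta[f(r)]$ using that $\delta$ is a derivation and $c(r)$ commutes with everything, and both terms vanish when $f(r) = 0$). Actually this rewriting $\hdelta_c(f)(r) = \big(c(r)^{-1}\delta[c(r)]\big)f(r) + \delta[f(r)]$ is the key simplification — it shows $\hdelta_c = \delta_0 + m_{\eta}$ where $\delta_0(f)(r) := \delta[f(r)]$ is the "pointwise" untwisted derivation and $\eta(r) := c(r)^{-1}\delta[c(r)] \in Z(A)$ acts by pointwise multiplication. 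Here I use that $c(r)$ is central, so $\delta[c(r)f(r)] = \delta[c(r)]f(r) + c(r)\delta[f(r)]$ and the $c(r)^{-1}$ cancels the second term's prefactor.

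The heart of the matter is (ii), and the main obstacle is getting the convolution-product computation right while keeping track of where $\alpha_r$ and $p(r)$ act. I would compute $\hdelta_c(f\star_\alpha g)(r)$ directly from \eqref{eq-product-alpha}, differentiating under the integral sign (legitimate since everything is compactly supported and continuous into $A$, and $\delta$ is continuous), using the decomposition $\hdelta_c = \delta_0 + m_\eta$. The term $\delta_0$ hitting $f(r')\,\alpha_{r'}[g(r'^{-1}r)]$ gives, by the ordinary Leibniz rule for $\delta$, $\delta[f(r')]\,\alpha_{r'}[g(r'^{-1}r)] + f(r')\,\delta\big(\alpha_{r'}[g(r'^{-1}r)]\big)$; now apply \eqref{commutationalphadelta} in the form $\delta\circ\alpha_{r'} = p(r')\,\alpha_{r'}\circ\delta$ to rewrite the second summand as $f(r')\,p(r')\,\alpha_{r'}\big[\delta[g(r'^{-1}r)]\big]$. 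Reassembling the integrals: the first summand integrates to $(\delta_0 f)\star_\alpha g$, and the second, using centrality of $p$ and that $p(r')\,\alpha_{r'}[\cdot]$ is the cocycle-twist, integrates to $f\star_\alpha (\delta_0 g)$ but with the extra factor $p(r')$ sitting in the right spot to become $\beta$ acting on the second variable — this is exactly the computation already carried out in the proof of Proposition~\ref{prop-beta} showing $\beta(f\star_\alpha g) = \beta(f)\star_\alpha\beta(g)$, specialized. Then the $m_\eta$ term: $\eta(r)$ is a $1$-cochain and one checks $\eta$ satisfies the cocycle-type identity making $m_\eta(f\star_\alpha g)$ split appropriately; concretely $m_\eta(f\star_\alpha g)(r) = \eta(r)\int f(r')\alpha_{r'}[g(r'^{-1}r)]$ and one splits $\eta(r)$ using the cocycle relation for $c$ (differentiated: $\eta(rr') = \eta(r) + p(r)\alpha_r[\eta(r')]$, obtained by applying $c(r)^{-1}\delta$ to $c(rr') = c(r)\alpha_r[c(r')]$ and using \eqref{commutationalphadelta}). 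Combining all four resulting integrals and matching against $\hdelta_c(f)\star_\alpha\beta(g) + f\star_\alpha\hdelta_c(g)$ (expanded the same way) gives the twisted Leibniz identity. The bookkeeping with $\beta$ on one factor — and the sign/side convention in the definition of a $\beta$-twisted derivation, which must be stated — is where care is needed; the algebra is otherwise routine given Proposition~\ref{prop-beta}. Finally I would remark that compatibility with the involution (if desired) follows from $\delta$ being a $*$-derivation and the relation $\beta_z(f^*) = [\beta_{-\bar z}(f)]^*$ from Proposition~\ref{prop-beta}, though the statement as given only asserts the twisted-derivation property, so I would keep that remark brief.
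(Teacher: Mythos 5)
Your strategy --- split $\hdelta_c=\delta_0+m_\eta$ with $\eta(r)\vc c(r)^{-1}\delta[c(r)]$ and track a differentiated cocycle identity for $\eta$ --- is close in spirit to what is needed, but as written it has two genuine problems. The first is that the decomposition applies $\delta$ to $c(r)$, which lives in $Z(A)^\times\subset M(\bA)$ and need not belong to $A$, the domain of $\delta$; the same issue recurs when you apply \eqref{commutationalphadelta} to $c(r')$ in order to derive $\eta(rr')=\eta(r)+p(r)\,\alpha_r[\eta(r')]$. To make this rigorous you would have to first extend $\delta$ to $Z(A)^\times$ (say by $\delta[c]\,a\vc\delta[c\,a]-c\,\delta[a]$), check that this yields a multiplier commuting with $A$, and check that \eqref{commutationalphadelta} survives the extension --- none of which appears in your write-up. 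The paper sidesteps all of this: it never splits $\delta[c(r)f(r)]$, but writes out the integrands of $\hdelta_c(f\star_\alpha g)$, $\hdelta_c(f)\star_\alpha g$ and $\beta(f)\star_\alpha\hdelta_c(g)$, uses $\alpha_{r'}[c(r'^{-1}r)]=c(r)\,c(r')^{-1}$ together with \eqref{commutationalphadelta}, and then verifies with the ordinary Leibniz rule --- applied only to the elements $f(r')c(r')$ and $\alpha_{r'}[g(r'^{-1}r)]\,c(r)c(r')^{-1}$ of $A$ --- that the last two integrands sum to the first.

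The second problem is the form of the twisted Leibniz rule you aim at, $\hdelta_c(f\star_\alpha g)=\hdelta_c(f)\star_\alpha\beta(g)+f\star_\alpha\hdelta_c(g)$: the twist is on the wrong factor. In your own computation the factor $p(r')$ produced by $\delta\circ\alpha_{r'}=p(r')\,\alpha_{r'}\circ\delta$ multiplies $f(r')$, not $g$, and the $\eta$-cocycle identity likewise attaches $p(r')$ to the first factor; what actually holds, and what the paper proves, is $\hdelta_c(f\star_\alpha g)=\hdelta_c(f)\star_\alpha g+\beta(f)\star_\alpha\hdelta_c(g)$. You flag the convention as uncertain, but with $\beta$ fixed by \eqref{beta} only one of the two versions is true, so this has to be pinned down rather than left open. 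The remaining points in your plan (stability of $\algA$ under $\hdelta_c$, differentiating under the integral) are fine.
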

Notice that $r \mapsto c(r)f(r)$ is in $\algA$ for any $f \in \algA$, so that $\hdelta_c \,f \in \algA$.

\begin{proof}
One has
\begin{align*}
\hdelta_c (f \star g)(r)
&= \int_G\dd \mu(r')\,\delta\big[f(r')\,\alpha_{r'} [g(r'^{-1}r)] \, c(r) \big] c(r)^{-1},
\end{align*}
and, using $\alpha_{r'}(c(r'^{-1}r)) = c(r) c(r')^{-1}$, 
\begin{align*}
[\hdelta_c(f) \star g ](r)
&= \int_G\dd \mu(r')\,\delta [f(r')\,c(r')] c(r')^{-1}\, \alpha_{r'} [g(r'^{-1}r)],
\\
[\beta(f) \star \hdelta_c(g) ](r)
&= \int_G\dd \mu(r')\,f(r')\, p(r')\, \alpha_{r'}\circ \delta\big[ g(r'^{-1}r)  \, c(r'^{-1}r) \big] \alpha_{r'}(c(r'^{-1}r)^{-1})
\\
&= \int_G\dd \mu(r')\,f(r') \, \delta\big[ \alpha_{r'} [g(r'^{-1}r)] \, c(r) c(r')^{-1} \big] c(r') c(r)^{-1}.
\end{align*}
With the simplified notations $a \vc f(r')$, $b \vc \alpha_{r'} [g(r'^{-1}r)]$, $c \vc c(r)$ and $c' \vc c(r')$, the integrand in the sum of the last two expressions is
\begin{equation*}
\delta[ a c'] \, c'^{-1} b + a \, \delta[b c c'^{-1}] \, c' c^{-1}
= \delta[a c' \, b c c'^{-1}] \, c^{-1}
= \delta[a b c] \, c^{-1}
\end{equation*}
which is the integrand for $\hdelta_c (f \star g)(r)$.
\end{proof}

This twisting of derivations by cocycles is related to \cite{FaKha2010,FaKhaNiRo2013}. When $c(r) = \vartheta(r) \,\bbbone$ (see Section~\ref{sec-special-case-vartheta}), this twisted derivation reduces to 
\begin{equation}
\label{eq-hdeltadef}
(\hdelta_c \,f)(r) \vc \delta f(r).
\end{equation}

Let us now consider the situation of Hypothesis~\ref{hypo-weight}, with the supplementary assumption: 

\begin{lemma}
Assume that $D$ is defined on the dense subspace $A \subset H$ by a derivation $\delta$ on $A$, $D\xi = \delta \xi$, for $\xi\in A$, and assume that hypotheses of Lemma~\ref{lem-twistedderivation} are satisfied for the cocycles $p$ and $c(r) = q(r)^*$, where $q$ satisfies also Hypothesis~\ref{hypo-weight}-\ref{hyp-q}. Then
\begin{equation*}
[\hD, \hrho(f)]_\beta = \hrho\,[\hdelta_c(f)] \quad\text{for any $f \in \algA$.}
\end{equation*}
\end{lemma}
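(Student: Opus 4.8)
The plan is a direct verification carried out on a convenient dense core, then extended by boundedness. I would work with vectors $\hxi$ in $C_c(G,\Lambda_\varphi(A))\subset\hH$: this subspace is dense, $\hxi(s)\in\Dom(D)$ for every $s$, and (since $A$ is complete metrizable, cf. Lemma~\ref{lem-Afrechet}) $(\hrho(f)\hxi)(r)\in\Lambda_\varphi(A)\subset\Dom(D)$ as well, so that all the group integrals below are strong integrals of $\Dom(D)$-valued maps and $D$ may be pulled through them exactly as in the proof of Lemma~\ref{prop-hD}.

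The first ingredient is the action of the conjugated operator. Writing $c\vc q^*$, so that $U_s^*\,\Lambda_\varphi(a)=\Lambda_\varphi[\alpha_{s^{-1}}(a)\,c(s^{-1})]$ by \eqref{eq-Ur}, and using $D\,\Lambda_\varphi(w)=\Lambda_\varphi(\delta w)$ for $w\in A$, the twisting relation \eqref{commutationalphadelta} in the form $\alpha_s\circ\delta=p(s)^{-1}\,\delta\circ\alpha_s$, the centrality of $p(s)$ and $c(s)$, and the cocycle identity $\alpha_s[c(s^{-1})]=c(s)^{-1}$ from \eqref{cocycle relation}, one obtains on $\Lambda_\varphi(A)$
\begin{align*}
U_s\,D\,U_s^*\,\Lambda_\varphi(a)=\Lambda_\varphi\!\left[\,p(s)^{-1}\,\delta\big(c(s)^{-1}a\big)\,c(s)\,\right],
\qquad\text{equivalently}\qquad
U_s\,D\,U_s^*=\rho[p(s)^{-1}c(s)]\;D\;\rho[c(s)]^{-1}.
\end{align*}
Feeding this, together with \eqref{beta}, \eqref{eq1-UmU*}, the identity $U_{s^{-1}r}^*\,D\,U_{s^{-1}r}=U_r^*(U_s\,D\,U_s^*)U_r$ and the centrality of $p(s)$, into the computation of the two terms of $[\hD,\hrho(f)]_\beta$ --- mimicking step by step the manipulations in the proof of Lemma~\ref{prop-hD} --- yields
\begin{align*}
\big([\hD,\hrho(f)]_\beta\,\hxi\big)(r)=U_r^*\int_G\dd\mu_G(s)\,\Big(D\,\rho[f(s)]-\rho[f(s)c(s)]\,D\,\rho[c(s)]^{-1}\Big)\,U_r\,\hxi(s^{-1}r).
\end{align*}

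It then remains to identify, for each fixed $s\in G$, the operator $D\,\rho[f(s)]-\rho[f(s)c(s)]\,D\,\rho[c(s)]^{-1}$ on $\Lambda_\varphi(A)$ with $\rho\big[c(s)^{-1}\delta(c(s)f(s))\big]=\rho[(\hdelta_c f)(s)]$. Evaluating the left side on $\Lambda_\varphi(v)$, $v\in A$, gives $\Lambda_\varphi\big(\delta(f(s)v)-f(s)c(s)\,\delta(c(s)^{-1}v)\big)$; expanding by the Leibniz rule --- where $\delta$ is extended to the central multipliers $c(s)^{\pm1}$ by $\delta(c(s))\,a\vc\delta(c(s)a)-c(s)\delta(a)$, a well-defined element of $Z(M(\bA))$ since $c(s)$ commutes with $A$ (centrality of $\delta(c(s))$ follows by applying $\delta$ to $[c(s),a]=0$), and $\delta(c(s)^{-1})=-c(s)^{-1}\delta(c(s))c(s)^{-1}$ --- and using centrality, this collapses to $\Lambda_\varphi\big((\delta f(s)+f(s)c(s)^{-1}\delta(c(s)))\,v\big)$, which by one further application of Leibniz equals $\Lambda_\varphi\big(c(s)^{-1}\delta(c(s)f(s))\,v\big)$; recall $c(s)^{-1}\delta(c(s)f(s))\in A$ as noted after Lemma~\ref{lem-twistedderivation}. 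Substituting back and pulling $U_r^*(\,\cdot\,)U_r$ into the integral as $\alpha_{r^{-1}}$ via \eqref{eq1-UmU*}, the right-hand side becomes precisely $\big(\hrho[\hdelta_c(f)]\,\hxi\big)(r)$ by \eqref{eq-defrepresentationhrho}. Thus $[\hD,\hrho(f)]_\beta$ and $\hrho[\hdelta_c(f)]$ agree on the dense core $C_c(G,\Lambda_\varphi(A))$; since $\hdelta_c(f)\in\algA\subset C_c(G,\bA)$ the second operator is bounded, hence $[\hD,\hrho(f)]_\beta$ extends to it, which is the assertion.

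The only non-mechanical point --- and the one I expect to be the main obstacle --- is the algebra in this last step: one must keep careful track of the fact that although $c(s)=q(s)^*$ generally lies in $M(\bA)\setminus A$, every object actually appearing ($\delta(c(s)^{-1}v)$, $c(s)^{-1}\delta(c(s)f(s))$, etc.) is a genuine element of $A$, and one must check that $\delta$ extends consistently to the central multipliers $c(s)^{\pm1}$ with image in $Z(M(\bA))$, so that the Leibniz manipulations above are legitimate. The only other care needed is the routine domain justification (identical to the one already used for Lemma~\ref{prop-hD}) that allows $D$ to commute with integration over $G$.
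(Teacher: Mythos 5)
Your argument is correct and follows essentially the same route as the paper's proof: a direct computation on a dense core using the covariance relation \eqref{eq-Ur}, the cocycle identities \eqref{cocycle relation}, the relation $p(r)\,\alpha_r\circ\delta=\delta\circ\alpha_r$, and a Leibniz-rule collapse of the integrand, followed by extension by boundedness. The only cosmetic difference is that you first isolate the conjugation identity $U_sDU_s^*=\rho[c(s)^*]^{-1}D\,\rho[c(s)]^{-1}$ and then detour through an extension of $\delta$ to the central multipliers $c(s)^{\pm 1}$, whereas the paper applies Leibniz once to the factorization $f(r')c(r')\cdot c(r')^{-1}q(r)^*\alpha_r[\hxi(r'^{-1}r)]$, whose two factors already lie in $A$ and $\Lambda_\varphi(A)$ respectively, so that no such extension is needed.
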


\begin{proof}
For any $f \in \algA = C_c(G, A)$ and $\hxi \in Y_\algA \subset \hH$, one has, omitting $\rho$,
\begin{align*}
(\hD \hrho(f) \,\hxi\,)(r)
&= \int_G \dd\mu_G(r')\, U_r^* \delta\left[ f(r')\, U_r \hxi(r'^{-1} r) \right]
\\
&= \int_G \dd\mu_G(r')\, U_r^* \delta\left[ f(r')\, q(r)^* \alpha_r[ \hxi(r'^{-1} r) ]  \right],
\\
(\hrho[\beta(f)] \hD\, \hxi\,)(r)
&= \int_G \dd\mu_G(r')\, U_r^* \, p(r')\, f(r')  \, U_{r'} D U_{r'^{-1} r} \hxi(r'^{-1} r)
\\
&= \int_G \dd\mu_G(r')\, U_r^* \, f(r')\, p(r')\, q(r')^* \alpha_{r'} \circ \delta\left[ q(r'^{-1} r)^* \, \alpha_{r'^{-1} r}[ \hxi(r'^{-1} r)] \right]
\\
&= \int_G \dd\mu_G(r')\, U_r^* \, f(r')\, q(r')^*\, \delta\left[ \alpha_{r'}[q(r'^{-1} r)^*] \, \alpha_{r} [ \hxi(r'^{-1} r)] \right].
\end{align*}
Using $\alpha_{r'}[q(r'^{-1} r)^*] = q(r)^* \, q(r')^{* -1}$ and the simplified notations $a \vc f(r')$, $\xi \vc \alpha_{r} [\hxi(r'^{-1} r)]$, $q^* \vc q(r)^*$ and $q'^* \vc q(r')^*$, the integrand in $([\hD, \hrho(f)]_\beta \,\hxi\,)(r)$ (without the first $U_r^*$) is 
\begin{align*}
\delta(a\, q^* \xi) - a\, q'^*\, \delta[ q'^{* -1} \, q^* \, \xi ] 
= \delta(a \, q'^*) \, q'^{* -1} \, q^* \, \xi
\end{align*}
so that
\begin{equation*}
([\,\hD, \hrho(f)]_\beta\, \hxi\,)(r) = \int_G \dd\mu_G(r')\, U_r^* \, \delta[ f(r')\, q(r')^*] \,q(r')^{* -1} \, U_r \, \hxi(r'^{-1} r)
= (\hrho\,[\hdelta_c(f)] \,\hxi\,)\,(r).
\end{equation*}
\end{proof}

This result gives a natural $\hD$ associated to a derivation. If there are several derivations on $A$, we can use the gamma matrices as in \eqref{eq-DiracG} but in appropriate dimension to define $\DD$.

\section{\texorpdfstring{$C^*$-algebra of a semidirect product of groups}{C*-algebra of a semidirect product of groups}}
\label{sec-C*-algebra-semidirect-product}

In this section, $G$ and $N$ are locally compact second countable Hausdorff groups with an action $\varsigma : G \to \Aut(N)$ such that $(r,n) \mapsto \varsigma_r(n)$ is continuous from $G \times N$ to $N$. \\
Denote by $\mu_G$ and $\mu_N$ the Haar measures on $G$ and $N$ respectively, and by $\Delta_G$ and $\Delta_N$ their modular functions. 

We are interested into applying our construction to the action of $G$ on $C^*_\red(N)$. In that case, the final modular-type twisted spectral triple is a spectral triple on the $C^*$-algebra of the semidirect product $K \vc G \ltimes_\varsigma N$.
\\
We suppose that the original spectral triple is given by $(A_N \vc C_c(N), H \vc L^2(N, \dd\mu_N), D)$ for the left regular representation $\lambda_N$ of $\bA \vc C^*_\red(N)$ on $L^2(N, \dd\mu_N)$, and for an operator $D$ not specified here.

\subsection{\texorpdfstring{The crossed products $G \ltimes_{\alpha, \red} C_\red^*(N)$ and $C^*_\red(G \ltimes_\varsigma N)$}{The crossed product GxC*(N)}}
\label{subsec-crossproductalgebra}

The space $A_N \vc C_c(N)$ of compactly supported functions on $N$ is a $*$-algebra for the product and the involution given for $f_N, g_N \in A_N$ by
\begin{align*}
(f_N \star_N g_N)(n) &\vc \int_N \dd\mu_N(n') \, f_N(n') g_N(n'^{-1} n),
&
f_N^*(n) &\vc \Delta_N(n)^{-1} \overline{f_N(n^{-1})},
\end{align*}
This $*$-algebra generates the $C^*$-algebra $\bA \vc C^*_\red(N)$. The left regular representation  $\rho = \lambda_N$ of $\bA$ on $H \vc L^2(N, \dd\mu_N)$ is
\begin{align}
\label{lambdaN}
[\lambda_N(f_N)\,\xi](n) \vc \int_N \dd\mu_N(n') \, f_N(n')\, \xi(n'^{-1} n),\quad \text{ for any $f_N \in A_N, \xi \in H$.}
\end{align}

According to \cite[Section~3.3]{Will07a}, there exists a continuous homomorphism $\nu : G \to \gR^+$  such that 
\begin{equation}
\label{eq-definition-nu}
\nu(r) \int_N \dd\mu_N(n) \, f_N \big(\varsigma_r(n)\big)  = \int_N \dd\mu_N(n) \, f_N(n),\quad \text{ for any $f_N\in A_N,\, r\in G$.}
\end{equation}

The continuous action of $G$ on $N$ induces a $C^*$-dynamical system $(\bA, G, \alpha)$ where the continuous morphism $\alpha : G \to \Aut(\bA)$ is given by 
\begin{equation}
\label{eq-def-alpha-group}
\alpha_r(f_N)(n) \vc \nu(r)^{-1}\, f_N[\varsigma_{r^{-1}}(n)],\quad \text{for any $f_N \in A_N$}
\end{equation}
(see \cite[Prop~3.11]{Will07a} with $A = \gC$ and $\beta = \Id$). To show that $\alpha_r$ is an automorphism of the $*$-algebra $A_N$ requires the relation
\begin{align}
\label{eq-inv-DeltaN}
\Delta_N[\varsigma_r(n)] = \Delta_N(n),\quad \text{for any $n \in N$},
\end{align}
which we suppose to hold. The relation \eqref{eq-definition-nu} reflects an invariance of the Haar measure of $N$ by the action of $G$ on $A_N$: 
\begin{equation*}
\int_N \dd\mu_N(n)\,\alpha_r(f_N)(n) = \int_N \dd\mu_N(n)\,f_N(n).
\end{equation*}
Here, the strongly continuous unitary representation $U_r$ satisfying \eqref{eq-UaU*} is given by
\begin{equation*}
U_r\,\xi(n)=\nu(r)^{-1/2}\,\xi[\varsigma_{r^{-1}}(n)].
\end{equation*}

Following the construction exposed in Section~\ref{sec-motivation-main-results}, the extended spectral triple is defined on the Hilbert space
\begin{equation*}
\hH \vc L^2(G, \dd\mu_G) \otimes L^2(N, \dd\mu_N) \simeq L^2(G \times N, \dd\mu_G \,\dd\mu_N)
\end{equation*}
and the representation of $C_c(G, \bA)$ on $\hH$ is given by
\begin{equation*}
\big(\hrho(f)\,\hxi\,\big)(r) = \int_{G}\,\dd\mu_G(r') \lambda_N\big( \alpha_{r^{-1}} [f(r')] \big) \,\hxi(r'^{-1} r),
\end{equation*}
for any $f \in C_c(G, \bA)$ and $\hxi \in \hH$,
and
\begin{align}
\big(\hrho(f)\,\hxi\;\big)(r)(n) 
\label{eq-rep-GxN}
&= \nu(r)\, \int_{G \times N} \dd\mu_G(r') \,\dd\mu_N(n')\, f(r')(\varsigma_r(n')) \,\hxi(r'^{-1} r)(n'^{-1} n) 
\\
&= \int_{G \times N} \dd\mu_G(r') \,\dd\mu_N(n')\,  f(r')(n') \,\hxi(r'^{-1} r)\big(\varsigma_{r^{-1}}(n'^{-1}) n\big)
\nonumber
\end{align}
for any $f \in C_c(G, C_c(N))$ and $\hxi \in C_c(G, H)$. By completion, the representation $\hrho$ of $C_c(G, \bA)$ defines the crossed product $C^*$-algebra $\bB=G \ltimes_{\alpha, \red} \bA$.

We equip $A_N \subset \bA$ with the induced topology of $\bA$. It is a dense $*$-subalgebra of $\bA$. The vector space $C_c(G, C_c(N))$ is dense in $C_c(G, \bA)$, but the product $f \star_\alpha g$ defined by \eqref{eq-product-alpha} of any elements $f,g \in C_c(G, C_c(N))$ is not necessary in $C_c(G, C_c(N))$. As an element in $C_c(G, \bA)$, this product is given by the double integral
\begin{align}
(f \star_\alpha g)(r)(n) &= \int_{G \times N}\, \dd\mu_G(r')\,\dd\mu_N(n') \, f(r')(n') \,\alpha_{r'} [g(r'^{-1} r)](n'^{-1} n)
\nonumber
\\
\label{eq-prod-beta}
& = \int_{G \times N} \dd\mu_G(r')\,\dd\mu_N(n') \, f(r')(n') \, g(r'^{-1} r)\big(\varsigma_{r'^{-1}}(n'^{-1} n)\big)\,\nu(r')^{-1},
\end{align}
and the involution of $f \in C_c(G, C_c(N))$, defined in \eqref{eq-involution-alpha}, is
\begin{equation}
\label{eq-involution-beta}
f^*(r)(n) = \nu(r)^{-1}\,\Delta_G(r)^{-1}\,  \Delta_N(n)^{-1} \,\overline{f(r^{-1})\big(\varsigma_{r^{-1}}(n^{-1})\big)}.
\end{equation}

There is a natural isomorphism $C_\red^*(G \ltimes_\varsigma N) \simeq G \ltimes_{\alpha, \red} C_\red^*(N)$ (see \cite[Prop.~3.11]{Will07a}) which permits to consider another description of the Hilbert space on which the modular-type twisted spectral triple of Theorem~\ref{thm-twisted-triple} is defined. Moreover, describing this isomorphism at the level of the spaces $C_c(G \ltimes_\varsigma N)$ and $C_c(G, C_c(N))$ will produce a natural candidate for the algebra $\algA \subset C_c(G, C_c(N))$. This description will also be used in \ref{sec-example-affine-group}.

Let $K \vc G \ltimes_\varsigma N$ be the semidirect product group, with elements $(r,n) \in G \times N$, product $(r,n)(r',n') \vc \big(r r', n \,\varsigma_r(n')\big)$, unit $e_K=(e_G, e_N)$, and inverse $(r,n)^{-1} = \big( r^{-1}, \varsigma_{r^{-1}}(n^{-1}) \big)$. Let $A_K \vc C_c(K)$ and $\bA_K \vc C^*_\red(K)$. 

The Haar measure on $K$ is given by
\begin{equation*}
\dd\mu_K(r,n) = \nu(r)^{-1} \,\dd\mu_G(r) \,\dd\mu_N(n),
\end{equation*}
and its modular function is 
\begin{equation*}
\Delta_K(r,n) = \nu(r)^{-1} \,\Delta_G(r)\, \Delta_N(n).
\end{equation*}
The product of $f_K,g_K \in A_K$ is
\begin{equation}
\label{eq-prod-K}
(f_K \star_K g_K)(r,n) = \int_{G \times N} \dd\mu_G(r') \,\dd\mu_N(n')\,f_K(r',n') \,g_K\big(r'^{-1} r, \varsigma_{r'^{-1}}(n'^{-1} n)\big)\, \nu(r')^{-1},
\end{equation}
while the involution of $f_K \in A_K$ is nothing else but
\begin{equation}
\label{eq-involution-K}
f_K^*(r,n) = \nu(r)\, \Delta_G(r)^{-1}\, \Delta_N(n)^{-1} \,\overline{f_K(r^{-1}, \varsigma_{r^{-1}}(n^{-1}))}.
\end{equation}

The algebra $C_\red^*(K)$ is obtained through the left regular representation $\lambda_K$ of $A_K$ on $L^2(K, \dd\mu_K)$:
\begin{equation*}
(\lambda_K(f_K)\, \xi_K)(r,n) \vc \int_{G \times N} \dd\mu_G(r') \,\dd\mu_N(n')\,f_K(r',n') \,\xi_K \big(r'^{-1} r , \varsigma_{r'^{-1}}(n'^{-1} n)\big) \,\nu(r')^{-1} 
\end{equation*}
for any $f_K \in A_K$ and $\xi_K \in L^2(K, \dd\mu_K)$.
\medskip

The products $\star_K$ and $\star_\alpha$ defined in \eqref{eq-prod-K} and \eqref{eq-prod-beta} respectively are formally the same, while the corresponding involutions, defined in \eqref{eq-involution-K} and  \eqref{eq-involution-beta}, are not the same but nevertheless, there is an isomorphism $\Phi:\,C_\red^*(K) \to G \ltimes_{\alpha, \red} C_\red^*(N)$ which is induced by the map (also denoted by) $\Phi : A_K \to C_c(G, C_c(N))$ defined by
\begin{equation*}
f(r)(n) = \Phi (f_K)(r)(n) \vc \nu(r)^{-1} f_K(r,n),
\end{equation*}
which relates the equations \eqref{eq-prod-K} and \eqref{eq-prod-beta} (resp. \eqref{eq-involution-K} and  \eqref{eq-involution-beta}). Notice that one has to prove that $r \mapsto \Phi (f_K)(r) \in C_c(N) \subset C_\red^*(N)$ is continuous: this can be easily performed using the norm topology on $L^1(N, \dd\mu_N)$.

There is a corresponding unitary operator $W : L^2(K, \dd\mu_K) \to L^2(G \times N, \dd\mu_G\, \dd\mu_N)$ given by
\begin{equation}
\label{eq-defUGbeta}
\hxi(r)(n) = (W\hxi_K)(r)(n) \vc \hxi_K\big(r, \varsigma_r(n)\big), \quad \text{for any $\hxi_K \in C_c(K) \subset L^2(K, \dd\mu_K)$,}
\end{equation}
which intertwines the representation $\lambda_K$ of $C_\red^*(K)$ on $L^2(K, \dd\mu_K)$ and the representation $\hrho \circ \Phi$ of $G \ltimes_{\alpha, \red} C_\red^*(N)$ on $\hH = L^2(G \times N, \dd\mu_G\, \dd\mu_N)$:
\begin{equation*}
\hrho \circ \Phi(f_K) (W\hxi_K) = W\lambda_K(f_K)  \,\hxi_K.
\end{equation*}
The relations between the algebras $C_\red^*(K)$, $G \ltimes_{\alpha, \red} C_\red^*(N)$ and their representations can be summarized in the commutative diagram
\begin{equation}
 \label{eq-diagram}
\raisebox{0.4\totalheight}{\xymatrix{
C_\red^*(K) \ar[r]^-{\Phi} \ar[d]_-{\lambda_K} 
&  G \ltimes_{\alpha, \red} C_\red^*(N) \ar[d]^-{\hrho} \ar@{=}[r]
&  G \ltimes_{\alpha, \red} C_\red^*(N) \ar[d]^-{\hrho_\GNS} 
\\
L^2(K, \dd\mu_K) \ar[r]^-{W}  
& L^2(G \times N, \dd\mu_G\, \dd\mu_N) \ar[r]^-{V}  
& L^2(G \times N, \dd\mu_\GNS).
}}
\end{equation}
where $V$ is given by
\begin{equation*}
(V\\\hxi\,)(r)(n) \vc \nu(r)^{-1} \, \hxi(r)\big(\varsigma_{r^{-1}}(n)\big),
\end{equation*}
and the last column will be described in Section~\ref{sec-plancherel-extension-group}.

Notice that the composite unitary operator  $VW : L^2(K, \dd\mu_K) \to L^2(G \times N, \dd\mu_\GNS)$ is nothing else than
\begin{equation*}
(VW\hxi_K)(r)(n) = \nu(r)^{-1}\, \hxi_K(r,n),
\end{equation*}
which is the same expression as the one defining $\Phi$.

The space
\begin{equation}
\label{group-ongroup}
\algA \vc \Phi(C_c(G\times N)) \subset C_c(G, C_c(N))
\end{equation}
is a dense $*$-subalgebra in $G \ltimes_{\alpha, \red} C_\red^*(N)$, and it is a good candidate to enter into the construction of the spectral triple of Theorem~\ref{thm-twisted-triple}. By construction, $\algA$ is the subspace of functions in $C_c(G, C_c(N))$ which are continuous in both variables. 

Remark that the existence of a finitely summable spectral triple on a group $C^*$-algebra is not guaranteed: Connes proved in \cite{Connes89} that, for an infinite discrete nonamenable group $N$, there exist no finitely summable spectral triples on $\bA=C^*_{red}(N)$. However, there still exist $\theta$-summable spectral triples on $\bA$, that is, there exists $t_0\geq 0$ such that $\Tr\, e^{-tD^2}<\infty$ for all $t>t_0$.

The construction presented here can also be used to transport the extended modular-type twisted spectral triple of Theorem~\ref{thm-twisted-triple} defined on a crossed product $C^*$-algebra, to a spectral triple on a group $C^*$-algebra. This will be done in Section~\ref{sec-extended-affine} in order to compare our extension with a spectral triple already proposed for the affine group.

\subsection{The Plancherel weight and its dual weight}
\label{sec-plancherel-extension-group}

For any locally compact group $N$, the Plancherel weight is defined on positive elements of the von Neumann algebra $C_\red^*(N)''$ generated by the left regular representation $\lambda_N$ of $N$ by 
\begin{equation*}
\varphi_N(f) \vc 
\begin{cases}
\norm*{g}^2_{L^2(N)}  &\text{ when $f^{\onehalf}=\lambda_N(g)$ for some left bounded function  $g\in L^2(N,\dd\mu_N)$},\\
\infty &\text{ otherwise},
\end{cases}
\end{equation*}
and $g\in L^2(N,\dd\mu_N)$ is called left bounded if $f\in A_N\subset  L^2(N,\dd\mu_N) \mapsto g\star_N f \in L^2(N,\dd\mu_N)$ can be extended to a bounded operator on $L^2(N,\dd\mu_N) $. \\
This weight is faithful and semifinite on $C_\red^*(N)''$ \cite[Chap. VII, Theorem 2.5]{Takes2003}.

On the generators $f_N \in A_N$ of $C_\red^*(N)$, it reduces simply to $\varphi_N(f_N) = f_N(e_N)$. This is a densely defined lower-semicontinuous weight on $C_\red^*(N)$ and because
\begin{equation*}
\varphi_N(f_N^* \star_N g_N) = \int_N  \dd\mu_N(n)\,\overline{f_N(n)} \, g_N(n) ,
\end{equation*}
its GNS construction produces the Hilbert space $L^2(N, \dd\mu_N)$ and the associated non-degene\-rate representation is the faithful left regular representation $\lambda_N$ of $C_\red^*(N)$. The Plancherel weight is a KMS-weight for the automorphism
\begin{equation}
\label{sigmaN}
\sigma_N(f_N)(n) \vc \Delta_N(n)^{-1} f_N(n), \quad \text{for any $f_N \in A_N,\,n \in N$.}
\end{equation}
One can also check directly that $\sigma_N(f_N)^* = \sigma_N^{-1}(f_N^*)$.

\begin{proposition}
\label{prop-hyp-satisfied-group-product}
For any $f_N \in A_N$ and $r \in G$, one has
\begin{equation*}
\varphi_N[f_N] = \varphi_N [\nu(r)\, \alpha_r(f_N)].
\end{equation*}
Moreover, $\sigma_N$ and $\alpha_r$ commute for $r \in G$.
\\
In other words, Hypothesis~\ref{hypo-weight} is satisfied for the one-cocycle $q^2(r) \vc \nu(r)\, \bbbone$ where $\bbbone$ is the unit in $M(\bA)$. 
\end{proposition}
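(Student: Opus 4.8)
The plan is to verify both assertions first on the dense $*$-subalgebra $A_N=C_c(N)$ by elementary computations, then to upgrade the relative invariance to all of $\bA_+$ by lower semicontinuity, and finally to read off Hypothesis~\ref{hypo-weight} with $q(r)\vc\nu(r)^{1/2}\,\bbbone$. On generators one has $\varphi_N(f_N)=f_N(e_N)$, $\alpha_r(f_N)(n)=\nu(r)^{-1}f_N[\varsigma_{r^{-1}}(n)]$ by \eqref{eq-def-alpha-group}, and $\varsigma_s(e_N)=e_N$ for every $s\in G$ since $\varsigma_s\in\Aut(N)$; hence for $f_N\in A_N$ and $r\in G$,
\[
\varphi_N[\nu(r)\,\alpha_r(f_N)]=\big(\nu(r)\,\alpha_r(f_N)\big)(e_N)=f_N[\varsigma_{r^{-1}}(e_N)]=f_N(e_N)=\varphi_N[f_N].
\]
Equivalently, $(\varphi_N\circ\alpha_{r^{-1}})[g_N]=\nu(r)\,\varphi_N[g_N]$ for all $g_N\in A_N$ (apply the identity to $g_N=\alpha_r(f_N)\in A_N$).

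To promote this to $\bA_+$ I would argue that, $\alpha_{r^{-1}}$ being an isometric $*$-automorphism of $\bA=C_\red^*(N)$, the composition $\varphi_N\circ\alpha_{r^{-1}}$ is again a faithful, densely defined, lower semicontinuous weight on $\bA$, and so is $\nu(r)\,\varphi_N$; these two weights agree on the dense $*$-subalgebra $A_N\subset\caN_{\varphi_N}$, hence on the linear span of $\{f_N^*\star_N g_N:f_N,g_N\in A_N\}\subset\caM_{\varphi_N}$. Since a lower semicontinuous weight equals the supremum of the bounded positive functionals it dominates \cite{Combes}, and domination by such a functional can be tested on a norm-dense $*$-subalgebra, the two weights dominate the same functionals and therefore coincide on $\bA_+$, yielding $\varphi_N[f]=\varphi_N[\nu(r)\,\alpha_r(f)]$ for all $f\in\bA_+$.

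For the commutation, a direct computation on $f_N\in A_N$ using \eqref{eq-def-alpha-group}, \eqref{sigmaN} and the assumed invariance $\Delta_N[\varsigma_r(n)]=\Delta_N(n)$ of \eqref{eq-inv-DeltaN} gives
\[
(\sigma_N\circ\alpha_r)(f_N)(n)=\Delta_N(n)^{-1}\nu(r)^{-1}f_N[\varsigma_{r^{-1}}(n)]=\nu(r)^{-1}\Delta_N[\varsigma_{r^{-1}}(n)]^{-1}f_N[\varsigma_{r^{-1}}(n)]=(\alpha_r\circ\sigma_N)(f_N)(n),
\]
so $\sigma_N$ and $\alpha_r$ commute on $A_N$; the same computation with $\Delta_N^{-it}$ in place of $\Delta_N^{-1}$ gives $\alpha_r\circ\sigma_N^t=\sigma_N^t\circ\alpha_r$ on $A_N$, hence on $\bA$ by density and boundedness.

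It remains to assemble Hypothesis~\ref{hypo-weight}: $\varphi_N$ is faithful, densely defined and lower semicontinuous (hence semifinite), and KMS for $\sigma_N$ as recalled in the text; the element $q(r)=\nu(r)^{1/2}\,\bbbone\in M(\bA)^\times$ is an $\alpha$-one-cocycle because $\nu$ is a continuous homomorphism and $\alpha_r$ fixes $\bbbone$, so $q(r)\,\alpha_r[q(r')]=\nu(r)^{1/2}\nu(r')^{1/2}\,\bbbone=\nu(rr')^{1/2}\,\bbbone=q(rr')$, and $r\mapsto q(r)^{-1}a=\nu(r)^{-1/2}a$ is continuous for each $a\in\bA$; finally $q(r)\,a\,q(r)^*=\nu(r)\,a$, so the relative invariance proved above, rewritten as $\varphi_N\circ\alpha_{r^{-1}}[a]=\varphi_N[\nu(r)\,a]=\varphi_N[q(r)\,a\,q(r)^*]$ for $a\in\bA_+$, is precisely \eqref{eq-hyp-omega-q-alpha}; since $q(r)$ is a scalar multiple of $\bbbone$ this is the situation of the remark following Hypothesis~\ref{hypo-weight}. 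The only step that goes beyond routine verification on compactly supported functions is the passage from $A_N$ to $\bA_+$ in the relative invariance, which I expect to be the main (mild) obstacle.
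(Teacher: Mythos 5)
Your proof is correct and takes essentially the same route as the paper's (three\hbox{-}sentence) proof: the invariance identity is exactly the evaluation-at-$e_N$ computation from \eqref{eq-def-alpha-group} together with $\varsigma_{r^{-1}}(e_N)=e_N$, the commutation of $\sigma_N$ with $\alpha_r$ is the pointwise computation resting on \eqref{eq-inv-DeltaN}, and the cocycle property of $q$ is just $\nu$ being a continuous homomorphism. The one place you go beyond the paper is the upgrade of the relative invariance from $A_N$ to $\bA_+$ --- the paper states the identity only for $f_N\in A_N$ and silently assumes the extension when invoking Hypothesis~\ref{hypo-weight}, so the extra step is welcome, but your justification that domination of a lower semicontinuous weight by a bounded positive functional ``can be tested on a norm-dense $*$-subalgebra'' is the one soft spot and would be cleaner via the standard behaviour of the Plancherel weight under unitarily implemented automorphisms that rescale the Haar measure.
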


\begin{proof}
The first relation follows from \eqref{eq-def-alpha-group}. The assumption \eqref{eq-inv-DeltaN} implies that $\sigma_N$ defined in \eqref{sigmaN} and $\alpha_r$ commute for any $r \in G$. Finally, $q$ is a one-cocycle since $\nu : G \to \gR_+^\times$ is a morphism of groups. 
\end{proof}

The invertible conjugate-linear isometry
\begin{equation}
\label{eq-modularconjugationN}
(J_N\,\xi)(n) \vc \Delta_N(n)^\onehalf \,\xi^*(n) = \Delta_N(n)^{-\onehalf} \,\overline{\xi(n^{-1})}
\end{equation}
defined on $\xi \in A_N \subset L^2(N,\dd\mu_N)$ is the canonical modular conjugation on $L^2(N,\dd\mu_N)$ with $J_N^2 = \bbbone$, and it intertwines the left regular representation $\lambda_N$ and the right regular representation $R_{\varphi_N}$:
$J_N\, \lambda_N(f_N^*) \,J_N = R_{\varphi_N}(f_N)$.

\medskip
Let us now consider the dual weight of $\varphi_N$. The map $v : C_c(G \times N)_\star \to C_c(G \times N)$ defined by $v[x](r) \vc \alpha_r[x(r)]$, that is $v[x](r)(n) =  \nu(r)^{-1}\, x(r)\big(\varsigma_{r^{-1}}(n)\big)$, intertwines  the two products and involutions given in \eqref{eq-prod-conv-takesaki} and (\ref{eq-product-alpha}-\ref{eq-involution-alpha}), and satisfies $\hrho \circ v = \pi_\alpha$, where $\pi_\alpha$ is the representation of $C_c(G \times N)$ (equipped with the product and involution \eqref{eq-prod-conv-takesaki}) on $\hH$ defined as in \eqref{eq-deftpitalpha}. The dual weight $\hvarphi$ associated to $\varphi_N$ is defined by \eqref{eq-dualweightTake} for any $x \in C_c(G \times N)$, so that, with $f = v[x] \in C_c(G \times N)$, it becomes
\begin{equation*}
\hvarphi \,[ \hrho(f)^* \, \hrho(f) ] = \varphi_N\,[(f^* \star_\alpha f)(e_G)].
\end{equation*}
We define $\hvarphi_N$ on $G \ltimes_{\alpha, \red} C_\red^*(N)$ by $\hvarphi_N(f^* \star_\alpha f) \vc \hvarphi \,[ \hrho(f)^* \, \hrho(f) ] = \varphi_N\,[(f^* \star_\alpha f)(e_G)]$. Since
\begin{align*}
(f^* \star_\alpha g)(e_G)= \int_G \dd\mu_G(r)\,\Delta_G(r)^{-1} \, \alpha_r[ f(r^{-1})^*] \star_N \alpha_r[ g(r^{-1})]= \int_G \dd\mu_G(r)\, \alpha_{r^{-1}}[ f(r)^* \star_N g(r)],
\end{align*}
using $\varphi_N [\alpha_{r^{-1}}(f_N)] = \nu(r) \, \varphi_N[f_N]$, we get
\begin{align*}
\hvarphi_N( f^* \star_\alpha g ) 
&= \int_G  \dd\mu_G(r) \,\varphi_N \circ \alpha_{r^{-1}} [ f(r)^* \star_N g(r)]\\
&= \int_{G \times N} \, \dd\mu_G(r)\,  \dd\mu_N(n) \, \nu(r)  \, \overline{f(r)(n)} \, g(r)(n).
\end{align*}
The GNS construction for $\hvarphi_N$ produces the Hilbert space of square integrable functions on $G\times N$ for the measure $(r,n) \mapsto \dd\mu_\GNS(r,n) \vc \nu(r) \, \dd\mu_G(r)\, \dd\mu_N(n)$ and the associated representation is
\begin{equation*}
[\hrho_\GNS(f) \,\hxi \,](r,n) = \int_{G\times N} \dd\mu_G(r')\, \dd\mu_N(n') \, \nu(r')^{-1} \, f(r')(n') \, \hxi(r'^{-1} r)\big( \varsigma_{r'^{-1}}(n'^{-1} n) \big).
\end{equation*}
This representation $\hrho_\GNS$ is unitarily equivalent to the representation $\hrho$ via the unitary equivalence $V :  \hH = L^2(G \times N, \dd\mu_G\, \dd\mu_N) \to \hH_\GNS = L^2(G \times N, \dd\mu_\GNS)$ which is the extension of $v$ to the respective Hilbert spaces. This corresponds to the last column of \eqref{eq-diagram}

\section{Examples}
\label{sec-examples}

\subsection{\texorpdfstring{The affine group $\gR \ltimes \gR$}{The affine group RxR}}
\label{sec-example-affine-group}

We now show that the affine group $K = \gR \ltimes \gR$ is an illustrative example of the above situation and how a spectral triple on $C^*(\gR)$ can be extended to a modular-type twisted spectral triple on $C^*(K)$, which coincides with the one defined in \cite{Mata14a}.

\subsubsection{General considerations}
\label{General considerations}

The group $G\vc\gR$ acts on $N\vc\gR$ by $\varsigma_a(b) \vc e^{-a} b$ for any $a,b \in \gR$. This defines the product 
\begin{equation*}
(a,b) \,(a', b') = (a + a', b + e^{-a} b')
\end{equation*}
on $K\vc\gR \ltimes \gR$. Following \ref{subsec-crossproductalgebra}, a direct computation shows that $\nu(a) = e^{-a}$, so that the modular function of $\gR \ltimes \gR$ is $\Delta_K(a,b) = e^a$.

The original spectral triple we consider is described by the following elements:

-- $A \vc C_c(\gR)$ is a $*$-dense subalgebra (for the convolution product) of $\bA \vc C^*(\gR)$, 

-- $H \vc L^2(\gR, \dd b)$,

-- $\rho\vc \lambda_\gR$ is the left regular representation of $\gR$ which induces the convolution product given in \eqref{lambdaN},

-- $D$ is the unbounded operator
\begin{equation*}
(D \,\xi)(b) \vc b\, \xi(b), \quad \text{for any $\xi \in C_c(\gR) \subset H$.}
\end{equation*}

Identifying $H=L^2(\gR, \dd b)$ with $L^2(\hgR, \dd \hb)$ by Fourier transform, where $\hgR$ is the Pontryagin dual of $\gR$, $D$  becomes the differential operator $-i \partial_{\,\hb}$. 

The spectral triple $(A, H, D)$ is non unital, and $\rho$ is the GNS representation $\pi_\varphi$ of the Plancherel weight $\varphi(f_\gR) \vc f_\gR(0)$ defined for any $f_\gR \in A$.

The action \eqref{eq-def-alpha-group} of $G=\gR$ on $\bA=C^*(\gR)$ is given by
\begin{equation*}
\alpha_a(f_\gR)(b) \vc e^a f_\gR(e^a b),\quad\text{for $a,b \in \gR$ and $f_\gR \in A$},
\end{equation*}
the algebra $\bB=\gR \ltimes_{\alpha, \red} C^*(\gR)$ has explicit product and involution (see \eqref{eq-prod-beta} and \eqref{eq-involution-beta})
\begin{align*}
(f \star g)(a)(b) &= \int \dd a' \dd b' \, e^{a'}\,  f(a')(b') \, g(a-a')(e^{a'}(b-b'))\\
&= \int \dd a' \dd b' \, f(a-a')(b- e^{-(a-a')}b') \, g(a')(b'),\\
f^*(a)(b) &= e^{a} \overline{f(-a)(-e^{a}b)},
\end{align*}
and the representation \eqref{eq-rep-GxN} of $ \bB$ on $\hH \vc L^2(\gR, \dd a) \otimes H \simeq L^2(\gR^2, \dd a \dd b)$ takes the explicit form
\begin{equation*}
\big(\hrho(f)\, \hxi \,\big)(a)(b) = e^{-a} \int_{\gR \times \gR} \dd a'\, \dd b'\, f(a')(e^{-a} b')\, \hxi(a-a')(b-b'),
\end{equation*}
for any $\hxi \in C_c(\gR , C_c(\gR)) \subset \hH$ and $f \in C_c(\gR , C_c(\gR)) \subset \gR \ltimes_{\alpha, \red} C^*(\gR)$.

Proposition~\ref{prop-hyp-satisfied-group-product} gives $q(a) \vc e^{-a/2}$, and from Proposition~\ref{prop-def-unitary}, the unitary operator defined by
\begin{equation*}
(U_a \,\xi)(b) = e^{a/2} \,\xi(e^a b), \quad \text{for $\xi \in C_c(\gR) \subset H$,}
\end{equation*}
satisfies \eqref{eq-UaU*}. A simple computation gives
\begin{equation*}
(U_a D U_a^*\, \xi)(b) =  e^a b \,\xi(b) = e^a (D\, \xi)(b) = \big({z(a)^*}^{-1} D z(a)^{-1} \,\xi \big)(b) \quad \text{with $z(a) \vc e^{-a/2} \,\bbbone$,}
\end{equation*}
so that $\vartheta(a)=e^{-a/2}$ and with
\begin{equation*}
p(a) \vc e^{-a}\,\bbbone,
\end{equation*}
one has $\rho[p(a)] = z(a) z(a)^* = e^{-a}\, \bbbone$.

Since $z(a)$ does not depends on the variable $b$, one has
\begin{equation}
\label{eq-[D,f]-affine}
[D, \rho(f(a)) \,z(a) ] z(a)^{-1} = [D, \rho(f(a))] = \rho[\partial f(a)]
\end{equation}
for any $f \in C_c(\gR , C_c(\gR))$, where $\partial$ is the differential operator $(\partial f_\gR)(b) \vc b \,f_\gR(b)$ on $A$. 

The operators entering the operator $\DD$ of the modular-type twisted spectral triple of Theorem~\ref{thm-twisted-triple} are given by
\begin{equation*}
(\hD \,\hxi\,)(a)(b) = e^{-a} b\; \hxi(a)(b)
\end{equation*}
and, since $(\htheta\, \hxi\,)(a) = e^{-a} \,\hxi(a)$,
\begin{equation*}
(\caT_{\cone,\,\ct} \,\hxi\,)(a)(b) = (\cone + \ct \,e^{-a}) \,\hxi(a)(b).
\end{equation*}
Moreover, the hypothesis of Proposition \ref{prop-compactness} holds true: $[\,\hD, \caT_{\cone,\,\ct}]=0$.\\
Similarly, for $v\in \gC^2$, 
\begin{equation*}
\DD_a\big(\hxi(a)\otimes v\big)(b)=b\,\hxi(a)(b)\otimes \gamma^1 v+(\cone+\ct e^{-a})\,\hxi(a)(b)\otimes \gamma^2 v.
\end{equation*}
We now check Hypothesis \ref{hyp-dirac}-\ref{preservdom}: since 
\begin{align*}
\hY=\{\hxi \in L^2(\gR^2,\dd a\dd b)\,\vert \, \hxi(a)\in \Dom(D), \,[a\in \gR \to \norm{(b\pm i \ct\,e^{-a})\,\hxi(a)}_{L^2(\gR,\dd b)} \in L^2(\gR,\dd a)]\}
\end{align*}
contains $$\caS_c(\gR^2) \vc \{ \text{Schwartz functions in $(a,b)$ with compact support in $a$}\}$$ which is dense in $L^2(\gR^2,\dd a\dd b)$, this proves \eqref{hY}. The existence of $Y_\algA$ will be given in \ref{sec-extended-affine}.

By definition \eqref{beta}, $\beta(f)(a)=p(a)f(a) = e^{-a} f(a)$ for any $C_c(\gR, C^*(\gR))$, but $\beta$ does not extend to an automorphism of $\gR \ltimes_{\alpha, \red} C^*(\gR)$: if $f(a)(b)\vc e^{-\abs{a}/2}\,g(b)$ for a $g\in C_c(\gR)$, then $f\in L^1(\gR,C^*(\gR))$ and $\beta(f)(a,b)=e^{-a+\abs{a}/2}\,g(b)$, but $\beta(f)$ is not in $L^1(\gR,C^*(\gR))$.

\subsubsection{\texorpdfstring{Possible choices for $\algA$}{Possible choices for  A}}
\label{Choice-affine-A}

The Hypothesis~\ref{hyp-dirac}-\ref{hyp-algA} requires the existence of an algebra $\algA$ for the construction of the spectral triple described in Theorem~\ref{thm-twisted-triple}. Here we present three possible algebras, which are related by inclusion, from the smallest one to the largest one, and which are dense $*$-subalgebras of $\bB=\gR \ltimes_{\alpha, \red} C^*(\gR)$.

$\bullet$ Since the present situation enters into the description of Section~\ref{subsec-crossproductalgebra}, we can consider the algebra $\algA_1 \vc \Phi (C_c(\gR\times \gR)) \subset C_c(\gR,C_c(\gR))$ as in \eqref{group-ongroup}: this corresponds to take $A = C_c(\gR)$ for the original spectral triple (as described before), \eqref{eq-hyp-p(r)} is trivially satisfied, and, considering \eqref{eq-[D,f]-affine}, one can take
\begin{equation*}
M_{f,z} \vc \sup_{a \in S_f} \, \int_\gR \dd b\, \abs{b f(a)(b)} < \infty
\end{equation*}
to satisfy \eqref{eq-hyp-comm-bounded} for any $f \in \algA_1$, where $S_f = \Supp(f)$. So we get the conclusion of Theorem \ref{thm-twisted-triple}, postponing the compactness of the resolvent \eqref{modularcompactresol} to \eqref{rescompacte}.

$\bullet$ The original spectral triple $(A, H, D)$ can also be given using a Fréchet $*$-algebra described as follows. For any $b \in \gR$, let $\sigma(b) \vc 1 + \abs{b}$ be a $m$-sub-polynomial scale on $\gR$ (a weight in fact). Following \cite{Schweitzer}, we define $L^\sigma_1(\gR)$ as the space of Borel measurable functions $f_\gR : \gR \to \gC$ such that the semi-norms
\begin{equation*}
\norm{f_\gR}^\sigma_m \vc \int_\gR \dd b\, \sigma(b)^m \abs{f_\gR(b)}
\end{equation*}
are finite for any $m \in \gN$. This is a $m$-convex Fréchet $*$-algebra for the semi-norms $\norm{\,}^\sigma_m$ and the convolution product and involution. Then it is easy to check that $(A = L^\sigma_1(\gR), H, D)$ is a spectral triple, with the same $H$ and $D$ as before.

For any $a \in \gR$, let $w(a)\vc \tfrac{1}{2}(1+e^{\abs{a}})$: this is a $m$-sub-polynomial scale on $G=\gR$. Then one has $\sigma(e^{-a} b) \leq 2\,w(a)\,\sigma(b)$, and this implies that the action $\alpha$ is $m$-$\sigma$-tempered since 
\begin{align*}
\norm{\alpha_a(f)}^\sigma_m = \int_\gR \dd b\, (1+ \abs{b})^m \, \abs{e^af(e^ab)} = \int_\gR \dd b'\, (1+ \abs{e^{-a}b'})^m\, \abs{f(b')} \leq 2^m\,w(a)^m \,\norm{f}^\sigma_m.
\end{align*}
The hypotheses of Lemma~\ref{lem-Afrechet} are satisfied, so consider $\algA_2 \vc C_c(\gR,L^\sigma_1(\gR))$. Then \eqref{eq-hyp-p(r)} holds true since functions in $\algA_2$ are compactly supported along the variable $a$, \eqref{eq-hyp-comm-bounded} is satisfied for any $f \in \algA_2$ with 
\begin{equation*}
M_{f,z} \vc \sup_{a \in S_f} \, \norm{f(a)}^\sigma_1 < \infty
\end{equation*}
(notice that the continuity of $:a \in \gR \mapsto f(a) \in L^\sigma_1(\gR)$ implies the continuity of $a \mapsto \norm{f(a)}^\sigma_1$). Again, we get the conclusion of Theorem~\ref{thm-twisted-triple} for this algebra.

$\bullet$ The construction presented in Remark~\ref{rmk-frechet-w-sigma} leads to a $m$-convex Fréchet $*$-algebra $\algA_3 \vc L^w_1(\gR,L_1^\sigma(\gR))$ \cite[Theorem~3.1.7]{Schweitzer} since $w_- = w$. We denote by 
\begin{equation*}
\norm{f}_{m,n} \vc \int_{\gR^2} \dd a \dd b \, w(a)^m \,\sigma(b)^n \,\abs{f(a)(b)}
\end{equation*}
the family of semi-norms which topologizes $\algA_3$. Note that $\algA_3$ is not included into $C_c(\gR, A)$, so it does not satisfies all the requirements in Hypothesis~\ref{hyp-dirac} and we cannot apply Theorem~\ref{thm-twisted-triple}, but nevertheless we now show that we can proceed directly.\\
From the general theory, one has
\begin{equation*}
\norm{\hrho(f)}_{\caB(\hH)} 
\leq \norm{f}_{L^1(\gR, \dd a)} 
\vc \int_\gR \dd a \, \norm{f(a)}_{C^*(\gR)} 
\leq \int_\gR \dd a \, \norm{f(a)}_{L^1(\gR, \dd b)} 
=\norm{f}_{0,0}.
\end{equation*}
This shows that the inclusion $\algA_3 \to \gR \ltimes_{\alpha, \red} C^*(\gR)$ is continuous.

Following the same line of computations as in the proof of Lemma~\ref{prop-hD}, one gets, for $\hxi$ in the dense subspace $C_c(\gR^2)$ of $\hH$:
\begin{align*}
([\hD, \hrho(f)]_\beta\, \hxi\,)(a)(b)
&= e^{-a} \int_{\gR^2} \dd a' \dd b' \, g(a')( e^{-a} b')  \,  \hxi(a-a')(b-b')
\end{align*}
with $g(a)(b) = b f(a)(b) = (\partial f(a) )(b)$. \\
Proposition~\ref{combounded} gives $[\caT_{\cone,\,\ct},\,\hrho(f)]_\beta=\cone\hrho(f)-\cone\hrho[\beta(f)]$, so that
\begin{align*}
\norm{[\hD, \hrho(f)]_\beta}_{\caB(\hH)} & \leq \norm{g}_{0,0} \leq \norm{f}_{0,1},
\\
\norm{[\caT_{\cone,\,\ct},\,\hrho(f)]_\beta}_{\caB(\hH)} & \leq \abs{\cone} \norm{f}_{0,0} + \abs{\cone} \norm{f}_{1,0}.
\end{align*}
This shows that the twisted commutators $[\DD, \pi(f)]_\beta$ are bounded for $f\in\algA_3$ so $(\algA_3, \caH,\DD)$ is also a modular-type $\beta$-twisted spectral triple. The algebra $\algA_3$, in particular the behavior of its functions at infinity in the $a$ and $b$ directions (governed by $w$ and $\sigma$), is perfectly adapted to the Dirac operator $\DD$ proposed in Theorem~\ref{thm-twisted-triple}.

\subsubsection{The extended triple}
\label{sec-extended-affine}

Since we want to recover the spectral triple proposed in \cite{Mata14a}, we choose to consider the algebra given there, defined by
\begin{equation*}
\algA=\algA_S \vc \caS_c(\gR^2).
\end{equation*}
Remark that $\algA$ is a subalgebra of $\algA_2$ whose elements are smooth functions in both variables $(a,b)$. With slight modifications we could have chosen a larger subalgebra (with minimal constraints on smoothness) in one of the three algebras quoted in previous section.

The second part of Hypothesis \ref{hyp-dirac}-\ref{preservdom} is satisfied with
\begin{align*}
Y_\algA \vc \algA  \subset \hY
\end{align*}
where the inclusion has been proved before and the inclusion of $\hrho(f) \,Y_\algA \subset Y_\algA \,(\subset \hY)$ can be shown using the explicit expression of $\hrho$, the compactness along the variable $a$ and the Schwartz behavior along $b$.

Moreover, $\beta_z(f)(a) \vc e^{-za} f(a)$ defines an automorphism of $\algA_S$ for any $z \in \gC$ (see Remark~\ref{rmk-betaz}).

The Hypothesis~\ref{hypo-weight} is automatically satisfied, as proved in  Proposition~\ref{prop-hyp-satisfied-group-product}, for the one-cocycle
\begin{equation*}
q(a)^2 \vc \nu(a) \,\bbbone = e^{-a} \,\bbbone.
\end{equation*}
Of course $a\in \gR \to \norm{q(a)}=e^{-a/2}$ is continuous, so $q$ is not only in $Z^1(\gR,M(C^*(\gR))^\times)$ but also in $Z^1(\gR,Z(C_c(\gR))^\times)$.

The constraint \eqref{eq-compact} with $c=0$ and $s=1$ is never satisfied:
\begin{align}
\label{div}
\int_G \dd\mu_G(r)\, \Delta_G(r)^{-1} \, \norm{[\bbbone + T_p(r)^2]^{-1}}_{\caB(H)}= \int_\gR \dd a\, [1+(\cone+\ct\,e^{-a})^2]^{-1} =\infty.
\end{align}
However, the constraint \eqref{eq-compact} holds true for $0<c<s$: 
\begin{align}
\int_G \dd\mu_G(r)\, &\Delta_G(r)^{-1} \norm*{\rho(p(r))}_{\caB(H)}^{2c} \, \norm{[\bbbone + T_p(r)^2]^{-1}}_{\caB(H)}^s
\nonumber\\
&=\int_\gR \dd a\,\tfrac{e^{-2ac}}{[1+(\cone+\ct\,e^{-a})^2]^s}  =\int_0^\infty \dd u\,\tfrac{u^{2c-1}}{[1+(\cone+\ct \,u)^2]^s}
\label{div2}
\end{align}
and the last integral converges. Remark that it diverges for $c=0$ and any $s$, or for $s=1$ and any $c\geq 1$. 
Since Proposition \ref{prop-compactness} asks for $c\geq 0$ and $s\geq 1$, we finally get, for any $f\in \algA$, 
\begin{equation}
\label{rescompacte}
\Theta^c\pi(f)(\bbbone+\DD^2)^{-s/2} \text{ is compact if }s>c\geq 1,
\end{equation}
so \eqref{modularcompactresol} is satisfied. \\
{\it Open question}: is the operator $\Theta \pi(f)(\bbbone+\DD^2)^{-1/2}$ compact? The lack of answer explains why we used in \eqref{modularcompactresol} a weak form for the requirement of compactness of the resolvent.

In \cite{Mata14a} the modular spectral triple introduced on the $\kappa$-Minkowski space is based on the same algebra $\algA_S$.  Denote by $\caF$ the Fourier transform on $\gR^2$, given by 
\begin{align*}
\caF(f)(\alpha, \beta) \vc (2\pi)^{-1} \int \dd a \dd b f(a)(b) e^{i a \alpha + i b \beta}.
\end{align*}
In \cite{Mata14a}, the variables $(a,b)$ (resp. $(\alpha, \beta)$) are denoted by $(p_0, p_1)$ (resp. $(x_0, x_1)$), and the Hilbert space of the representation of $\algA_S$ is  $L^2(\gR^2, \dd \alpha \dd \beta) \otimes \gC^2$. The Fourier transform $\caF : \hH = L^2(\gR^2, \dd a\dd b) \to  L^2(\gR^2, \dd \alpha \dd \beta)$ intertwines the representations $\pi$ defined here and the representation defined in \cite{Mata14a}, and it identifies the Dirac operator proposed in \cite{Mata14a} with $\DD$ on $\caH \vc \hH \otimes \gC^2$ for the specific values $\cone = 1$ and $\ct = -1$ (see Proposition~\ref{prop-hJ-operator} for the origin of the relation $\cone = \epsilon\, \ct$ and comment below).

Let $J$ be the reality operator on the original triple $(A, H, D)$, which is given by the modular theory of the GNS representation of the Plancherel weight on $C^*(\gR)$. Here $\sigma$ is trivial ($\gR$ is unimodular), so that $J \xi = \xi^*$ for any $\xi \in C_c(\gR)$, \textsl{i.e.} $(J \xi)(b) = \overline{\xi(-b)}$ (see \eqref{eq-modularconjugationN}). A simple computation gives $JD = - DJ$, so that $\epsilon = -1$ in Proposition~\ref{prop-hJ-operator}. Imposing the existence of the reality operator $\hJ$ of Proposition \ref{prop-hJ-operator} forces $\cone = - \ct$, as noticed before, to recover the Dirac operator in \cite[Theorem~25]{Mata14a}, and this real structure is the one described in \cite[Proposition~34]{Mata14a}. As seen in Section \ref{The dual weight}, this is the natural reality operator defined in this context.

\begin{remark}
\label{comparaison}
This twisted spectral triple is a modular spectral triples as in \cite[Definition 5.1]{Kaa11} except it is non unital, so Hypothesis \ref {hyp-dirac}-\ref{hyp-comm-bounded} is cancelled, and the constraint on the resolvent has been modified in \eqref{modularcompactresol}: the corresponding notations are
\begin{equation*}
\caN \to \algB(\caH),\,\phi \to \Tr(\Theta\,\cdot),\,\mathcal{A}_S \to \algA_S,\,\sigma \to \hsigma,\,\theta \to \beta,\, D\to \DD,
\end{equation*}
where $\hsigma_t(x)\vc \Theta^{-it} x \,\Theta^{it}$. Since $\caN^\sigma$ corresponds to $\{ a\in \algB(\caH)  \, \vert \, [a,\Theta^{it}]=0,\,\forall t\in \gR \}$, the affiliation of $\DD$ to $\caN^\sigma$ means $[\DD, \Theta]=0$ which is satisfied here. Remark that, for any $s>0$, $\pi(a)(\bbbone+ \DD^2)^{-s/2}$ is never in the set $\mathcal{L}^1(\caH)$ of trace-class operators on $\caH$  \cite{Mata14a}. Thus  this twisted affine triple is not finitely summable. The operators $\pi(f)(\bbbone+\DD^2)^{-(1+\epsilon)/2}$ are $\phi$-compact for any $f\in \algA$ and $\epsilon >0$.
 \end{remark}

\begin{theorem}
\label{As0}
For $f, g \in \algA_S$, $\pi(f)\, \Theta\, (1 + \DD^2)^{-s/2}\, \pi(g)\in\mathcal{L}^1(\caH)$ when $s > 2$.
\end{theorem}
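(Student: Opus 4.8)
The plan is to reduce the claim to an explicit integral--kernel estimate on $\hH=L^2(\gR^2,\dd a\,\dd b)$ and then to display the operator as a product of two Hilbert--Schmidt operators. The starting observation is that in the affine setting $[\hD,\caT_{\cone,\,\ct}]=0$ (noted just after \eqref{eq-[D,f]-affine}), so by \eqref{Dcarre} the cross term $i[\caT_{\cone,\,\ct},\hD]\otimes\gamma^3$ vanishes and $\DD^2=(\hD^2+\caT_{\cone,\,\ct}^2)\otimes\bbbone_2$; hence $(\bbbone+\DD^2)^{-s/2}=[\bbbone+\hD^2+\caT_{\cone,\,\ct}^2]^{-s/2}\otimes\bbbone_2$ and $\Tr\bbbone_2=2$, so only the factor on $\hH$ matters. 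Since $\htheta$, $\hD$ and $\caT_{\cone,\,\ct}$ all act on $\hH$ as multiplication operators (by $e^{-a}$, by $e^{-a}b$, and by $\cone+\ct e^{-a}$ respectively), the operator $\htheta[\bbbone+\hD^2+\caT_{\cone,\,\ct}^2]^{-s/2}$ is multiplication $M_m$ by the positive function
\[
m(a,b)\vc e^{-a}\,\bigl[1+e^{-2a}b^2+(\cone+\ct e^{-a})^2\bigr]^{-s/2},
\]
which is bounded (inspect $a\to+\infty$, where $e^{-a}\to0$ and the bracket is $\le1$; $a\to-\infty$, where the $\ct^2e^{-2a}$ term forces $m\lesssim e^{a(s-1)}\to0$ as $s>2>1$; and $|b|\to\infty$). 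Therefore $\pi(f)\,\Theta\,(\bbbone+\DD^2)^{-s/2}\,\pi(g)=\bigl(\hrho(f)\,M_m\,\hrho(g)\bigr)\otimes\bbbone_2$, and it suffices to prove $\hrho(f)M_m\hrho(g)\in\mathcal{L}^1(\hH)$.

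Next I would split $m=m_1m_2$ with $m_1=m_2=m^{1/2}$ (both bounded), so that $M_m=M_{m_1}M_{m_2}$ and $\hrho(f)M_m\hrho(g)=\bigl[\hrho(f)M_{m_1}\bigr]\bigl[M_{m_2}\hrho(g)\bigr]$, a product of bounded operators. Using the kernel of $\hrho$ in the affine case, $(a,b;\tilde a,\tilde b)\mapsto e^{-a}f(a-\tilde a)(e^{-a}(b-\tilde b))$, the kernel of $\hrho(f)M_{m_1}$ is $e^{-a}f(a-\tilde a)(e^{-a}(b-\tilde b))\,m_1(\tilde a,\tilde b)$, and its Hilbert--Schmidt norm is computed as in the proof of Proposition~\ref{prop-compactness}: integrating first over $(a,b)$ with $(\tilde a,\tilde b)$ fixed and substituting $u=a-\tilde a$, $v=e^{-a}(b-\tilde b)$ separates off the constant $C_f\vc\int_{\gR^2}e^{-u}\abs{f(u)(v)}^2\,\dd u\,\dd v<\infty$ (finite because $f\in\caS_c(\gR^2)$ is compactly supported in its first variable and Schwartz in the second), leaving
\[
\|\hrho(f)M_{m_1}\|_{\HS}^2=C_f\int_{\gR^2}\dd\tilde a\,\dd\tilde b\;e^{-2\tilde a}\bigl[1+e^{-2\tilde a}\tilde b^2+(\cone+\ct e^{-\tilde a})^2\bigr]^{-s/2}.
\]
The same computation, integrating instead over $(\tilde a,\tilde b)$, gives $\|M_{m_2}\hrho(g)\|_{\HS}^2=\|g\|_{L^2(\gR^2)}^2$ times the identical integral, call it $I_s$.

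It remains to show $I_s<\infty$ exactly when $s>2$. Integrating over $\tilde b$ with the substitution $w=e^{-\tilde a}\tilde b$ produces a factor $(1+(\cone+\ct e^{-\tilde a})^2)^{(1-s)/2}$ and the constant $\int_\gR(1+w^2)^{-s/2}\,\dd w<\infty$ (since $s>1$); then substituting $t=e^{-\tilde a}$ gives $I_s=\kappa_s\int_0^\infty(1+(\cone+\ct t)^2)^{(1-s)/2}\,\dd t$, which is finite at $t=0$ and, since $\ct\neq0$, behaves like $t^{1-s}$ as $t\to\infty$, hence converges precisely for $s>2$. Thus for $s>2$ both $\hrho(f)M_{m_1}$ and $M_{m_2}\hrho(g)$ lie in $\mathcal{L}^2(\hH)$, their product lies in $\mathcal{L}^1(\hH)$, and $\pi(f)\Theta(\bbbone+\DD^2)^{-s/2}\pi(g)\in\mathcal{L}^1(\caH)$. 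The only genuinely delicate points are verifying that $m$ (hence $m_1,m_2$) is bounded so that the factorization uses only bounded operators, and keeping the powers of $\nu(a)=e^{-a}$ straight through the two changes of variables; the threshold $s>2$ is dictated by the $t\to\infty$ tail of $I_s$, i.e.\ by the factor $e^{-a}$ contributed by $\htheta$ not being damped by the $\caT_{\cone,\,\ct}^2$ term (which only grows like $e^{-2a}$) — the same mechanism that obstructs taking $s=1$ or removing $\Theta$.
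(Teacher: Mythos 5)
Your argument is correct, and it takes a genuinely different route from the paper. The paper proves Theorem \ref{As0} by first regularizing: it establishes Proposition \ref{thm-traceclass} (that $\pi(f)\,\Theta^{1+c}(1+\DD^2)^{-s/2}$ is trace-class for $c>0$, $s>2+c$) by viewing $A_{s,c}(f)$ as a pseudodifferential operator and invoking Arsu's $L^1$-symbol criterion (Theorem \ref{Arsu}), then obtains a bound on $\norm{A_{s,c}(f)\hrho(f^*)}_1$ uniform in $c\in(0,c_0)$, passes to the limit $c\downarrow 0$ in operator norm (Lemma \ref{lem-normlimitAsc}), and concludes with the Deift--Simon Lemma \ref{lem-tracelimit} and polarization. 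You instead exploit the fact that in the affine model $\htheta$, $\hD$ and $\caT_{\cone,\,\ct}$ are simultaneously multiplication operators, so the middle factor is multiplication by the explicit bounded function $m$, which you split as $m^{1/2}\cdot m^{1/2}$ and pair with $\pi(f)$ on the left and $\pi(g)$ on the right; each half is Hilbert--Schmidt with squared norm proportional to $\int g_{s,1}$, which by Lemma \ref{lem-gsc} is finite exactly for $s>2$ (your change-of-variables bookkeeping with the factors $e^{-a}$ is right). What your approach buys is brevity and transparency: it avoids the Arsu machinery and the limiting argument entirely, and it exhibits the threshold $s>2$ directly as the integrability condition on $g_{s,1}$. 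What it gives up is generality: the Hilbert--Schmidt factorization uses the two-sided sandwich in an essential way (a multiplication operator alone is never Hilbert--Schmidt), so it cannot yield the one-sided statement of Proposition \ref{thm-traceclass} with $\Theta^{1+c}$ and no right factor, which the paper also needs; and it depends on the special feature that all the relevant operators diagonalize as multiplications in the $(a,b)$ representation, whereas the pseudodifferential route is a more robust tool. One small point worth making explicit in a write-up: $\Theta$ is unbounded, so one should note (via Lemma \ref{Xborne}) that $\Theta(1+\DD^2)^{-s/2}$ is bounded for $s\geq 1$ before identifying the product with the kernel operator.
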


The main difficulty to prove this theorem is that the symbol $a(x,\xi)$ of this operator is not classical since some $(\partial^\alpha_x\partial^\beta_\xi a)(x,\xi)$ can increase exponentially on non-zero Lebesgue measure sets. Our strategy is to get traceability of the operator multiplied by the unbounded operator $\Theta^c$ with $c>0$ using a result of Arsu \cite{Arsu08a}, and then to get rid of this modification by letting $c\to 0$ by showing that the trace-class property is preserved via a result of Deift--Simon in \cite{DS} (see Lemma \ref{lem-tracelimit}). A direct approach of this method with $c=0$ fails. \\
So we first begin to show a result similar to the theorem where the replacement of $\Theta$ by $\Theta^{1+c}$ with $c>0$ plays a key role in the proof: actually, when $c=0$ in the following proposition, this is Theorem 30 of \cite{Mata14a} but the proof given there is not correct.

\begin{proposition}
\label{thm-traceclass}
For any $f \in \algA_S$, $\pi(f)\, \Theta^{1+c}\, (1 + \DD^2)^{-s/2} \in \mathcal{L}^1(\caH)$ for $c > 0$ and $s > c+2$.
\end{proposition}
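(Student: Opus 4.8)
\emph{Plan and reduction.} Since in the affine case $\hD$ and $\caT_{\cone,\,\ct}$ are the multiplication operators by $e^{-a}b$ and by $\cone+\ct\,e^{-a}$, they commute, so by \eqref{Dcarre} one has $\DD^2=(\hD^2+\caT_{\cone,\,\ct}^2)\otimes\bbbone_2$ and $(\bbbone+\DD^2)^{-s/2}=(\bbbone+\hD^2+\caT_{\cone,\,\ct}^2)^{-s/2}\otimes\bbbone_2$. As $\Theta^{1+c}=\htheta^{1+c}\otimes\bbbone$ and $\pi(f)=\hrho(f)\otimes\Id_{\gC^2}$, this gives $\pi(f)\,\Theta^{1+c}(\bbbone+\DD^2)^{-s/2}=L\otimes\bbbone_2$ with
\begin{equation*}
L\vc \hrho(f)\,\htheta^{1+c}(\bbbone+\hD^2+\caT_{\cone,\,\ct}^2)^{-s/2}\quad\text{on }\hH=L^2(\gR^2),
\end{equation*}
so, since $\mathcal{L}^1(\hH\otimes\gC^2)=\mathcal{L}^1(\hH)\otimes M_2(\gC)$, it suffices to prove $L\in\mathcal{L}^1(\hH)$. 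Here $\htheta^{1+c}(\bbbone+\hD^2+\caT_{\cone,\,\ct}^2)^{-s/2}$ is the multiplication operator by the positive function $m(a,b)\vc e^{-(1+c)a}[\,1+e^{-2a}b^2+(\cone+\ct\,e^{-a})^2\,]^{-s/2}$ (recall $\ct\neq 0$), and, after the change of variables used in the proof of Lemma~\ref{prop-hD}, the distribution kernel of $L$ is $K_L\big((a,b),(\tilde a,\tilde b)\big)=e^{-a}\,f(a-\tilde a)\big(e^{-a}(b-\tilde b)\big)\,m(\tilde a,\tilde b)$.

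\emph{Pseudodifferential realisation.} I would next write $L=\Op(\mathfrak a)$, with $\mathfrak a(x,\xi)=\int_{\gR^2}e^{-iz\cdot\xi}K_L(x,x-z)\,dz$; carrying out the $z$-integral yields, modulo the composition corrections, $\mathfrak a(a,b,\xi_1,\xi_2)\approx (\caF f)(\xi_1,e^{a}\xi_2)\,m(a,b)$, where $\caF$ is the Fourier transform on $\gR^2$ (so $\caF f\in\caS(\gR^2)$); this is precisely the Fourier picture of \cite{Mata14a}, in which $\DD$ is the first order operator and $\pi(f)$ is manifestly pseudodifferential. The symbol $\mathfrak a$ is \emph{not} classical: because of the dilation $\xi_2\mapsto e^{a}\xi_2$ and of the term $e^{-2a}b^2$ in $m$, several of the derivatives $\partial_x^\gamma\partial_\xi^\delta\mathfrak a$ grow exponentially in $a$ on sets of positive Lebesgue measure, so the Hörmander-class trace-class theorems do not apply. (Equivalently, one may first conjugate $L$ by the unitary implementing $a\mapsto t=e^{-a}$, turning $\bbbone+\hD^2+\caT_{\cone,\,\ct}^2$ into multiplication by the polynomial $1+t^2b^2+(\cone+\ct\,t)^2$ on $t>0$; this is the concrete device that keeps the symbol estimates tractable.)

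\emph{Arsu's criterion.} I would then apply the trace-class criterion of \cite{Arsu08a}, which only asks that $\mathfrak a$ and finitely many of its symbol derivatives satisfy weighted $L^1$-bounds with an integrable weight on phase space — conditions weak enough to tolerate the above non-classical growth. Verifying them reduces, after the $b$-integration via the substitution $b=e^{a}[\,1+(\cone+\ct\,e^{-a})^2\,]^{1/2}t$ (which gives $\int_{\gR}[\,1+e^{-2a}b^2+A\,]^{-s/2}\,db=C_s\,e^{a}A^{(1-s)/2}$ with $A=1+(\cone+\ct\,e^{-a})^2$ and $C_s<\infty$ for $s>1$), and after the momentum integrations (absorbed by the Schwartz decay of $\caF f$, the Jacobians $e^{-a}$ in $\xi_2$ compensating the factors $e^{+a}$ produced by the $\xi_2$-derivatives), to the convergence of the one-dimensional integrals
\begin{equation*}
\int_{\gR}e^{-(1+c)a}\,\big[\,1+(\cone+\ct\,e^{-a})^2\,\big]^{(1-s-k)/2}\,da,\qquad k\geq 0.
\end{equation*}
As $a\to+\infty$ the factor $e^{-(1+c)a}$ forces decay (here $c>0$ is used), while as $a\to-\infty$ the bracket is $\sim|\ct|^{1-s-k}e^{(s+k-1)a}$, so the integrand is $\sim e^{(s+k-2-c)a}\to 0$ iff $s>c+2-k$, the binding case $k=0$ being exactly $s>c+2$. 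Hence Arsu's theorem gives $L\in\mathcal{L}^1(\hH)$ — and, as a by-product, the trace formula $\Tr L=(2\pi)^{-2}\int_{\gR^4}\mathfrak a\,dx\,d\xi$, which will be used later — so that $\pi(f)\,\Theta^{1+c}(\bbbone+\DD^2)^{-s/2}=L\otimes\bbbone_2\in\mathcal{L}^1(\caH)$.

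\emph{Main obstacle.} The technical heart — and the reason the auxiliary parameter $c>0$ cannot be removed at this stage, the case $c=0$ being recovered only afterwards by the Deift--Simon limiting argument of Lemma~\ref{lem-tracelimit} — is the verification of Arsu's hypotheses for the non-classical symbol $\mathfrak a$: one must organise the bookkeeping so that the exponential growths produced by differentiation are systematically compensated by the momentum Jacobians, leaving only the borderline integral $\int_{\gR}e^{-(1+c)a}[\,1+(\cone+\ct\,e^{-a})^2\,]^{(1-s)/2}\,da$, finite precisely when $s>c+2$.
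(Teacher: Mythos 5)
Your proposal follows essentially the same route as the paper: reduce to $L=\hrho(f)\,\htheta^{1+c}(\bbbone+\hD^2+\caT_{\cone,\,\ct}^2)^{-s/2}$ on $\hH$, realise it as a pseudodifferential operator whose symbol is (a Fourier transform of a Schwartz-type function, evaluated at $(\xi_1,e^{a}\xi_2)$) times $g_{s,c}$, and invoke Arsu's $L^1$-criterion on the symbol and its first-order derivatives, the binding integral being $\int e^{-(1+c)a}[1+(\cone+\ct e^{-a})^2]^{(1-s)/2}\,da$, finite iff $s>c+2$. One bookkeeping point is off, though: the family of integrals produced by the derivative estimates is not uniformly of the form $\int e^{-(1+c)a}[\cdots]^{(1-s-k)/2}\,da$ — the $\xi_2$-derivative of the symbol brings out a factor $e^{+a}$ that, after the change of variable in $\xi_2$, \emph{removes} the Jacobian factor $e^{-a}$ rather than being compensated by it, so that case reduces to $\int g_{s,c}$ with weight $e^{-ca}$ (not $e^{-(1+c)a}$); it is the convergence of this integral at $a\to+\infty$ that genuinely requires $c>0$, whereas $e^{-(1+c)a}$ alone decays there for any $c>-1$. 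With that correction the verification closes exactly as in the paper.
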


\begin{proof}
Remark that $\pi(f)\, \Theta^{1+c}\, (1 + \DD^2)^{-s/2}\in \mathcal{L}^1(\caH)$  is equivalent to 
\begin{align}
\label{Asc}
A_{s,c}(f) \vc \hrho(f)\, \htheta^{1+c}\, (1 + \hD^2 + \caT_{\cone,\,\ct}^2)^{-s/2}\in \mathcal{L}^1(\hH).
\end{align}
For any $\xi \in \hH$,
\begin{equation}
\label{eq-opAsc}
(A_{s,c}(f) \xi)(a)(b) 
= e^{-a} \int_{\gR^2} \dd a' \dd b' \, f(a-a')(e^{-a}(b-b')) \, g_{s,c}(a',b') \, \xi(a')(b')
\end{equation}
with
\begin{align*}
g_{s,c}(a,b) \vc e^{-(1+c)a} [ 1 + e^{-2a} b^2 + (\cone + \ct e^{-a})^2 ]^{-s/2}.
\end{align*}

Using the Fourier transform $\caF$, for any $\zeta \in L^2(\gR^2, \dd \alpha \dd \beta)$, one has
\begin{equation}
\label{defOp}
[A_{s,c}(f) \,\zeta](\alpha, \beta) = \tfrac{1}{(2\pi)^2} \int_{\gR^4} \dd \alpha' \dd \beta' \dd a' \dd b' \, e^{i(\alpha - \alpha')\,a' + i(\beta - \beta')\,b'}\, a_{s,c}(f)(\alpha, \beta ; a', b') \, \zeta(\alpha', \beta')
\end{equation}
with
\begin{align*}
a_{s,c}(f)(\alpha, \beta; a', b') 
&\vc \int_{\gR^2} \dd a\, \dd b \, e^{i \alpha (a-a') + i \beta (b-b')} e^{-a} f(a-a')(e^{-a} (b-b')) \, g_{s,c}(a', b')
\\
&= \int_{\gR^2} \dd a\, \dd b \, e^{i \alpha a + i e^{a'}\beta b} \, e^{-a} f(a)(e^{-a}b) \, g_{s,c}(a', b')
\\
&=2\pi \, \caF(h)(\alpha, e^{a'}\beta) \, g_{s,c}(a', b'),
\end{align*}
where
\begin{equation*}
h(a) \vc \alpha_{-a}[f(a)], \text{ that is } h(a)(b) = e^{-a} f(a)(e^{-a}b).
\end{equation*}
For $f \in \algA_S$, $h$ is smooth and compactly supported along $a$, and a Schwartz function along $b$.
So $\caF(h)$ is a Schwartz function in both variables $\alpha, \beta$ (and in fact analytic along $\alpha$).

Since $A_{s,c}(f)$ on $L^2(\gR^2, \dd \alpha \dd \beta)$ is a pseudo-differential operator with symbol $a_{s,c}(f)$, we can use the following result of Arsu to prove that $A_{s,c}(f)\in \mathcal{L}^1(\hH)$  since $a_{s,c}(f)$ has sufficiently many derivatives in $L^1(\gR^2\times \gR^2)$. We will use the following norm on the set of functions $a\in\caS'(\gR^2\times \gR^2)$ where $\mathbf{t}=(t_1,t_2)\in \gN^2$:
\begin{equation*}
\abs{a}_{1, \mathbf{t}} \vc \max_{\substack{n_\alpha, n_a  \leq t_1 \\  n_\beta, n_b \leq t_2}} \norm*{\partial_\alpha^{n_\alpha} \partial_\beta^{n_\beta} \partial_a^{n_a} \partial_b^{n_b} a}_{L^1} <\infty\,.
\end{equation*}
Using the orthogonal decomposition $\gR^2 = \gR \times \gR$, the conditions of \cite[Theorem~5.4~(i)]{Arsu08a} (with $\tau = 0$) can be recast as

\begin{theorem}
\label{Arsu}
Let $a\in\caS'(\gR^2\times \gR^2)$ such that $\abs{a}_{1, \mathbf{t}}<\infty$ for $\mathbf{t} = (1,1)$. Then $\Op(a)$ (defined as in \eqref{defOp} by the symbol $a$) has an extension which is a trace-class operator on the Hilbert space $L^2(\gR^2,\dd \alpha\dd\beta)$ and $\norm{\Op(a)}_1\leq C \vert a\vert_{1,\mathbf{t}}$ for some constant $C$.
\end{theorem}
So, we only need to prove that for $a=a_{s,c}(f)$, we have:

-- $a_{s,c}(f) \in \caS'(\gR^2 \times \gR^2)$,

-- $\abs{a_{s,c}(f)}_{1, (1,1)} < \infty$.

Let us check the second point while the first will be shown also in the proof, see ``Case $(0,0,0,0)$'' below. We have to consider derivatives of order at most $1$ in the four directions of the function $(\alpha, \beta; a, b) \mapsto a_{s,c}(f)(\alpha, \beta; a, b)$. This gives 16 cases to consider.

Denote by $(n_\alpha, n_\beta, n_a, n_b) \in \{0, 1\}^4$ the order of derivations along the 4 variables.

\noindent -- Case $(0,0,0,0)$: 
\begin{align*}
\int_{\gR^4} \dd \alpha \dd \beta \dd a \dd b \, \abs*{a_{s,c}(f)(\alpha, \beta; a, b)}
&= 2\pi \, \int_{\gR^4} \dd \alpha \dd \beta \dd a \dd b \, \abs*{\caF(h)(\alpha, e^{a}\beta)} \, g_{s,c}(a, b)
\\
& = 2\pi \, \left[ \int_{\gR^2} \dd \alpha \dd \beta \, \abs*{\caF(h)(\alpha, \beta)}\right] \left[\int_{\gR^2} \dd a \dd b \,  e^{-a} g_{s,c}(a, b) \right],
\end{align*}
where in the second line we use the change of variable $e^{a}\beta \to \beta$. The first integral is finite since $\caF(h)$ is Schwartz in both variables, and the second one is $\int_{\gR^2} \dd a \dd b \, g_{s,c+1}(a, b)$, which is finite for $c+1>0$ and $s > 2+c$ by Lemma~\ref{lem-gsc} (see below).

Since $a_{s,c}(f) \in L^1(\gR^4)$ for $c+1>0$ and $s>2+c$, this shows that $a_{s,c}(f) \in \caS'(\gR^2 \times \gR^2)$. 

\noindent -- Cases $(1, n_\beta, n_a, n_b)$: The  derivative along $\alpha$ concerns only $\caF(h)$, and $\partial_\alpha \caF(h)$ is Schwartz in both variables. So, $(1, n_\beta, n_a, n_b)$ is equivalent to $(0, n_\beta, n_a, n_b)$.

\noindent -- Cases $(n_\alpha, n_\beta, n_a, 1)$: The derivative along $b$ concerns only $g_{s,c}$, and $\abs*{\partial_b g_{s,c}}$ and $\abs*{\partial_a \partial_b g_{s,c}}$ are dominated by $g_{s,c}$ by Lemma~\ref{lem-gsc}. So $(n_\alpha, n_\beta, n_a, 1)$ is equivalent to $(n_\alpha, n_\beta, n_a, 0)$.

\noindent -- Cases $(n_\alpha, n_\beta, 1, n_b)$: The derivative along $a$ produces two terms:

-- $e^a \beta (\partial_\beta \caF(h))(\alpha, e^a\beta) g_{s,c}(a,b)$: the function $(\alpha, \beta) \mapsto \beta (\partial_\beta \caF(h))(\alpha, \beta)$ is Schwartz in both variables, so we reduce to the situation $n_a = 0$.

-- $\caF(h)(\alpha, e^a\beta) (\partial_a g_{s,c})(a,b)$: the function $\partial_a g_{s,c}$ is dominated by $g_{s,c}$.
\\
So $(n_\alpha, n_\beta, 1, n_b)$ is equivalent to $(n_\alpha, n_\beta, 0, n_b)$.

\noindent Combining all these equivalences, the case $(0,0,0,0)$ covers the 8 cases $(n_\alpha, 0, n_a, n_b)$.

\noindent -- Case $(0,1,0,0)$: 
\begin{align*}
\int_{\gR^4} \dd \alpha \dd \beta \dd a \dd b \, \abs*{\partial_\beta a_{s,c}(f)(\alpha, \beta; a, b)}
&= 2\pi \, \int_{\gR^4} \dd \alpha \dd \beta \dd a \dd b \,  e^{a} \abs*{(\partial_\beta \caF(h))(\alpha, e^a\beta)} \, g_{s,c}(a, b)
\\
& = 2\pi \, \left[ \int_{\gR^2} \dd \alpha \dd \beta \, \abs*{\partial_\beta \caF(h)(\alpha, \beta)}\right] \left[\int_{\gR^2} \dd a \dd b \,  g_{s,c}(a, b) \right].
\end{align*}
The first integral is finite since $\partial_\beta \caF(h)$ is Schwartz in both variables, and the second one is finite for $c>0$ and $s > 1+c$ by Lemma~\ref{lem-gsc}.

This case covers the remaining 8 cases $(n_\alpha, 1, n_a, n_b)$.

\medskip
Since $a_{s,c}(f)$ satisfies the hypothesis of Theorem \ref{Arsu}, we end up with the constrains  $c>0$ and $s>2+c$, which are the hypothesis of Proposition~\ref{thm-traceclass}. 
\end{proof}
\begin{lemma}\label{lem-gsc}
The function $g_{s,c}$ has the following properties:

\noindent 1) $g_{s,c} \in L^1(\gR^2, \dd a \dd b)$ if and only if $c>0$ and $s > 1+c$.

\noindent 2) There exists a constant $M_{s,c} > 0$ such that $\abs*{(X\, g_{s,c})(a,b)} \leq M_{s,c}\,g_{s,c}(a,b)$ for $(a,b) \in \gR^2$ when $X=\partial_a,\,\partial_b$ or $\partial_a\partial_b$.
\end{lemma}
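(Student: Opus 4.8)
The plan is to express everything in terms of the strictly positive smooth function
\[
F(a,b) \vc 1 + e^{-2a}b^2 + (\cone + \ct e^{-a})^2 \geq 1,
\]
so that $g_{s,c}(a,b) = e^{-(1+c)a}\,F(a,b)^{-s/2}$, and then to reduce both parts to elementary estimates on $F$.

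For part 1) I would apply Tonelli's theorem (the integrand being nonnegative) and integrate first in $b$: writing $\kappa(a)^2 \vc 1 + (\cone + \ct e^{-a})^2 \geq 1$ and substituting $b = e^{a}u$ gives $\int_{\gR} F(a,b)^{-s/2}\,\dd b = C_s\, e^{a}\, \kappa(a)^{1-s}$, where $C_s \vc \int_{\gR}(1+u^2)^{-s/2}\,\dd u$ is finite exactly when $s>1$. Hence $\int_{\gR} g_{s,c}(a,b)\,\dd b = C_s\, e^{-ca}\,\kappa(a)^{1-s}$, and the substitution $t = e^{-a}$ turns the remaining integral into $\int_0^\infty t^{c-1}\,\big[1+(\cone+\ct t)^2\big]^{(1-s)/2}\,\dd t$. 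Near $t=0$ the integrand is comparable to $t^{c-1}$, integrable iff $c>0$; near $t=\infty$, using $\ct\neq 0$ so that $(\cone+\ct t)^2\sim\ct^2 t^2$, it is comparable to $t^{c-s}$, integrable iff $s>c+1$. Since $c>0$ and $s>c+1$ jointly force $s>1$, these two conditions are exactly the ones needed; for the converse, if they fail then $C_s=+\infty$ (when $s\le 1$), or the $t$-integral diverges at $0$ (when $c\le 0$) or at $\infty$ (when $s\le c+1$), and in each case Tonelli yields $g_{s,c}\notin L^1$.

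For part 2) the decisive step is the two-sided comparison
\[
c_1\big(1 + e^{-2a} + e^{-2a}b^2\big) \leq F(a,b) \leq c_2\big(1 + e^{-2a} + e^{-2a}b^2\big)
\]
for constants $0<c_1\leq c_2$ independent of $(a,b)$. The upper bound is immediate; for the lower one it suffices (after discarding $e^{-2a}b^2$) to observe that $x\mapsto\big(1+(\cone+\ct x)^2\big)/(1+x^2)$ is continuous and strictly positive on $[0,\infty)$ and tends to $\ct^2>0$ as $x\to\infty$, hence is bounded below, and to specialize to $x=e^{-a}$. Granting this comparison, each of $e^{-2a}b$, $e^{-2a}b^2$ and $e^{-a}(\cone+\ct e^{-a})$ is $O(F)$ by AM--GM (for instance $|e^{-2a}b|\leq\tfrac12(e^{-2a}+e^{-2a}b^2)$), so $\partial_a F/F$ and $\partial_b F/F$ are bounded on $\gR^2$. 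Since
\[
\frac{\partial_b g_{s,c}}{g_{s,c}} = -\frac{s}{2}\,\frac{\partial_b F}{F}, \qquad \frac{\partial_a g_{s,c}}{g_{s,c}} = -(1+c)-\frac{s}{2}\,\frac{\partial_a F}{F},
\]
the cases $X=\partial_b$ and $X=\partial_a$ follow at once. For $X=\partial_a\partial_b$ I would write $\partial_b g_{s,c}=\phi\,g_{s,c}$ with $\phi\vc-\tfrac{s}{2}\,\partial_b F/F$ bounded, use $\partial_a\partial_b F=-4e^{-2a}b=O(F)$ and the quotient rule to see that $\partial_a\phi$ is bounded, and conclude $\partial_a\partial_b g_{s,c}=\big(\partial_a\phi+\phi\cdot\partial_a g_{s,c}/g_{s,c}\big)g_{s,c}$ with bounded prefactor.

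The only genuinely delicate point is the lower comparison $F\gtrsim 1+e^{-2a}+e^{-2a}b^2$: it is precisely here that the standing assumption $\ct\neq 0$ enters, and without it $F$ would control only $1+e^{-2a}b^2$, which does not absorb the $e^{-a}$-growth produced by differentiating in $a$. Everything else is routine bookkeeping with AM--GM inequalities and the quotient rule.
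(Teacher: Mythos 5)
Your proof is correct and follows essentially the same route as the paper's: for part 1) you integrate in $b$ first and then substitute $t=e^{-a}$ to reduce to the integral $\int_0^\infty t^{c-1}\bigl[1+(\cone+\ct t)^2\bigr]^{(1-s)/2}\,\dd t$, exactly as in the paper; for part 2) you compute the same logarithmic derivatives and bound the same quotients. The only difference is that you justify the boundedness of those quotients via the two-sided comparison $F\asymp 1+e^{-2a}+e^{-2a}b^2$ (which usefully isolates where $\ct\neq 0$ enters), whereas the paper simply asserts that the relevant brackets and fractions are bounded on $\gR^2$.
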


\begin{proof}
1) $g_{s,c}$ is positive and $\int_\gR \dd b \, g_{s,c}(a,b)$ is finite if and only if $s > 1$ and in this case,
\begin{equation}\label{eq-intdbgsc}
\int_\gR \dd b \, g_{s,c}(a,b) = \tfrac{\sqrt{\pi}\,\Gamma[(s-1)/2]}{\Gamma(s/2)} \, e^{-c a} \,[ 1 + (\cone + \ct \,e^{-a})^2]^{-(s-1)/2}.
\end{equation}
Then
\begin{align*}
\int_{-\infty}^\infty \dd a \, e^{-c a}\, [ 1 + (\cone + \ct \,e^{-a})^2 ]^{-(s-1)/2}
&= \int_0^\infty \dd u \,  u^{c-1}\,[ 1 + (\cone + \ct\, u)^2]^{-(s-1)/2}
\end{align*}
which is finite if and only if $c>0$ and $s>1+c$. 

2) For $X=\partial_a$, we have
\begin{align*}
(\partial_a g_{s,c}) (a,b) = g_{s,c}(a,b) \left[  -(1+c) + s\, \tfrac{(b^2 + c^2_2) e^{-2a} + \cone \ct e^{-a} }{1 + e^{-2a} b^2 + (\cone + \ct e^{-a})^2} \right]
\end{align*}
and the bracket is bounded on $\gR^2$. Similarly,
\begin{align*}
(\partial_b g_{s,c}) (a,b) = g_{s,c}(a,b) \, \tfrac{ 2  e^{-2a} b }{1 + e^{-2a} b^2 + (\cone + \ct e^{-a})^2}
\end{align*}
and the fraction is bounded on $\gR^2$. Finally,
\begin{multline*}
(\partial_a \partial_b g_{s,c}) (a,b) = (\partial_a g_{s,c})(a,b) \, \tfrac{ 2  e^{-2a} b }{1 + e^{-2a} b^2 + (\cone + \ct e^{-a})^2}
\\
- 4 g_{s,c}(a,b) \left[ \tfrac{ e^{-2a} b}{1 + e^{-2a} b^2 + (\cone + \ct e^{-a})^2} + 
	\tfrac{e^{-2a} b [ (b^2 + c^2_2) e^{-2a} + \cone \ct e^{-a} ]}{[1 + e^{-2a} b^2 + (\cone + \ct e^{-a})^2]^2}
\right].
\end{multline*}
For the first term, we use previous argument and for the second one  the bracket is bounded on $\gR^2$.
\end{proof}

We now show that the family of operators defined in \eqref{Asc} naturally converges in norm when $c\to 0$:

\begin{lemma}
\label{lem-normlimitAsc}
We have $A_{s,0}(f) =\norm{\cdot}$-$\underset{c\downarrow 0}{\lim} \,A_{s,c}(f)$, for any $s > 2$ and $f \in \algA_S$.
\end{lemma}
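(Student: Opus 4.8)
The plan is to show that the difference $A_{s,c}(f)-A_{s,0}(f)$ tends to $0$ in operator norm as $c\downarrow 0$. Writing $A_{s,c}(f)=\hrho(f)\,\htheta^{1+c}\,(1+\hD^2+\caT_{\cone,\,\ct}^2)^{-s/2}$ and recalling that the operators $\hD$, $\htheta=\caT$-generating multiplication and $\caT_{\cone,\,\ct}$ all commute and act as multiplication operators in the variable $(a,b)$ (see the explicit formulas in Section~\ref{sec-extended-affine}), the operator $A_{s,c}(f)$ has the integral kernel described in \eqref{eq-opAsc} with multiplier part $g_{s,c}(a,b)=e^{-(1+c)a}[1+e^{-2a}b^2+(\cone+\ct e^{-a})^2]^{-s/2}$. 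Hence $A_{s,c}(f)-A_{s,0}(f)$ has kernel of the same convolution-type form but with $g_{s,c}-g_{s,0}$ in place of $g_{s,c}$, namely
\begin{equation*}
[(A_{s,c}(f)-A_{s,0}(f))\xi](a)(b)=e^{-a}\int_{\gR^2}\dd a'\dd b'\,f(a-a')(e^{-a}(b-b'))\,(g_{s,c}-g_{s,0})(a',b')\,\xi(a')(b').
\end{equation*}
First I would estimate this operator norm by the same Hölder/Young-type bound used throughout (e.g.\ in the proof of Lemma~\ref{prop-hD} and in Lemma~\ref{lem-HilbertSchmidt}): the norm of such a convolution-in-$a$, multiplication-in-the-other-slot operator is controlled by $\sup_{a}\int\dd b\,\cdots$ together with an $L^1$-norm of $f$ along the group variable. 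Concretely, after the change of variables that diagonalises the kernel, one gets a bound of the form $\norm{A_{s,c}(f)-A_{s,0}(f)}_{\caB(\hH)}\le C_f\,\sup_{a'\in\gR}\big|g_{s,c}(a',\cdot)-g_{s,0}(a',\cdot)\big|$ in an appropriate norm, or even more simply a bound by $\norm{f}_{L^1}\,\norm{g_{s,c}-g_{s,0}}_{\infty\text{-in-}a',\,1\text{-in-}b'}$; the key point is that $g_{s,c}(a,b)-g_{s,0}(a,b)=g_{s,0}(a,b)(e^{-ca}-1)$.

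The heart of the argument is then a dominated-convergence estimate on $g_{s,0}(a,b)(e^{-ca}-1)$. For $c\in(0,1]$ we have the pointwise bound $|e^{-ca}-1|\le c\,|a|\,e^{c|a|}\le c\,|a|\,e^{|a|}$, so $g_{s,0}(a,b)|e^{-ca}-1|\le c\,|a|\,e^{|a|}\,g_{s,0}(a,b)$. Since $s>2$, I can pick $c_0>0$ small enough that $s>2+c_0$, and then $|a|e^{|a|}g_{s,0}(a,b)\le M\,(g_{s-1,-c_0}(a,b)+g_{s-1,c_0}(a,b))$ or some similar explicit majorant that is integrable in $b$ uniformly in $a$ with the relevant decay, by the computations of Lemma~\ref{lem-gsc} and \eqref{eq-intdbgsc}; the factor $|a|e^{|a|}$ is absorbed because $e^{-a}$ enters $g_{s,0}$ with a positive power strictly larger than needed once $s>2$. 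Integrating in $b$ and taking $\sup$ in $a$ (or integrating in $a$ as needed for the chosen norm) gives $\sup_{a}\int\dd b\,g_{s,0}(a,b)|e^{-ca}-1|\le c\cdot(\text{finite constant depending on }s)$, which tends to $0$ as $c\downarrow 0$.

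Putting these together yields $\norm{A_{s,c}(f)-A_{s,0}(f)}_{\caB(\hH)}\le C_{f,s}\,c\to 0$, which is the claimed norm convergence (note $A_{s,0}(f)$ itself is bounded, indeed it was shown trace-class in \cite{Mata14a} / is the $c=0$ endpoint of Proposition~\ref{thm-traceclass}'s estimates, but for this lemma boundedness suffices and follows from the same Young-type bound with $g_{s,0}\in L^1$ which holds since $s>2$). The main obstacle I anticipate is bookkeeping the exponential weights carefully enough to produce a clean uniform-in-$b$ (or jointly integrable) majorant for $|a|e^{|a|}g_{s,0}(a,b)$ with a constant independent of $c$: one must verify that the slack $s-2>0$ is exactly what is needed to absorb the extra $|a|e^{|a|}$ coming from the mean-value estimate on $e^{-ca}-1$, and to keep $b$-integrability one should route the estimate through \eqref{eq-intdbgsc} rather than pointwise bounds. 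Everything else is a routine repetition of the convolution-operator-norm estimates already used in the paper.
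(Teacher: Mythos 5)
Your overall plan (identify the kernel of $A_{s,c}(f)-A_{s,0}(f)$, whose multiplier part is $g_{s,0}(a,b)(e^{-ca}-1)$, and kill it by dominated convergence) is in the same spirit as the paper's proof, but the specific estimate you commit to fails, and it fails exactly at the point you flag as the ``main obstacle''. Two problems. First, the factor $\htheta^{1+c}(1+\hD^2+\caT_{\cone,\,\ct}^2)^{-s/2}$ is a \emph{multiplication} operator on $L^2(\gR^2,\dd a\,\dd b)$, so its contribution to the operator norm of $A_{s,c}(f)-A_{s,0}(f)=\hrho(f)\,M_{g_{s,c}-g_{s,0}}$ is governed by the $L^\infty$ norm of $g_{s,c}-g_{s,0}$; there is no Young/Schur bound of the form $\norm{f}_{L^1}\cdot\norm{g_{s,c}-g_{s,0}}_{\infty\text{-in-}a,\,1\text{-in-}b}$ for such an operator. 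Second, and fatally for the route you choose, the quantity you claim is $O(c)$ is in fact bounded below away from $0$ uniformly in $c$: by \eqref{eq-intdbgsc}, $\int\dd b\,g_{s,0}(a,b)=\tfrac{\sqrt{\pi}\,\Gamma[(s-1)/2]}{\Gamma(s/2)}\,[1+(\cone+\ct e^{-a})^2]^{-(s-1)/2}$ tends to a strictly positive constant as $a\to+\infty$, while $\abs{e^{-ca}-1}\to 1$ there, so $\sup_a\int\dd b\,g_{s,0}(a,b)\abs{e^{-ca}-1}$ does \emph{not} tend to $0$ as $c\downarrow 0$. Correspondingly, your proposed majorant $\abs{a}e^{\abs{a}}g_{s,0}(a,b)$ has $b$-integral growing like $\abs{a}e^{a}$ as $a\to+\infty$: the slack $s-2>0$ only helps at $a\to-\infty$, whereas at $a\to+\infty$ the decay of $g_{s,0}$ is exactly $e^{-a}$ pointwise and $O(1)$ after $b$-integration, which cannot absorb $e^{\abs{a}}$.

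The conclusion is nonetheless true, and there are two clean repairs. The cheap one is to use the sup norm: for $a\ge 0$, $g_{s,0}(a,b)\abs{e^{-ca}-1}\le e^{-a}\,ca\le c/e$, and for $a\le 0$ one has $g_{s,0}(a,b)\le C\,e^{a(s-1)}$ so that, with $\abs{e^{-ca}-1}\le c\abs{a}e^{c\abs{a}}$ and $0<c<c_0<s-1$, the product is again $O(c)$ uniformly; hence $\norm{A_{s,c}(f)-A_{s,0}(f)}\le\norm{\hrho(f)}_{\caB(\hH)}\,\norm{g_{s,c}-g_{s,0}}_{L^\infty}\to 0$. The paper instead bounds the operator norm of the difference by the Hilbert--Schmidt norm of its kernel, which factorizes as $\bigl[\int\dd a\,\dd b\,e^{-a}\abs{f(a,b)}^2\bigr]\cdot\bigl[\int\dd a'\,\dd b'\,e^{-a'}g_{s,c}(a',b')^2(1-e^{ca'})^2\bigr]$; the extra weight $e^{-a'}$ appearing in the second factor is precisely what restores integrable decay at $a'\to+\infty$ and makes the dominated-convergence argument (in the variable $u=e^{-a'}$) legitimate. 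Your estimate, as written, has no such weight and cannot be salvaged by bookkeeping alone.
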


\begin{proof}
Using \eqref{eq-opAsc} and $g_{s,0}(a,b) = g_{s,c}(a,b) \,e^{c a}$, the kernel of $B_{s,c}(f) \vc A_{s,c}(f) - A_{s,0}(f)$ on $\hH = L^2(\gR^2, \dd a \dd b)$ is
\begin{equation*}
K_{s,c}(f)(a,b; a', b') = e^{-a} f(a-a')(e^{-a}(b-b'))\, g_{s,c}(a', b') (1 - e^{c a'}).
\end{equation*}
To show that $\lim_{c \to 0^+} \norm*{B_{s,c}(f)} =0$, we use the inequality $\norm*{B_{s,c}(f)} \leq \norm*{K_{s,c}(f)}_{L^2}$. One has
 \begin{align*}
\norm*{K_{s,c}(f)}^2_{L^2} 
 &= \int \dd a \dd b \dd a' \dd b' \, e^{-2a}\, \abs*{f(a-a')(e^{-a}(b-b'))}^2 \, g_{s,c}(a', b')^2 \, (1 - e^{c a'})^2
 \\
 &= \left[ \int \dd a \dd b \, e^{-a} \, \abs*{f(a,b)}^2 \right] \, \left[ \int \dd a' \dd b' \, e^{-a'} \, g_{s,c}(a', b')^2 \, (1 - e^{c a'})^2 \right].
\end{align*}
 The first integral is finite and independent of $s$ and $c$. Using \eqref{eq-intdbgsc}, $g_{s,c}^2(a,b) = e^{-(1+c)a} g_{2s, c}(a,b)$ and after the change of variable $u = e^{-a}$, the second integral can be evaluated as
 \begin{equation*}
\int \dd a \dd b \, e^{-a} \, g_{s,c}(a, b)^2 \, (1 - e^{c a})^2 = 
 \tfrac{\sqrt{\pi}\,\Gamma(s-1/2)}{\Gamma(s)}\, \int_0^\infty \dd u \, \tfrac{u (u^c-1)^2}{[1 + (\cone + \ct u)^2]^{s-1/2}} \,.
\end{equation*}
For $u \in [0,1]$, $(u^2 - 1)^2 \leq 1$, so that $\int_0^1 \dd u \, \frac{u (u^c-1)^2}{[1 + (\cone + \ct u)^2]^{s-1/2}} \leq \int_0^1 \dd u \, \frac{u}{[1 + (\cone + \ct u)^2]^{s-1/2}} <\infty$. Let $c_0 >0$ such that $s > 2+ c_0$. Then for any $0 < c < c_0$ and $u \in [1, \infty)$, one has $u^{1+2c} (1-u^{-c})^2 \leq u^{1+ 2 c_0}$, so that $\int_1^\infty \dd u \, \frac{u (u^c-1)^2}{[1 + (\cone + \ct u)^2]^{s-1/2}} \leq \int_1^\infty \dd u \, \frac{u^{1+ 2 c_0}}{[1 + (\cone + \ct u)^2]^{s-1/2}} < \infty$ since $s > 2+ c_0$. When $u \in [0, \infty)$, $\frac{u (u^c-1)^2}{[1 + (\cone + \ct u)^2]^{s-1/2}}$ goes to $0$ as $c \to 0^+$, so, by the dominated convergence theorem, one get $\lim_{c \to 0^+} \norm*{K_{s,c}(f)}^2_{L^2} = 0$.
\end{proof}

Remark that previous lemma implies that the operator $A_{s,0}(f)=\pi(f) \;\Theta (\bbbone+\DD^2)^{-s/2}$ is compact for $s>2$ (and conjectured to be in $\mathcal{L}^1(\caH)$), but the result of \eqref{rescompacte} is stronger.

\begin{proof}[of Theorem \ref{As0}]
In the following, we fix $s>0$, choose $c_0>0$ such that $s > 2 + c_0$, and we restrict $c$ to $c \in (0,c_0)$.

For $f \in \algA_S$ and previous notations, $A_{s,c}(f) \hrho(f^*) = \hrho(f)\, \htheta^{1+c}\, (1 + \hD^2 + \caT_{\cone,\,\ct}^2)^{-s/2}\, \hrho(f^*) $ is a positive operator which is in $\mathcal{L}^1(\hH)$ when $s > 2+c$ since $\hrho(f^*)$ is bounded  and $A_{s,c}(f)$ is also in $\mathcal{L}^1(\hH)$ by Proposition~\ref{thm-traceclass}. Its trace-norm can be evaluated as
\begin{align*}
\norm*{A_{s,c}(f) \hrho(f^*)}_{1} &= \Tr A_{s,c}(f) \hrho(f^*) = \Tr\ \hrho(f^*) A_{s,c}(f) = \Tr A_{s,c}(f^* \star f) 
\end{align*}
since $A_{s,c}(f)$ and $A_{s,c}(f^* \star f)$ are trace-class. Using the kernel of $A_{s,c}(f^* \star f)$ given in \eqref{eq-opAsc}, 
\begin{align*}
\Tr A_{s,c}(f^* \star f) 
&= \int \dd a \dd b\, e^{-a} (f^* \star f)(0)(0) \, g_{s,c}(a,b)
= (f^* \star f)(0)(0)  \int \dd a \dd b\, g_{s,c+1}(a,b)
\end{align*}
which is finite since $s>2+c$ by Lemma~\ref{lem-gsc}, and the integral evaluates to
\begin{equation*}
\int \dd a \dd b\, g_{s,c+1}(a,b) = \tfrac{\sqrt{\pi}\,\Gamma[(s-1)/2]}{\Gamma(s/2)} \int_0^\infty \dd u\, \tfrac{u^c}{\left[ 1 + (\cone + \ct u)^2 \right]^{(s-1)/2}}\,.
\end{equation*}
For $u \in [0,1]$, $\frac{u^c}{\left[ 1 + (\cone + \ct u)^2 \right]^{(s-1)/2}} \leq \frac{1}{\left[ 1 + (\cone + \ct u)^2 \right]^{(s-1)/2}}$ which is integrable on $[0,1]$, and for any $u \in [1, \infty)$, $\frac{u^c}{\left[ 1 + (\cone + \ct u)^2 \right]^{(s-1)/2}} \leq \frac{u^{c_0}}{\left[ 1 + (\cone + \ct u)^2 \right]^{(s-1)/2}}$ which is integrable on $[1, \infty)$. So finally, $\norm*{A_{s,c}(f) \hrho(f^*)}_{1} = \Tr  A_{s,c}(f^* \star f)  \leq M <\infty$ uniformly in $c \in (0, c_0)$.

We deduce from Lemma~\ref{lem-normlimitAsc} that $\norm{\cdot}$-$\lim_{c \to 0^+} A_{s,c}(f) \hrho(f^*) = A_{s,0}(f) \hrho(f^*)$. One concludes by Lemma~\ref{lem-tracelimit} that $A_{s,0}(f) \hrho(f^*)\in\mathcal{L}^1(\hH)$. By polarization, $A_{s,0}(f) \hrho(g)$ is trace-class for any $f, g \in \algA_S$, and so is $\pi(f)\, \Theta\, (1 + \DD^2)^{-s/2}\, \pi(g)$.
\end{proof}

The next result can be found in \cite[Prop. 2]{DS}, but we include a proof for completeness.

\begin{lemma}
\label{lem-tracelimit}
Let $A_n\in \mathcal{L}^1$ be a sequence such that $\norm{\cdot}$-$\underset{n\to \infty}{\lim} A_n = A$ and $\underset{n }{\sup} \,\norm*{A_n}_{1} = M < \infty$. Then $A\in \mathcal{L}^1$.
\end{lemma}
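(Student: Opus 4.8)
The plan is to reduce everything to the duality characterisation of the trace norm. Recall that for any bounded operator $B$ on $\caH$ one has
\begin{equation*}
\norm{B}_1 = \sup\{\,\abs{\Tr(BF)} \ :\ F\in\caB(\caH)\ \text{finite rank},\ \norm{F}\le 1\,\},
\end{equation*}
with the convention that the right-hand side is $+\infty$ when $B\notin\mathcal{L}^1$; in particular $B\in\mathcal{L}^1$ if and only if this supremum is finite, in which case it equals $\norm{B}_1$. This formula follows by testing against $F=W^*P_N$, where $B=W\abs{B}$ is the polar decomposition and $P_N$ is the orthogonal projection onto the span of the first $N$ singular vectors of $\abs{B}$, which gives $\Tr(BF)=\sum_{k\le N}s_k(B)$, and then letting $N\to\infty$. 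I would quote this as standard.

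First I would note that $A$ is compact, being the operator-norm limit of the trace-class (hence compact) operators $A_n$; this is not strictly needed but fixes the setting. Then I would fix a finite-rank operator $F$ with $\norm{F}\le 1$ and write $F=\sum_{j=1}^m \lvert u_j\rangle\langle v_j\rvert$, so that $\Tr(BF)=\sum_{j=1}^m \langle v_j, B u_j\rangle$ for every $B\in\caB(\caH)$. The key (and essentially only) point is that the linear functional $B\mapsto\Tr(BF)$ is continuous on $\caB(\caH)$ for the operator norm, since $\abs{\Tr(BF)}\le \bigl(\sum_j \norm{u_j}\,\norm{v_j}\bigr)\norm{B}$. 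Consequently $\Tr(A_nF)\to\Tr(AF)$ as $n\to\infty$, and therefore
\begin{equation*}
\abs{\Tr(AF)} = \lim_{n\to\infty}\abs{\Tr(A_nF)} \ \le\ \sup_n \norm{A_n}_1\,\norm{F} \ \le\ M .
\end{equation*}

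Taking the supremum over all finite-rank $F$ with $\norm{F}\le1$ and using the duality formula above yields $\norm{A}_1\le M<\infty$, i.e. $A\in\mathcal{L}^1$, which is the claim. There is no real obstacle here: the argument is routine once the trace-norm duality formula and the operator-norm continuity of $B\mapsto\Tr(BF)$ for finite-rank $F$ are in hand; the statement is included in the paper only because it is invoked in the proof of Lemma~\ref{lem-normlimitAsc}'s application and in Theorem~\ref{As0}.
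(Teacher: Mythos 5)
Your proof is correct and follows essentially the same route as the paper's: both establish the uniform bound $\abs{\Tr(AF)}\le M\,\norm{F}$ over finite-rank $F$ by passing to the limit in $n$, and then conclude via the trace-class/compact duality (the paper invokes $\mathcal{L}^1=\caK^*$ explicitly, you use the equivalent supremum-over-finite-rank-contractions characterisation of $\norm{\cdot}_1$). The only quibble is the test operator in your justification of that characterisation: it should be $F=P_NW^*$ rather than $W^*P_N$ so that $\Tr(BF)=\Tr(\abs{B}P_N)=\sum_{k\le N}s_k(B)$; this is immaterial to the argument.
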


\begin{proof}
Let $B$ be a finite rank operator. Then $AB$ and $(A-A_n)B$ are trace-class, and
\begin{align*}
\abs*{\Tr(A B)} \leq  \norm*{A - A_n} \, \norm*{B}_{1} + \norm*{A_n}_{1} \,  \norm*{B}
\leq \norm*{A - A_n} \, \norm*{B}_{1} + M \norm*{B}.
\end{align*}
Taking $n \to \infty$ on both sides, one gets $\abs*{\Tr(A B)} \leq M \norm*{B}$ for any finite rank operator $B$, with $M$ independent of $B$. The linear form $B \mapsto \Tr(A B)$ is thus continuous, and extends uniquely to a continuous linear form on the Banach space $\caK$ of compact operators. Since the duality $ \mathcal{L}^1 = \caK^*$ is realized by $A \mapsto \Tr(A \cdot)$, one has $A \in \mathcal{L}^1$.\\
 (Remark that the hypothesis $A=\text{strong}$-$\underset{n\to \infty}{\text{limit}} \,A_n$ is sufficient.)
\end{proof}

As claimed after Theorem \ref{As0}, the role of $c$ is important since for instance we cannot use the inequality $\norm{A_{s,c}(f)}_1=\norm{\Op(a_{s,c}(f))}_{1} \leq C \vert a_{s,c}(f)\vert_{1,(1,1)}$ of Theorem \ref{Arsu} to get directly that $A_{s,0}(f)\in\mathcal{L}^1(\hH)$ from previous lemma, because $\sup_{c<1}\vert a_{s,c}(f)\vert_{1,(1,1)}$ is infinite.
\bigskip

Since the spectral dimension of a non-unital modular spectral triple is not yet well settled, we use the following notion of modular dimension in our example
\begin{equation}
\label{def-dimension}
p\vc \inf \{s>0  \ \mid \  \Tr \,\Theta\, \pi(a) (1+\DD^2)^{-s/2}\pi(a^*) <\infty, \,\forall a\in \algA_S\}.
\end{equation}

The modular spectral triple given in \cite{Ioc12} was only $1$-summable, but here the dimension increases:

\begin{theorem}
While the original spectral triple has spectral dimension 1, this modular-type twisted spectral triple given by Theorem \ref{thm-twisted-triple} has modular spectral dimension 2.
\end{theorem}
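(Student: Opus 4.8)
The plan is to prove $p=2$ by establishing the two bounds $p\leq 2$ and $p\geq 2$ separately, and to record that the original triple has spectral dimension $1$ by a short Hilbert--Schmidt computation.

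\emph{Upper bound.} On the relevant core one has $\Theta\,\pi(f)=\pi[\beta(f)]\,\Theta$ by \eqref{eq-beta-p} and \eqref{defTheta}, hence
\[
\Theta\,\pi(f)\,(1+\DD^2)^{-s/2}\,\pi(f^*)=\pi[\beta(f)]\,\Theta\,(1+\DD^2)^{-s/2}\,\pi(f^*).
\]
Since $\beta$ restricts to an automorphism of $\algA_S$ (Section~\ref{sec-extended-affine}), both $\beta(f)$ and $f^*$ lie in $\algA_S$, so Theorem~\ref{As0} applied to the pair $(\beta(f),f^*)$ shows this operator is in $\mathcal{L}^1(\caH)$ whenever $s>2$. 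Thus the trace in \eqref{def-dimension} is finite for every $f\in\algA_S$ as soon as $s>2$, i.e. $p\leq 2$.

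\emph{Lower bound.} Here the key point is that $[\DD,\Theta]=0$ in the affine case (Remark~\ref{comparaison}), so that, writing $(1+\DD^2)^{-s/2}=C^*C$ with $C=(1+\DD^2)^{-s/4}\pi(f^*)$ and using cyclicity of the trace for positive operators,
\[
\Tr\,\Theta\,\pi(f)\,(1+\DD^2)^{-s/2}\,\pi(f^*)=\big\|\,\Theta^{1/2}\,\pi(f)\,(1+\DD^2)^{-s/4}\,\big\|_{\HS}^{2}\ \in[0,\infty].
\]
Using the explicit diagonal forms $(\htheta\,\hxi)(a)=e^{-a}\hxi(a)$, $(\hD\,\hxi)(a)(b)=e^{-a}b\,\hxi(a)(b)$, $(\caT_{\cone,\,\ct}\hxi)(a)(b)=(\cone+\ct e^{-a})\hxi(a)(b)$ together with the kernel of $\hrho(f)$ read off from \eqref{eq-rep-GxN}, I would compute the integral kernel of $\htheta^{1/2}\hrho(f)(1+\hD^2+\caT_{\cone,\,\ct}^2)^{-s/4}$ on $\hH$; after the change of variables $u=a-a'$, $v=e^{-a}(b-b')$ (and the overall factor $2$ from the $\gC^2$ summand, since $\DD^2=(\hD^2+\caT_{\cone,\,\ct}^2)\otimes\bbbone_2$ here) this yields
\[
\Tr\,\Theta\,\pi(f)\,(1+\DD^2)^{-s/2}\,\pi(f^*)=2\Big(\int_{\gR^2}e^{-2u}\,\abs{f(u)(v)}^2\,\dd u\,\dd v\Big)\int_{\gR^2}g_{s,1}(a,b)\,\dd a\,\dd b ,
\]
with $g_{s,1}$ as in Proposition~\ref{thm-traceclass}. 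For any nonzero $f\in\algA_S=\caS_c(\gR^2)$ the first factor is a strictly positive finite constant, while by Lemma~\ref{lem-gsc}(1) the integral of $g_{s,1}$ is finite if and only if $s>2$. Hence for every $s\leq 2$ the trace in \eqref{def-dimension} is infinite for every nonzero $f$, which forces $p\geq 2$. Combining the two bounds, $p=2$.

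\emph{The original triple.} Via Fourier transform $D$ becomes $-i\,\dd/\dd\hb$ on $L^2(\hgR)$ and $\rho(g)$ becomes multiplication by $\hg$; for $g\in A^+$ (so $\hg\geq 0$) one gets, as above,
\[
\Tr\,\rho(g)\,(1+D^2)^{-s/2}=\big\|(1+D^2)^{-s/4}\rho(g)^{1/2}\big\|_{\HS}^{2}=c\Big(\int_{\gR}(1+\xi^2)^{-s/2}\,\dd\xi\Big)\Big(\int_{\gR}\hg\Big),
\]
which is finite exactly when $s>1$; so $(A,H,D)$ has spectral dimension $1$ (this is the classical fact already recorded in \cite{Ioc12}).

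The main obstacle is the lower-bound computation: one must justify that the cyclic rearrangements of the a priori infinite trace are legitimate (which is why one passes through a positive operator and its Hilbert--Schmidt square root), and then carry out the kernel computation and the change of variables carefully enough to identify the $(a,b)$-integral as exactly $\int g_{s,1}$, so that the borderline $L^1$-exponent $s=2$ of Lemma~\ref{lem-gsc}(1) is what produces the jump of dimension from $1$ to $2$.
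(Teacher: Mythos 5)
Your proof is correct and follows essentially the same route as the paper: the upper bound is Theorem \ref{As0} (combined, as you do, with $\Theta\,\pi(f)=\pi[\beta(f)]\,\Theta$), and the lower bound rests on the same kernel factorization as in Lemma \ref{lem-trace-affine}, whose $(a,b)$-integral is $\int g_{s,1}$ and diverges precisely for $s\leq 2$ by Lemma \ref{lem-gsc}. Your Hilbert--Schmidt packaging of the lower bound is a clean way to make the divergence for $s\leq 2$ rigorous (the paper instead computes the exact value for $s>2$, with its hypergeometric closed form, and reads off the pole at $s=2$), and your factor $\int e^{-2u}\abs{f(u)(v)}^2\,\dd u\,\dd v$ correctly matches $\hvarphi_\gR(f^*\star\beta(f))$.
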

Since by Theorem \ref{As0}, the dimension is less or equal to 2, this follows from the following:
\begin{lemma}
\label{lem-trace-affine}
For any $f,g\in \algA_S$, $\cone,\,\ct\in\gR$ with $\ct\neq 0$ and $s>2$, we have
\begin{equation*}
\Tr\,\pi(f)\, \Theta\, (\bbbone+\DD^2)^{-s/2} \, \pi(g)
= 2 \hvarphi_\gR(g \star f)\,\big[ \tfrac{\pi}{\abs{\ct} (s-2)}-\tfrac{\cone}{\ct} \tfrac{\sqrt{\pi}\,\Gamma\left((s-1)/2\right)}{\Gamma(s/2)} \,{}_2F_1(\tfrac{1}{2},\tfrac{s-1}{2},\tfrac{3}{2},-\cone^2)\big]
\end{equation*}
where ${}_2F_1$ is some hypergeometric function.
Moreover
\begin{equation*}
\lim_{s\to 2} (s-2) \,\Tr\,\pi(f)\, \Theta\, (\bbbone+\DD^2)^{-s/2} \, \pi(g) = \tfrac{2\pi}{\abs{\ct}}\,\hvarphi_\gR(g \star f).
\end{equation*}
\end{lemma}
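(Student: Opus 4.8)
The plan is to reduce the trace on $\caH = \hH \otimes \gC^2$ to a trace on $\hH$, compute the latter as the diagonal integral of an explicit operator kernel, and then carry out the resulting $(a,b,a',b')$-integral by two changes of variables, ending at a one-dimensional integral identified with a $\,{}_2F_1$. Since in the affine example $\hD$ and $\caT_{\cone,\,\ct}$ are both multiplication operators on $L^2(\gR^2,\dd a\,\dd b)$, they commute, so $\DD^2 = (\hD^2 + \caT_{\cone,\,\ct}^2)\otimes\bbbone_2$ and hence $\pi(f)\,\Theta\,(\bbbone+\DD^2)^{-s/2}\,\pi(g) = B_s \otimes \bbbone_2$ with $B_s \vc \hrho(f)\,\htheta\,(\bbbone+\hD^2+\caT_{\cone,\,\ct}^2)^{-s/2}\,\hrho(g)$. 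By Theorem~\ref{As0} this is trace-class for $s>2$, and $\Tr_\caH = 2\,\Tr_\hH B_s$ because $\Tr\bbbone_2 = 2$.

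Next I would write down the integral kernel of $B_s$ on $\hH = L^2(\gR^2)$: the operator $\hrho(f)$ has kernel $(a,b;a',b') \mapsto e^{-a} f(a-a')(e^{-a}(b-b'))$, likewise for $\hrho(g)$, while $\htheta\,(\bbbone+\hD^2+\caT_{\cone,\,\ct}^2)^{-s/2}$ is multiplication by the function $g_{s,0}$ of Lemma~\ref{lem-gsc}. Composing these three kernels and using the standard fact that a trace-class integral operator with continuous kernel has $\Tr$ equal to the integral of its kernel over the diagonal, one gets $\Tr_\hH B_s$ as a quadruple integral over $(a,b,a',b')$. This reduction is the one step requiring genuine care: it needs $B_s$ trace-class (supplied by Theorem~\ref{As0}) together with continuity and integrability of the kernel, and one must keep track of the Haar and modular-function normalizations ($\nu(a) = e^{-a}$, $\Delta_G$) implicit in composing the kernels.

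The computation of that integral is then routine. Substituting $\tau = a-a'$, $\zeta = b-b'$ at fixed $(a',b')$ makes the integrand split as a product of a piece depending only on $f,g,\tau,\zeta,a'$ and the factor $g_{s,0}(a',b')$; the $b'$-integral is $\int_\gR g_{s,0}(a',b')\,\dd b' = \tfrac{\sqrt\pi\,\Gamma[(s-1)/2]}{\Gamma(s/2)}\,[1+(\cone+\ct e^{-a'})^2]^{-(s-1)/2}$ by \eqref{eq-intdbgsc}, and a further substitution $w = e^{-a'}\zeta$ decouples the $a'$-integral from the $(\tau,w)$-integral. The $(\tau,w)$-integral is recognized, after one more elementary substitution, as $(g\star f)(0)(0)$, which equals $\hvarphi_\gR(g\star f)$ since by Section~\ref{sec-plancherel-extension-group} the dual weight satisfies $\hvarphi_\gR(\cdot) = \varphi_\gR[(\cdot)(e_G)]$ and $\varphi_\gR(f_\gR) = f_\gR(0)$; substituting $t = e^{-a'}$ turns the $a'$-integral into $J(s) \vc \int_0^\infty [1+(\cone+\ct t)^2]^{-(s-1)/2}\,\dd t$. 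Thus $\Tr_\caH = 2\,\hvarphi_\gR(g\star f)\,\tfrac{\sqrt\pi\,\Gamma[(s-1)/2]}{\Gamma(s/2)}\,J(s)$.

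Finally one evaluates $J(s)$ via $x = \cone + \ct t$, which gives $J(s) = |\ct|^{-1}\int_0^\infty (1+x^2)^{-(s-1)/2}\dd x - \tfrac{\cone}{\ct}\,{}_2F_1(\tfrac12,\tfrac{s-1}{2};\tfrac32;-\cone^2)$, valid for either sign of $\ct$ and $\cone$ because $x_0 \mapsto x_0\,{}_2F_1(\tfrac12,\tfrac{s-1}{2};\tfrac32;-x_0^2) = \int_0^{x_0}(1+x^2)^{-(s-1)/2}\dd x$ is odd. Using the Beta-integral $\int_0^\infty (1+x^2)^{-(s-1)/2}\dd x = \tfrac{\sqrt\pi\,\Gamma(s/2)}{(s-2)\,\Gamma[(s-1)/2]}$ (together with $\Gamma((s-2)/2) = \tfrac{2}{s-2}\Gamma(s/2)$), one checks that $\tfrac{\sqrt\pi\,\Gamma[(s-1)/2]}{\Gamma(s/2)}\,J(s) = \tfrac{\pi}{|\ct|(s-2)} - \tfrac{\cone}{\ct}\tfrac{\sqrt\pi\,\Gamma[(s-1)/2]}{\Gamma(s/2)}\,{}_2F_1(\tfrac12,\tfrac{s-1}{2};\tfrac32;-\cone^2)$, which multiplied by $2\,\hvarphi_\gR(g\star f)$ is exactly the stated formula. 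The limit is then immediate: at $s = 2$ the hypergeometric term is regular (it is ${}_2F_1(\tfrac12,\tfrac12;\tfrac32;-\cone^2)$), so only the pole $\tfrac{\pi}{|\ct|(s-2)}$ survives and $(s-2)\,\Tr\,\pi(f)\,\Theta\,(\bbbone+\DD^2)^{-s/2}\,\pi(g) \to \tfrac{2\pi}{|\ct|}\,\hvarphi_\gR(g\star f)$.
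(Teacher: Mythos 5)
Your proposal is correct and follows essentially the same route as the paper: reduce to $2\,\Tr_{\hH}$ of $\hrho(f)\,\htheta\,(\bbbone+\hD^2+\caT_{\cone,\,\ct}^2)^{-s/2}\,\hrho(g)$ (trace-class by Theorem~\ref{As0}), integrate the composed kernel over the diagonal to factor out $(g\star f)(0)(0)=\hvarphi_\gR(g\star f)$ times $\int \dd a\,\dd b\, g_{s,1}(a,b)$, and evaluate the remaining one-dimensional integral via $u=e^{-a}$ into the stated $\Gamma$/${}_2F_1$ expression, with the limit read off from the pole at $s=2$. Your explicit treatment of the sign of $\ct$ and the Beta-integral identity only fills in details the paper leaves implicit.
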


\begin{proof}
Thanks to Theorem \ref {As0},
\begin{equation*}
\Tr\,\pi(f)\, \Theta\, (\bbbone+\DD^2)^{-s/2} \, \pi(g) = 2 \Tr \, \hrho(f)\, \htheta\, (1 + \hD^2 + \caT_{\cone,\,\ct}^2)^{-s/2}\, \hrho(g) .
\end{equation*}
The kernel of $A_s(f,g) \vc \hrho(f)\, \htheta\, (1 + \hD^2 + \caT_{\cone,\,\ct}^2)^{-s/2}\, \hrho(g)$ is
\begin{equation*}
K(a,b; a', b') \vc \int \dd a'' \dd b'' \, e^{-a-a''}  f(a - a'')(e^{-a}(b - b'')) \, g_{s,0}(a'', b'') \, g(a'' - a')(e^{-a''}(b'' - b'))
\end{equation*}
so that
\begin{align*}
\Tr A_s(f,g)
&= \int \dd a \dd b \dd a' \dd b' \, e^{-a-a'}  f(a - a')(e^{-a}(b - b')) \, g_{s,0}(a', b') \, g(a' - a)(e^{-a'}(b' - b))
\\
&= (g \star f)(0)(0) \, \int \dd a \dd b\, e^{-a} g_{s,0}(a,b)
\end{align*}
where we have used $(g \star f)(0)(0) = \int \dd a' \dd b' e^{a'} g(a')(b') \, f(-a')(-e^{a'} b')$. The integral is
\begin{align*}
\int \dd a \dd b\, e^{-a} g_{s,0}(a,b)
&= \int \dd a \dd b\, g_{s,1}(a,b)
= \tfrac{\sqrt{\pi}\,\Gamma[(s-1)/2]}{\Gamma(s/2)} \int \dd a \, e^{-a} \left[ 1 + (\cone + \ct e^{-a})^2 \right]^{-(s-1)/2}
\\
&= \tfrac{\sqrt{\pi}\,\Gamma[(s-1)/2]}{\Gamma(s/2)} \, \int_0^\infty \dd u\, [ 1 + (\cone + \ct\, u)^2 ]^{(1-s)/2}
\\
&= \big[ \tfrac{\pi}{\abs{\ct} (s-2)} -\tfrac{\cone}{\ct} \tfrac{\sqrt{\pi}\,\Gamma[(s-1)/2]}{\Gamma(s/2)} \,{}_2F_1(\tfrac{1}{2},\tfrac{s-1}{2},\tfrac{3}{2},-\cone^2)\big].
\end{align*}
Since $\lim_{s\to 2} \,{}_2F_1(\tfrac{1}{2},\tfrac{s-1}{2},\tfrac{3}{2},-\cone^2)=\cone^{-1}\text{arcsinh}(\cone)$, we see that the spectral dimension of the spectral triple is 2 and that $\lim_{s\to 2} (s-2) \,\Tr\,\pi(f)\, \Theta\, (\bbbone+\DD^2)^{-s/2} \, \pi(g) =\tfrac{2\pi}{\abs{\ct}}\,\hvarphi_\gR(g \star f)$, where $\hvarphi_\gR(g \star f) = (g \star f)(0)(0)$.
\end{proof}

In fact, the Dixmier trace of $\Theta\,\pi(f)\,(\bbbone+\DD^2)^{-1}\pi(g)$ appears to be proportional to the dual weight $\hvarphi_\gR(g \star \beta[f])$ exactly as in the unital case, see for instance \cite{CNNR}:

\begin{proposition}
\label{Dixmiertraceable}
We have $\pi(f)\,\Theta(\bbbone+\DD^2)^{-1}\,\pi(g) \in \mathcal{L}^{1,\infty}$ for any $f,g\in \algA_S$. 
\end{proposition}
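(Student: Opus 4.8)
As in the proof of Theorem~\ref{As0}, the first move is to eliminate the $\gC^2$ factor and to work on $\hH$. In the affine situation one has $[\hD,\caT_{\cone,\,\ct}]=0$, so by \eqref{Dcarre} $\DD^2=(\hD^2+\caT_{\cone,\,\ct}^2)\otimes\bbbone_2$ and $\Theta(\bbbone+\DD^2)^{-1}=M_{\phi}\otimes\bbbone_2$, where $M_\phi$ is multiplication on $\hH=L^2(\gR^2,\dd a\,\dd b)$ by the positive bounded function
\[
\phi(a,b)\vc e^{-a}\bigl[\,1+e^{-2a}b^2+(\cone+\ct e^{-a})^2\,\bigr]^{-1}
\]
(this is the function $g_{2,0}$ of the proof of Proposition~\ref{thm-traceclass}), and $\pi=\hrho\otimes\Id_{\gC^2}$. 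Since $\mathcal{L}^{1,\infty}$ is stable under $\,\cdot\otimes\bbbone_2$, it is enough to prove $\hrho(f)\,M_\phi\,\hrho(g)\in\mathcal{L}^{1,\infty}(\hH)$ for $f,g\in\algA_S$. Factorising
\[
\hrho(f)\,M_\phi\,\hrho(g)=\bigl(\hrho(f)\,M_\phi^{1/2}\bigr)\,\bigl(\hrho(g^*)\,M_\phi^{1/2}\bigr)^{*},
\]
the Hölder inclusion $\mathcal{L}^{2,\infty}\cdot\mathcal{L}^{2,\infty}\subset\mathcal{L}^{1,\infty}$ and the stability of $\mathcal{L}^{2,\infty}$ under adjunction reduce the theorem to the single critical estimate $\hrho(h)\,M_\phi^{1/2}\in\mathcal{L}^{2,\infty}(\hH)$ for every $h\in\algA_S$.

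For this estimate I would decompose dyadically along the variable $a$: with $\chi_k$ the characteristic function of $\{\,k\le a<k+1\,\}$, $k\in\gZ$, write $M_\phi^{1/2}=\sum_{k}M_{\chi_k}M_\phi^{1/2}$, a sum with pairwise orthogonal ranges and domains, so that $\hrho(h)M_\phi^{1/2}=\bigoplus_{k\in\gZ}B_k$ with $B_k\vc \hrho(h)\,M_{\chi_k\phi^{1/2}}$. Two inputs about the blocks come from the explicit form of $\phi$ and from the Hilbert--Schmidt computation already carried out in the proof of Lemma~\ref{lem-normlimitAsc}: first,
\[
\|B_k\|_{\HS}^2=\Bigl(\int_{\gR^2}e^{-a}\,\abs{h(a)(b)}^2\,\dd a\,\dd b\Bigr)\int_{[k,k+1)\times\gR}e^{-a}\phi(a,b)\,\dd a\,\dd b,
\]
which by the one-variable integral \eqref{eq-intdbgsc} and Lemma~\ref{lem-gsc} decays like $e^{-k}$ as $k\to+\infty$ and stays bounded as $k\to-\infty$; second, $\|B_k\|_{\caB(\hH)}\le\|\hrho(h)\|\,\bigl(\sup_{[k,k+1)\times\gR}\phi\bigr)^{1/2}$, and an inspection of $\phi$ gives $\sup_{[k,k+1)\times\gR}\phi=O(e^{-\abs{k}})$, comparable to $e^{-\abs{k}}$ as $k\to-\infty$. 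Hence $\bigoplus_{k\ge0}B_k$ has summable Hilbert--Schmidt norms, so lies in $\mathcal{L}^2\subset\mathcal{L}^{2,\infty}$. For $k<0$ each $B_k$ has bounded Hilbert--Schmidt norm but operator norm $\asymp e^{-\abs{k}/2}$, so its singular values form a plateau of height $\asymp e^{-\abs{k}/2}$ and length $\asymp e^{\abs{k}}$ followed by a rapid tail; because these plateaux sit at the geometrically separated scales $e^{-\abs{k}/2}$, the merged singular-value sequence of $\bigoplus_{k<0}B_k$ satisfies $\mu_n=O(n^{-1/2})$, the desired $\mathcal{L}^{2,\infty}$ bound. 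The rapid tail of each block and the $a$-uniformity of $\hrho(h)$ on a strip can be made quantitative through a Cwikel-type inequality: after a unitary rescaling of the $b$-variable, $B_k$ restricted to the $k$-th strip is, up to the scalar $e^{-\abs{k}/2}$, an operator of the form $m(e^{-k}D_b)\,\langle b\rangle^{-1}$ with $m$ a Schwartz symbol and $\langle b\rangle^{-1}\in L^{2,\infty}(\gR)$.

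The step I expect to be the main obstacle is precisely the assembling of the blocks $B_k$ for $k<0$: a crude estimate $\#\{\,\mu_j(B_k)\ge\varepsilon\,\}\le\varepsilon^{-2}\|B_k\|_{\HS}^2$ summed over the $O(\log(1/\varepsilon))$ blocks with $\|B_k\|\ge\varepsilon$ loses a logarithm and only produces $\mu_n=O(\sqrt{\log n/n})$, strictly weaker than $\mathcal{L}^{1,\infty}$; one really has to exploit that $\|B_k\|_{\caB(\hH)}$ is \emph{comparable} to $e^{-\abs{k}/2}$, not merely dominated by it, so that distinct blocks occupy disjoint scale ranges. An alternative route, avoiding the block counting, is to compute the symbol of the pseudo-differential operator $\hrho(f)M_\phi\hrho(g)$ and to check that it and its first derivatives in the four variables lie in weak $L^1(\gR^2\times\gR^2)$, and then to invoke a weak-Schatten version of the criterion used in Theorem~\ref{Arsu} (whose strong form only yields the trace-class property used for Theorem~\ref{As0}); this would be the cleaner formalisation if such a version of \cite{Arsu08a} is at hand. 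In either case, polarisation as at the end of the proof of Theorem~\ref{As0} upgrades the statement from $g=f^*$ to arbitrary $f,g\in\algA_S$.
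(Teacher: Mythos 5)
Your route is genuinely different from the paper's, and it contains one real gap that you have correctly located yourself. The paper does not attempt any direct singular-value estimate: it computes $\Tr\,\pi(f)\,\Theta^{s/2}(\bbbone+\DD^2)^{-s/2}\pi(f^*)$ for $s>2$ (trace-class by Proposition~\ref{thm-traceclass}, exactly as in Lemma~\ref{lem-trace-affine}), shows that $(s-2)$ times this trace stays bounded as $s\downarrow 2$, rewrites this as $\sup_{1\le s'\le 2}(s'-1)\Tr\,\pi(f)\,G^{s'}\pi(f^*)<\infty$ for the bounded positive operator $G=\Theta(\bbbone+\DD^2)^{-1}$ (using $[\Theta,\DD]=0$), and then invokes the abstract result \cite[Proposition~3.8]{CGRS}: finiteness of this zeta-type residue places $\pi(f)\,G\,\pi(f^*)$ in $\mathcal{L}^{1,\infty}$, and polarization finishes. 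This buys a complete proof from a one-line integral computation that the paper has already performed twice; your approach, if completed, would buy more explicit information about the singular values, but at the cost of the block analysis below.

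The gap is the assembly of the blocks $B_k$ for $k<0$. From the data you actually establish --- $\norm{B_k}_{\HS}^2=O(1)$ and $\norm{B_k}_{\caB(\hH)}=O(e^{-\abs{k}/2})$ --- one provably cannot do better than $\mu_n=O(\sqrt{\log n/n})$, as you note; the ``plateau of length $\asymp e^{\abs{k}}$ followed by a rapid tail'' is an extra structural claim that neither norm implies, and the Cwikel-type argument you sketch for it is not carried out (note also that for $k<0$ the relevant multiplier is $m(e^{-\abs{k}}D_b)$, spread over frequencies $\lesssim e^{\abs{k}}$, and that the ranges of the $B_k$ are only boundedly overlapping, not orthogonal, since $\hrho(h)$ convolves in $a$; both points are harmless but need saying). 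What would close the argument is a per-block trace-norm bound $\norm{B_k}_{1}\lesssim e^{\abs{k}/2}$ (e.g.\ by a Birman--Solomyak or Arsu-type estimate on each unit strip, where $e^{-a}$ is comparable to $e^{-k}$): combined with the operator-norm bound it gives $\#\{j:\mu_j(B_k)\ge\varepsilon\}\le\varepsilon^{-1}\norm{B_k}_1$ supported only on $e^{-\abs{k}/2}\gtrsim\varepsilon$, and the geometric sum $\sum_{e^{\abs{k}/2}\lesssim 1/\varepsilon}e^{\abs{k}/2}/\varepsilon=O(\varepsilon^{-2})$ yields the $\mathcal{L}^{2,\infty}$ bound without the logarithm. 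So the plan is viable, but as written the critical estimate is asserted rather than proved; your fallback (a weak-$\mathcal{L}^1$ symbol criterion replacing Theorem~\ref{Arsu}) is likewise conditional on a result you do not supply.
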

 
\begin{proof}
We first show that for $f\in \algA_S$, 
\begin{align}
\label{eq-limit}
\lim_{s\downarrow 2} \,(s-2) \,\Tr\, \pi(f)\,\Theta^{s/2}\,(\bbbone+\DD^2)^{-s/2}\pi(f^*)=\tfrac{2\pi}{\abs{\ct}}\,\hvarphi_\gR(f^*\star f).
\end{align}
where the operator is trace-class by Proposition \ref{thm-traceclass}. As in the proof of Lemma \ref{lem-trace-affine} and with $g=f^*\star f$,
\begin{align*}
\Tr\, \pi(f)\,\Theta^{s/2}\,(\bbbone+\DD^2)^{-s/2} \pi(f^*) &=2 \,\hvarphi_\gR(g)\int_{\gR^2} \dd a\,\dd b \,e^{-as/2}\, [1+b^2+(\cone+\ct\,e^{-a})^2]^{-s/2}\\
& = 2 \,\hvarphi_\gR(g)\,\tfrac{\sqrt{\pi}\,\Gamma[(s-1)/2]}{\Gamma(s/2)} \,\int_{-\infty}^\infty \dd a\,e^{-as/2}[1 + (\cone + \ct \,e^{-a})^2]^{(1 - s)/2}
\\
&= 2 \,\hvarphi_\gR(g)\,\tfrac{\sqrt{\pi}\,\Gamma[(s-1)/2]}{\Gamma(s/2)} \, \int_0^\infty \dd u\, u^{s/2-1}\,[ 1 + (\cone + \ct\, u)^2 ]^{(1-s)/2}.
\end{align*}
Since $s-2=\tfrac{2\Gamma(s/2)}{\Gamma(s/2-1)}$ we get 
\begin{multline*}
\lim_{s\downarrow 2} (s-2) \,\Tr\, \pi(f)\,\Theta^{s/2}\,(\bbbone+\DD^2)^{-s/2}\pi(f^*)=\\4 \,\hvarphi_\gR(g)\,\pi \,\lim_{s\to 2} \tfrac{1}{\Gamma(s/2-1)}\,\int_0^\infty \dd u\, u^{s/2-1}[ 1 + (\cone + \ct\, u)^2 ]^{(1-s)/2}.
\end{multline*}
If $I(u,s)\vc u^{s/2-1}\,[ 1 + (\cone + \ct\, u)^2 ]^{(1-s)/2}$, then $\int_0^\infty \dd u\,I(u,s)=\int_0^1 \dd u\,I(u,s)+\int_1^\infty \dd u\,I(u,s)$ where the first integral converges for any $s\geq 2$, so that $\lim_{s\downarrow 2} \tfrac{1}{\Gamma(s/2-1)}\int_0^1 \dd u\,I(u,s)=0$. Moreover 
\begin{align*}
\int_1^\infty \dd u\,I(u,s)=\int_1^\infty \dd u\,[I(u,s)-\tfrac{1}{\vert \ct\vert^{s-1}}u^{-s/2}]+\tfrac{1}{\vert \ct\vert^{s-1}}\int_1^\infty \dd u \,u^{-s/2},
\end{align*}
with $\int_1^\infty \dd u \,u^{-s/2}=\tfrac{2}{s-2}$, thus $\lim_{s\downarrow 2} \tfrac{1}{\Gamma(s/2-1)}\tfrac{1}{\vert \ct\vert^{s-1}}\int_1^\infty \dd u \,u^{-s/2}=\tfrac{1}{\vert \ct\vert}$. \\
For $J(u,s)\vc I(u,s)-\tfrac{1}{\vert \ct\vert^{s-1}}u^{-s/2}$, we get $J(u,2) \underset{u\to \infty}{\simeq} -\tfrac{-\cone}{\vert \ct\vert} u^{-2}$ and \eqref{eq-limit} is proved since
 $\lim_{s\downarrow 2} \tfrac{1}{\Gamma(s/2-1)}\int_1^\infty \dd u \,J(u,s)=0$.
 
We can now follow the arguments of the proof of \cite[Proposition 5.12]{Ioc12}: first we show that $\pi(f)\, \Theta(\bbbone+\DD^2)^{-1} \pi(f^*)\in \mathcal{L}^{1,\infty}(\caH)$ for any $f\in \algA_S$: by \eqref{eq-limit} we get for $G \vc \Theta(\bbbone+\DD^2)^{-1}$
\begin{align*}
\sup_{1\leq s'\leq 2}(s'-1) \Tr \pi(f)\, G^{s'} \pi(f^*)<\infty.
\end{align*}
Note that $G$ is an injective positive operator since $\DD$ and $\Theta$ commutes and $G\leq \ct^{-1}\bbbone$ and we can apply \cite{CGRS} after a renormalisation of $G$. Thus, $\pi(f)$ is in the algebra $B_\zeta(G)$ of \cite[Definition 3.2]{CGRS}, so by \cite[Proposition 3.8]{CGRS}, $\pi(f)\,G\,\pi(f^*) \in \mathcal{L}^{1,\infty}$ and finally by polarization, $\pi(f)\,G\,\pi(g) \in \mathcal{L}^{1,\infty}$ for any $f,g\in \algA_S$.
\end{proof}

We conjecture that $\Theta \,\pi(f)(\bbbone+\DD^2)^{-1} =\pi(\beta[f])\,\Theta (\bbbone+\DD^2)^{-1}\in \mathcal{L}^{1,\infty}$ for any $f\in \algA_S$ and its Dixmier trace is equal to $\tfrac{2\pi}{\abs{\ct}}\,\hvarphi_\gR(f)$, which means that, within the definition \eqref{def-dimension} of the modular dimension, the symmetrization in $a$  is not necessary. 

\begin{remark}
\label{affine bifurcation C-M}
As quoted in Section \ref{bifurcation C-M}, we compute now the spectral dimension of our twisted affine triple represented on $H$ and not on $\hH$. We have
\begin{align*}
[(\rho \rtimes U)(f)(\bbbone+D^2)^{-s/2}\,\xi\,] (b)&= [\int \dd a\,\lambda_\gR(f(a))\,U_a\,(\bbbone+D^2)^{-s/2}\,\xi\,](b)\\
&=\int \dd a\,\dd b'\,f(a)(b')\,[U_a((\bbbone+D^2)^{-s/2} \,\xi\,](b-b')\\
&=\int \dd a\,\dd b'\,f(a)(b')\,e^{a/2}[1+(e^a(b-b'))^2]^{-s/2}) \,\xi(e^a(b-b'))\\
&=\int \dd b''\int \dd a\, e^{-a/2}\,f(a)(b-e^{-a}b'')\,(1+b''^2)^{-s/2}\, \xi(b'')\\
&=\int \dd b'' \,K(b,b'') \,\xi(b'')
\end{align*}
where $K(b,b'')=\int \dd a\,e^{-a/2}\,f(a)(b-e^{-a}b'')\,(1+b'')^{-s/2}$. Thus, computing $\int_\gR \dd b \,K(b,b)$, we get
\begin{align}
\label{dim=1}
\Tr \,(\rho \rtimes U)(f)(\bbbone+D^2)^{-s/2}
=\int_{\gR^2}\,\dd b\,\dd a \,e^{-a/2}\,f(a)(b(1-e^{-a}))\,(1+b^2)^{-s/2}.
\end{align}
Since in \eqref{dim=1}, $f$ has a compact support in both variables $a$ and $b$, the above integral always converges except eventually around $a=0$: 
assume that $a\in \gR \to f(a)\in C_c(\gR)$ is locally constant in $[-\epsilon, \epsilon]$, then 
\begin{align*}
\int_\gR \dd b\, (1+b^2)^{-s/2}\int_{-\epsilon}^\epsilon \dd a \,e^{-a/2}\,\abs{f(a)(b(1-e^{-a}))} 
& = 4\sinh(\tfrac \epsilon 2)\,\abs{f(0)(0)} \int_\gR \dd b\, (1+b^2)^{-s/2}\\
&=4 \sinh(\tfrac \epsilon 2)\,\abs{f(0)(0)} \,\tfrac{\sqrt{\pi}\sqrt{\Gamma ((s-1)/2)}}{\Gamma(s/2)}\text{ for $s>1$}. 
\end{align*}
Thus the trace is finite for $s>1$ and the dimension \eqref{dimension} of the twisted spectral triple remains the same than those of the original triple which is one.
\end{remark}

\subsection{\texorpdfstring{The group $\gZ \ltimes \gR$}{The group ZxR}}
\label{sec-example-discrete-affine-group}

This example is a variation of the previous one, in the sense that the group $G$ which acts on $\gR$ is taken to be $G = \gZ$, and the action is the restriction of the previous action $\varphi$ to the subgroup $\gZ \subset \gR$. Many of the features of the previous example are present in this simpler example, for instance the increase in dimension.

The action of $n \in \gZ$ on $b \in\gR$ is $\varsigma_n(b) \vc e^{-n} b$, the homomorphism of groups defined in \eqref{eq-definition-nu} is $\nu(n) \vc e^{-n}$, it coincides with $q(n)^2$, and the action of $\gZ$ on $C^*(\gR)$ is given on any $f_\gR \in C_c(\gR)$ as $\alpha_n(f_\gR)(b) = e^n f(e^n b)$. The unitary operator \eqref{eq-Ur} is $(U_n\, \xi)(b) = e^{n/2} \xi(e^n b)$ for any $\xi \in C_c(\gR) \subset H = L^2(\gR, \dd b)$. 

Consider, as before, the Dirac operator $(D \,\xi)(b) \vc b\, \xi(b)$ for any $\xi \in C_c(\gR)$. Then $U_n D U_n^* \,\xi = e^n D \xi$, so that $z(n) \vc e^{-n/2}\,\bbbone$, $p(n) = e^{-n} \,\bbbone= q(n)^2$, and $\vartheta(n) = e^{-n/2}$.

As before, Hypothesis~\ref{hypo-weight} and \ref{hyp-dirac} are satisfied. The modular-type twisted spectral triple of Theorem~\ref{thm-twisted-triple} is then given by the following operators:
\begin{equation*}
(\hD \,\hxi\,)(n)(b) = e^{-n} b\; \hxi(n)(b),
\quad\text{and}\quad
(\caT_{\cone,\,\ct} \,\hxi\,)(n)(b) = (\cone + \ct \,e^{-n}) \,\hxi(n)(b).
\end{equation*}

We check now that this twisted spectral triple is also $2$-summable. As in the proof of Lemma~\ref{lem-trace-affine}, using \eqref{eq-tracecomputationagain},
\begin{align*}
\Tr\, \Theta\,\pi(f)\,(1+\DD^2)^{-s/2}\pi(g) &= 2\sum_{n \in \gZ} e^{-n} \Tr_H (g\star \beta(f))(0) [1+ D^2+(\cone+\ct \,e^{-n})^2]^{-s/2} \\
&= 2\, \hvarphi_\gR(g\star \beta(f)) \sum_{n \in \gZ} \int_{\gR} \dd b \,  e^{-n} [1+b^2+(\cone+\ct\,e^{-n})^2 ]^{-s/2}
\\
&= 2 \,\hvarphi_\gR(g\star \beta(f))\,\tfrac{\sqrt{\pi}\,\Gamma((s-1)/2)}{\Gamma(s/2)} \,\sum_{n \in \gZ} \,e^{-n}[1 + (\cone + \ct \,e^{-n})^2]^{(1 - s)/2}.
\end{align*}
Let $a_n(s) \vc e^{-n}[1 + (\cone + \ct \,e^{-n})^2]^{(1 - s)/2}$. For $n \to +\infty$, $a_n(s) \sim e^{-n} (1 + \cone^2)^{(1-s)/2}$, so that $\sum_{n \geq 0} a_n(s) < \infty$ for any $s \in \gR$. For $n \to -\infty$, $a_n(s) \sim e^{\abs{n}} \, \ct^{1-s} \,  e^{\abs{n} (1-s)} = \ct^{1-s} e^{\abs{n}(2-s)}$, so that $\sum_{n \leq 1} a_n(s) < \infty$ iff $s > 2$.

This example shows that the spectral dimension of a original spectral triple can increase under the action of a single automorphism (via $\gZ$), so our extension procedure can be applied to dynamical systems.

\subsection{The conformal group of a Riemannian manifold}
\label{conformal}

Let us consider the situation of Case~\ref{Case-Conformal}: let $(M,g)$ be a $n$-dimensional smooth and complete Riemannian spin manifold where $g$ is a given metric on $M$ and let $[g]$ be the class of metrics conformally equivalent to $g$. We do not assume that $M$ is compact for the moment. \\
Let $\text{Conf}(M,[g])$ be the conformal group of smooth diffeomorphisms of $M$ which preserve the class $[g]$ and the orientation. When $n\geq 3$, $\text{Conf}(M,[g])$ is a Lie group for the compact-open topology which is of dimension less or equal to $(n+1)(n+2)/2$ by theorems of Montgomery and Kobayashi: it is the automorphism group of the conformal structure of $M$ which is a $G$-structure of finite type \cite[I.~Theorem 5.1 and IV.~Theorem 6.1]{Koba}. 

 A subgroup $K$ of $\text{Conf}(M,[g])$ is said to be essential if there exists no $h\in C^\infty(M)$ such that $K\subset \text{Iso}(M,e^{2h}g)$ where $\text{Iso}(M,g)$ is the group of isometries of $M$ for the metric $g$. By a theorem of Ferrand--Obata  \cite{Ferrand}, for $n\geq 2$, if $M$ is not conformally equivalent with $\gS^n$ (with standard metric when $M$ is compact) or $\gR^n$ (with Euclidean metric when $M$ is not compact), then $G$ acts properly on $M$ (implying that $G$ is compact when $M$ is compact) and it is inessential. When $M$ is not compact, the conformal group is inessential if it is equicontinuous. 
 
As original triple, we choose the commutative triple on $M$: $A\vc C_0^\infty(M)$ is the space of smooth functions on $M$ which vanish at infinity as well as all their derivatives, the Hilbert space is $H\vc L^2(M,\slashed{S}_g)$, and $D\vc\Dg$ is the usual Dirac operator associated to the metric $g$ and the given spin structure, which is selfadjoint since $M$ is complete. 

Let $G \vc SCO(M, [g])$ be the subgroup of $\text{Conf}(M,[g])$ which preserves the spin structure, \textsl{i.e.} $G$ lifts to the principal spin bundle. Thus $G$ is the isotropy subgroup of $\text{Conf}(M,[g])$ which fixes the class in $H^1(M, \gZ_2)$ defining the spin structure, so $G$ is a Lie group. 
The group $G$ acts on tensor structures on $M$ by pull-back. In particular, the action $\alpha$ of $\phi \in G$ on $f \in A$ is given by 
\begin{equation*}
\phi^* f \vc f \circ \phi.
\end{equation*}

The Hilbert space $L^2(M,\slashed{S}_g)$ decomposes as $L^2(M,\dd\vol_g) \otimes \slashed{S}_g$, where $\slashed{S}_g$ is the spinor bundle associated to $g$, and $L^2(M,\dd\vol_g)$ is the GNS representation space of the weight $\varphi_g$ defined on the $C^*$-algebra $\bA \vc C_0(M)$ by
\begin{equation*}
\varphi_g(f) \vc \int_M  f \,\dd\vol_g\, \text{ for any $f \in \bA^+$.}
\end{equation*}
The representation of $f \in \bA$ on $L^2(M,\dd\vol_g)$ is $\pi_\varphi(f)\, \xi \vc f\,\xi$, and it extends diagonally to $H$. Notice that $H$ depends on $g$.

By hypothesis, for any $\phi \in G$, there exists a smooth function $h_\phi$ on $M$ such that
\begin{equation}
\label{stretch factor}
\phi^* g = e^{-4 h_\phi} g
\end{equation}
where the $4$ is arbitrary but simplify \eqref{UdU*=}.
This implies that
\begin{equation*}
\phi^*\text{dvol}_g=\text{dvol}_{\phi^*g} = e^{-2 n h_\phi} \,\text{dvol}_g \quad \text{and }\quad e^{-4nh_\phi}=\text{det}^2(\phi')\,\tfrac{(\det\,g)\circ\phi}{\det\,g}\,.
\end{equation*}
From \eqref{stretch factor}, or from $\det\,(\phi_1\circ \phi_2)'=(\det \,\phi_2')\,(\det\,\phi_1')\circ\phi_2$, we deduce that the function $h_\phi$ satisfies the cocycle relation
\begin{equation}
\label{hcocycle}
h_{\phi_1 \circ \phi_2} = h_{\phi_2} + \alpha_{\phi_2}(h_{\phi_1}), \, \,\text{ for any }\phi_1, \phi_2 \in G.
\end{equation}
For $\phi \in G$ and $f \in \bA$, one has also
\begin{equation}
\label{eq-omega-conform-phi*}
\varphi_g(f) 
= \int_M f \, \dd\vol_g
= \int_M \phi^*(f \, \dd\vol_g)
= \int_M (\phi^*f) \, e^{-2 n h_\phi} \, \dd\vol_g.
\end{equation}

Denote by $\tphi$ the (well-defined) lift of $\phi$ to $\slashed{S}_g)$. Then there is a natural action of $\phi$ on $H$ defined by $\xi \mapsto \tphi^{-1} \circ \xi \circ \phi$. 

Some of the relations presented above look similar to relations in the text. In order to recover these relations, one needs to use the opposite group $G^\op$ of $G$.  $G^\op$ has the same space as $G$, and we denote by $r \in G^\op \mapsto \phi_r \in G$ the (smooth) one-to-one correspondence between $G^\op$ and $\phi \in G$ as spaces. Then the product on $G^\op$ is defined such that $\phi_{rr'}=\phi_{r'}\,\phi_r$. 

Let $\alpha$ be the action of $G^\op$ on $\bA$ defined by
\begin{equation*}
\alpha_r(f) \vc \phi_r^* f = f \circ \phi_r\,.
\end{equation*}
Then one can check that $\alpha_{r} \alpha_{r'} = \alpha_{rr'}$.

Since $(G, M)$ is a Lie transformation group, it defines a $C^*$-dynamical system $(\bA, G, \alpha)$ with $\bA = C_0(M)$. Then one has  $M(\bA)\simeq C_b(M)$ and $Z(A)\simeq C_b^\infty(M)$.\\
When $M$ is compact, $\bA=C(M)$, $M(\bA)=\bA$ and $Z(A)=A= C^\infty(M)$.

The dense subspace $$\algA \vc C_c^\infty(G\times M)\text{ of }C_c(G, A)\subset \bB$$ is a $*$-algebra for the product
\begin{equation*}
(\hf \star_\alpha \hg)(r, x) = \int_G \dd\mu_G(r') \, \hf(r', x) \, \hg(r'^{-1} r, \phi_{r'}(x)), \text{ for any $(r,x) \in G \times M$},
\end{equation*}
and the involution $\hf^*(r,x) = \overline{\hf(r^{-1}, \phi_r(x))}$, for  any $\hf, \hg \in \algA$.

The relation \eqref{eq-omega-conform-phi*} can be written as
\begin{equation*}
\varphi_g(\alpha_{r^{-1}}[f]) = \varphi_g( f\, e^{- 2n h_{\phi_r}}) = \varphi_g(q(r)^2 \, f),
\end{equation*}
which is Hypothesis~\ref{hypo-weight}-\ref{hyp-q} with $q(r) \vc e^{- n h_{\phi_r}}$. As a consequence of \eqref{hcocycle}, $q$ is a $\alpha$-one-cocycle in $Z^1(G, C_+^\infty(M))$, where $C_+^\infty(M)$ are strictly positive functions in $C^\infty(M)$ (which is a multiplicative group).

The action $\alpha$ is extended to an action $\alpha^e$ on $H$ as $\alpha^e_r(\xi) = \tphi_r^{-1} \circ \xi \circ \phi_r$ for any $\xi \in H$, with the property that $\alpha^e_r(f \xi) = \alpha_r(f)\, \alpha^e_r(\xi)$. Then the unitary representation of Proposition~\ref{prop-def-unitary} is given by
\begin{equation*}
U_r (\xi) \vc q(r) \,\alpha^e_r(\xi) = e^{-n h_{\phi_r}} \, \tphi_r^{-1} \circ \xi \circ \phi_r\,.
\end{equation*}
This is the unitary operator defined in \cite[eq.~(3.3)]{Moscovici2010}. In fact, since $G$ preserves the spin structure, this unitary links the two Hilbert spaces of spinors associated to two different metrics, and it is shown in \cite{BG1992} (see \cite[Lemma~3.1]{Moscovici2010}) that 
\begin{equation}
\label{UdU*=}
U_r\, \Dg \, U_r^* = {z(r)^*}^{-1}\, \Dg\, z(r)^{-1}, \quad \text{with $z(r) \vc e^{-h_{\phi_r}}\,\bbbone$}.
\end{equation}
This defines the one-cocycle $p \in Z^1(G, C_+^\infty(M))$ as
\begin{equation*}
p(r) \vc e^{-2 h_{\phi_r}}.
\end{equation*}

\begin{lemma}
\label{lem-smoothhphi}
The function $(r,x) \in G^\op \times M \mapsto h_{\phi_r}(x)$ is smooth. 
\end{lemma}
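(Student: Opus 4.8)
The plan is to reduce the smoothness of $(r,x)\mapsto h_{\phi_r}(x)$ to the smoothness of the group action and the implicit definition of $h_\phi$ via \eqref{stretch factor}. First I would recall that $G=SCO(M,[g])$ is a Lie group acting smoothly on $M$ (as a $G$-structure of finite type, by the Montgomery--Kobayashi theorems cited above), so the evaluation map $(r,x)\in G^{\op}\times M\mapsto \phi_r(x)\in M$ is smooth, and hence so is its tangent (jet) prolongation $(r,x)\mapsto \phi_r'(x)$, the differential of $\phi_r$ at $x$, viewed as a smooth section of the appropriate bundle $\operatorname{Hom}(TM,\phi^*TM)$ over $G^{\op}\times M$.

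Next I would extract $h_{\phi_r}$ from \eqref{stretch factor}. Taking determinants in the local expression $\phi_r^*g = e^{-4h_{\phi_r}}g$, one gets the pointwise identity
\begin{equation*}
e^{-4n\,h_{\phi_r}(x)} = \det\!\big(\phi_r'(x)\big)^2\,\frac{(\det g)(\phi_r(x))}{(\det g)(x)},
\end{equation*}
which is already noted in the text just after \eqref{stretch factor}. The right-hand side is a strictly positive smooth function of $(r,x)$, being built by composition, multiplication and division of the smooth maps $(r,x)\mapsto\phi_r(x)$, $(r,x)\mapsto\phi_r'(x)$, the smooth function $\det g$ on $M$ (nonvanishing since $g$ is a metric), and the smooth determinant map on matrices. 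Therefore
\begin{equation*}
h_{\phi_r}(x) = -\tfrac{1}{4n}\,\log\!\Big(\det(\phi_r'(x))^2\,\tfrac{(\det g)(\phi_r(x))}{(\det g)(x)}\Big)
\end{equation*}
is smooth in $(r,x)$, since $\log$ is smooth on $(0,\infty)$ and the argument is a smooth positive function.

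One careful point I would make explicit is that the above determinant formula is a priori a local (coordinate-chart) statement, whereas $h_{\phi_r}$ is globally defined on $M$ by \eqref{stretch factor}; but smoothness is a local property, so it suffices to verify it in charts, and in each chart the formula above applies. I expect no genuine obstacle here: the only thing to be slightly careful about is invoking the correct regularity of the $G$-action --- namely that the joint map $G^{\op}\times M\to M$ and its first prolongation are smooth, which follows from $G$ being a Lie transformation group for a finite-type $G$-structure. Alternatively, and perhaps more cleanly, one can argue directly from \eqref{stretch factor}: the assignment $\phi\mapsto\phi^*g$ depends smoothly on $\phi\in G$ (it is a prolongation of the action on $1$-jets), $\phi^*g$ and $g$ are both nowhere-degenerate, so their ratio $\phi^*g/g$ is a smooth positive function of $(r,x)$, and $h_{\phi_r} = -\tfrac14\log(\phi_r^*g/g)$ is smooth. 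Either route gives the claim.
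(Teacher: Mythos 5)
Your proof is correct and follows essentially the same route as the paper's: both extract $e^{-c\,h_{\phi_r}(x)}$ as a manifestly smooth positive function of $(r,x)$ from the identity $\phi_r^*g = e^{-4h_{\phi_r}}g$ and the smoothness of the joint action map, then take a logarithm. The only (immaterial) difference is that you contract via determinants, i.e.\ use $e^{-4n h_{\phi_r}}=\det(\phi_r')^2\,\tfrac{(\det g)\circ\phi_r}{\det g}$, whereas the paper contracts $\phi_r^*g$ with $g^{\mu\nu}$ in a local chart.
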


\begin{proof}
In a local chart of $M$ with coordinates $(x^\mu)$, \eqref{stretch factor} gives
\begin{equation*}
 e^{-4 h_{\phi_r}(x)} = g^{\mu\nu}[x] \, \partial_\mu \phi_r^\rho(x)\, \partial_\nu \phi_r^\sigma(x)\, g_{\rho \sigma}[\phi_r(x)].
\end{equation*}
Since $(r,x) \in G^\op \times M \mapsto \phi_r(x) \in M$ is smooth, the function $(r, x) \in G^\op \times M \mapsto h_{\phi_r}(x)$ is smooth. 
\end{proof}
Hypothesis \ref{hyp-dirac}-\ref{preservdom} holds true with the space $Y_\algA \vc C_c^\infty(G\times M,\slashed{S}_g) \subset \hY$ which is dense in $\hH$. Remark that, since $(\,\widehat{\,\Dg} \,\hxi\,)(r)=e^{-h_{\phi_r}}\, \Dg \,e^{-h_{\phi_r}} \hxi(r)$ and $(\htheta\,\hxi\,)(r)=\,e^{-2h_{\phi_r}\circ \phi_{r^{-1}}}\,\hxi(r)$, $\widehat{\,\Dg}$ does not commute with $\htheta$ or $\caT_{\cone,\,\ct}$.

The twist $\beta$ defined by 
\begin{equation*}
\beta(\hf\,)(r,x) \vc e^{-2 h_{\phi_r}(x)}\,\hf(r,x)
\end{equation*}
maps $\algA$ into itself and is an automorphism of $\algA$ as proved in Proposition \ref{prop-beta}.

When $M$ is non compact, the function $e^{-2 h_{\phi_r}}$ need not be bounded on $M$, and so does not belongs to $M(\bA) =  C_b(M)$. When $M$ is compact, this function is always bounded and then $p(r)$ and $q(r)$ belong to $C_+^\infty(M) \subset M(\bA) = \bA = C(M)$.

\begin{corollary}
\label{lem-continuouscocycles}
When $M$ is compact, the maps $p, q, q^{-1}: G^\op \to M(\bA)$ are continuous.
\end{corollary}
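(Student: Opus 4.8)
The statement asserts that when $M$ is compact, the maps $p, q, q^{-1} : G^{\op} \to M(\bA) = \bA = C(M)$ are continuous. Since $p(r) = q(r)^2$ and $q(r)^{-1} = e^{n h_{\phi_r}}$, and $C(M)$ is a $C^*$-algebra on which $x \mapsto x^k$ and (on the strictly positive cone) $x \mapsto x^{-1}$ are continuous, it suffices to prove continuity of the single map $r \in G^{\op} \mapsto q(r) = e^{-n h_{\phi_r}} \in C(M)$. First I would recall that by Lemma~\ref{lem-smoothhphi}, the function $(r,x) \in G^{\op} \times M \mapsto h_{\phi_r}(x)$ is smooth, hence in particular jointly continuous. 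The whole point is then to upgrade this joint continuity into continuity for the \emph{sup-norm} topology on $C(M)$, which is where compactness of $M$ enters decisively.

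The key step is the following: fix $r_0 \in G^{\op}$ and $\varepsilon > 0$; I want a neighborhood $\mathcal{U}$ of $r_0$ in $G^{\op}$ such that $\sup_{x \in M} |h_{\phi_r}(x) - h_{\phi_{r_0}}(x)| < \varepsilon$ for all $r \in \mathcal{U}$. By joint continuity of $(r,x) \mapsto h_{\phi_r}(x)$, for each $x_0 \in M$ there are open neighborhoods $\mathcal{U}_{x_0} \ni r_0$ and $V_{x_0} \ni x_0$ with $|h_{\phi_r}(x) - h_{\phi_{r_0}}(x_0)| < \varepsilon/2$ for $(r,x) \in \mathcal{U}_{x_0} \times V_{x_0}$; combining this at $r = r_0$ gives $|h_{\phi_r}(x) - h_{\phi_{r_0}}(x)| < \varepsilon$ on $\mathcal{U}_{x_0} \times V_{x_0}$. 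Now compactness of $M$ lets me extract a finite subcover $M = V_{x_1} \cup \dots \cup V_{x_m}$, and setting $\mathcal{U} \vc \mathcal{U}_{x_1} \cap \dots \cap \mathcal{U}_{x_m}$ (a finite intersection, still a neighborhood of $r_0$) yields $\sup_{x \in M}|h_{\phi_r}(x) - h_{\phi_{r_0}}(x)| \leq \varepsilon$ for all $r \in \mathcal{U}$. This is just the standard argument that a jointly continuous function on (space)$\times$(compact) is continuous into the uniform-norm function space; the only subtlety is checking that $G^{\op}$ is locally compact or at least first countable is \emph{not} needed — the elementary open-cover argument above works for an arbitrary topological group $G^{\op}$.

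From $r \mapsto h_{\phi_r} \in C(M)$ being norm-continuous, composing with the continuous map $t \mapsto e^{-nt}$ (uniformly continuous on bounded sets, and $\{h_{\phi_r} : r \in \mathcal{U}\}$ is norm-bounded for $\mathcal{U}$ small) gives norm-continuity of $q : G^{\op} \to C(M)$; then $p = q^2$ and $q^{-1}$ are norm-continuous by continuity of multiplication and of inversion on the open set of invertibles in the Banach algebra $C(M)$ — noting $q(r)$ is everywhere strictly positive, hence invertible in $C(M)$, with $q(r)^{-1} = e^{n h_{\phi_r}}$. The main obstacle, if any, is purely bookkeeping: one must be careful that the strict topology on $M(\bA)$ coincides, when $M$ is compact and $\bA = C(M)$ is unital, with the norm topology — which is immediate since then $M(\bA) = \bA$ — so that ``continuous for the strict topology'' as needed in Hypothesis~\ref{hypo-weight}-\ref{hyp-q} and in Lemma~\ref{strict continuity} is automatically satisfied. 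I would close by remarking that this continuity is exactly what is required to apply the construction of Section~\ref{rhypothesis-weight} and Corollary~\ref{cor-twisted-triple-weight} to the conformal group of a compact Riemannian spin manifold.
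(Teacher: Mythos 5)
Your proof is correct and follows essentially the same route as the paper: both start from Lemma~\ref{lem-smoothhphi} to get joint continuity of $(r,x)\mapsto h_{\phi_r}(x)$ and then use compactness of $M$ to upgrade this to continuity into $C(M)$ with the sup-norm. The only cosmetic differences are that the paper runs the compactness step as a sequential proof by contradiction applied separately to each of $p$, $q$, $q^{-1}$, whereas you use the direct finite-subcover argument once for $q$ and deduce continuity of $p=q^2$ and of $q^{-1}$ from continuity of multiplication and of inversion on invertibles in the Banach algebra $C(M)$.
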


\begin{proof}
Applying Lemma~\ref{lem-smoothhphi}, each of functions $(r,x) \mapsto p(r)(x)$, $(r,x) \mapsto q(r)(x)$ and $(r,x) \mapsto q^{-1}(r)(x) = q(r^{-1})(\phi_r(x))$ are smooth on $G^\op \times M$.

Since $M$ is compact, the topology of $M(\bA) = \bA = C(M)$ is the sup-norm topology, so, to show that $p$ is continuous, we have to show that $\norm{p(r') - p(r)}_\infty \to 0$ when $r' \to r$ in $G^\op$. If this is not the case, then there exists $\epsilon > 0$ and sequences $r_n \in G^\op$ and $x_n \in M$ such that $r_n \to r$ and $\abs{p(r_n)(x_n) - p(r)(x_n)} > \epsilon$. But, by the compactness of $M$, we can suppose that $x_n \to x \in M$, and then, by continuity of $(r,x) \mapsto p(r)(x)$, one gets $\lim_{n \to \infty}\, \abs{p(r_n)(x_n) - p(r)(x_n)} = 0$ and a contradiction. \\The same argument applies to $q$ and $q^{-1}$.
\end{proof}

Locally on $M$, one can write $\Dg = - i \gamma^\mu \partial_\mu$. Then, for any $r \in G^\op$ and $\hf \in \algA$, one has
\begin{equation*}
[\  \Dg, \hf(r) \,z(r) ]\, z(r)^{-1}= [\ \Dg, \hf(r) \,e^{-h_{\phi_r}}] \,e^{h_{\phi_r}} = i \gamma^\mu [ \partial_\mu (\hf(r))+ \hf(r)\,\partial_\mu(h_{\phi_r}) ].
\end{equation*}
By Lemma~\ref{lem-smoothhphi}, the map $r\in G^\op \mapsto \hf(r) \,e^{-2h_{\phi_r}} \in A$ belongs to $\algA$, which proves \eqref{eq-hyp-p(r)}, and \eqref{eq-hyp-comm-bounded} is trivially satisfied for $M_{\hf, h} = \sup_{r \in \Supp(\hf)} \sup_{x \in M} \abs*{\partial_\mu (\hf(r))(x) + \hf(r)(x)\, \partial_\mu(h_{\phi_r})(x)}$. Moreover, here $[\Dg, p(r)] = 2i \gamma^\mu (\partial_\mu h_{\phi_r} )e^{-2h_{\phi_r}}$.

Suppose now $M$ is compact and $G$ is the spin structure preserving conformal group of $(M, g)$. Then, $G$ being compact, the norm of $[\Dg, p(r)]$ is uniformly bounded, so that $[\hD, \caT_{\cone,\,\ct}]$ is bounded, and the integral in \eqref{eq-compact} is finite, so \eqref{modularcompactresol} holds true even for $\epsilon=0$. Moreover, Hypotheses~\ref{hyp-dirac} (or  \ref{hypo-weight}) are fulfilled and one can use Theorem \ref{thm-twisted-triple} (or Corollary \ref{cor-twisted-triple-weight}). 

\bigskip

The fact that the group $G$ is essential or not is directly related to the cocycle property \eqref{hcocycle} of $h: \,\phi \in G \mapsto h_\phi \in C^\infty(M)$:

\begin{proposition}
\label{essential}
$G$ is inessential if and only if the class of $q$ (resp. $p$) in $H^1(G, Z(A)^\times)$ is trivial.
\end{proposition}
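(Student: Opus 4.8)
The plan is to unwind the definitions on both sides and reduce the statement to the classical Ferrand--Obata characterization of inessentiality in terms of the conformal factor being a coboundary. Recall that $G$ is inessential means precisely that there exists $h \in C^\infty(M)$ with $G \subset \text{Iso}(M, e^{2h} g)$; equivalently, every $\phi \in G$ is an isometry of the metric $e^{2h}g$ for a fixed $h$. I would first translate this condition into the language of the conformal factors $h_\phi$ defined by $\phi^* g = e^{-4 h_\phi} g$. The pull-back of $e^{2h}g$ under $\phi$ is $\phi^*(e^{2h}g) = e^{2(h\circ\phi)}\,\phi^* g = e^{2(h\circ\phi)} e^{-4 h_\phi} g$, and $\phi$ is an isometry of $e^{2h}g$ iff this equals $e^{2h}g$, i.e. iff $h\circ\phi - 2 h_\phi = h$, that is $h_\phi = \tfrac{1}{2}(\alpha_\phi(h) - h) = \tfrac12(h\circ\phi - h)$ (being careful with the $G$ versus $G^{\op}$ convention: recall $\alpha_r(f) = f\circ\phi_r$ and $h_{\phi_{rr'}} = h_{\phi_{r'}} + \alpha_{r'}(h_{\phi_r})$ by \eqref{hcocycle}).

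Next I would match this to the cohomological condition. By definition, $q(r) = e^{-n h_{\phi_r}}$ and $p(r) = e^{-2 h_{\phi_r}}$, and both are positive one-cocycles in $Z^1(G, Z(A)^\times)$ (here $Z(A)^\times \simeq C_+^\infty(M)$ when $M$ is compact, or more generally the relevant multiplicative group of functions). The class of $q$ in $H^1(G, Z(A)^\times)$ is trivial iff $q$ is a coboundary, i.e. there is $b \in Z(A)^\times$, which we may write $b = e^{-n k}$ for some $k \in C^\infty(M)$ using the logarithm in the abelian group $C_+^\infty(M)$, such that $q(r) = b^{-1} \alpha_r(b)$ for all $r$. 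Taking logarithms, this reads $-n h_{\phi_r} = n k - n\, \alpha_r(k)$, i.e. $h_{\phi_r} = \alpha_r(k) - k = k\circ\phi_r - k$. Comparing with the isometry computation above (with $k = \tfrac12 h$ up to the sign conventions), $q$ is a coboundary iff there exists $k$ with $h_{\phi_r} = \alpha_r(k) - k$ for all $r$, which is exactly the statement that $G \subset \text{Iso}(M, e^{2k}g)$, i.e. that $G$ is inessential. The same computation with $p(r) = e^{-2 h_{\phi_r}}$ gives: $p$ is a coboundary iff $h_{\phi_r} = \alpha_r(k') - k'$ for some $k'$, the identical condition, so the classes of $p$ and $q$ in $H^1(G, Z(A)^\times)$ are simultaneously trivial (indeed $p = q^{2/n}$, and $q^z$ is again a one-cocycle for any $z$ by Lemma~\ref{group of cocycles}, and a power of a coboundary is a coboundary, and conversely taking the $n/2$ power recovers $q$ from $p$). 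This handles the parenthetical ``resp. $p$''.

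The main point, and the step that requires invoking external input rather than a bare computation, is that the elementary condition ``there exists $k \in C^\infty(M)$ with $h_{\phi_r} = k\circ\phi_r - k$ for all $\phi_r \in G$'' is genuinely equivalent to $G$ being inessential in the sense of Ferrand--Obata; in one direction this is immediate from the metric computation above, and in the other direction one simply sets $k$ from the given $h$ with $G \subset \text{Iso}(M, e^{2h}g)$. So there is no deep analytic obstacle here once the bookkeeping of the two group conventions $G$ versus $G^{\op}$ and the exponents $-4h_\phi$ in \eqref{stretch factor} versus $-2h_\phi$ in $p$ and $-nh_\phi$ in $q$ is done carefully. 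I would write the proof as: (i) spell out that triviality of $[q]$ (equivalently $[p]$) in $H^1(G, Z(A)^\times)$ means $h_{\phi_r} = \alpha_r(k) - k$ for some $k$; (ii) show this forces $\phi_r^*(e^{2k}g) = e^{2k}g$ for all $r$, hence $G \subset \text{Iso}(M, e^{2k}g)$, so $G$ is inessential; (iii) conversely, from $G \subset \text{Iso}(M, e^{2h}g)$ recover $h_{\phi_r} = \tfrac12(\alpha_r(h) - h)$, exhibiting $q$ (and $p$) as coboundaries.

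The hard part will be nothing conceptual but purely notational: keeping the sign and the opposite-group conventions consistent so that the cocycle relation \eqref{eq-relationc(rr')}, $c(rr') = c(r)\alpha_r[c(r')]$, matches the conformal cocycle relation \eqref{hcocycle}, $h_{\phi_{rr'}} = h_{\phi_{r'}} + \alpha_{r'}(h_{\phi_r})$ --- note the order of $r$ and $r'$ is swapped, which is exactly the point of passing to $G^{\op}$. I would double-check that $q(r) = e^{-n h_{\phi_r}}$ indeed satisfies \eqref{eq-relationc(rr')} with the $G^{\op}$ action (it does: $q(rr') = e^{-n h_{\phi_{rr'}}} = e^{-n h_{\phi_{r'}}} e^{-n\alpha_{r'}(h_{\phi_r})} = q(r')\,\alpha_{r'}(q(r))$ --- so in $G^{\op}$ this reads $q(r\cdot_{\op} r') = q(r)\,\alpha_r^{\op}(q(r'))$, consistent), and present the coboundary equivalence accordingly. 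Modulo that care, the proof is short.
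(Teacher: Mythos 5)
Your argument is correct and is essentially the proof the paper itself gives (an adaptation of Banyaga's Proposition~2.2): inessentiality is translated into the existence of a conformal factor $\lambda$ with $e^{-4 h_{\phi_r}} = e^{\lambda}\,\alpha_r(e^{-\lambda})$, which is exactly the coboundary condition for $p$ (and, via the $n/2$-power, for $q$), with the converse obtained by reading the same identity backwards to produce the invariant metric $g_0 = e^{\lambda} g$. The only point to tidy is your restatement of \eqref{hcocycle}: since $\phi_{rr'} = \phi_{r'}\circ\phi_r$, one has $h_{\phi_{rr'}} = h_{\phi_r} + \alpha_r(h_{\phi_{r'}})$, so $q(rr') = q(r)\,\alpha_r[q(r')]$ follows directly, without the detour through the product in $G$.
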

 
\begin{proof}
The proof is an adaptation of \cite[Proposition 2.2]{Bana} in our context.

If $G$ is inessential, there is a metric $g_0 = e^{\lambda} g \in [g]$ such that $\phi^\ast g_0 = g_0$ for any $\phi \in G$, with $\lambda \in C^\infty(M)$. Then, for any $r \in G^\op$, $\phi_r^\ast g = \phi_r^\ast( e^{-\lambda} g_0) = \alpha_r(e^{-\lambda}) g_0 = \alpha_r(e^{-\lambda}) e^{\lambda} g$, so that $e^{-4 h_{\phi_r}} = e^{\lambda} \alpha_r(e^{-\lambda})$. This shows that $q(r) = e^{n\lambda/4} \alpha_r(e^{-n\lambda/4})$ and $p(r) = e^{\lambda/2} \alpha_r(e^{-\lambda/2})$, which means that they are both coboundaries in $Z^1(G, C_+^\infty(M))$.

From the computation above, it is obvious that if $p$ is a coboundary if and only if the $q$ is a coboundary. So, let assume that $p$ is a coboundary in $Z^1(G, C_+^\infty(M))$: $p(r) = e^{\lambda/2} \alpha_r(e^{-\lambda/2})$ for some $e^{\lambda/2} \in C_+^\infty(M)$. This parametrization gives $e^{-4 h_{\phi_r}} = e^{\lambda} \alpha_r(e^{-\lambda})$ as before. Then, defining $g_0 \vc e^{\lambda} g \in [g]$, the previous computation shows that $\phi^\ast g_0 = g_0$ for any $\phi \in G$, so that $G$ is  inessential.
\end{proof}
Notice that the metric $g_0$ in the previous proof gives rise to the Dirac operator $\bar{D}$ of Proposition~\ref{Dbar}.

\bigskip

We now give an example showing that in {\it Case \ref{Case-Conformal} the spectral dimension can change as in the affine case of Section  \ref{sec-example-affine-group}} for the subgroup of dilations.
\\
Consider the Euclidean space $M = \gR^n$ with $n \geq 3$ acted upon by the conformal group $G=SO(n+1, 1)$ with the simpler case of the action of the dilatation subgroup $(\gR, +)$ of $G$, which acts on $(x^\mu) \in \gR^n$ by $(x^\mu) \mapsto (e^{-a} x^\mu)$ for any $a \in \gR$. 
Choose the original spectral triple $(A, H, D)$ with $A \vc {\caS}(\gR^n)$ the space of Schwartz functions on $\gR^n$, $H\vc L^2(\gR^n,\slashed{S})$ the Hilbert space of $L^2$-spinors and $D \vc \slashed{\partial} = i \gamma^\mu \partial_\mu$. Then, applying previous machinery, one gets $z(a) = e^{-a/2} \,\bbbone$ and $p(a) = e^{-a}\, \bbbone$, so that
\begin{align*}
(\hD\, \hxi\,)(a) = e^{-a} \,\slash\hspace{-0.23cm} \partial \hxi(a),\qquad
(\caT_{\cone,\,\ct}\, \hxi\,)(a) &= (\cone + \ct e^{-a})\, \hxi(a).
\end{align*}
Following similar arguments as in Section \ref{sec-extended-affine}, the dimension of the extended spectral triple can be computed by evaluating the trace of 
\begin{align*}
\int_{-\infty}^\infty \dd a\,e^{-a} \Tr_H X(s),\quad X(s) \vc f(0) [\bbbone+ \slash\hspace{-0.23cm} \partial^2+(\cone+\ct \,e^{-a})^2]^{-s/2}
\end{align*}
as a function of $s$, where $f(0) \in A$. Let $(b^\mu) \in \hgR^n$ be the Fourier transforms of variables $(x^\mu) \in \gR^n$ and denote by $f(a)\,\hat{}$ the Fourier transform of $f(a) \in A$ for $a \in \gR$. 
Then $X(s) $ acts on $\xi\,\hat{}\in H\,\hat{}\, \vc L^2(\hgR^n,\dd^n b)$ as
\begin{align*}
(X(s) \,\xi\,\hat{}\,)(b)&=\int_{\hgR^n} \dd^n b' \,f(0)\,\hat{}\,(b')\,[1+(b-b')^2+ (\cone+\ct\,e^{-a})^2]^{-s/2}\,\xi\,\hat{}\,(b-b')\\
&= \int_{\hgR^n} \dd^n b' \,f(0)\,\hat{}\,(b-b')\,[1+b'^2+ (\cone+\ct\,e^{-a})^2]^{-s/2}\,\xi\,\hat{}\,(b'),
\end{align*}
where $b^2 \vc \sum_{\mu} (b^\mu)^2$. Thus $\Tr_H X(s)=f(0)\,\hat{}\,(0)\,\int_{\hgR^n} \dd^n b \,[1+b^2+ (\cone+\ct\,e^{-a})^2]^{-s/2}$. Assume that $\cone, \ct \in \gR$ and $\ct \neq 0$. For $s > n+1$, using
\begin{equation*}
\int_{\hgR^n} \dd^n b \, [1+b^2+ (\cone+\ct\,e^{-a})^2]^{-s/2} = \tfrac{{\pi}^{n/2} \,\Gamma[(s-n)/2]}{\Gamma(s/2)} \, [1 +  (\cone+\ct\,e^{-a})^2 ]^{(n-s)/2}
\end{equation*}
and by a similar argument as in the proof of Lemma~\ref{lem-trace-affine}, one gets
\begin{align*}
\Tr\, \Theta\,\pi(f)\,(\bbbone+\DD^2)^{-s/2}
&= 2 f(0)\,\hat{}\,(0) \,  \tfrac{{\pi}^{n/2}\,\Gamma[(s-n)/2]}{\Gamma(s/2)} \,\int_{-\infty}^\infty \dd a\,e^{-a} [1 +  (\cone+\ct\,e^{-a})^2 ]^{(n-s)/2}\\
&= 2 f(0)\,\hat{}\,(0) \, \tfrac{{\pi}^{n/2}}{\Gamma(s/2)}  \, 
\big[ 
\tfrac{\sqrt{\pi}\, \Gamma[(s-n+1)/2]}{\vert \ct\vert\, (s-n-1)}
-
\tfrac{\cone\, \Gamma[(s-n)/2]}{\vert \ct \vert}   \,{}_2F_1(\tfrac{1}{2}, \tfrac{s-n}{2},\tfrac{3}{2},-\cone^2)
\big].
\end{align*}
So the extended (modular) spectral triple has dimension $n+1$ and
\begin{equation*}
\lim_{s\to n+1} (s-n-1) \,\Tr\, \Theta\,\pi(f)\,(\bbbone+\DD^2)^{-s/2} = \tfrac{2 \,\pi^{(n+1)/2}}{\vert \ct\vert\, \Gamma[(n+1)/2]} \,f(0)\,\hat{}\,(0).
\end{equation*}

\begin{remark}
\label{pb du noyau 2}
The kernel of $\vartheta$ includes the subgroups of translations and rotations. Only the subgroup of homotheties is sensitive to $\vartheta$.
\end{remark}

The case where $U_r D U_r^* -D$ is bounded (see Remark~\ref{rmk-udu-d-bounded}) has been considered by A.~Connes in \cite[p.~346]{Conn97a}.

\section{Perspectives}

One of questions left open is a better control of this notion of summability for the extended spectral triple. This seems linked to the modular operator and a good notion of non-unital modular spectral triples has to be settled down before.

In the case of the $C^*$-algebra of a semidirect group, it is probably possible to do the same construction with a twisted convolution product instead of the product \eqref{eq-product-alpha} where the twist is given by a 2-cocycle valued in the circle, see \cite[II.10.7.4]{Blac06a}.

We expect that most of our construction could be transferred to (compact) quantum groups. See also \cite{NT} for a different approach.

\section*{Acknowledgments}

We thank Gruia Arsu, Christian Duval, Jens Kaad, Michael Puschnigg, Adam Rennie, Fedor Sukochev, Antony Wassermann and Dmitriy Zanin for fruitful discussions or correspondences. We also thank the referee for his helpful comments.


\bibliography{extension-of-triples-biblio}

\end{document}